\newcommand{\NN}{\mathbb{N}}
\newcommand{\ZZ}{\mathbb{Z}}
\newcommand{\SM}{\mathbf{Sm}_k}
\newcommand{\SCH}{\mathbf{Sch}_k}
\newcommand{\LCI}{\mathbf{Lci}_k}
\newcommand{\srarrow}{\twoheadrightarrow}
\newcommand{\irarrow}{\hookrightarrow}
\newcommand{\flag}{\mathcal{F}\ell}
\newcommand{\Proj}{\mathbb{P}}
\newcommand{\Laz}{\mathbb{L}}
\newcommand{\trecd}{\cdot\cdot\cdot}
\newcommand{\tred}{\ldots}
\newcommand{\unddot}{_\textbf{\textbullet}}
\newcommand{\variables}{[x_1,\ldots,x_n,y_1,\ldots,y_n]}
\newcommand{\spec}{{\rm Spec\,}}
\newtheorem{theorem}{Theorem}[subsection]
\newtheorem{lemma}[theorem]{Lemma}
\newtheorem{proposition}[theorem]{Proposition}
\newtheorem{corollary}[theorem]{Corollary}
\newtheorem{definition}[theorem]{Definition}
\theoremstyle{definition} \newtheorem{remark}[theorem]{Remark}
\theoremstyle{definition} \newtheorem{example}[theorem]{Example}
\theoremstyle{definition} 
\theoremstyle{definition} \newtheorem{question}[theorem]{Question}
\date{}
\begin{document}

\begin{center}
{\LARGE Thom-Porteous formulas in algebraic cobordism}

\vspace{0.3 cm}

{\Large Thomas Hudson}

\end{center}

\tableofcontents

\section{Introduction}
The motivating question that represented the starting point of this thesis can be phrased as follows: ``Is there any analogue for algebraic cobordism of the Thom-Porteous formula with values in the Chow ring?''. Given a morphism of vector bundles $h: E\rightarrow F$ over a pure dimensional Cohen-Macaulay scheme $X$ such that the degeneracy locus 
$$D_n(h):=\{x\in X\ |\ {\rm rank}(h(x):E(x)\rightarrow F(x))\leq n\}$$
has the expected codimension in $X$, the Thom-Porteous formula allows one to write the Chow ring-valued fundamental class $[D_n(h)]_{CH}$ as a determinant in the Chern classes of the two bundles.
On the other hand the theory of algebraic cobordism $\Omega^*$ was established by Levine and Morel as an algebraic geometric analogue of complex cobordism. From our point of view the key feature of algebraic cobordism is that it represents the universal oriented cohomology theory on smooth schemes. This in particular implies that it can be seen as a powerful generalization of the Chow ring: to be able to find such a formula in the context of algebraic cobordism would have consequences for all other oriented cohomology theories. 

Following the work of Fulton in \cite{FlagsFulton}, we have decided to restrict our attention to degeneracy loci of morphisms of vector bundles endowed with full flags and in particular to the universal case represented by the full flag bundle $\flag(V)$ over a scheme $X$. In this setting the degeneracy loci are the Schubert varieties $\Omega_\omega$ and Fulton has showed that their fundamental classes are given by double Schubert polynomials evaluated at the Chern roots of the defining bundles. From this special case he then recovers the general case by pulling back to the base the fundamental class of the appropriate Schubert variety, therefore providing a description of the fundamental class of the degeneracy loci in terms of double Schubert polynomials. 
  
Later, in \cite{PieriFulton} Fulton and Lascoux considered once again the universal case but this time they aimed at giving a description of the fundamental classes of Schubert varieties in the Grothendieck ring of vector bundles. The formula they found, which expresses the fundamental classes in terms of the double Grothendieck polynomials of Lascoux and Sch\"{u}tzenberger, formally resembles the one in the Chow ring case and it is proved following essentially the same pattern. Even though they are not explicitly mentioned, in both proofs a central role is played by the Bott-Samelson resolutions: it is the push-forward of their fundamental classes that can be naturally described by double Schubert and Grothendieck polynomials. On the other hand Bott-Samelson resolutions also happen to be desingularizations of Schubert varieties, it is this fact that allows to bring back into the picture the fundamental classes $[\Omega_\omega]$. 

The study of the Grothendieck ring case was finally completed by Buch in \cite{GrothendieckBuch}, where he manages to express the fundamental class of a general degeneracy locus by means of Grothendieck polynomials. 
 
In view of these results we wondered if the method designed by Fulton could also be used  in the framework of algebraic cobordism. As we have already mentioned, Levine and Morel have showed that algebraic cobordism is the universal oriented cohomology theory and as such it generalizes both the Chow ring and the Grothendieck ring.  Even though this last fact alone would justify our interest in the problem, there is another aspect which is worth underlining: the universality of algebraic cobordism makes it possible to study the question in many oriented cohomology theories at once, highlighting what conditions the theory has to satisfy so that the different steps of the proof go through. In some sense even the goals can change according to the theory one considers.

Let us give an easy illustration of this phenomenon. As we have already mentioned, Fulton's approach in the original setting consists of two main parts: computing the classes associated to the Bott-Samelson resolutions and relating them to the fundamental classes of Schubert varieties. In case one considers algebraic cobordism already at this very primitive stage the final goal has to be modified: in algebraic cobordism only local complete intersection schemes have a well defined notion of fundamental class, so it is not possible to associate a fundamental class to each Schubert variety. On the other hand it is well possible that there exist other theories, less general than $\Omega^*$, in which fundamental classes are defined and within those theories one can still try to carry on the second part of the computation.

The first successful attempts of solving this kind of problem in the context of algebraic cobordism were carried out by Hornbostel and Kiritchenko in \cite{SchubertHornbostel} and by Calmes, Petrov and Zainoulline in \cite{SchubertCalmes}. In particular, Hornbostel and Kiritchenko gave an explicit description of the push forward map along $\Proj^1$-bundles which they used to compute the push-forward classes of Bott-Samelson resolutions in the case of the flag manifold or, in other words, when the base scheme $X$ is a point. By making use of their computations we have succeeded in extending their result to a general flag bundle, hence allowing any smooth base $X$. 

At this point it is important to mention that there are many Bott-Samelson resolutions associated to the same Schubert variety. In the two classical cases this fact did not play any role because taking the push-forward had the effect of making the different classes equal. On the other hand, when dealing with algebraic cobordism this coincidence is not guaranteed anymore. One way out of this situation is to consider a more restrictive oriented cohomology theory for which the push-forward classes have to coincide. One possible choice, which still generalizes both the Chow ring and the Grothendieck ring, is to consider connected $K$-theory. When the formula obtained for cobordism is translated in this setting, not only we recover the equality as in the original cases, but we also manage to provide a geometric interpretation to the double $\beta$-polynomials defined in \cite{GrothendieckFomin} by Fomin and Kirillov for combinatorial purposes.     

Let us now outline the internal organization of our work. In section \ref{ch 2} we recall the necessary background material on algebraic cobordism and its relations with other oriented cohomology theories, in particular with connected $K$-theory. We also perform some computations with Chern classes that will be used in section \ref{ch 3}.

 In section \ref{ch 1} we introduce the geometric entities that represent the object of our study and we provide a detailed presentation of the method used by Fulton in the Chow ring case. In this section we also present the double Schubert, Grothendieck and $\beta$-polynomials together with the results of Fulton-Lascoux and Buch in the case of the Grothendieck ring of vector bundles.

In section \ref{ch 3}, after presenting the results of Hornbostel and Kiritchenko on the flag manifold, we compute the push-forward classes of Bott-Samelson resolutions in the algebraic cobordism of the flag bundle. We then specialize our formula to connected $K$-theory, hence giving a geometric interpretation to the $\beta$-polynomials of Fomin and Kirillov.

\textbf{Acknowledgements: }
My deepest gratitude goes to Prof. M. Levine for his patient support, for all the things that he managed to teach me and for those that I managed not to learn: without him this thesis would have never been possible.
I am as well greatly indebted to Prof. J. Weyman: his direct and energetic guidance saved me from quite a few pitfalls and dead ends.
I would like to thank Prof. A. S. Buch and Prof. J. Hornbostel for their careful reading of an early version of this work. Their comments and suggestions not only improved the manuscript, they also allowed me to see things from a different angle. In particular, I am indebted to Prof. A. S. Buch for bringing my attention to $\beta$-polynomials.

I might not have started my graduate studies had it not been for the much needed encouragement and active help of Prof. A. Collino and of Prof. A. Mori. 
I am grateful to Prof. V. Lakshmibai, Prof. A. Martsinkovsky, Prof. D. B. Massey, Prof. A. Suciu, Prof. G. Todorov and Prof. V. Toledano-Laredo for making my time at Northeastern University a fruitful and enjoyable experience, both inside and outside the classrooms.
Finally, I would like to thank all my fellow graduate students and the postdocs at Northeastern University and at the Universit\"{a}t Duisburg-Essen for providing me with a friendly and pleasant studying environment.


\section{Algebraic cobordism and oriented cohomology theories}\label{ch 2}
The main goal of this section is to present the notions of oriented cohomology theory and oriented Borel-Moore homology theory and to describe the construction of algebraic cobordism. We moreover illustrate the relations existing between algebraic cobordism and other oriented Borel-Moore homology theories.

\subsection{The Lazard ring and the universal formal group law}

In this subsection we recall the notion of formal group law and we introduce the universal such law on the Lazard ring. 

\begin{definition}\label{def FGL}

A commutative formal group law of rank one with coefficients in $R$ is a pair $(R,F)$, where $R$ is a commutative ring and $F(u,v)=\sum a_{i,j}u^iv^j\in R[[u,v]]$ is a formal power series satisfying the following conditions:  
\begin{enumerate} 
\item $F(u,0)=F(0,u)=u\in R[[u]];$
\item $F(u,v)=F(v,u)\in R[[u,v]];$
\item $F(u,F(v,w))=F(F(u,v),w)\in R[[u,v,w]].$
\end{enumerate}
A morphism of formal group laws $\phi:(R,F)\rightarrow (R',F')$ consists of a ring homomorphism $\varPhi:R\rightarrow R'$ such that $[\varPhi(F)](u,v):=\sum\varPhi(a_{i,j})u^i v^j$ equals $F'(u,v)$.
\end{definition}


\begin{definition}
Given a commutative formal group law $(R,F)$ there exists a unique power series $\chi_F(u)\in R[u]$ such that 
$$F(u,\chi_F(u))=0\ .$$
We will refer to $\chi_F(u)$ as the inverse for the formal group law $F$. 
\end{definition}

\begin{example}
Let $R$ be a commutative ring. Two elementary examples of formal group laws and their inverses are given by the \textit{additive formal group law} $$F_a(u,v)=u+v\  ,\ \chi_{F_a}(u)=-u$$ and by the \textit{multiplicative formal group law} 
$$F_m(u,v)=u+v-buv \ ,\ \ \chi_{F_m}(u)=\frac{-u}{1-bu}$$ for some choice of $b\in R$. One sees immediately that the additive formal group law can be recovered from the multiplicative one by setting $b=0$. A multiplicative formal group law is said \textit{periodic} if the element $b\in R$ is a unit.
\end{example}

We will now describe the construction of the Lazard ring.
Let $A=\{A_{i,j}\ |\ i,j\in \NN\setminus\{0\}\}$ be a set of variables and define $\mathbb{\tilde{L}}$ as the polynomial ring over $\ZZ$ generated by $A$. On this ring one defines the formal power series $\tilde{F}(u,v)=\sum_{i,j}A_{i,j} u^i v^j\in \mathbb{\tilde{L}}[[u,v]]$. The next step is to quotient $\tilde{\mathbb{L}}$ by the ideal $I$ generated by the relations obtained by forcing $\tilde{F}$ to satisfy conditions $(1), (2)$ and $(3)$ from definition  \ref{def FGL}. The quotient  ring $\mathbb{\tilde{L}}/I$, usually denoted $\Laz$, is called the {\em Lazard ring}. $\Laz$ is in fact a polynomial ring with integer coefficients on a countable set of variables $x_i$, $i\ge1$ (see for example  \cite[pp. 64-74]{StableAdams}, \cite[pp. 26-30]{FormalHazewinkel} or \cite[pp. pp. 357-360, 368-369]{ComplexRavenel})). The image of $\tilde{F}$ in $\Laz[[u,v]]$ via the quotient map $p:\tilde{\Laz}\rightarrow \Laz$ will be denoted by $F_\Laz$ and we will write $a_{i,j}$ for $p(A_{i,j})$. In order to make $\Laz$ into a graded ring, one possible choice is to assign degree $1-i-j$ to the coefficient~$a_{i,j}$. It is worth mentioning that this choice gives $\text{deg} (x_i)=-i$. We will denote this graded ring by $\Laz^*$. Another option for the grading of $\Laz$ is to set $\text{deg} (a_{i,j})=i+j-1$: we will write $\Laz_*$ for the resulting graded ring. There is a canonical choice for the variable $x_1$, namely the coefficient of $uv$ in the universal formal group law $F(u,v)$, however, the remaining variables $x_i$, $i\ge 2$ are only canonical modulo decomposable elements in the previous variables.


Let us now state the universal property of the Lazard ring.

\begin{proposition} 
$(\Laz, F_\Laz)$ is the universal commutative formal group law of rank one: for every formal group law $(R,F)$ there exist a unique ring homomorphism $\varPhi_F:\Laz\rightarrow R$ such that $\varPhi_F(F_\Laz)=F$. 
\end{proposition}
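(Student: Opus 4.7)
The strategy is essentially a formal unwinding of the construction of $\Laz$ as a quotient of a polynomial ring by the universal relations. Since $\Laz$ has been built precisely to represent the functor of formal group laws, the proof should be mechanical once the construction is unpacked; nothing deep is needed beyond the freeness of $\tilde{\Laz}$ and the definition of the ideal $I$.

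For existence, given a formal group law $(R,F)$ with $F(u,v) = \sum a_{i,j}u^iv^j$, I would first define a ring homomorphism $\tilde{\varPhi}_F : \tilde{\Laz} \to R$ by sending the generator $A_{i,j}$ to $a_{i,j}$. This is unambiguously defined because $\tilde{\Laz} = \ZZ[A_{i,j}]$ is free as a commutative ring on the variables $A_{i,j}$. By construction, the induced map on power series rings sends $\tilde{F}(u,v)$ to $F(u,v)$. I would then check that $\tilde{\varPhi}_F$ kills the ideal $I$: the generators of $I$ are exactly the coefficients of the power series expressing the failure of axioms (1)--(3) of Definition \ref{def FGL} for $\tilde{F}$, and applying $\tilde{\varPhi}_F$ transports each such coefficient to the analogous coefficient for $F$, which vanishes since $F$ is itself a formal group law. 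Hence $\tilde{\varPhi}_F$ descends to a ring homomorphism $\varPhi_F : \Laz \to R$ with $\varPhi_F(F_\Laz) = F$.

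For uniqueness, I would observe that the elements $a_{i,j} = p(A_{i,j})$ generate $\Laz$ as a $\ZZ$-algebra, since they are the images of a generating set of $\tilde{\Laz}$ under the surjection $p$. Any ring homomorphism $\Psi : \Laz \to R$ satisfying $\Psi(F_\Laz) = F$ must send each $a_{i,j}$ (the coefficient of $u^iv^j$ in $F_\Laz$) to the corresponding coefficient of $F$, and is therefore forced to agree with $\varPhi_F$ on generators, hence everywhere.

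The only step that requires any care is the verification that $\tilde{\varPhi}_F(I) = 0$, but this is a formality: the generators of $I$ are indexed by coefficients of the power series $\tilde{F}(u,0)-u$, $\tilde{F}(0,u)-u$, $\tilde{F}(u,v)-\tilde{F}(v,u)$, and $\tilde{F}(u,\tilde{F}(v,w))-\tilde{F}(\tilde{F}(u,v),w)$, and compatibility with $\tilde{\varPhi}_F$ on the level of power series rings translates these termwise into the analogous (vanishing) coefficients for $F$. So I do not expect any genuine obstacle — the entire content of the proposition is already encoded in how $\Laz$ was defined.
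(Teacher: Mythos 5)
Your proof is correct and is the standard universal-property argument: define the map on the free polynomial ring $\tilde{\Laz}$ by sending $A_{i,j}\mapsto a_{i,j}$, observe that the ideal $I$ is killed because its generators are the coefficients of the failure of the formal-group-law axioms and $F$ satisfies those axioms, and get uniqueness from the fact that the $a_{i,j}=p(A_{i,j})$ generate $\Laz$. The paper states this proposition without proof (deferring to the cited references on the Lazard ring), so there is no alternative argument to compare against, but yours is exactly the canonical one.
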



\begin{example}
Let us consider first the additive formal group law $(R,F_a)$. The ring homomorphism $\varPhi_{F_a}$ arising from the universal property is the composition of the homomorphism $\Laz=\ZZ[\mathbf{x}]\rightarrow R[\mathbf{x}]$ (coming from the canonical morphism $\ZZ\rightarrow R$) together with the homomorphism $R[\mathbf{x}]\rightarrow R$ setting all variables equal to 0. Here by $R[\mathbf{x}]$ we mean the polynomial ring with coefficient in $R$ on the variables $x_i,\ i\geq 1$.

On the other hand, in order to obtain $\varPhi_{F_m}$ for a multiplicative formal group law $(R,F_m)$, one has to modify the second map so that  $x_1$ is mapped to $-b$.  
\end{example}

\subsection{Oriented cohomology theories and oriented Borel-Moore homology theories}

In this subsection we recall the notions of oriented cohomology theory and Borel-Moore oriented homology theory. All notations and definitions are taken from \cite[Chapter 1 and 5]{AlgebraicLevine} with only minor modifications.

 We will denote by $\SCH$ the category of separated schemes of finite type over $\spec k$, with $k$ an arbitrary field. $\SM$ will then represent the full subcategory of $\SCH$ consisting of schemes smooth and quasi-projective over $\spec k$. In general by smooth morphism we will always mean smooth and quasi-projective. 

\begin{definition} 
Let $\mathcal{V}$ be a full subcategory of $\SCH$. $\mathcal{V}$ is said \textit{admissible} if it satisfies the following conditions

\begin{enumerate}
\item {\rm Spec}$\, k$ and the empty scheme $\emptyset$ are in $\mathcal{V}$.
\item If $Y\rightarrow X$ is a smooth quasi-projective morphism in $\SCH$ with $X\in \mathcal{V}$, then $Y\in \mathcal{V}$.
\item If $X$ and $Y$ are in $\mathcal{V}$, then so is the product $X\times_{{\rm Spec}\, k}Y$.
\item If $X$ and $Y$ are in $\mathcal{V}$, so is $X \coprod Y$. 
\end{enumerate}

\end{definition}

It follows immediately from conditions $1$ and $2$ that $\SM$ is contained in every admissible subcategory $\mathcal{V}$: ${\rm Spec}\, k$ is in $\mathcal{V}$ and for every $X\in \SM$ the structural morphism $\tau_X$ is smooth and quasi-projective. In this work $\mathcal{V}$ will mainly be either $\SCH$ or $\SM$. 

\begin{definition}
For $z\in Z\in\SM $ denote by dim$_k(Z,z)$ the dimension over $\spec k$ of the connected component of $Z$ containing $z$. 
Given an integer $d\in\ZZ$, a morphism $f:Y\rightarrow X$ in $\SM$ has \textit{relative dimension }$d$ if, for each $y\in Y$, we have dim$_k(Y, y)-$dim$_k(X,f(y))=d$.
\end{definition}

\begin{definition}
Let $f:X\rightarrow Z$, $g:Y\rightarrow Z$ be morphisms in an admissible subcategory $\mathcal{V}$ of $\SCH$. We say that $f$ and $g$ are {\rm transverse} in $\mathcal{V}$ if 
\begin{enumerate} 
\item ${\rm Tor }^{\mathcal{O}_Z}_q(\mathcal{O}_Y,\mathcal{O}_X)=0$ for all $q>0$.
\item The fiber product $X\times_Z Y$ is in $\mathcal{V}$.
\end{enumerate}
If $\mathcal{V}=\SM$ we just say that $f$ and $g$ are transverse; if $\mathcal{V}=\SCH$ we will say that $f$ and $g$ are {\rm Tor-independent}.  
\end{definition}

In the following definition $\mathbf{R}^*$ will denote the category of commutative, graded rings with unit. Let us also recall that a functor $A^*:\mathcal{V}^{op}\rightarrow \mathbf{R}^*$ is said to be additive if $A^*(\emptyset)=0$ and for any pair $(X,Y)\in \mathcal{V}^2$  the canonical ring map $A^*(X\coprod  Y)\rightarrow A^*(X)\times A^*(Y)$ is an isomorphism.

\begin{definition}
Let $\mathcal{V}$ be an admissible subcategory of $\SCH$. An oriented cohomology theory on $\mathcal{V}$ is given by 
\begin{list}{(D\arabic{enumi}).}{\usecounter{enumi}}
\item An additive functor $A^*:\mathcal{V}^{{\rm op}}\rightarrow \mathbf{R}^*$.
\item For each projective morphism $f:Y\rightarrow X$ in $\mathcal{V}$ of relative codimension $d$, a homomorphism of graded $A^*(X)$-modules:
$$f_*:A^*(Y)\rightarrow A^{*+d}(X)\ .$$
Observe that the ring homomorphism $f^*:A^*(X)\rightarrow A^*(Y)$ gives $A^*(Y)$ the structure of an $A^*(X)$-module. 

\end{list}
These satisfy
\begin{list}{(A\arabic{enumi}).}{\usecounter{enumi}}
\item One has $({\rm Id}_X)_*={\rm Id}_{A^*(X)}$ for any $X\in \mathcal{V}$. Moreover, given projective morphisms $f:Y\rightarrow X$ and $g:Z\rightarrow Y$ in $\mathcal{V}$, with $f$ of relative codimension $d$ and $g$ of relative codimension $e$, one has 
$$(f\circ g)_*=f_*\circ g_*: A^*(Z)\rightarrow A^{*+d+e}(X)\ .$$

\item Let $f:X\rightarrow Z$, $g:y\rightarrow Z$ be transverse morphisms in $\mathcal{V}$, giving the cartesian square 
$$
\xymatrix{
  W\ar[r]^{g'} \ar[d]_{f'}&X\ar[d]^{f} \\
  Y\ar[r]^{g}& Z     }
$$
Suppose that $f$ is projective of relative dimension $d$ (thus so is $f'$). Then $g^*f_*=f'_*g'^*$.
\end{list}

\begin{list}{(PB).}{}
\item Let $E\rightarrow X$ be a rank $n$ vector bundle over some $X$ in $\mathcal{V}$, $O(1)\rightarrow \Proj (E)$ the canonical quotient line bundle with zero section $s:\Proj (E)\rightarrow O(1)$. Let $1\in A^0(\Proj (E))$ denote the multiplicative unit element. Define $\xi\in A^1(\Proj (E))$ by
$$\xi:=s^*(s_*(1))\ .$$
Then $A^*(\Proj (E))$ is a free $A^*(X)$-module, with basis $(1,\xi,\tred, \xi^{n-1})$.
\end{list}

\begin{list}{(EH).}{}
\item Let $E\rightarrow X$ be a vector bundle over some $X$ in $\mathcal{V}$, and let $p: V\rightarrow X $ be an $E$-torsor. Then $p^*:A^*(X)\rightarrow A^*(V)$ is an isomorphism.

\end{list}

A morphism of oriented cohomology theories on $\mathcal{V}$ is a natural transformation of functors $\mathcal{V}^{\rm{op}}\rightarrow \mathbf{R}^*$ which commutes with the maps $f_*$.

\end{definition}

In the previous definition the abbreviations $(PB)$ and $(EH)$ stands respectively for \textit{projective bundle formula} and \textit{extended homotopy property}. The morphisms $f^*$ are called \textit{pull-backs}, while the morphisms $f_*$ are called \textit{push-forwards}. 

\begin{example} \label{ex OCT}
Two fundamental examples of oriented cohomology theories on $\SM$ are given by the Chow ring $X\mapsto CH^*(X)$ and by a graded version of the Grothendieck group of locally free coherent sheaves  $X\mapsto K^0(X)$. More precisely, in order to obtain a graded ring out of $K^0(X)$ one first considers the multiplication law given by the tensor product of sheaves and then adds a graded structure by tensoring over $\ZZ$ with the ring of Laurent polynomials $\ZZ[\beta,\beta^{-1}]$ with $\beta$ in degree -1. We will denote by $K^0[\beta,\beta^{-1}]$ the functor corresponding to the assignment $X\mapsto K^0(X)\otimes_\ZZ \ZZ[\beta,\beta^{-1}]$.

 It is important to notice that both the pull-back and push-forward maps for $K^0[\beta,\beta^{-1}]$ are defined by adding the right power of $\beta$ to the corresponding maps in $K^0$. For a smooth morphism $f:Y\rightarrow X$ one sets
$$f^*([\mathcal{E}]\cdot\beta^n)=[f^*(\mathcal{E})]\cdot \beta^n\ ,$$ 
where $\mathcal{E}$ is a locally free coherent sheaf on $X$ and $n\in \ZZ$.
In order to be able to describe the push-forwards we first need to recall that for $X\in\SM$ it is possible to identify $K^0(X)$ with the Grothendieck group of coherent sheaves $G_0(X)$. In view of this identification, for a projective morphism $f:Y\rightarrow X$ of pure codimension $d$ one can set
$$f_*([\mathcal{E}]\cdot\beta^n)= \sum_{i=0}^{\infty} (-1)^i [R^i f_*(\mathcal{E})]\cdot \beta^{n-d}\in K_0[\beta,\beta^{-1}](X)\ ,$$
where $n\in\ZZ$ and $\mathcal{E}$ is a locally free coherent sheaf on $Y$.
\end{example}

We now want to introduce the notion of oriented Borel-Moore homology theory and in order to do this we first need to recall the definitions of regular embedding and local complete intersection morphisms.

\begin{definition}
A closed immersion $i:Z\rightarrow X$ is said to be a \textit{regular embedding} if the ideal sheaf $\mathcal{I}_Z$ of $Z$ in $X$ is locally generated by a regular sequence. 
\end{definition}

\begin{definition}
A morphism $f:X\rightarrow Y$ between flat $k$-schemes of finite type is said to be a local complete intersection morphism (an \textit{l.c.i. morphism}) if it admits a factorization as $f=q\cdot i$, where $i:X\rightarrow P$ is a regular embedding and $q:P\rightarrow Y$ is a smooth, quasi-projective morphism.

We will call a scheme whose structural morphism is l.c.i. an {\rm l.c.i. scheme} and we will denote by $\LCI$ the full subcategory of $\SCH$ whose objects are l.c.i. schemes.  
\end{definition}

\begin{remark} \label{rem l.c.i. trans}
It is important to underline that both classes of morphisms are closed under composition (see \cite[Remarks 5.1.2 (2)-(3)]{AlgebraicLevine}) and to point out that given two Tor-independent morphisms $f:X\rightarrow Y$ and $g:Z\rightarrow Y$ in $\SCH$, knowing that $f$ is l.c.i. allows to conclude that also $pr_2:X\times_Y Z\rightarrow Z$ is an l.c.i. morphism.
\end{remark}

\begin{definition}
Let $\mathcal{V}$ be an admissible subcategory of $\SCH$. An \textit{ oriented Borel-Moore homology theory} on $\mathcal{V}$ is given by 
\begin{list}{(D\arabic{enumi}).}{\usecounter{enumi}}
\item An additive functor $A_*:\mathcal{V}'\rightarrow \mathbf{Ab}_*$.
\item For each l.c.i. morphism $f:Y\rightarrow X$ in $\mathcal{V}$ of relative dimension $d$, a homomorphism of graded groups:
$$f^*:A_*(X)\rightarrow A_{*+d}(Y)\ .$$
\item An element $1\in A_0(\spec k)$ and, for each pair $(X,Y)$ of objects in $\mathcal{V}$, a bilinear graded pairing 
\begin{align*}
A_*(X)\otimes A_*(Y)&\rightarrow A_*(X\times_{\spec k}Y)\\
u\otimes v \quad& \mapsto \quad u\times v
\end{align*}
called the{ \rm external product}, which is associative, commutative and admits 1 as unit element. 
\end{list}
These satisfy 
\begin{list}{(BM\arabic{enumi}).}{\usecounter{enumi}}
\item One has ${\rm Id}_X^*={\rm Id}_{A_*(X)}$ for any $X\in \mathcal{V}$. Moreover, given l.c.i. morphisms $f:Y\rightarrow X$ and $g:Z\rightarrow Y$ in $\mathcal{V}$, of pure relative dimension, one has 
$(f\circ g)_*=f_*\circ g_*$.

\item Let $f:X\rightarrow Z$, $g:y\rightarrow Z$ be transverse morphisms in $\mathcal{V}$. Suppose that $f$ is projective and that $g$ is an l.c.i. morphism, giving the cartesian square 
$$
\xymatrix{
  W\ar[r]^{g'} \ar[d]_{f'}&X\ar[d]^{f} \\
  Y\ar[r]^{g}& Z     }
$$
Note that $f'$ is projective and $g'$ is an l.c.i. morphism. Then $g^*f_*=f'_*g'^*$.

\item Let $f:X'\rightarrow X$ and $g:Y'\rightarrow Y$ be morphisms in $\mathcal{V}$. If $f$ and $g$ are projective, then for $u'\in A_*(X')$ and $v'\in A(Y')$ one has 
$$(f\times g)_*(u'\times v')=f_*(u')\times g_*(v')\ .$$
If $f$ and $g$ are l.c.i. morphisms, then for $u\in A_*(X)$ and $v\in A_*(Y)$ one has
$$(f\times g)^*(u\times v)=f^*(u)\times g^*(v)\ . $$
\end{list}

\begin{list}{(PB).}{}
\item For $L\rightarrow Y$ a line bundle on $Y\in \mathcal{V}$ with zero-section $s:Y\rightarrow L$, define the operator 
$$\widetilde{c_1}(L):A_*(Y)\rightarrow A_{*-1}(Y)$$
by $\widetilde{c_1}(\eta)=s^*(s_*(\eta))$. Let $E$ be a rank $n+1$ vector bundle on $X\in \mathcal{V}$, with projective bundle $q:\Proj(E)\rightarrow X$ and canonical quotient line bundle $O(1)\rightarrow \Proj(E)$. For $i\in \{0,\tred, n\}$, let 
$$\xi^{(i)}:A_{*+i-n}(X)\rightarrow A_*(\Proj(E))$$
be the composition of $q^*:A_{*+i-n}(X)\rightarrow A_{*+i}(\Proj(E))$ with $\widetilde{c_1}(O(1))^i:A_{*+1}(\Proj(E)\rightarrow A_*(\Proj(E)))$. Then the homomorphism 
$$\sum_{i=0}^{n}\xi^{(i)}:\bigoplus_{i=0}^n A_{*+i-n}\rightarrow A_*(\Proj(E))$$
is an isomorphism.
\end{list}

\begin{list}{(EH).}{}
\item Let $E\rightarrow X$ be a vector bundle of rank $r$ over $X\in\mathcal{V}$, and let $p: V\rightarrow X $ be an $E$-torsor. Then $p^*:A_*(X)\rightarrow A_{*+r}(V)$ is an isomorphism.
\end{list}

\begin{list}{(CD).}{}
\item For integers $r,N>0$, let $W=\Proj^N\times_{\spec k}\tred \times_{\spec k} \Proj^N$ ($r$ factors), and let $p_i:W\rightarrow \Proj^N$ be the $i$-th projection. Let $X_0, \tred, X_N$ be the standard homogeneous coordinates on $\Proj^N$, let $n_1,\tred,n_r$ be non negative integers, and let $i:Z\rightarrow W$ be the subscheme defined by $\prod_{i=1}^r p_i^*(X_N)^{n_i}=0$. Suppose that $Z$ is in $\mathcal{V}$. Then $i_*:A_*(Z)\rightarrow A_*(W)$ is injective. 

\end{list}
A morphism of oriented Borel-Moore homology theories on $\mathcal{V}$ is a natural transformation of functors $\mathcal{V}'\rightarrow \mathbf{Ab}_*$ which respects the element 1 and commutes with both the maps $f^*$ and  the external product $\times$. 

\end{definition}

\begin{example}
Two examples of oriented Borel-Moore homology theories on $\SCH$ are given by the Chow group functor $X\mapsto CH_*(X)$ and by a graded version of the Grothendieck group of coherent sheaves $X\mapsto G_0(X)$. Exactly as for the case of $K^0$ in example \ref{ex OCT}, the graded structure is added by tensoring $G_0(X)$ with $\ZZ[\beta,\beta^{-1}]$. The only difference lies in the grading of $\ZZ[\beta,\beta^{-1}]$: in this case the degree of $\beta$ is set equal to 1. We will denote the resulting functor $X\mapsto G_0\otimes_\ZZ\ZZ[\beta,\beta^{-1}]$ by $G_0[\beta,\beta^{-1}]$. For the precise details concerning the definitions of external product, push-forwards and pull-back maps see \cite[Examples 2.2.5]{AlgebraicLevine}. 
\end{example}

We now present a lemma which states a set of sufficient conditions under which axiom $(CD)$ holds. 

\begin{lemma}\label{lem CD}
Suppose to be given a functor $A_*:\SCH\rightarrow\mathbf{Ab}_*$, a family of homomorphisms $\{f^*\}$, an element $1$ and an external product $\times$ as in $(D1)-(D3)$ of the previous definition, satisfying all the axioms with the possible exception of $(CD)$. If for every closed embedding $i:Z\rightarrow X$ with complement $j:U\rightarrow X$ the sequence
$$A_*(Z)\stackrel{i_*}\longrightarrow A_*(X)\stackrel{j^*}\longrightarrow A_*(U)$$
is exact, then axiom $(CD)$ is satisfied.
\end{lemma}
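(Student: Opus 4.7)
I propose to prove the lemma by induction on $n := \sum_{i=1}^r n_i$, establishing for every $r$ and every tuple $(n_1, \ldots, n_r)$ of non-negative integers summing to $n$ that $i_*: A_*(Z) \to A_*(W)$ is injective. The cases $n = 0$ (where $Z = \emptyset$) and $n = 1$ serve as the base; the inductive step is a diagram chase that invokes the exactness hypothesis on two carefully chosen closed/open pairs simultaneously.

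For the base case $n = 1$, after reordering factors we may assume $n_1 = 1$, so $Z = V(p_1^* X_N) \cong \Proj^{N-1} \times (\Proj^N)^{r-1}$ is smooth and embeds in $W$ as the divisor of a section of $p_1^* O(1)$. Iterated application of (PB) writes both $A_*(W)$ and $A_*(Z)$ as free $A_*(\spec k)$-modules with bases consisting of monomial operators $\xi_1^{a_1} \cdots \xi_r^{a_r}$ applied to $\tau^*(1)$, where $\xi_i := \widetilde{c_1}(p_i^* O(1))$, with $0 \leq a_i \leq N$ on $W$ and $0 \leq a_1 \leq N-1$ on $Z$. Since $i_*(1) = \xi_1(\tau^* 1)$ and $\widetilde{c_1}$ commutes with projective pushforward up to pullback, one computes $i_*(\xi_1^{a_1} \cdots \xi_r^{a_r}) = \xi_1^{a_1+1} \xi_2^{a_2} \cdots \xi_r^{a_r}$, which realises $i_*$ as a bijection between the two $A_*(\spec k)$-bases; in particular $i_*$ is injective.

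For the inductive step, assume $n \geq 2$ and the claim for all smaller values. After reordering we may take $n_1 \geq 1$, and we set $Z'' := V(p_1^* X_N)$ and $U_0 := W \setminus Z'' \cong \mathbb{A}^N \times (\Proj^N)^{r-1}$. Because $n_1 \geq 1$ the ideal of $Z''$ contains that of $Z$, so $Z'' \hookrightarrow Z$ is a closed embedding with open complement $Z \cap U_0 \cong \mathbb{A}^N \times Z_{r-1}$, where $Z_{r-1} := V(\prod_{i \geq 2} p_i^* X_N^{n_i}) \subset (\Proj^N)^{r-1}$. Applying the exactness hypothesis to the pairs $Z'' \subset Z$ and $Z'' \subset W$ yields the rows of the commutative diagram
$$
\xymatrix@C=1.5em{
A_*(Z'') \ar[r]^-{i''_*} \ar@{=}[d] & A_*(Z) \ar[r]^-{j'^*} \ar[d]^-{i_*} & A_*(\mathbb{A}^N \times Z_{r-1}) \ar[d] \\
A_*(Z'') \ar[r]^-{I_*} & A_*(W) \ar[r]^-{J^*} & A_*(\mathbb{A}^N \times (\Proj^N)^{r-1})
}
$$
whose right square commutes by (BM2) applied to the Cartesian square obtained by restricting $i$ along the open immersion $J: U_0 \hookrightarrow W$ (which is transverse since $J$ is smooth). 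Using (EH) to trivialise the $\mathbb{A}^N$-factors, the right vertical map is identified (up to degree shift) with the pushforward $A_*(Z_{r-1}) \to A_*((\Proj^N)^{r-1})$, an instance of (CD) with sum $\sum_{i \geq 2} n_i < n$, injective by induction. Similarly $I_*$ is the (CD) pushforward for the tuple $(1, 0, \ldots, 0)$ with sum $1 < n$, injective by the base case. A chase now finishes the argument: if $i_* \alpha = 0$, then commutativity and injectivity of the right column force $j'^* \alpha = 0$, so $\alpha = i''_* \beta$ by top-row exactness, whence $I_* \beta = i_* \alpha = 0$ and injectivity of $I_*$ gives $\beta = 0$, hence $\alpha = 0$.

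The main obstacle is recognising the right shape of the induction: even though the hypothesis only produces a single three-term sequence per closed/open pair, comparing two such sequences for the nested embeddings $Z'' \subset Z \subset W$, together with (BM2) for the open restriction, reduces the statement to strictly simpler instances of (CD). Once the diagram and the (EH)-identifications are set up, the chase itself is mechanical; one must however check that the identification of the right column via (EH) is natural with respect to closed embeddings, which comes from (BM2) applied to a second transverse Cartesian square.
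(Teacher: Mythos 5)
Your argument is correct, and the paper itself only cites Lemmas 5.2.10 and 5.2.11 of Levine--Morel without reproducing the proof; your inductive localization argument (reducing via (BM2), (PB), and (EH) to strictly smaller values of $\sum n_i$, with the explicit basis comparison for the $n=1$ base case) is a sound reconstruction of essentially the approach behind those cited lemmas.
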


\begin{proof}
See \cite[Lemmas 5.2.11 and 5.2.10]{AlgebraicLevine}
\end{proof}

We now want to illustrate how, provided one sets $\mathcal{V}=\SM$, it is possible to construct a functor $A^*:\SM^{op}\rightarrow \mathbf{R}^*$ out of an oriented Borel-Moore homology theory $A_*$. First of all for a pure $d$-dimensional $X\in\SM$ one sets $A^n(X):=A_{d-n}(X)$ and the definition is then extended to a general $X$ by additivity over the connected componets. On $A^*(X)$ the multiplication $\cup_X$  is defined by relying on the fact that for $X\in \SM$ the diagonal morphism $\delta_X:X\rightarrow X\times X$ is a regular embedding and hence an l.c.i morphism: for $a\in A^n(X)$ and $b\in A^m(X)$ one sets
$$a\cup_X b:=\delta_X^*(a\times b)\in\Omega^{n+m}(X) \ .$$
Since the external product is commutative and associative and, by axiom $(BM3)$, is compatible with l.c.i. pull-backs, we have that the multiplication $\cup_X$ turns $A^*(X)$ into a commutative graded ring with $\tau^*_X(1)$ as a unit. Concerning the morphisms, the first thing to note is that all morphisms between smooth schemes are l.c.i. and as a consequence for any morphism $f$ in $\SM$ one obtains a graded group homomorphism $f^*$. It is an immediate consequence of axioms $(BM1)$ and $(BM3)$ that $f^*$ is actually a graded ring homomorphism. One is finally left to verify the functoriality with respect to composition but this is granted by axiom $(BM1)$. 

One can actually say more: $A^*$ is not just a functor, it is an oriented cohomology theory. Moreover, the construction can also be reversed and from an oriented cohomology theory one can obtain an oriented Borel-Moore homology theory.  One in fact has the following result (\cite[Proposition 5.2.1]{AlgebraicLevine}), which describes the relationship between the two kinds of theories.
\begin{proposition}\label{prop OCT OBM}
Sending $A_*$ to $A^*$ as described above defines an equivalence between the category of oriented Borel-Moore homology theories on $\SM$ and the category of oriented cohomology theories on $\SM$.  
\end{proposition}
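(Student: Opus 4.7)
The plan is to complete what was begun just above the proposition. The passage from $A_*$ to $A^*$ has been sketched, but it remains to verify the remaining axioms of an oriented cohomology theory, construct the inverse functor, check the round-trip identities, and extend the correspondence to morphisms.

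For the forward direction, axiom (A1) follows immediately from (BM1) for pushforward, while (A2), the transverse base change identity, follows from (BM2) after observing that every morphism in $\SM$ is an l.c.i. morphism, so that l.c.i. pullback in the Borel-Moore sense coincides with pullback in the cohomological sense modulo the degree shift $A^n(X) = A_{d-n}(X)$. The projective bundle formula (PB) and extended homotopy (EH) in cohomological form are direct translations of their Borel-Moore counterparts: the operator $\widetilde{c_1}(L) = s^* s_*$ in the Borel-Moore setting becomes multiplication by the class $\xi = s^* s_*(1)$ in cohomology via the same isomorphism.

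For the reverse direction, given an oriented cohomology theory $B^*$ on $\SM$, I would set $B_n(X) := B^{d-n}(X)$ for $X$ pure of dimension $d$, with additive extension. The l.c.i. pullback and projective pushforward are the corresponding maps of $B^*$, reindexed, and the external product is defined by
\[
u \times v := p_X^*(u) \cup_{X \times Y} p_Y^*(v),
\]
which is commutative, associative, and admits $1 \in B^0(\spec k)$ as unit. The axioms (BM1), (BM2), (PB) and (EH) translate directly from (A1), (A2), (PB) and (EH) of $B^*$; axiom (CD) is vacuous since the defining subschemes of the axiom are not in $\SM$ in general. The one slightly less obvious point is (BM3), the compatibility of the external product with pushforward and l.c.i. pullback, which I would reduce to (A2) applied to the cartesian squares $X' \times Y \to X \times Y$ and $X \times Y' \to X \times Y$, combined with the functoriality of pullback.

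To finish the equivalence, I would show that the two constructions are mutually inverse on objects and functorial on morphisms. Starting from $A_*$, building $A^*$ and then passing back to $(A^*)_{BM}$ recovers $A_*$: pullbacks, pushforwards and the unit are preserved tautologically, while the external product $u \times v$ in $A_*$ is recovered from $p_X^*(u) \cup p_Y^*(v)$ by expanding the cup product as diagonal pullback of an external product and then invoking (BM2) together with (BM3) applied to the projections from $X \times Y$. The opposite round-trip is analogous. A morphism of oriented (Borel-Moore) homology theories is the same data as a morphism of oriented cohomology theories, since both notions require naturality exactly for the four structures (pullback, pushforward, unit, product) preserved by the constructions. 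The main obstacle I anticipate is the bookkeeping needed to recover the external product from the cup product in the first round-trip, but no input beyond the axioms already in hand is required.
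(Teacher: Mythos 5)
The paper itself gives no proof for this proposition; it cites Levine--Morel, Proposition 5.2.1, and your sketch follows the same route. Most of it is sound: once the degree convention $A^n(X)=A_{d-n}(X)$ is fixed, $(A1)$ and $(A2)$ translate into $(BM1)$ and $(BM2)$, $(PB)$ and $(EH)$ carry over verbatim, and the recipe $u\times v := p_X^*(u)\cup p_Y^*(v)$ is indeed inverse to $a\cup b := \delta_X^*(a\times b)$ (though you invoke $(BM2)$ where what you actually use is $(BM1)$, the functoriality of l.c.i.\ pullback --- a harmless mislabel).

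The genuine gap is your dismissal of axiom $(CD)$: it is not vacuous on $\SM$. If exactly one exponent $n_i$ equals $1$ and the rest vanish, the subscheme $Z$ is $\Proj^{N-1}\times(\Proj^N)^{r-1}$, which is smooth and quasi-projective, so the hypothesis ``$Z\in\mathcal{V}$'' is satisfied and the axiom does impose that $i_*\colon A_*(Z)\to A_*(W)$ be injective. Likewise $Z=\emptyset$ when every $n_i$ is zero, which is trivially fine; but the smooth-hyperplane case is not. There the injectivity is true but has to be proved: one applies the projective bundle formula over $(\Proj^N)^{r-1}$ to both $W$ and $Z$, observes that $Z$ is the zero scheme of the transverse section $X_N$ of $\mathcal{O}(1)$ (so that, using $(A2)$ and $(EH)$, $i_*(1_Z)=c_1(\mathcal{O}(1))$), and then checks via the projection formula that $i_*$ sends the generators of $A_*(Z)$ to $\xi^{(1)},\ldots,\xi^{(N)}$, a subset of a basis for $A_*(W)$ over $A_*((\Proj^N)^{r-1})$. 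Without this verification the construction of the inverse functor on objects is incomplete.
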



\subsubsection{Fundamental classes} \label{sect classes}

The existence of a multiplicative structure in a oriented cohomology theory $A^*$ leads to the notion of the fundamental class of a scheme $X$. If one interprets the multiplication in $A^*(X)$ as an algebraic version of the geometric operation of intersecting two schemes, then the class representing the whole space has to act as a identity element. For this reason one defines the fundamental class of $X$ to be $1_X\in A^*(X)$. Given this definition, the compatibility of fundamental classes with respect to pull-back maps is an immediate consequence of the obvious observation that ring homomorphisms respect the identity element. This in particular implies that one can re-interpret the fundamental classes as pull-backs along the structural morphisms of the identity element in the coefficient ring $A^*(\spec k)$. 

The main advantage of this approach is that it can also be used in the context of oriented Borel-Moore homology theories, where the multiplicative structure is not available. Moreover, the fundamental classes defined in this way coincide, for smooth schemes, with those one obtains through proposition \ref{prop OCT OBM}: to a theory $A_*$ on some admissible subcategory $\mathcal{V}$ one associates a theory $A^*$ on $\SM$ by applying the proposition to the restriction of $A_*$ to $\SM$. Since for $X\in\SM$ the groups $A^*(X)$ and $A_*(X)$ coincide, it is possible to refer to the fundamental class of $X$ in both contexts. Let us now state the precise definitions.

\begin{definition}
Let $A^*$ be an oriented cohomology theory on an admissible subcategory $\mathcal{V}$. For $X\in \mathcal{V}$, we define the fundamental class of $X$, denoted  $[X]_{A^*}\in A^0(X)$, by setting
$$[X]_{A^*}:=\tau_X^*(1)\ ,$$
where $\tau_X$ is the structural morphism of $X$ and $1$ represents the identity element in the coefficient ring $A^*(\spec k)$. These classes are functorial with respect to pull-back morphisms: for every $f:Y\rightarrow X$ in $\mathcal{V}$ one has $f^*[X]_{A^*}=[Y]_{A^*}$.
\end{definition}

\begin{definition}\label{def fund}
Let $A_*$ be an oriented Borel-Moore homology theory on an admissible subcategory $\mathcal{V}$. For an l.c.i. scheme $X\in\mathcal{V}$, we define the fundamental class of $X$, denoted $[X]_{A_*}\in A_*(X)$ as 
$$[X]_{A_*}:=\tau_X^*(1)\ ,$$
where $\tau_X$ is the structural morphism of $X$ and $1$ represents the identity element in the coefficient ring $A_*(\spec k)$. These classes are functorial with respect to pull-back maps associated to l.c.i. morphisms: for every l.c.i. morphism $f:Y\rightarrow X$ in $\mathcal{V}$ with $Y,X\in\LCI$ one has the equality $f^*[X]_{A_*}=[Y]_{A_*}$.
\end{definition}

\begin{remark}
In both cases the compatibility between pull-back maps and fundamental classes is due to the functoriality of pull-back morphisms: $(f\circ g)^*=g^*f^*$. While for an oriented cohomology theory this descends from the fact that $A^*$ is a functor, for an oriented Borel-Moore homology theory the equality is just axiom $(BM1)$. 
\end{remark}

We now present a lemma which illustrates the compatibility between  fundamental classes and push-forward morphisms. 

\begin{lemma} \label{lem transverse}
Let $A_*$ be an oriented Borel-Moore homology theory on $\SCH$.
Let $f:X\rightarrow Y$ be a projective morphism in $\SCH$, with $X\in \LCI$ and let $g:Z\rightarrow Y$ be an l.c.i. morphism in $\SCH$ such that $f$ and $g$ are {\rm Tor}-independent. Then
\begin{enumerate}
\item $W:=Z\times_Y X$ is an l.c.i. scheme;
\item $pr_{2*}([W]_{A_*})=g^*(f_*([X]_{A_*}))\ .$
\end{enumerate}
\end{lemma}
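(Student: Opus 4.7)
The plan is to derive both claims directly from the axioms by exploiting the cartesian square
$$
\xymatrix{
  W\ar[r]^{pr_X} \ar[d]_{pr_2}&X\ar[d]^{f} \\
  Z\ar[r]^{g}& Y     }
$$
together with the structural morphisms of $X$ and $W$ to $\spec k$. Part (1) is a straightforward application of the closure properties of l.c.i.\ morphisms recorded in Remark \ref{rem l.c.i. trans}, and Part (2) is essentially the base-change axiom (BM2) applied to the class $[X]_{A_*}$.

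For Part (1), I would first invoke the second half of Remark \ref{rem l.c.i. trans}: since $g$ is l.c.i.\ and Tor-independent of $f$, the base change $pr_X:W\to X$ is itself an l.c.i.\ morphism. Because $X\in\LCI$, the structural morphism $\tau_X:X\to\spec k$ is also l.c.i., and hence $\tau_W = \tau_X\circ pr_X$ is a composition of l.c.i.\ morphisms, which is again l.c.i.\ by the first half of the same remark. Thus $W\in\LCI$.

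For Part (2), the key observation is that $[W]_{A_*}$ can be expressed as a pull-back of $[X]_{A_*}$. Indeed, by Definition \ref{def fund}, $[X]_{A_*}=\tau_X^*(1)$ and $[W]_{A_*}=\tau_W^*(1)$; since $\tau_W=\tau_X\circ pr_X$, axiom (BM1) gives $\tau_W^*=pr_X^*\circ\tau_X^*$, and therefore $[W]_{A_*}=pr_X^*([X]_{A_*})$. Now axiom (BM2) applied to the cartesian square above, whose hypotheses ($f$ projective, $g$ an l.c.i.\ morphism, $f$ and $g$ Tor-independent) are exactly those of the lemma, yields $g^*\circ f_*=pr_{2*}\circ pr_X^*$. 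Evaluating both sides on $[X]_{A_*}$ produces
$$g^*(f_*([X]_{A_*}))=pr_{2*}(pr_X^*([X]_{A_*}))=pr_{2*}([W]_{A_*}),$$
which is the desired equality.

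There is no real obstacle here: every ingredient has been stated earlier in the excerpt. The only point that requires care is ensuring that the transversality hypotheses needed for (BM2) and for Remark \ref{rem l.c.i. trans} coincide with the Tor-independence assumption of the lemma, which they do since $\mathcal{V}=\SCH$ and $W=Z\times_Y X$ is automatically in $\SCH$.
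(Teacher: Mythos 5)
Your proof is correct and follows exactly the same route as the paper's: part (1) combines Remark \ref{rem l.c.i. trans} with closure of l.c.i.\ morphisms under composition, and part (2) rewrites $[W]_{A_*}$ as $pr_X^*([X]_{A_*})$ and then applies axiom (BM2). The only cosmetic difference is that you spell out the reduction $\tau_W^*=pr_X^*\circ\tau_X^*$ via (BM1), whereas the paper simply invokes the functoriality of fundamental classes under l.c.i.\ pull-backs.
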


\begin{proof}
The proof of (1) essentially follows from remark \ref{rem l.c.i. trans}. First one observes that since $f$ and $g$ are Tor-independent and $g$ is an l.c.i. one has that $pr_1: W\rightarrow X$ is l.c.i.; the statement then follows since $\tau_W=\tau_X \circ pr_1$ and l.c.i. morphisms are closed under composition. 

For (2), as we have already proven that $W\in\LCI$ and that $pr_1$ is an l.c.i. morphism, it suffices to recall the functoriality of fundamental classes with respect to l.c.i. morphisms and axiom $(BM2)$: 
$$pr_{2*}([W]_{A_*})=pr_{2*}(pr_1^*([X]_{A_*}))=g^*f_*([X]_{A_*})\ .\qedhere$$
\end{proof}

Let us now consider more in detail the definition of fundamental classes in the two most important examples of oriented Borel-Moore homology theory: the Chow group $CH_*$ and the Grothendieck group of coherent sheaves $G_0[\beta,\beta^{-1}]$. While our general definition gives us a notion of fundamental class only for l.c.i. schemes, in these two theories it is possible to extend the definition so that it includes all equi-dimensional schemes in $\SCH$. We consider first the case of the Chow group.

\begin{definition}
Let $X\in \SCH$ be an equi-dimensional scheme with irreducible components $X_1,\tred,X_n$. The Chow group fundamental class of $X$ in $CH_d(X)$ is defined as
$$[X]_{CH_*}:=\sum_{i=1}^n m_i[X_i]\ ,$$
where the coefficients $m_i$ are set equal to $l(\mathcal{O}_{X,X_i})$, the length of the local ring $\mathcal{O}_{X,X_i}$ viewed as a module over itself.   
\end{definition}

\begin{remark} \label{rem comp CH}
It is important to point out that this last definition of fundamental class is compatible with l.c.i pull-backs. To show this one first makes use of the functoriality of l.c.i. pull-back maps to reduce to two different cases: smooth morphisms and regular embeddings.  The first case follows immediately from the definition of flat pull-backs in the Chow group (see \cite[Section 1.7]{IntersectionFulton}). For what it concerns regular embeddings one has to work explicitly with the definition of the Gysin morphism. For a proof see \cite[Example 6.2.1]{IntersectionFulton}.
\end{remark}

An immediate consequence of the previous remark is that the definition we just gave for the Chow group extends the general one. It suffices to observe that the two definitions trivially agree on $\spec k$ and recall the compatibility with respect to l.c.i. pull-back morphisms to conclude that for l.c.i. schemes the two notions of fundamental class actually coincide. 


Let us now state the analogue of lemma \ref{lem transverse} in the case of the Chow group: in this context the result can be extended to equi-dimensional schemes. 

\begin{lemma} \label{lem transverse CH}
Let $f:X\rightarrow Y$ and $g:Z\rightarrow Y$ be Tor-independent morphisms in $\SCH$ which are respectively projective and l.c.i.. Suppose furthermore that $X$ is an equi-dimensional scheme, then one has
 $$pr_{2*}([W]_{CH_*})=g^*(f_*([X]_{CH_*}))$$ 
where $W:=Z\times_Y X$.  
\end{lemma}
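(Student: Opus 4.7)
The plan is to mimic the argument of Lemma \ref{lem transverse}, using axiom $(BM2)$ to rewrite the right-hand side as a push-forward of an l.c.i.\ pull-back, and then invoke Remark \ref{rem comp CH} to identify that pull-back with $[W]_{CH_*}$. The novelty compared with Lemma \ref{lem transverse} is that $X$ is only assumed equi-dimensional, so one cannot speak of $[X]_{CH_*}$ via the general definition; instead one must use the extended definition via irreducible components and lengths, and rely on the fact that it is still compatible with l.c.i.\ pull-backs.

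First I would observe, exactly as in part (1) of Lemma \ref{lem transverse}, that since $f$ and $g$ are Tor-independent and $g$ is l.c.i., Remark \ref{rem l.c.i. trans} implies that the projection $pr_1: W \to X$ is an l.c.i.\ morphism. In particular the l.c.i.\ pull-back $pr_1^*$ is defined on $CH_*(X)$, and since $X$ is equi-dimensional and l.c.i.\ morphisms have locally constant relative dimension, $W$ is again equi-dimensional, so the extended fundamental class $[W]_{CH_*}$ makes sense.

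Next, forming the Cartesian square
$$
\xymatrix{
  W\ar[r]^{pr_2} \ar[d]_{pr_1}&Z\ar[d]^{g} \\
  X\ar[r]^{f}& Y     }
$$
and applying axiom $(BM2)$ (which requires only that $f$ be projective and $g$ be l.c.i., both satisfied here, with Tor-independence playing the role of transversality in $\SCH$), I get
$$g^*\bigl(f_*([X]_{CH_*})\bigr) = pr_{2*}\bigl(pr_1^*([X]_{CH_*})\bigr).$$
The proof is then reduced to the identity $pr_1^*([X]_{CH_*}) = [W]_{CH_*}$. This last equality is precisely the compatibility of the extended Chow-group fundamental class with l.c.i.\ pull-backs asserted in Remark \ref{rem comp CH}, applied to the l.c.i.\ morphism $pr_1: W \to X$. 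Substituting this into the displayed equation yields the claim.

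The main obstacle is really contained in the compatibility assertion of Remark \ref{rem comp CH}, which is nontrivial precisely because $X$ need not be l.c.i.: once $pr_1^*$ is decomposed into a smooth pull-back and a Gysin pull-back along a regular embedding, the smooth case reduces to the definition of flat pull-back on cycles, while the regular embedding case is \cite[Example 6.2.1]{IntersectionFulton}. With that input in hand, the lemma above is a purely formal consequence of $(BM2)$.
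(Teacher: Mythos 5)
Your proposal is correct and is exactly what the paper intends: the paper's proof simply says ``Same as for lemma \ref{lem transverse},'' and your argument is the fleshed-out version of that, with the key observation that the equality $pr_1^*([X]_{CH_*}) = [W]_{CH_*}$ now has to be supplied by Remark \ref{rem comp CH} (compatibility of the extended Chow-group fundamental class with l.c.i.\ pull-backs) rather than by the general Definition \ref{def fund}, since $X$ is only assumed equi-dimensional.
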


\begin{proof}
Same as for lemma \ref{lem transverse}.
\end{proof}

Let us now consider the case of the Grothendieck group of coherent sheaves $G_0[\beta,\beta^{-1}]$.

\begin{definition}
Let $X\in\SCH$ be an equi-dimensional scheme. We define the fundamental class of $X$ in $G_0[\beta,\beta^{-1}](X)$ as
$$[X]_{G_0[\beta,\beta^{-1}]}:=[\mathcal{O}_X]\cdot\beta^d\ ,$$
where $d$ is the dimension of $X$.
\end{definition} 

\begin{remark} \label{rem comp G_0}
A direct application of the definition of the pull-back morphisms for $G_0$ yields the equality $f^*[\mathcal{O}_X]=[\mathcal{O}_Y]\in G_0(Y)$ for any morphism $f:X\rightarrow Y$. In particular the equality still holds if we restrict to the case of l.c.i. morphisms and we take into account the correct power of $\beta$, so to adjust to the definition in $G_0[\beta,\beta^{-1}]$. We therefore have that $[X]_{G_0[\beta,\beta^{-1}]}$ is functorial with respect to l.c.i. pull-back maps and that for l.c.i. schemes it coincides with the fundamental class arising from the general definition. 










\end{remark}

We complete our discussion on fundamental classes by stating the analogue of lemma \ref{lem transverse CH} for $G_0[\beta,\beta^{-1}]$.

\begin{lemma}\label{lem transverse G_0}
Let $f:X\rightarrow Y$ and $g:Z\rightarrow Y$ be Tor-independent morphism in $\SCH$ which are respectively projective and l.c.i.. Suppose furthermore that $X$ is an equi-dimensional scheme, then one has
 $$pr_{2*}([W]_{G_0[\beta,\beta^{-1}]})=g^*(f_*([X]_{G_0[\beta,\beta^{-1}]}))$$ 
where $W:=Z\times_Y X$.
\end{lemma}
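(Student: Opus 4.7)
The plan is to follow exactly the same pattern as Lemma \ref{lem transverse} and Lemma \ref{lem transverse CH}, combining Remark \ref{rem l.c.i. trans} (stability of l.c.i.\ morphisms under Tor-independent base change), Remark \ref{rem comp G_0} (compatibility of the extended fundamental class with l.c.i.\ pull-backs), and the base-change axiom $(BM2)$. The only reason this statement is separated from Lemma \ref{lem transverse} is that $X$ is assumed merely equi-dimensional, possibly non l.c.i., so the general definition of fundamental class does not apply and one has to invoke instead the extended notion just introduced for $G_0[\beta,\beta^{-1}]$.

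First I would note that since $g$ is l.c.i.\ and $f,g$ are Tor-independent, Remark \ref{rem l.c.i. trans} implies that $pr_1:W\to X$ is an l.c.i.\ morphism of the same relative dimension as $g$. In particular the l.c.i.\ pull-back
$$pr_1^{*}:G_0[\beta,\beta^{-1}](X)\to G_0[\beta,\beta^{-1}](W)$$
is defined, and since $X$ is equi-dimensional, so is $W$, with $\dim W = \dim X$ plus the relative dimension of $pr_1$.

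Next, I would apply Remark \ref{rem comp G_0}: the equality of structure sheaves $pr_1^{*}[\mathcal{O}_X]=[\mathcal{O}_W]$ in $G_0$, combined with the correct shift of $\beta$-power dictated by the relative dimension of the pull-back, yields
$$pr_1^{*}\bigl([X]_{G_0[\beta,\beta^{-1}]}\bigr)=[W]_{G_0[\beta,\beta^{-1}]}.$$
Finally, axiom $(BM2)$ applied to the Tor-independent pair $(f,g)$ gives the identity $g^{*}f_{*}=pr_{2*}\,pr_1^{*}$, and substituting $[X]_{G_0[\beta,\beta^{-1}]}$ produces
$$g^{*}\bigl(f_{*}([X]_{G_0[\beta,\beta^{-1}]})\bigr)=pr_{2*}\bigl(pr_1^{*}([X]_{G_0[\beta,\beta^{-1}]})\bigr)=pr_{2*}\bigl([W]_{G_0[\beta,\beta^{-1}]}\bigr),$$
which is the desired formula.

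The argument is essentially a verbatim transcription of the proof of Lemma \ref{lem transverse}; the only subtle point, if any, is keeping the grading bookkeeping consistent in $G_0[\beta,\beta^{-1}](Z)$, but both sides automatically land in the same graded piece because the relative dimensions of $pr_1$ and $g$ agree. The substantive input is therefore the already-established extension of the $G_0[\beta,\beta^{-1}]$-fundamental class together with its l.c.i.\ functoriality from l.c.i.\ schemes to all equi-dimensional ones, which is why the hypothesis that $X$ be equi-dimensional (rather than l.c.i.) suffices here.
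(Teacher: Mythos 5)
Your proof is correct and follows exactly the paper's intended route: the paper's proof is simply the one-line remark ``Same as for lemma \ref{lem transverse}'', and your argument reproduces that argument faithfully, substituting Remark \ref{rem comp G_0} for the l.c.i.\ functoriality of fundamental classes since $X$ is only assumed equi-dimensional. The reductions via Remark \ref{rem l.c.i. trans} and axiom $(BM2)$ are identical to the paper's.
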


\begin{proof}
Same as for lemma \ref{lem transverse}.
\end{proof}






\subsubsection{Chern classes and Chern class operators}
Suppose now that $A^*$ is an oriented cohomology theory and that $E\rightarrow X$ is a vector bundle of rank $n$. To define the Chern classes of $E$ one can make use of Grothendieck's method from \cite{ChernGrothendieck}: it is a direct consequence of $(PB)$ that there exist unique elements $\alpha_i\in A^i(X)$, $i\in\{0,\tred,n-1\}$ such that
\begin{align*}
\xi^n=\sum_{i=0}^{n-1}\alpha_i \xi^i\ . 
\end{align*}
Starting from these $\alpha_i$'s one can define elements $c_i(E)\in A^i(X),\ i\in\{0,\tred, n\}$ which, provided one sets $\mathcal{V}=\SM$, enjoy the formal properties expected from Chern classes. To achieve this one sets $c_0(E)=1$ and  $c_i(E)=(-1)^{i+1} \alpha_{n-i}$ for $i\in\{1,\tred, n\}$ so that they satisfy the defining equation 
\begin{align}
\sum_{i=0}^n (-1)^i c_i(E)\xi^{n-i}=0\ .\label{eq def chern}
\end{align}
\vspace{0.2 cm}

\textbf{Notation:}  Let $E\rightarrow X$ be a vector bundle of rank $n$. From the Chern classes of $E$ one defines the \textit{Chern polynomial} by setting $c_t(E)=\sum_{i=0}^n c_i(E) t^i \in A^*(X)[t]$. We will refer to the leading coefficient of this polynomial as the \textit{top Chern class}.   

\begin{proposition}\label{prop Chern}
Let $A^*$ be an oriented cohomology theory on $\SM$. The Chern classes $\{c_i(E)\}_{0\leq i\leq n}$ satisfy the following properties:
\begin{enumerate}
\item For any line bundle $L$ over $X\in \SM$, $c_1(L)$ equals $s^*s_*(1)\in A^1(X)$, where $s:X\rightarrow L$ denotes the zero section and $1\in A^*(X)$ is the multiplicative unit element.

\item For any morphism $f:Y\rightarrow X\in \SM$, and any vector bundle $E$ over $X$, one has for each $i\geq 0$
$$c_i(f^*E)=f^*(c_i(E))\ .$$

\item (Whitney formula) Given the exact sequence of vector bundles
$$0\rightarrow E'\rightarrow E\rightarrow E''\rightarrow 0$$
then one has 
$$c_t(E)=c_t(E')c_t(E'')\ .$$
\end{enumerate}
Moreover Chern classes are characterized by these properties.
\end{proposition}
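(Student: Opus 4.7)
Property (1) falls out of the defining equation (4) by specializing to $n=1$. For a line bundle $L$ on $X$ we have $\Proj(L) \cong X$, and under this identification the canonical quotient $O(1)$ is $L$ itself with zero section the zero section of $L$; equation (4) collapses to $\xi - c_1(L) = 0$, whence $c_1(L) = \xi = s^*s_*(1)$. Property (2) I would handle by base change: pulling $\Proj(E) \to X$ along $f$ produces a cartesian square with top arrow $\tilde f : \Proj(f^*E) \to \Proj(E)$, and the canonical isomorphism $\tilde f^*O_{\Proj(E)}(1) \cong O_{\Proj(f^*E)}(1)$ together with axiom $(A2)$ forces $\tilde f^*\xi_E = \xi_{f^*E}$. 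Pulling the defining equation for $E$ along $\tilde f^*$ then expresses $\xi_{f^*E}^n$ as a combination of lower powers of $\xi_{f^*E}$ with coefficients $f^*c_i(E)$, and the uniqueness clause of $(PB)$ identifies these coefficients with $c_i(f^*E)$.

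For property (3) my plan is the splitting principle. The iterated projective bundle construction yields a smooth tower $p : \flag(E) \to X$ whose pull-back $p^*$ is injective by repeated application of $(PB)$ and on which $p^*E$ acquires a tautological complete flag $0 = E_0 \subset \cdots \subset E_n = p^*E$ with line bundle subquotients $L_i := E_i/E_{i-1}$. Given a short exact sequence $0 \to E' \to E \to E'' \to 0$, I would pull back to $\flag(E') \times_X \flag(E'')$, on which both $E'$ and $E''$ are filtered by line bundles and $E$ inherits a compatible complete flag whose subquotients are the concatenation $L'_1, \ldots, L'_r, L''_1, \ldots, L''_{n-r}$. The Whitney formula then reduces to the identity $c_t(F) = \prod_i \bigl(1 + c_1(L_i) t\bigr)$ for any bundle $F$ equipped with a line bundle filtration.

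This identity I would prove by induction on the rank of $F$. The inductive step compares the defining equation on $\Proj(F)$ with the corresponding equation on $\Proj(F_{n-1})$: the quotient $F \twoheadrightarrow L_n$ produces a section $\sigma : X \to \Proj(F)$ along which $\sigma^*\xi = c_1(L_n)$, while $\Proj(F_{n-1}) \hookrightarrow \Proj(F)$ embeds as a closed subscheme whose complement is an affine bundle over $\sigma(X)$ (analyzed via $(EH)$). Matching the defining polynomial against the factor $\xi - c_1(L_n)$ together with the induction hypothesis applied to $F_{n-1}$ produces the claimed factorization. For the characterization clause, any alternative system of classes satisfying $(1)$, $(2)$, $(3)$ agrees with ours on line bundles by $(1)$, agrees on any line-bundle-filtered bundle by $(3)$, and hence agrees on an arbitrary $E$ after pull-back to $\flag(E)$; the injectivity of $p^*$ then descends the equality to $X$.

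The principal obstacle is precisely the inductive step in the Whitney argument, where one must reconcile the defining polynomial on $\Proj(F)$ with the stratification by $\Proj(F_{n-1})$ and the section $\sigma$; once this geometric decomposition is correctly translated into a factorization of the class $\sum (-1)^i c_i(F) \xi^{n-i}$ in $A^*(\Proj(F))$, the rest of the proposition is essentially a bookkeeping exercise in the projective bundle formula.
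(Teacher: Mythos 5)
Your argument takes a genuinely different route from the paper's: the paper proves this proposition simply by citing \cite[Proposition 4.1.15]{AlgebraicLevine} for the analogous statement about Chern class operators $\tilde{c}_i(E)$ in an oriented Borel-Moore weak homology theory and then invoking the translation $c_i(E)=\tilde c_i(E)(1_X)$, whereas you attempt a self-contained proof directly from the axioms $(A1)$, $(A2)$, $(PB)$, $(EH)$. Your treatments of properties (1) and (2) — specializing the defining equation at $n=1$, and base-changing the projective bundle along $f$ and using $(A2)$ plus the uniqueness in $(PB)$ — are correct and are essentially what the reference does internally.

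The gap is in the Whitney formula step, and it is twofold. First, a convention error: the paper uses the \emph{quotient} convention, where $\Proj(F)$ parametrizes line-bundle quotients of $F$ and $O(1)$ is the universal quotient. Under this convention a subbundle $F_{n-1}\subset F$ does \emph{not} induce a closed embedding $\Proj(F_{n-1})\hookrightarrow\Proj(F)$; what embeds naturally is $\Proj(F/F_1)\hookrightarrow\Proj(F)$, coming from the other end of the filtration (quotients of $F$ that kill $F_1$). The section $\sigma$ associated to the top quotient $L_n=F/F_{n-1}$ is fine, but it is the image of the section, not $\Proj(F_{n-1})$, that sits inside $\Proj(F)$ as the zero locus of the canonical map $q^*F_{n-1}\to O(1)$, and the complement of the closed embedding you would actually have is an affine bundle over $X$, not over $\sigma(X)$. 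Second, and more seriously, even with the correct geometry the decomposition ``closed stratum $\Proj(F/F_1)$, open complement an affine bundle'' cannot be exploited with the axioms you are allowed to use: oriented cohomology theories are \emph{not} equipped with a localization sequence. Axiom $(EH)$ tells you the cohomology of the open complement but gives you no exact triangle relating it to $A^*(\Proj(F))$ and $A^*(\Proj(F/F_1))$. The right-exact localization sequence of theorem \ref{th loc seq} is a specific, hard-won property of $\Omega_*$, not a consequence of the $(PB)/(EH)/(A1)/(A2)$ package. Moreover, the class $\xi-c_1(L_n)$ you want to factor out is not the first Chern class of $(q^*L_n)^\vee\otimes O(1)$ in a general oriented cohomology theory — that Chern class is $F(\xi,\chi(c_1(L_n)))$ — so the Gysin-type identification of $\sigma_*\sigma^*$ with multiplication by $\xi-c_1(L_n)$ also fails as stated. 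This is precisely the nontrivial content that the cited proposition in Levine–Morel supplies via a more elaborate induction with Chern class operators, and it is why the paper chooses to cite it rather than reprove it.
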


\begin{proof}
In \cite[Proposition 4.1.15]{AlgebraicLevine} one can find the proof for the case of Chern class operators $\tilde{c}_i(E)$ in an oriented Borel-Moore weak homology theory. The result then follows because every oriented cohomology theory on $\SM$ defines an oriented Borel-Moore weak homology theory on  $\SM$ and the relationship between $c_i(E)$ and $\tilde{c}_i(E)$ for a vector bundle $E\rightarrow X$ is given by the equality $c_i(E)=\tilde{c}_i(E)(1_X)$. 
\end{proof}
\begin{remark}
For the definition of oriented Borel-Moore weak homology theory see \cite[Definition 4.1.9]{AlgebraicLevine}. The relationship existing between these theories and oriented Borel-Moore homology theories is described by proposition 5.2.6 in \cite{AlgebraicLevine}. There it is shown that every oriented Borel-Moore homology theory on an admissible subcategory $\mathcal{V}$ defines an oriented Borel-Moore weak homology theory. As a consequence, in view of proposition \ref{prop OCT OBM} one is able to associate an oriented Borel-Moore weak homology theory to every oriented cohomology theory on $\SM$.
\end{remark}

Unlike what happens for $CH^*$, in a general oriented cohomology theory it is not always true that for two line bundles $L$ and $M$ over the same base one has 
$$c_1(L\otimes M)=c_1(L)+c_1(M)\ .$$
Instead, the relation existing between the first Chern class of a tensor product of line bundles and the first Chern class of the factors is described by means of a formal group law. More precisely, let us recall a result from \cite[Lemma 1.1.3]{AlgebraicLevine}.

\begin{lemma} \label{lem 1.1.3}
Let $A^*$ be an oriented cohomology theory on $\SM$. Then for any line bundle $L$ on $X\in \SM$ the class $c_1(L)^n$ vanishes for $n$ large enough. Moreover, there is a unique power series
$$F_A(u,v)=\sum_{i,j} a_{i,j}u^i v^j\in A[[u,v]]$$
with $a_{i,j}\in A^{1-i-j}(k)$, such that, for any $X\in \SM$ and any pair of line bundles $L,\ M$ on $X$, we have
$$F_A(c_1(L),c_1(M))=c_1(L\otimes M)\ .$$
In addition, the pair $(A^*(k),F_A)$ is a commutative formal group law of rank one. 
\end{lemma}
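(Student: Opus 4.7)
The plan is to decouple the three things the lemma asserts: nilpotency of $c_1(L)$, existence of the power series $F_A$, and the formal group law axioms. These are not quite independent, so a natural route is to first establish nilpotency for very ample line bundles, then construct $F_A$ universally over products of projective spaces, and finally bootstrap to arbitrary line bundles by formal inversion.

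First I would apply the projective bundle formula to the trivial bundle $\mathcal{O}^{N+1}$ on $\spec k$. The defining relation~(6) combined with the Whitney sum formula (which gives $c_i(\mathcal{O}^{N+1})=0$ for $i\geq1$) yields $\xi^{N+1}=0$ where $\xi=c_1(\mathcal{O}(1))$. If $L$ is very ample on $X\in\SM$, then $L=\phi^*\mathcal{O}(1)$ for some $\phi:X\to\Proj^N$, so functoriality of Chern classes gives $c_1(L)^{N+1}=0$. Two iterations of (PB) next produce an isomorphism
$$A^*(\Proj^m\times\Proj^n)\cong A^*(k)[\xi_1,\xi_2]/(\xi_1^{m+1},\xi_2^{n+1}),$$
with $\xi_i=c_1(p_i^*\mathcal{O}(1))$. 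The class $c_1(p_1^*\mathcal{O}(1)\otimes p_2^*\mathcal{O}(1))$ is therefore represented uniquely by a polynomial $F^{(m,n)}(\xi_1,\xi_2)$ of bidegree $\leq(m,n)$; tracking degrees forces the coefficient of $\xi_1^i\xi_2^j$ to lie in $A^{1-i-j}(k)$. Pulling back along the linear inclusions $\Proj^{m'}\hookrightarrow\Proj^m$ and $\Proj^{n'}\hookrightarrow\Proj^n$ shows that the polynomials $F^{(m,n)}$ are mutually compatible reductions, so the inverse limit defines a well-defined power series $F_A(u,v)\in A^*(k)[[u,v]]$.

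For two very ample line bundles $L,M$ on $X\in\SM$ I would factor the classifying data through a morphism $(\phi_L,\phi_M):X\to\Proj^m\times\Proj^n$. Functoriality of $c_1$ combined with the nilpotency from the first step then yields the identity $c_1(L\otimes M)=F_A(c_1(L),c_1(M))$ as an honest finite sum. To pass to arbitrary $L$, write $L=L_1\otimes L_2^{\vee}$ with $L_i$ very ample. The identity $\mathcal{O}_X=M\otimes M^\vee$ applied to an auxiliary $M$ forces any candidate $c_1(M^\vee)$ to satisfy $F_A(c_1(M),c_1(M^\vee))=0$; since the coefficient of $v$ in $F_A(u,v)$ is $1$, solving this equation recursively produces a unique power series $\chi_{F_A}(u)\in A^*(k)[[u]]$ with $\chi_{F_A}(u)=-u+\cdots$, and substitution of a nilpotent element into $\chi_{F_A}$ (or into $F_A$) produces a nilpotent output. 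Applying this to $M=L_2$ and then combining with $L_1$ through $F_A$ proves both nilpotency of $c_1(L)$ and the tensor-product formula for general $L,M$. Uniqueness of $F_A$ is immediate from the universal example on $\Proj^m\times\Proj^n$. Finally, the axioms (1)--(3) follow by pulling back the tautological identities $L\otimes\mathcal{O}=L$, $L\otimes M=M\otimes L$, $(L\otimes M)\otimes N=L\otimes(M\otimes N)$ to products of projective spaces of arbitrarily large dimension and invoking uniqueness of the polynomial representatives.

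The main obstacle is the apparent circularity between nilpotency and the definition of $F_A(c_1(L),c_1(M))$: evaluating $F_A$ on a class requires that class to be nilpotent, yet for arbitrary $L$ nilpotency of $c_1(L)$ is itself extracted from the formal group structure (via $L=L_1\otimes L_2^{\vee}$). The resolution is the two-stage approach above, treating the very ample case first so that $F_A$ is built before it is needed to bootstrap nilpotency in general. A secondary subtlety is that line bundles on a smooth quasi-projective $X$ may not themselves be globally generated, which is precisely why the factorization $L=L_1\otimes L_2^{\vee}$ and the formal inverse $\chi_{F_A}$ must enter the argument.
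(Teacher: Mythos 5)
The paper itself does not prove this lemma; it simply cites \cite[Lemma 1.1.3]{AlgebraicLevine}, so there is no internal argument to compare against. Judged on its own merits, your proof follows the classical Grothendieck--Quillen template (truncated polynomial description of $A^*(\Proj^m\times\Proj^n)$, construction of $F_A$ in the universal case, pullback to the globally generated case, formal group law axioms from tautological identities), and these steps are sound. The construction of $F_A$ and the verification of the group law axioms on globally generated pairs are correct, and the observation that uniqueness is forced by the universal example is also correct.

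There is, however, a genuine circularity in the last step, where you extend the tensor-product formula from very ample pairs to arbitrary $L$. You write $L=L_1\otimes L_2^\vee$ and argue that the identity $\mathcal{O}_X=M\otimes M^\vee$ ``forces'' $F_A(c_1(M),c_1(M^\vee))=0$, from which you extract $\chi_{F_A}$ and then nilpotency of $c_1(L)$ together with the formula. But the relation $F_A(c_1(M),c_1(M^\vee))=0$ is precisely an instance of the tensor formula applied to one very ample bundle ($M$) and one non--globally-generated bundle ($M^\vee$) --- exactly the case you have not yet proved. The same objection applies when you then ``combine with $L_1$ through $F_A$'': this invokes the formula for $L_1$ (very ample) and $L_2^\vee$ (not). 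In other words, the equation $c_1(M^\vee)=\chi_{F_A}(c_1(M))$ is a \emph{consequence} of the lemma you are proving (this is exactly how the paper's Lemma~\ref{lem dual} derives it from Lemma~\ref{lem 1.1.3}), not an input to it, so appealing to it here begs the question. Your own diagnosis of ``the apparent circularity'' stops one step short: the two-stage structure resolves the circularity between the existence of $F_A$ and nilpotency of $c_1(L)$ for globally generated $L$, but it does not resolve the circularity in extending the tensor formula to the general case.

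Worth noting: nilpotency of $c_1(L)$ for arbitrary $L$ can in fact be proved without $\chi_{F_A}$. Writing $L=\psi^*\bigl(p_1^*\mathcal{O}(1)\otimes p_2^*\mathcal{O}(-1)\bigr)$ for a classifying map $\psi:X\to\Proj^m\times\Proj^n$, one sees that $c_1\bigl(p_1^*\mathcal{O}(1)\otimes p_2^*\mathcal{O}(-1)\bigr)$ has vanishing constant term (restrict to a $k$-point and use $c_1(\mathcal{O})=0$, i.e.\ Lemma~\ref{lem trivial}), hence lies in the nilpotent ideal $(\xi_1,\xi_2)$, so its pullback is nilpotent. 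The real difficulty is the formula itself: on $\Proj^m\times\Proj^n$ one must identify $c_1(\mathcal{O}(1,-1))$ with $F_A(\xi_1,\chi_{F_A}(\xi_2))$, and this requires a genuine additional geometric argument (this is the technical core of the Levine--Morel proof), not just formal inversion. Without that input the final paragraph of your proposal does not close the argument.
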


The fact that every oriented cohomology theory $A^*$ has an associated formal group law $(A^*(\spec k),F_A)$ also gives, by the universal property of the Lazard ring, a homomorphism $\varPhi_A:\Laz\rightarrow A^*(\spec k)$. It can be checked that this is actually a homomorphism of graded rings $\varPhi_A:\Laz^*\rightarrow A^*(\spec k)$.

\begin{example}
For $A^*=CH^*$, as it was implicitly mentioned earlier, the formal group law obtained by applying lemma 1.1.3. is the additive formal group law over $CH^*(\spec k)=\ZZ$. 

For $A^*=K^0[\beta,\beta^{-1}]$ one has $F_{K^0[\beta,\beta^{-1}]}(u,v)=u+v-\beta uv\in K^0[\beta,\beta^{-1}](\spec k)[[u,v]]=\ZZ[\beta,\beta^{-1}][[u,v]]$ and therefore $F_{K^0[\beta,\beta^{-1}]}$ is a multiplicative formal group law. 
\end{example}

Let us now consider the more general case of an oriented Borel-Moore homology theory over an admissible subcategory $\mathcal{V}$. Also in this context it is possible to define Chern classes, not in the form of actual classes but as operators. In view of axiom $(PB)$, for any vector bundle $E\rightarrow X$ of rank $n$ with $X\in \mathcal{V}$ it is possible to define the homomorphisms 
$$\widetilde{c_i}(E):\Omega_*(X)\rightarrow \Omega_{*-i}(X)$$
with $i\in\{0,\tred, n\}$ and $\widetilde{c_0}(E)=1$, as the unique solution of the equation
$$\sum_{i=0}^n (-1)^i\xi^{(n-i)}\widetilde{c_i}(E)=0\ ,$$ 
which represents the analogue of (\ref{eq def chern}). Since for line bundles we already have a notion of first Chern class operator, it is necessary to check that the two definitions actually coincide. This is in fact the case as one can verify by setting $n=0$ in axiom $(PB)$. One last point worth mentioning is associated to the relationship between the Chern classes $c_i(E)$ and the Chern class operators $\widetilde{c_i}(E)$: the link between the two notions, assuming $X$ to be a smooth scheme, is given by the formula 
\begin{align}
c_i(E)=\widetilde{c_i}(E)(1_X)\ . \label{eq chern op}
\end{align}
In view of the Whitney formula, which holds for the operators as well as for the Chern classes, one only has to consider the case of line bundles (see \cite[Proposition 5.2.4]{AlgebraicLevine}).


\subsection{Some computations using Chern classes}

In this subsection we recall some basic facts concerning Chern classes in an oriented cohomology theory $A^*$ on $\SM$. We will denote by $F$ the formal group law associated to $A^*$ and by $\chi$ its inverse. All schemes are assumed to be objects in $\SM$ with $k$ an arbitrary field.

We begin by verifying the vanishing of the first Chern class of a trivial line bundle and by relating, using the formal group law, the first Chern class of a line bundle with the one of its dual.
 
\begin{lemma}\label{lem trivial}
Let $O_X$ be the trivial line bundle over a scheme $X$. Then $c_1(O_X)=0\in A^*(X)$.
\end{lemma}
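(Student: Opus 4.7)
The plan is to exploit the extended homotopy axiom (EH) together with the base-change axiom (A2), using the fact that the trivial line bundle $O_X = X\times\mathbb{A}^1$ admits, in addition to the zero section $s_0\colon X\to O_X$, a disjoint unit section $s_1\colon X\to O_X$ defined by $x\mapsto (x,1)$.

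First I would set up the two sections. Let $p\colon O_X\to X$ be the projection, which is an $O_X$-torsor, so by (EH) the pullback $p^*\colon A^*(X)\to A^*(O_X)$ is an isomorphism. Since $p\circ s_0=p\circ s_1=\mathrm{Id}_X$, we have $s_0^*\circ p^*=s_1^*\circ p^*=\mathrm{Id}_{A^*(X)}$, and because $p^*$ is invertible this forces the identity
\begin{equation*}
s_0^*=s_1^*=(p^*)^{-1}\colon A^*(O_X)\to A^*(X).
\end{equation*}
In particular, applying both sides to $s_{0,*}(1)$,
\begin{equation*}
c_1(O_X)=s_0^*s_{0,*}(1)=s_1^*s_{0,*}(1).
\end{equation*}

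Next I would compute the right-hand side geometrically. Both $s_0$ and $s_1$ are closed immersions of $X$ into $O_X$ cut out, locally, by a single equation ($t$ and $t-1$ respectively, where $t$ is the coordinate on $\mathbb{A}^1$), so they are regular embeddings, hence l.c.i.\ morphisms. Their images are disjoint, so the fiber product $X\times_{O_X}X$ (formed via $s_0$ and $s_1$) is empty. The empty scheme is in $\SM$, and the Tor vanishing is automatic since $s_0$ and $s_1$ meet in the empty scheme, so $s_0$ and $s_1$ are transverse. Applying axiom (A2) to the cartesian square
\begin{equation*}
\xymatrix{
\emptyset\ar[r]\ar[d]&X\ar[d]^{s_0}\\
X\ar[r]^{s_1}&O_X
}
\end{equation*}
yields $s_1^*s_{0,*}=(\text{pushforward from }\emptyset)\circ(\text{pullback to }\emptyset)$, and since $A^*(\emptyset)=0$ by additivity this composition is zero. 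Combined with the previous display this gives $c_1(O_X)=0$, as required.

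No step here is a serious obstacle: the only subtlety is to confirm that $s_0$ and $s_1$ genuinely satisfy the transversality requirement of (A2), which as noted reduces to the observation that their images are disjoint regular embeddings, so every relevant Tor vanishes trivially.
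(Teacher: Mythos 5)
Your proof is correct and follows essentially the same approach as the paper: both use axiom (A2) applied to the transverse cartesian square formed by the zero section and a disjoint (unit) section, whose empty intersection forces $s_1^*s_{0,*}=0$, and both invoke (EH) to convert this vanishing into the vanishing of $c_1(O_X)$. The only cosmetic difference is that you observe $s_0^*=s_1^*=(p^*)^{-1}$ and compute $c_1(O_X)$ directly, whereas the paper first deduces $s_{0,*}(1)=0$ from the injectivity of $s_1^*$ and then applies $s_0^*$.
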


\begin{proof}
Since by property $1$ of proposition \ref{prop Chern} we have $c_1(O_X)=s^* s_*(1)$, it suffices to show that $s_*(1)=0$. To do this, one takes  a non-zero section $s'$ transverse to $s$ and considers the following cartesian square.  
$$
\xymatrix{
  \emptyset\ar[r]^{j} \ar[d]_{j}&X\ar[d]^{s} \\
  X\ar[r]^{s'}& \mathbb{A}_X^1     }
$$
Since $s$ and $s'$ are transverse in $\SM$, by $(A2)$ one has $s'^*s_*=j_*j^*$ and this last composition has to be 0 as it factors through $A^*(\emptyset)=0$. This is enough to complete the proof: it is a consequence of the extended homotopy property that $t^*$ is an isomorphism for every section $t:X\rightarrow E$ of a vector bundle $E$. In particular this applies to~$s'^*$ and we can conclude 
$$s_*(1)=(s'^*)^{-1}(j_*j^*(1))=(s'^*)^{-1}(0)=0\ . \qedhere$$ 
\end{proof}


\begin{lemma} \label{lem dual}
Let $L\rightarrow X$ be a line bundle. Then 
$$c_1(L^\vee)=\chi(c_1(L))\ .$$
\end{lemma}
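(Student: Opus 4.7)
The plan is to reduce everything to the defining equation of the formal inverse $\chi$ by exploiting the fact that $L\otimes L^\vee$ is canonically trivial. First I would observe that $L\otimes L^\vee\cong \mathcal{O}_X$, so Lemma \ref{lem trivial} gives $c_1(L\otimes L^\vee)=0\in A^*(X)$. Then Lemma \ref{lem 1.1.3} applied to the pair $(L,L^\vee)$ yields
$$F(c_1(L),c_1(L^\vee))=c_1(L\otimes L^\vee)=0.$$
Since $F(u,\chi(u))=0$ in $R[[u]]$ by definition of $\chi$, we are reduced to showing that this equation has a unique solution in $c_1(L^\vee)$ inside $A^*(X)$, forcing $c_1(L^\vee)=\chi(c_1(L))$.

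The only point that requires a small extra argument is precisely this uniqueness, since a priori the defining identity $F(u,\chi(u))=0$ lives in $R[[u]]$, not in $A^*(X)$. Here is where nilpotence enters: Lemma \ref{lem 1.1.3} guarantees that $c_1(L)^n$ and $c_1(L^\vee)^n$ vanish for $n$ large, so $\chi(c_1(L))$ is a finite sum and every formal power series in these two classes is a well-defined element of $A^*(X)$. Setting $u_0:=c_1(L)$, $v_0:=c_1(L^\vee)$, and $w_0:=v_0-\chi(u_0)$, all three elements are nilpotent. Since the conditions $F(u,0)=u$ and $F(0,v)=v$ imply $F(u,v)=u+v+\sum_{i,j\ge 1}a_{i,j}u^iv^j$, one has $\partial F/\partial v(0,0)=1$, so we may write the formal identity
$$F(u,v)-F(u,\chi(u))=(v-\chi(u))\cdot H(u,v)$$
for some $H\in R[[u,v]]$ with $H(0,0)=1$. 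Evaluating at $(u_0,v_0)$ this reads $0=F(u_0,v_0)=w_0\cdot H(u_0,v_0)$, and $H(u_0,v_0)$ is a unit in $A^*(X)$ because it is $1$ plus a nilpotent element. Hence $w_0=0$, which is the desired equality.

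The main (and essentially only) obstacle is the transition from a formal identity of power series over $R=A^*(\spec k)$ to an equality of concrete classes in $A^*(X)$; once the nilpotence of first Chern classes of line bundles is invoked, the argument above handles this cleanly, and the rest is just the formal group law bookkeeping already set up in Lemmas \ref{lem trivial} and \ref{lem 1.1.3}.
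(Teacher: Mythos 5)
Your proof is correct, and the first half (reducing to $F(c_1(L),c_1(L^\vee))=0$ via Lemmas \ref{lem trivial} and \ref{lem 1.1.3}) is exactly what the paper does. However, your second half takes a genuinely different route. The paper finishes with the standard ``uniqueness of inverses in a group'' manipulation, transposed to the formal group law: writing $v=c_1(L^\vee)$, $u=c_1(L)$, it computes
$$v=F(v,0)=F(v,F(u,\chi(u)))=F(F(v,u),\chi(u))=F(0,\chi(u))=\chi(u)\ ,$$
using only the axioms $(1)$--$(3)$ of Definition \ref{def FGL} and the fact that power series can be evaluated on nilpotent Chern classes. You instead factor $F(u,v)-F(u,\chi(u))=(v-\chi(u))\cdot H(u,v)$ with $H(0,0)=1$, evaluate at the nilpotent classes, and conclude by observing that $H(c_1(L),c_1(L^\vee))$ is a unit since it is $1$ plus a nilpotent. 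Both arguments are sound and both ultimately rely on nilpotence of $c_1$ (to make sense of the substitution), but the paper's version is purely a consequence of the FGL axioms and avoids introducing the auxiliary series $H$ and the unit argument, while yours makes the ``uniqueness'' mechanism explicit as a factorization-plus-units statement. Either is fine; the paper's is shorter and hews more closely to the group-law formalism already set up.
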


\begin{proof}
First one shows, using lemma \ref{lem 1.1.3} and lemma \ref{lem trivial} that  
$$F(c_1(L),c_1(L^\vee))=c_1(L\otimes L^\vee)=c_1(O_X)=0\ .$$
The needed equality is then recovered by making use of the properties of the formal group law and its inverse:
$$c_1(L^\vee)=F(c_1(L^\vee),0)=F(c_1(L^\vee),F(c_1(L),\chi(c_1(L))))=$$
$$=F(F(c_1(L^\vee),c_1(L)),\chi(c_1(L)))=F(0,\chi(c_1(L)))=\chi(c_1(L))\ .\qedhere$$
\end{proof}

The next lemma introduces the concept of \textit{Chern roots}: if a bundle $E$ is equipped with a full flag, either of quotient bundles or of subbundles, $c_t(E)$ can be factored as a product of Chern polynomials of certain line bundles one constructs using the flag (for the details see definition \ref{def linear factors}). The first Chern classes of these line bundles are called the Chern roots of $E$. More precisely, suppose we are given a full flag of quotient bundles $E\unddot$ and that we denote by $x_1,\tred,x_n$ the Chern roots associated to this flag. One then has $c_t(E)=\prod_{i=1}^n (1+x_i t)$. It follows from this factorization that the $i$-th Chern class of $E$ is the $i$-th elementary symmetric function in the Chern roots. 

\begin{remark}
It is important to point out that the standard definition of Chern classes differs from the one we just gave: in some sense ours is a restriction to bundles equipped with full flags. In the usual setting the Chern roots of a vector bundle $E\rightarrow X$ are defined as the first Chern classes of the line bundles associated to the universal full flag over $\flag(E)$, the full flag bundle of $E$. As a consequence, the Chern roots belong to $A^*(\flag(E))$ and the factorization of the Chern polynomial takes place in $A^*(\flag(E))[t]$. The link between the two different definitions is given by the universal property of $\flag(E)$: a full flag $E\unddot$ of $E$ produces a section $s_{E\unddot}: X\rightarrow \flag(E)$ whose associated pull-back morphism $s^*_{E\unddot}:A^*(\flag(E))\rightarrow A^*(X)$ maps the usual Chern roots to the ones given by our definition.
\end{remark}

\begin{lemma}\label{lem quot bundle}
Let $E\rightarrow X$ be a vector bundle of rank $n$ and let $E\unddot=(E=E_n\srarrow E_{n-1}\srarrow \tred \srarrow E_1)$ be a full flag of quotient bundles. For $i\in\{1,\tred,n\}$ set $x_i=c_1({\rm Ker}(E_i\srarrow E_{i-1}))$. Then the Chern polynomial and the top Chern class of $E$ are given by the following formulas:
$$c_t(E)=\prod_{i=1}^{n}(1+x_i t)\quad,\quad c_n(E)=\prod_{i=1}^n x_i\ .$$
In other words, the $x_i$'s form a set of Chern roots of $E$.
\end{lemma}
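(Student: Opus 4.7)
My approach is a straightforward induction on the rank $n$, using the Whitney sum formula (part $3$ of Proposition \ref{prop Chern}) applied to the filtration step by step.

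For the base case $n=1$, the flag reduces to $E=E_1$ being itself a line bundle, and with the convention $E_0=0$ we have $\mathrm{Ker}(E_1\srarrow E_0)=E_1$, so $x_1=c_1(E)$ and hence $c_t(E)=1+x_1t$ directly from the definition of the Chern polynomial.

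For the inductive step, I would exploit the fact that the flag data provides, at the top level, a short exact sequence of vector bundles
\begin{equation*}
0\longrightarrow K_n\longrightarrow E_n\longrightarrow E_{n-1}\longrightarrow 0,
\end{equation*}
where $K_n:=\mathrm{Ker}(E_n\srarrow E_{n-1})$ is a line bundle (since $E_n$ has rank $n$ and $E_{n-1}$ has rank $n-1$, this being implicit in the notion of a full flag of quotient bundles). By construction $c_1(K_n)=x_n$, so $c_t(K_n)=1+x_n t$. Moreover the truncated flag $E_{n-1}\srarrow E_{n-2}\srarrow\cdots\srarrow E_1$ is a full flag of quotient bundles on $E_{n-1}$, with associated first Chern classes $x_1,\ldots,x_{n-1}$, so the inductive hypothesis gives $c_t(E_{n-1})=\prod_{i=1}^{n-1}(1+x_it)$. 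Applying the Whitney formula to the displayed sequence yields
\begin{equation*}
c_t(E)=c_t(E_n)=c_t(K_n)\cdot c_t(E_{n-1})=(1+x_n t)\prod_{i=1}^{n-1}(1+x_i t)=\prod_{i=1}^n(1+x_it),
\end{equation*}
which is the first claim. The formula for the top Chern class is then obtained by extracting the coefficient of $t^n$ in this product, which is visibly $\prod_{i=1}^n x_i$.

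There is no real obstacle here, since Whitney's formula has already been recorded and everything reduces to a trivial induction; the only minor care needed is to make explicit the convention $E_0=0$ (so that the inductive step makes sense at the bottom) and to note that each successive kernel is automatically a line bundle, which is forced by the rank drop imposed by the fullness of the flag.
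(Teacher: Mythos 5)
Your proof is correct and follows essentially the same route as the paper: induction on the rank, using the short exact sequence $0\to\mathrm{Ker}(E_n\srarrow E_{n-1})\to E_n\to E_{n-1}\to 0$ together with the Whitney formula, and then reading off the top Chern class as the leading coefficient of the Chern polynomial. The only cosmetic difference is that you make the convention $E_0=0$ and the rank-one nature of the kernels explicit, which the paper leaves implicit.
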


\begin{proof}
First of all let us observe that the formula expressing the top Chern class is a direct consequence of the one which involves the Chern polynomial: by definition the top Chern class is the leading coefficient of $c_t(E)$. It is therefore sufficient to prove the first equality.
 
The proof is done by induction on $n$ and the case $n=1$, the basis of the induction, is tautologically true.   
In order to prove the inductive step, let us consider the following short exact sequence of vector bundles: 
$$0\rightarrow {\rm Ker}(E_n\srarrow E_{n-1})\rightarrow E_n\rightarrow E_{n-1}\rightarrow 0\ .$$
We can now finish the proof by applying first the Whitney formula (proposition \ref{prop Chern}) and then the inductive hypothesis. 
$$c_t(E_n)=c_t({\rm Ker}(E_n\srarrow E_{n-1}))c_t(E_{n-1})= (1+x_n t) \prod_{i=1}^{n-1}(1+x_i t)=\prod_{i=1}^{n}(1+x_i t)\ .\qedhere$$

\end{proof}

In the next two lemmas we compute the Chern roots of a dual bundle and of a tensor product of bundles and, as a consequence, their Chern polynomials. 

\begin{lemma} \label{lem Chern dual}
Let $E\rightarrow X$ be a vector bundle of rank $n$ and let $E\unddot=(E_1\subset E_2\subset ... \subset E_n=E)$ be a full flag of subbundles. Set $y_i=c_1(E_i/E_{i-1})$ for $i\in\{1,\tred,n\}$. Then the Chern polynomial and the top Chern class are given by:
$$c_t(E^\vee)=\prod_{i=1}^{n}(1+\chi(y_i) t)\quad, \quad c_n(E^\vee)=\prod_{i=1}^{n}\chi(y_i)\ .$$
\end{lemma}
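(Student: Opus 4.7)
The plan is to reduce to Lemma \ref{lem quot bundle} by dualizing the given flag of subbundles into a flag of quotient bundles of $E^\vee$, and then to use Lemma \ref{lem dual} to identify the Chern roots.

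First I would dualize the filtration $E_1 \subset E_2 \subset \cdots \subset E_n = E$. Since dualization is exact on locally free sheaves, each inclusion $E_{i-1} \hookrightarrow E_i$ yields a surjection $E_i^\vee \twoheadrightarrow E_{i-1}^\vee$, giving a full flag of quotient bundles
$$E^\vee = E_n^\vee \twoheadrightarrow E_{n-1}^\vee \twoheadrightarrow \cdots \twoheadrightarrow E_1^\vee$$
of $E^\vee$. Moreover the short exact sequence $0 \to E_{i-1} \to E_i \to E_i/E_{i-1} \to 0$ dualizes to the short exact sequence
$$0 \to (E_i/E_{i-1})^\vee \to E_i^\vee \to E_{i-1}^\vee \to 0,$$
so the kernel of $E_i^\vee \twoheadrightarrow E_{i-1}^\vee$ is canonically identified with the line bundle $(E_i/E_{i-1})^\vee$.

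Next I would apply Lemma \ref{lem quot bundle} to the bundle $E^\vee$ endowed with the above flag of quotients. The resulting Chern roots are the classes
$$x_i := c_1\bigl(\mathrm{Ker}(E_i^\vee \twoheadrightarrow E_{i-1}^\vee)\bigr) = c_1\bigl((E_i/E_{i-1})^\vee\bigr),$$
and by Lemma \ref{lem dual} each of these equals $\chi(c_1(E_i/E_{i-1})) = \chi(y_i)$. Substituting into the conclusion of Lemma \ref{lem quot bundle} gives
$$c_t(E^\vee) = \prod_{i=1}^n (1 + \chi(y_i)\, t),$$
and reading off the leading coefficient yields $c_n(E^\vee) = \prod_{i=1}^n \chi(y_i)$.

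There is essentially no serious obstacle here: the only non-formal ingredient is the identification of the first Chern class of a dual line bundle via the inverse $\chi$ of the formal group law, which is exactly the content of Lemma \ref{lem dual}. The rest is bookkeeping with the dualized flag, so the argument is a short two-step reduction to the previously established lemmas.
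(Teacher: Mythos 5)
Your proof is correct and follows essentially the same route as the paper: dualize the flag into a full flag of quotient bundles of $E^\vee$, identify the linear factors with $(E_i/E_{i-1})^\vee$, then apply Lemma \ref{lem quot bundle} together with Lemma \ref{lem dual}. You spell out the dualized short exact sequence a bit more explicitly than the paper does, but the argument is the same.
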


\begin{proof}
We begin by observing that dualizing the flag $E\unddot$ returns a full flag of quotient bundles $(E^\vee\srarrow E^\vee_{n-1}\srarrow \tred \srarrow E_1^\vee)$ and that the linear factors ${\rm Ker}(E^\vee_i\srarrow E^\vee_{i-1})$ are isomorphic to  $(E_i/E_{i-1})^\vee$. One can then finish the proof by applying lemma \ref{lem quot bundle} and \ref{lem dual}:
$$c_t(E^\vee)=\prod_{i=1}^{n}(1+c_1((E_i/E_{i-1})^\vee)t)=\prod_{i=1}^{n}(1+\chi(c_1(E_i/E_{i-1}))t)=\prod_{i=1}^{n}(1+\chi(y_i)t)\ . \qedhere$$
\end{proof}

\begin{lemma} \label{lem Chern tensor}
Let $E$ and $F$ be two vector bundles over $X$ of rank $n$ and $m$ respectively. Let $E\unddot=(E=E_n\srarrow E_{n-1}\srarrow \tred \srarrow E_1)$ and   $F\unddot=(F=F_m\srarrow F_{m-1}\srarrow \tred \srarrow F_1)$ be full flags of quotient bundles of $E$ and $F$ respectively. Set $y_j=c_1(E_j/E_{j-1})$ and  $x_i=c_1({\rm Ker}(F_i\srarrow F_{i-1}))$ for $j\in\{1,\tred, n\}$, $i\in\{1,\tred, m\}$. Then the Chern polynomial and the top Chern class of $E\otimes F$ are given by:
$$c_t(E\otimes F)=\prod_{i=1}^m\prod_{j=1}^n (1+F(x_i,y_j)t) \quad, \quad c_{nm}(E\otimes F)=\prod_{i=1}^m\prod_{j=1}^n F(x_i,y_j)\ .$$
\end{lemma}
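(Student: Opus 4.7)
My plan is to prove the Chern polynomial identity first, as the top Chern class formula falls out immediately by reading off the leading coefficient of $c_t(E\otimes F)$ viewed as a polynomial in $t$ of degree $nm$. The strategy reduces the general assertion to the case of a tensor product with a line bundle, via the flag on $F$, and then reduces that case to the formal group law on line bundles via the flag on $E$.

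\textbf{Step 1 (reduction to a line bundle factor).} I would fix $E$ and induct on $m=\mathrm{rank}(F)$. For the base case $m=1$, $F=F_1$ is itself a line bundle with $c_1(F_1)=x_1$, and the claim becomes the ``tensor with a single line bundle'' identity treated in Step 2. For the inductive step, apply the Whitney formula (proposition \ref{prop Chern}) to the short exact sequence
$$0\rightarrow K_i\otimes E\rightarrow F_i\otimes E\rightarrow F_{i-1}\otimes E\rightarrow 0,$$
obtained by tensoring $0\to K_i\to F_i\to F_{i-1}\to 0$ with $E$, where $K_i=\mathrm{Ker}(F_i\srarrow F_{i-1})$. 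This yields
$$c_t(F_m\otimes E)=c_t(K_m\otimes E)\cdot c_t(F_{m-1}\otimes E).$$
By the inductive hypothesis applied to $F_{m-1}$ (whose flag is the truncation) the second factor equals $\prod_{i=1}^{m-1}\prod_{j=1}^n(1+F(x_i,y_j)t)$, so it remains to handle $c_t(K_m\otimes E)$ with $K_m$ a line bundle.

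\textbf{Step 2 (tensor with a line bundle).} For $L$ a line bundle with $c_1(L)=x$ and $E$ carrying the flag $E_n\srarrow\cdots\srarrow E_1$ with Chern roots $y_1,\ldots,y_n$, I claim $c_t(L\otimes E)=\prod_{j=1}^n(1+F(x,y_j)t)$. Induct on $n$. For $n=1$, $E=E_1$ is itself a line bundle and $c_1(L\otimes E_1)=F(c_1(L),c_1(E_1))=F(x,y_1)$ by Lemma \ref{lem 1.1.3}, which is exactly the asserted degree-one polynomial. For the inductive step, tensor the sequence $0\to \mathrm{Ker}(E_n\srarrow E_{n-1})\to E_n\to E_{n-1}\to 0$ with $L$, apply Whitney, and use the $n=1$ case on the line bundle factor $\mathrm{Ker}(E_n\srarrow E_{n-1})\otimes L$ together with the inductive hypothesis on $E_{n-1}\otimes L$.

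\textbf{Step 3 (assembly and top Chern class).} Combining Step 2 into Step 1 gives
$$c_t(E\otimes F)=\prod_{i=1}^m\prod_{j=1}^n(1+F(x_i,y_j)t),$$
as required. Since this product is monic of degree $nm$ in $t$ once expanded, its leading coefficient, which by definition is $c_{nm}(E\otimes F)$, equals $\prod_{i,j} F(x_i,y_j)$.

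\textbf{Expected obstacle.} There is no serious conceptual difficulty beyond bookkeeping; the entire content is the iterated application of Whitney together with Lemma \ref{lem 1.1.3}. The one point requiring mild care is the asymmetric setup of the two flags in the statement: the roots $y_j$ arise from the subquotients $E_j/E_{j-1}$ of the quotient flag on $E$ (equivalently, from the kernels of $E_j\srarrow E_{j-1}$, up to the identifications used in lemma \ref{lem quot bundle}), while the $x_i$ arise from the kernel line bundles $K_i$ of the flag on $F$. Since $F(u,v)=F(v,u)$ by the commutativity of the formal group law, this asymmetry does not affect the final formula, but it must be tracked consistently when invoking the inductive hypothesis in Step 1, where the roles of the two bundles are temporarily exchanged.
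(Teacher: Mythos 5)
Your proof is correct and follows essentially the same strategy as the paper: the paper assembles the induced filtrations into a single full flag of quotient bundles of $E\otimes F$ and then applies Lemma \ref{lem quot bundle} once to read off all the Chern roots, whereas you inline that lemma's content as an explicit double induction via the Whitney formula. The two arguments are substantively the same, differing only in whether Lemma \ref{lem quot bundle} is cited or rederived.
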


\begin{proof}
We begin by constructing a filtration of $E\otimes F$ by means of $E\unddot$ and $F\unddot$. In order to get a filtration one first needs to tensor the linear factors $\text{Ker}(F_i\srarrow F_{i-1})$ with the filtration $E\unddot$. In this way one obtains a filtration for each $E\otimes \text{Ker}(F_i\srarrow F_{i-1})$. These filtrations are then assembled together to produce a full flag of quotient bundles of $E\otimes F$, whose linear factors are of the form $\text{Ker}(E_j\srarrow E_{j-1})\otimes \text{Ker}(F_i\srarrow F_{i-1})$. We are finally able to apply lemma \ref{lem quot bundle} and then finish the proof using lemma 1.1.3:
$$c_t(E\otimes F)=\prod_{i=1}^m\prod_{j=1}^n (1+c_1(\text{Ker}(E_j\srarrow E_{j-1})\otimes \text{Ker}(F_i\srarrow F_{i-1}))t)=\prod_{i=1}^m\prod_{j=1}^n (1+F(x_i,y_j)t)\ .\qedhere$$
\end{proof}

\begin{corollary} \label{cor Chern}
Let $E$ and $F$ be two vector bundles over $X$ respectively of rank $n$ and $m$. Let $E\unddot=(E_1\subset E_2\subset ... \subset E_n=E)$ and   $F\unddot=(F=F_m\srarrow F_{m-1}\srarrow \tred \srarrow F_1)$ be full flags of $E$ and $F$ respectively. Set $y_j=c_1(E_j/E_{j-1})$ and  $x_i=c_1({\rm Ker}(F_i\srarrow F_{i-1}))$ for $j\in\{1,\tred, n\}$, $i\in\{1,\tred, m\}$. Then the Chern polynomial and the top Chern class of $E^\vee\otimes F$ are given by:
$$c_t(E^\vee\otimes F)=\prod_{i=1}^m\prod_{j=1}^n (1+F(x_i,\chi(y_j))t) \quad, \quad c_{nm}(E^\vee\otimes F)=\prod_{i=1}^m\prod_{j=1}^n F(x_i,\chi(y_j))\ .$$
\end{corollary}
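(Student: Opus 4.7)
The plan is to reduce this directly to Lemma \ref{lem Chern tensor} by first constructing from the flag $E\unddot$ of subbundles a full flag of quotient bundles on $E^\vee$ whose Chern roots are precisely $\chi(y_1),\ldots,\chi(y_n)$. Dualizing the given flag $E_1\subset E_2\subset\cdots\subset E_n=E$ produces the full flag of quotient bundles
$$E^\vee\srarrow E_{n-1}^\vee\srarrow\cdots\srarrow E_1^\vee\ ,$$
and the kernel of each successive surjection $E_j^\vee\srarrow E_{j-1}^\vee$ is canonically isomorphic to $(E_j/E_{j-1})^\vee$, exactly as observed in the proof of Lemma \ref{lem Chern dual}. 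By Lemma \ref{lem dual}, the first Chern class of this line bundle equals $\chi(c_1(E_j/E_{j-1}))=\chi(y_j)$.

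Next I would feed this flag of quotient bundles on $E^\vee$, together with the given flag of quotient bundles $F\unddot$ on $F$, into Lemma \ref{lem Chern tensor}. That lemma's hypotheses are met, and it yields
$$c_t(E^\vee\otimes F)=\prod_{i=1}^m\prod_{j=1}^n\bigl(1+F(x_i,\chi(y_j))t\bigr)\ ,$$
where the roles of the two sets of Chern roots are played by the $x_i$'s (from $F\unddot$) and the $\chi(y_j)$'s (from the dualized flag). The formula for the top Chern class $c_{nm}(E^\vee\otimes F)$ then follows by reading off the leading coefficient of $c_t(E^\vee\otimes F)$, exactly as in Lemma \ref{lem quot bundle}.

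There is essentially no obstacle here: the corollary is a mechanical combination of Lemmas \ref{lem dual}, \ref{lem Chern dual}, and \ref{lem Chern tensor}. The only point that deserves a moment of care is verifying that the Whitney filtration argument from Lemma \ref{lem Chern tensor} genuinely applies to $E^\vee$ equipped with the dualized flag, but this is immediate since that lemma is stated for arbitrary vector bundles endowed with a full flag of quotient bundles, which is exactly the structure we have produced on $E^\vee$.
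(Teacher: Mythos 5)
Your argument is correct and matches the paper's proof essentially verbatim: both dualize $E\unddot$ to a full flag of quotient bundles on $E^\vee$ with linear factors $(E_j/E_{j-1})^\vee$ (so Chern roots $\chi(y_j)$ by Lemma \ref{lem dual}, as already noted in Lemma \ref{lem Chern dual}), and then apply Lemma \ref{lem Chern tensor} to $E^\vee\otimes F$. There is nothing to add.
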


\begin{proof}
As it was noticed in the proof of lemma \ref{lem Chern dual}, the full flag of quotient bundles $(E^\vee\srarrow E^\vee_{n-1}\srarrow \tred \srarrow E_1^\vee)$ has linear factors isomorphic to $(E_i/E_{i-1})^\vee$ whose first Chern class is given by $\chi(y_i)$. One can therefore apply lemma \ref{lem Chern tensor} and finish the proof. 
\end{proof}


\subsection{Algebraic cobordism}
In this subsection we recall the definition and main properties of algebraic cobordism. Our goal is to present the material contained in \cite{AlgebraicLevine} that will be necessary for our purposes. In this subsection with the exception of 2.4.1, in which $k$ can be arbitrary, we will assume the base field to have characteristic 0.


\subsubsection{The construction of $\Omega_*$}
The first step for defining algebraic cobordism as an additive functor $\Omega^*:\SM^{{\rm op}}\rightarrow \mathbf{R}^*$, consists of constructing an additive functor $\Omega_*:\SCH'\rightarrow \mathbf{Ab}_*$. Here $\mathbf{Ab}_*$ denotes the category of abelian groups, while $\SCH'$ stands for the subcategory of $\SCH$ which has the same objects but with only projective morphisms. This functor is enriched with extra-structures: pull-backs morphisms for smooth morphisms, first Chern class operators for line bundles and an external product. Our ultimate goal will be to establish $\Omega_*$ as a oriented Borel-Moore homology theory on $\SCH$ and to use proposition \ref{prop OCT OBM} to obtain $\Omega^*$.

We begin the construction by introducing the notion of cobordism cycle. 
 
\begin{definition}
Let $X$ be a $k$-scheme of finite type. A cobordism cycle over $X$ is a family $(f:Y\rightarrow X,L_1,\tred, L_r)$ where $f:Y\rightarrow X$ is a projective morphism with $Y\in\SM$ and integral, while $(L_1,\tred, L_r)$ is a (possibly empty) finite sequence of $r$ line bundles over $Y$. The dimension of $(f:Y\rightarrow X,L_1,\tred, L_r)$ is ${\rm dim}_k(Y)-r$.
\end{definition}

In order to simplify the notation, whenever in a formula the number of line bundles of a cobordism cycle is clear from the context and it is not modified, we will write $\mathbf{L}$ to denote the sequence $(L_1,\tred,L_r)$.

 We now introduce the notion of isomorphism of cobordism cycles and we construct the functor $\mathcal{Z}_*:\SCH'\rightarrow \mathbf{Ab}_*$ which represents the first step towards the definition of $\Omega_*$.

\begin{definition}
An isomorphism $(\phi: Y\rightarrow Y',\sigma, (\psi_1,\tred, \psi_r))$ between the cycles $(Y\rightarrow X,L_1,\tred, L_r)$ and $(Y'\rightarrow X,L'_1,\tred, L'_{r})$ consists of  an isomorphism of $X$-schemes $\phi$, a permutation $\sigma\in S_r$ and isomorphisms of line bundles $\psi_i:L_i\cong \phi^*(L'_{\sigma(i)})$.
\end{definition}

\begin{definition}
Let $\mathcal{Z}(X)$ be the free abelian group generated by the isomorphism classes of cobordism cycles over $X$. This group can be graded by means of the dimension of cobordism cycles, giving rise to the abelian graded group $\mathcal{Z}_*(X)$. We will denote by $[f:Y\rightarrow X,L_1,\tred, L_r]$ the  image of $(f:Y\rightarrow X,L_1,\tred, L_r)$ in $\mathcal{Z}_*(X)$.  
\end{definition}

Suppose $Y\in\SM$ and denote its irreducible components by $Y_\alpha$. For a projective morphism $f:Y\rightarrow X$, we define $[Y\rightarrow X]$ to be the sum of the classes $[f\circ i_\alpha :Y_\alpha\rightarrow X]$, where $i_\alpha$ is the inclusion of $Y_\alpha$ into $Y$. In case $X\in\SM$, it is possible to consider the class $[id_X:X\rightarrow X]$ which we will denote by $1_X$. We will refer to this class as the \textit{fundamental class} of $X$.

The next series of definition describes the pull-back, push-forward and first Chern class homomorphisms for $\mathcal{Z}_*$.    
 
\begin{definition}
Let $g:X\rightarrow X'$ be a projective morphism in $\SCH$.  The push-forward along $g$ is defined as 
\begin{align*}
g_*:\mathcal{Z}_*(X) &\longrightarrow \mathcal{Z}_*(X') \\
[f:Y\rightarrow X,\boldsymbol{L}] &\longmapsto  [g\circ f : Y\rightarrow X',\boldsymbol{L}]
\end{align*}
and it is a map of graded groups.
\end{definition}

\begin{definition}
Let $g:X\rightarrow X'$ be a smooth equidimensional morphism of relative dimension $d$. The pull-back homomorphism along $g$ is defined as 
\begin{align*}
g^*:\mathcal{Z}_*(X')&\longrightarrow \mathcal{Z}_{*+d}(X)\\
[f:Y\rightarrow X,\boldsymbol{L}]&\longmapsto [p_2 :(Y\times_X X')\rightarrow X',p_1^*(\boldsymbol{L})]
\end{align*}
and it is a map of graded groups. Here by $p_1^*(\boldsymbol{L})$ we mean the sequence of line bundles one obtains by pulling back $\boldsymbol{L}$ along the first projection of $Y\times_X X'$.
\end{definition}

\begin{definition}
For $X\in\SM$ and $L$ a line bundle on $X$ let us define the first Chern class homomorphism of $L$ as the graded group homomorphism
\begin{align*}
\tilde{c}_1(L):\mathcal{Z}_*(X)&\longrightarrow \mathcal{Z}_{*-1}(X)\\
[f:Y\rightarrow X,L_1,\tred, L_r]&\longmapsto [f :Y\rightarrow X,L_1,\tred, L_r,f^*(L)]
\end{align*}
\end{definition}

On the functor $\mathcal{Z}_*$ it is also possible to define an external product.

\begin{definition}\label{def ext}
Let us denote by $\alpha$ the cycle $[f:X'\rightarrow X,L_1,\tred, L_r]\in\mathcal{Z}_*(X)$ and by $\beta$ the cycle  $[g:Y'\rightarrow Y, M_1,\tred,M_s]\in\mathcal{Z}_*(Y)$. Let $p_1^*(\boldsymbol{L})$ and $p_2^*(\boldsymbol{M})$ be the two sequences one obtains by pulling back the sequences $\boldsymbol{L}$ and $\boldsymbol{M}$ along the two projections of $X'\times Y'$. Then one sets
\begin{align*}
\times:\mathcal{Z}_*(X)\times\mathcal{Z}_*(Y)&\longrightarrow \mathcal{Z}_*(X\times Y)\\
(\alpha ,\beta)\qquad&\longmapsto [f\times g :X'\times Y'\rightarrow X\times Y,p_1^*(\boldsymbol{L}),p_2^*(\boldsymbol{M})]
\end{align*}
and $\times$ is associative and commutative.  
\end{definition}

It is important to observe that such a product gives $\mathcal{Z}_*(k)$ the structure of a commutative graded ring (the unit being $[id_{\spec k}]\in \mathcal{Z}_0(k)$) and therefore every graded group $\mathcal{Z}_*(X)$ has an $\mathcal{Z}_*(k)$-module structure. 

As a graded group, algebraic cobordism is obtained from $\mathcal{Z}_*$ by successively imposing three families of relations. These relations are such that taking the quotient with respect to them will not affect the extra-structures we have defined on the functor $\mathcal{Z}_*$. For more details see \cite[Section 2.1.5]{AlgebraicLevine}.

 The first family of relations forces every composition of Chern classes homorphisms to vanish once the dimension of the base scheme is exceeded. More precisely one requires algebraic cobordism to satisfy the following axiom:
 
\begin{list}{(Dim).}{}
\item  For any $Y\in \SM$ and any family $(L_1,\tred, L_n)$ of line bundles on $Y$ with $n>{\rm dim_k}(Y)$, one has 
$$\tilde{c}_1(L_1)\circ\trecd \circ\tilde{c}_1(L_n)(1_Y)=0\in\Omega_*(Y)\ .$$
\end{list}

The second family of relations establishes a link between the first Chern class homomorphism associated to a line bundle and the fundamental class of the zero-subscheme of its sections:  
\begin{list}{(Sect).}{}
\item  For any $Y\in \SM$, any line bundle $L$  on $Y$ and any section $s$ of $L$ which is transverse to the zero-section of $L$, one has
$$\tilde{c}_1(L)(1_Y)=i_*(1_Z)\ ,$$
where $i:Z\rightarrow Y$ is the closed immersion of the zero-subscheme of s.
\end{list}

The last family of relations endows $\Omega_*(\spec k)$ with a formal group law by forcing to hold the analogue of the equality in lemma \ref{lem 1.1.3}.

\begin{list}{(FGL).}{}
\item  Suppose given a fixed graded ring homomorphism $\varPhi: \Laz_*\rightarrow \Omega_*(k)$, denote by $F\in\Omega_*(k)[[u,v]]$ the image of the universal formal group law $F_\Laz\in\Laz_*[[u,v]]$ via $\varPhi$. Then for any $Y\in\SM$ and any pair $(L,M)$ of line bundles on $Y$ one has 
$$F(\tilde{c}_1(L),\tilde{c}_1(M))(1_Y)=\tilde{c}_1(L\otimes M)(1_Y)\in \Omega_*(Y)\ .$$
\end{list}

\begin{remark}It is worth noticing that the order in which this relations are imposed matters: in order for the statement of (FGL) to make sense one uses (Dim) to ensure that $F(\tilde{c}_1(L),\tilde{c}_1(M))$ is a well defined element in $\Omega_*(k)$. 
\end{remark}

Before we start imposing the relations on $\mathcal{Z}_*$ it can be helpful to say a few words about how this procedure works in general. We will use $\mathcal{Z}_*$ to examplify the procedure but the same observations will of course hold for any other functor endowed with the same structure, as the ones one builds as intermediate stages in the construction of $\Omega_*$. Suppose we are given for each $X\in\SCH$ a set of homogeneous elements $\mathcal{R}_*(X)\subset \mathcal{Z}_*(X)$. In order to ensure the compatibility of the quotient with the pull-back, push-forward and first Chern class homomorphisms, one has to define a subgroup $\langle \mathcal{R}_*\rangle (X)$ generated not just by  $\mathcal{R}_*(X)$ but by all elements of the form 
$$f_*\circ \tilde{c}_1(L_1)\circ\tred\circ\tilde{c}_1(L_r)\circ g^*(\rho)$$
with $f:Y\rightarrow X$ in $\SCH'$, $(L_1,\tred,L_r)$ a sequence of line bundles over Y, $g:Y\rightarrow Z$ smooth and equi-dimensional and $\rho \in \mathcal{R}_*(Z)$. In this way one ensures that the set of generators of the subgroup is closed under pull-backs, push-forwards and the action of first Chern classes. In this way one ensures that the quotient is still endowed with these extra-structures. One last word should be said about the external product. For the quotient to be endowed with an external product compatible with the projection map, one requires the sets $\mathcal{R}_*(Z)$ to satisfy the following condition: given elements $\rho\in\mathcal{Z}_*(X)$ and $\sigma\in\mathcal{Z}_*(Y)$ one has
\begin{align}\label{cond product}
\big( \rho\in\mathcal{R}_*(X)\vee\sigma\in\mathcal{R}_*(T) \big)\Rightarrow  \rho\times\sigma \in \mathcal{R}_*(X\times Y)\ .
\end{align}
Even though strictly speaking the expression $[f:Y\rightarrow X,L_1,\tred,L_r]$ represents an element in $\mathcal{Z}_*(X)$, we will abuse notation and we will also use it to denote its image in $\mathcal{Z}_*(X)/\langle\mathcal{R}_*(X)\rangle$ and in the successive quotients as well. In particular it will also denote an element in $\Omega_*(X)$.

Let us now see in detail how one imposes the relations (Dim), (Sect) and (FGL). For what concerns (Dim) one defines a subset $\mathcal{R}_*^{Dim}(X)\subset \mathcal{Z}_*(X)$ for every irreducible $X\in \SM$: it consists of all elements of the form 
$$[Y\rightarrow X,L_1,\tred,L_r]\ ,$$
 where ${\rm dim}_k Y<r$. The subgroup $\langle\mathcal{R}_*^{Dim}\rangle (X)$ is then explicitly described by the following result (see \cite[Lemma 2.4.2]{AlgebraicLevine}).

\begin{lemma}
Let $X$ be a finite type $k$-scheme. Then $\langle \mathcal{R}_*^{Dim}\rangle (X)$ is the subgroup of $\mathcal{Z}_*(X)$ generated by standard cobordism cycles of the form:
$$[Y\rightarrow X, \pi^*(L_1),\tred,\pi^*(L_r),M_1,\tred,M_s]\ ,$$
where $\pi:Y\rightarrow Z$ is a smooth quasi-projective equi-dimensional morphism, $Z$ is a smooth quasi-projective irreducible $k$-scheme, $(L_1,\tred,L_r)$ are line bundles on $Z$ and $r> {\rm dim}_k (Z)$.    
\end{lemma}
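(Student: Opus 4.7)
The claim amounts to showing two inclusions, each of which is obtained by directly unravelling the definitions of $g^*$, $\tilde{c}_1$ and $f_*$ given above.

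For the inclusion of the group generated by the standard cobordism cycles into $\langle\mathcal{R}_*^{Dim}\rangle(X)$, I would start with a cycle $[f\colon Y\to X,\pi^*(L_1),\tred,\pi^*(L_r),M_1,\tred,M_s]$ satisfying the stated hypotheses and take as base element $\rho:=[\mathrm{id}_Z\colon Z\to Z,L_1,\tred,L_r]\in\mathcal{Z}_*(Z)$. This $\rho$ lies in $\mathcal{R}_*^{Dim}(Z)$ because $Z$ is smooth and irreducible (hence integral) and $r>\dim_k Z$. Pulling back along the smooth morphism $\pi$ gives $\pi^*(\rho)=[\mathrm{id}_Y\colon Y\to Y,\pi^*(L_1),\tred,\pi^*(L_r)]$ via the canonical identification $Z\times_Z Y\cong Y$; applying the Chern class operators $\tilde{c}_1(M_j)$ appends the $M_j$'s to the list, and the final push-forward along $f$ recovers the starting cycle, which is therefore a generator of $\langle\mathcal{R}_*^{Dim}\rangle(X)$.

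For the reverse inclusion I would take a general generator of $\langle\mathcal{R}_*^{Dim}\rangle(X)$, which by construction is of the form $f_*\circ\tilde{c}_1(L'_1)\circ\tred\circ\tilde{c}_1(L'_r)\circ g^*(\rho)$ with $\rho=[h\colon W\to Z,N_1,\tred,N_t]\in\mathcal{R}_*^{Dim}(Z)$ (so $W$ is smooth integral and $t>\dim_k W$), $g\colon Y\to Z$ smooth equi-dimensional, and $f\colon Y\to X$ projective. Unpacking the pull-back gives $g^*(\rho)=[p_2\colon W'\to Y,p_1^*(N_1),\tred,p_1^*(N_t)]$ with $W':=W\times_Z Y$ and $p_1,p_2$ the two projections. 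The key point is that $W'$ is smooth (being the base change of the smooth morphism $g$ over the smooth base $W$) but possibly disconnected; decomposing $W'=\coprod_\alpha W'_\alpha$ into its connected, hence smooth integral, components and then applying the Chern class operators and the push-forward term by term produces $\sum_\alpha[f\circ p_2|_{W'_\alpha}\colon W'_\alpha\to X,(p_1|_{W'_\alpha})^*(N_1),\tred,(p_1|_{W'_\alpha})^*(N_t),(p_2|_{W'_\alpha})^*(L'_1),\tred,(p_2|_{W'_\alpha})^*(L'_r)]$. Each summand is a standard cobordism cycle of the required form, with $Z$ replaced by $W$, the role of $\pi$ played by $p_1|_{W'_\alpha}\colon W'_\alpha\to W$, the $L_i$'s identified with the $N_i$'s, and the $M_j$'s identified with the $(p_2|_{W'_\alpha})^*(L'_j)$'s; the hypothesis $t>\dim_k W$ provides precisely the required strict dimension inequality.

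The only genuine obstacle is the failure of $W\times_Z Y$ to be integral in general, which forces the decomposition into connected components before one can recognise the generator as a sum of bona fide standard cobordism cycles. The remaining verifications are routine: $p_1|_{W'_\alpha}\colon W'_\alpha\to W$ is smooth and quasi-projective because both properties are preserved under base change and under restriction to open-closed subschemes, and it is equi-dimensional because a smooth morphism has locally constant relative dimension and $W'_\alpha$ is connected.
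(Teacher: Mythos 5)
Your argument is correct, and it is the natural proof of this statement: the paper itself gives no proof of this lemma but refers directly to \cite[Lemma 2.4.2]{AlgebraicLevine}, and your computation --- unwinding the definitions of $g^*$, $\tilde{c}_1$, and $f_*$, applied to a generator $f_*\circ\tilde{c}_1(L'_1)\circ\cdots\circ\tilde{c}_1(L'_r)\circ g^*(\rho)$, and then decomposing the smooth fiber product $W\times_Z Y$ into its connected (hence irreducible) components so that each piece is a bona fide standard cobordism cycle of the displayed shape --- is exactly what the cited source does. The two points you flag as needing care (that $W\times_Z Y$ may be reducible, forcing the decomposition, and that the projection restricted to each connected component is equi-dimensional because a smooth morphism has locally constant relative dimension) are precisely the non-trivial observations, and you handle them correctly.
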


It is now evident from the construction that if we define  $\mathcal{\underline{Z}}_*(X):= \mathcal{Z}_*(X)/\langle \mathcal{R}_*^{Dim}\rangle (X)$, then the functor  
$\underline{\mathcal{Z}}_*:\SCH'\rightarrow \mathbf{Ab}_*$ satisfies the axiom (Dim). At this point one applies the same procedure to $\underline{\mathcal{Z}}_*$  so to make (Sect) hold. In this case for every irreducible $X\in\SM$ one defines $\mathcal{R}_*(X)$ as the subset consisting of all elements of the form 
$$\tilde{c}_1(L)-[Z\rightarrow X]\ ,$$
where $L$ is a line bundle over $X$, $s:X\rightarrow L$ is a section transverse to the zero section and $Z\rightarrow Y$ is the zero subscheme of $s$. Again one can give an explicit description of the generators of $\langle \mathcal{R}_*^{Sect}\rangle (X)$ (see \cite[Lemma 2.4.7]{AlgebraicLevine}).

\begin{lemma}
Let $X$ be a finite type $k$-scheme. Then $\langle \mathcal{R}_*^{Sect}(X) \rangle$ is the subgroup of $\mathcal{\underline{Z}}_*(X)$ generated by elements of the form;
$$[Y\rightarrow X,L_1,\tred,L_r]-[Z\rightarrow X, i^*(L_1), \tred, i^*(L_{r-1})]$$
with $r>0$ and $i:Z\rightarrow Y$ the closed immersion of the subscheme defined by the vanishing of a transverse section $s:Y\rightarrow L_r$. 
\end{lemma}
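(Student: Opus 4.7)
The goal is to establish the equality of two subgroups of $\underline{\mathcal{Z}}_*(X)$. By the general construction, $\langle \mathcal{R}_*^{Sect}\rangle(X)$ is generated by elements of the form
$$f_* \circ \tilde{c}_1(L_1) \circ \cdots \circ \tilde{c}_1(L_r) \circ g^*(\rho)$$
with $f : Y\to X$ projective, $g:Y\to W$ smooth equi-dimensional, and $\rho = \tilde{c}_1(M)(1_W) - j_*(1_V) \in \mathcal{R}_*^{Sect}(W)$, where $j:V\hookrightarrow W$ is the zero subscheme of a transverse section $s$ of the line bundle $M$. I will prove the two containments separately.

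For ($\supseteq$), I will show that each specified generator lies in $\langle \mathcal{R}_*^{Sect}\rangle(X)$. Given $[Y\to X,L_1,\ldots,L_r]$ and $i:Z\hookrightarrow Y$ the zero locus of a transverse section of $L_r$, I would rewrite
$$[Y\to X,L_1,\ldots,L_r] - [Z\to X, i^*L_1,\ldots, i^*L_{r-1}] = f_*\,\tilde{c}_1(L_1)\cdots\tilde{c}_1(L_{r-1})\bigl(\tilde{c}_1(L_r)(1_Y) - i_*(1_Z)\bigr),$$
using the tautological identity $\tilde{c}_1(L_j)\circ i_* = i_*\circ \tilde{c}_1(i^*L_j)$ at the level of cobordism cycles to transform $f_*\tilde{c}_1(L_1)\cdots \tilde{c}_1(L_{r-1})i_*(1_Z)$ into $[Z\to X, i^*L_1,\ldots,i^*L_{r-1}]$. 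The parenthesized factor is manifestly an element of $\mathcal{R}_*^{Sect}(Y)$, so choosing $g = \mathrm{id}_Y$ exhibits the whole expression as a generator of $\langle \mathcal{R}_*^{Sect}\rangle(X)$.

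For ($\subseteq$), the key is that $g^*(\rho)$ itself already lies in $\mathcal{R}_*^{Sect}(Y)$. Compatibility of smooth pull-back with Chern class operators gives $g^*\tilde{c}_1(M)(1_W) = \tilde{c}_1(g^*M)(1_Y)$, while the base-change formula for pushforward along the closed immersion $j$ built into $\mathcal{Z}_*$ yields $g^*j_*(1_V) = j'_*(1_{V'})$, where $V' := V\times_W Y \hookrightarrow Y$. Since $g$ is smooth, the pulled-back section $g^*s$ of $g^*M$ remains transverse to the zero section, so $V'$ is genuinely the zero scheme of a transverse section; hence $g^*(\rho) = \tilde{c}_1(g^*M)(1_Y) - j'_*(1_{V'}) \in \mathcal{R}_*^{Sect}(Y)$. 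Substituting this back and applying the same tautological identity as above lets me rewrite the generic generator as
$$[Y\to X, L_1,\ldots,L_r, g^*M] - [V'\to X, i^*L_1,\ldots,i^*L_r],$$
which is exactly one of the specified elements.

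The principal obstacle is the reduction in the second containment: making explicit that smooth pull-back carries a section-relation to another section-relation. This requires simultaneously invoking the commutation of $g^*$ with first Chern class operators, the base-change isomorphism for the push-forward along the closed immersion $j$, and the preservation of transversality of sections under smooth pull-back. Once these compatibilities (all consequences of the construction of $\mathcal{Z}_*$ and standard smoothness arguments) are in place, both containments follow by formal rewriting.
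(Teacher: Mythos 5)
Your argument is correct. The paper does not reprove this lemma but defers to Lemma 2.4.7 of Levine and Morel's book, and your two containments --- the forward one via the commutation $\tilde{c}_1(L)\circ i_* = i_*\circ\tilde{c}_1(i^*L)$, the converse via smooth base change of the push-forward $j_*$ together with preservation of transversality under smooth pull-back --- reproduce the essential steps of that reference's argument.
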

The functor $\underline{\Omega}_*:\SCH'\rightarrow \mathbf{Ab}_*$ obtained by setting $\underline{\Omega}_*(X)=\underline{\mathcal{Z}}_*(X)/\langle\mathcal{R}_*^{Sect}(X) \rangle$ is called algebraic pre-cobordism and it satisfies both (Dim) and (Sect). 

In order to complete the construction of algebraic cobordism by enforcing (FGL), one needs to have a ring homomorphism $\varPhi$ from $\Laz_*$ to the coefficient ring. For this reason one replaces $\underline{\Omega}_*$ with $\Laz_*\otimes_\ZZ \underline{\Omega}_*$ and it can be checked that this substitution preserves the validity of both (Dim) and (Sect).  Then  for $X\in\SM$ irreducible, the elements of $\mathcal{R}_*^{FGL}(X)$ are given by
$$F(\tilde{c}_1(L),\tilde{c}_1(M))(1_X)-\tilde{c}_1(L\otimes M)(1_X)$$
where $L$ and $M$ are line bundles over $X$. In view of (Dim), $F(\tilde{c}_1(L),\tilde{c}_1(M))$ is simply a polynomial in $\tilde{c}_1(L)$ and $\tilde{c}_1(M)$ and it can therefore be viewed as an endomorphism of $\Laz_*\otimes_\ZZ \underline{\Omega}_*(X)$. It is a direct consequence of the grading of $\Laz_*$ that this endomorphism  decreases the degree by 1 and this last fact implies that all the elements of $\mathcal{R}_*^{FGL}(X)$ are homogenenous: the two summands of each element have both degree  $\rm{deg}(1_X)-1$.

Unlike what was happening for the other two families, in this case one cannot use directly $\mathcal{R}_*^{FGL}$: one first has to force condition (\ref{cond product}) to hold. For this reason one replaces $\mathcal{R}_*^{FGL}$ with $\Laz_* \mathcal{R}_*^{FGL}$. For a given $X$, $\Laz_* \mathcal{R}_*^{FGL}(X)$ is the subset of $\Laz_*\otimes_\ZZ \underline{\Omega}_*(X)$ whose elements are of the form $a\otimes\rho$ with $a\in\Laz_*$ and $\rho\in\underline{\Omega}_*(X)$. Exactly as for the previous cases, it is possible to give an explicit description of the generators of $\langle\Laz_*\mathcal{R}^{FGL}\rangle(X)$ (see \cite[remark 2.4.11]{AlgebraicLevine}).

\begin{lemma}
Let $X$ be a finite type $k$-scheme. Then $\langle \Laz_*\mathcal{R}_*^{FGL}\rangle(X)$ as an $\Laz_*$-submodule of $\Laz_*\otimes_\ZZ \underline{\Omega}_*(X)$ is generated by elements of the form 
$$f_*\big(\tilde{c}_1(L_1)\tred\circ\tilde{c}_1(L_n)(\rho)\big)\ ,$$
where $f:Y\rightarrow X$ is in $\SCH'$, $L_1,\tred, L_n, L$ and $M$ are line bundles on $Y\in\SM$ and $\rho$ belongs to $\mathcal{R}_*^{FGL}$.  
\end{lemma}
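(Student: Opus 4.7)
The plan is to combine the general description of $\langle\mathcal{R}_*\rangle(X)$ recalled at the beginning of this subsection with the observation that smooth pull-back preserves the shape of the generators of $\mathcal{R}_*^{FGL}$. The inclusion $\supseteq$ in the statement is immediate from the general description by taking the smooth morphism to be $\mathrm{id}_Y$, so the substance of the lemma lies in the reverse inclusion.

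First I would invoke the general recipe: every element of the $\Laz_*$-submodule $\langle\Laz_*\mathcal{R}_*^{FGL}\rangle(X)$ is an $\Laz_*$-linear combination of expressions
$$f_* \bigl( \tilde{c}_1(N_1) \circ \cdots \circ \tilde{c}_1(N_n) \circ g^*(\rho') \bigr),$$
where $f: Y \to X$ is projective, $g: Y \to Z$ is smooth and equi-dimensional, $N_1,\ldots,N_n$ are line bundles on $Y$, and $\rho' \in \mathcal{R}_*^{FGL}(Z)$. Since the $\Laz_*$-action, the smooth pull-back $g^*$, the first Chern class operators and the projective push-forward $f_*$ are all $\Laz_*$-linear on $\Laz_* \otimes_\ZZ \underline{\Omega}_*$, any $\Laz_*$-coefficient can be pulled outside the expression without altering its geometric content.

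The key point is then the compatibility of smooth pull-back with fundamental classes and Chern-class operators, which is built into the construction of $\underline{\Omega}_*$. The naturality identity $g^*(\tilde{c}_1(N)(1_Z)) = \tilde{c}_1(g^*N)(1_Y)$ holds already on $\mathcal{Z}_*$ for any line bundle $N$ on $Z$ and descends to $\underline{\Omega}_*$; iterating it gives
$$g^* \bigl( P(\tilde{c}_1(L'), \tilde{c}_1(M'))(1_Z) \bigr) = P(\tilde{c}_1(g^*L'), \tilde{c}_1(g^*M'))(1_Y)$$
for any polynomial $P$ with coefficients in $\Laz_*$. Applying this to a defining generator
$$\rho' = F(\tilde{c}_1(L'), \tilde{c}_1(M'))(1_Z) - \tilde{c}_1(L' \otimes M')(1_Z)$$
of $\mathcal{R}_*^{FGL}(Z)$, together with the canonical isomorphism $g^*(L' \otimes M') \cong g^*L' \otimes g^*M'$, shows that $g^*(\rho')$ is itself a generator of $\mathcal{R}_*^{FGL}(Y)$, namely the one associated to the line bundles $L := g^*L'$ and $M := g^*M'$. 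Substituting $g^*(\rho')$ back into the standard generator displays the whole expression in the form claimed in the lemma.

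The main potential pitfall is keeping track of the precise sense in which pull-back and Chern-class operators live on the intermediate quotient $\underline{\Omega}_*$, but this is exactly the content of the preceding construction: the quotients by $\mathcal{R}^{Dim}$ and $\mathcal{R}^{Sect}$ were arranged so that all these operations descend from $\mathcal{Z}_*$ and remain compatible with the naturality identities used above.
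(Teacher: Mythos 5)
Your proof is correct and supplies what the paper leaves unproven: the paper simply cites Levine and Morel's Remark 2.4.11 for this lemma, and your argument is precisely the mechanism that reference relies on. The decisive point — that the smooth pull-back $g^*(\rho')$ of an FGL generator on $Z$ is again an FGL generator on $Y$ (via the naturality of $\tilde{c}_1$ under smooth pull-back together with $g^*(L'\otimes M')\cong g^*L'\otimes g^*M'$), so that the intermediate smooth morphism in the general description of $\langle\mathcal{R}_*\rangle(X)$ can be absorbed and $\Laz_*$-coefficients can be pulled out by linearity — is exactly why the $(FGL)$ (and $(Sect)$, but not $(Dim)$) case admits this streamlined generating set.
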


We are now finally able to give the definition of algebraic cobordism.

\begin{definition}
Algebraic cobordism $\Omega_*:\SCH'\rightarrow \mathbf{Ab}_*$ is defined as the additive functor arising from the quotient of $\Laz_*\otimes_\ZZ \underline{\Omega}_*$ with respect to  $\Laz_*\mathcal{R}_*^{FGL}$, 
$$\Omega_*:=\Laz_*\otimes_\ZZ\underline{\Omega}_*/\langle\Laz_* \mathcal{R}_*^{FGL}\rangle\ .$$  
\end{definition}

As a consequence of the construction one has that $\Omega_*$ is endowed with pull-back morphisms $f^*$ for smooth morphisms, first Chern class operators $\tilde{c}_1$ for line bundles, an external product $\times$ and a graded ring homomorphism $\varPhi:\Laz_*\rightarrow\Omega_*(k)$ giving rise to a formal group law $F$. It is worth underlying that the interplay of the external product and of $\varPhi$ gives to all graded groups $\Omega_*(X)$ an $\Laz_*$-module structure. Moreover, this structure is compatible with the other operations as they all happen to be $\Laz_*$-linear. As it was mentioned earlier, we will abuse notation and interpret the cobordism cycles $[Y\rightarrow X,L_1,\tred,L_r]$ as elements of $\Omega_*(X)$.


\subsubsection{The projective bundle formula and the extended homotopy  property}

Before we proceed further with the construction of $\Omega_*$, let us introduce an important technical property enjoyed by algebraic cobordism: the right-exact localization sequence (see \cite[Section 3.2 and theorem 3.2.7]{AlgebraicLevine}). In this subsection, as well as in the remainder of the section, we will assume that the base field $k$ has characteristic 0. 
 
\begin{theorem}\label{th loc seq}
 Let $X$ be a finite type $k-$scheme, $i:Z\rightarrow X$ a closed subscheme and $j:U\rightarrow X$ the open complement. Then the sequence 
$$\Omega_*(Z)\stackrel{i_*}\longrightarrow \Omega_*(X)\stackrel{j^*}\longrightarrow \Omega_*(U)\longrightarrow 0 \ ,$$
is exact.
\end{theorem}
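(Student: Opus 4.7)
The plan is to verify the three assertions of right-exactness: that $j^{*}\circ i_{*}=0$, that $j^{*}$ is surjective, and that $\ker j^{*}\subseteq \operatorname{im} i_{*}$. The first is formal, the second geometric, and the third carries the main weight.

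For $j^{*}\circ i_{*}=0$, I would argue directly from the definitions of push-forward and smooth pull-back at the level of cobordism cycles. A generator $[f:Y\to Z,\mathbf{L}]$ of $\Omega_{*}(Z)$ maps under $i_{*}$ to $[i\circ f:Y\to X,\mathbf{L}]$, and under $j^{*}$ to $[Y\times_{X}U\to U,\,p_{1}^{*}\mathbf{L}]$. Since $i\circ f$ factors through $Z=X\setminus U$, the fiber product $Y\times_{X}U$ is empty, so the class vanishes.

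For surjectivity of $j^{*}$, I would start from a cobordism cycle $[f:Y\to U,L_{1},\dots,L_{r}]$ on $U$. Using that $f$ is projective, embed $Y$ as a locally closed subscheme of $\Proj^{N}_{X}$ for some $N$, form the scheme-theoretic closure $\overline{Y}$, and apply Hironaka's resolution of singularities, which is available since $\mathrm{char}\,k=0$, to obtain a smooth projective $X$-scheme $\widetilde{Y}$ together with a birational $X$-morphism $\widetilde{Y}\to\overline{Y}$ that is an isomorphism over $Y$. The $L_{i}$ extend to line bundles $\widetilde{L}_{i}$ on $\widetilde{Y}$, because $Y$ sits as a dense open in the smooth variety $\widetilde{Y}$ and Weil divisors extend by closure (coinciding with Cartier divisors by smoothness). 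The cycle $[\widetilde{Y}\to X,\widetilde{L}_{1},\dots,\widetilde{L}_{r}]$ then restricts under $j^{*}$ to the original cycle.

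The hard part, and the main obstacle, is exactness in the middle. The strategy is to trace through the successive quotients defining $\Omega_{*}$: prove the analogue first at the level of $\mathcal{Z}_{*}$ (or the intermediate quotient $\underline{\mathcal{Z}}_{*}$), and then verify that the subgroups $\langle\mathcal{R}_{*}^{Dim}\rangle$, $\langle\mathcal{R}_{*}^{Sect}\rangle$ and $\langle\Laz_{*}\mathcal{R}_{*}^{FGL}\rangle$ restrict compatibly. Concretely, given $\alpha\in\Omega_{*}(X)$ with $j^{*}\alpha=0$, lift it to $\widehat{\alpha}\in\Laz_{*}\otimes_{\ZZ}\underline{\mathcal{Z}}_{*}(X)$; then $j^{*}\widehat{\alpha}$ lies in the relation subgroup on $U$ and is a combination of defining generators. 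The key technical claim is that each such generator on $U$ lifts, via the compactify-and-resolve procedure of the previous step, to a generator of the corresponding relation subgroup on $X$ modulo an element in $i_{*}\Omega_{*}(Z)$. Relation (Dim) is purely dimensional and extends automatically; (Sect) requires producing a transverse section of the extended line bundle whose zero scheme has the correct restriction to $U$, which may require further blow-ups along $Z$ to achieve transversality, the extra cycles being precisely the correction term supported on $Z$. The most delicate case is (FGL), where the formal power series identity on $U$ must be upgraded to one on the compactification up to a boundary supported on $Z$; this is handled by a reverse induction on the dimension of the support, using at each stage the surjectivity and vanishing already established to absorb error terms into $i_{*}\Omega_{*}(Z)$.
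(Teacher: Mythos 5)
The paper does not prove this theorem; it is \cite[Theorem 3.2.7]{AlgebraicLevine}, cited without proof, and its argument occupies a substantial part of Chapter 3 there.

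Your first two steps are fine. The vanishing $j^{*}\circ i_{*}=0$ is immediate at the cycle level. For surjectivity of $j^{*}$, your compactify-and-resolve argument is sound: since $f\colon Y\to U$ is projective, $Y$ is closed in some $\Proj^{N}_{U}\subset\Proj^{N}_{X}$, its closure $\overline{Y}$ is projective over $X$, a resolution $\widetilde{Y}\to\overline{Y}$ is an isomorphism over the already-smooth dense open $Y$, and $\operatorname{Pic}(\widetilde{Y})\to\operatorname{Pic}(Y)$ is surjective because $\widetilde{Y}$ is smooth.

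The gap is in the middle exactness, and it is structural rather than a matter of suppressed detail: the localization sequence is \emph{not} exact at the level of $\mathcal{Z}_{*}$, nor after imposing (Dim), so the plan of proving it at that level and then checking that the relation subgroups restrict compatibly cannot get started. Take $X=\mathbb{A}^{2}$, $Z=\{0\}$, $U=X\setminus Z$, and compare the cobordism cycles $[\mathrm{id}\colon\mathbb{A}^{2}\to X]$ and $[\pi\colon\mathrm{Bl}_{0}\mathbb{A}^{2}\to X]$. These are distinct basis elements of the free abelian group $\mathcal{Z}_{*}(X)$, neither factoring through $Z$, and both restrict to $[\mathrm{id}\colon U\to U]$; so their difference lies in $\ker j^{*}$, but not in $i_{*}\mathcal{Z}_{*}(Z)$, which is exactly the span of basis cycles whose underlying morphism factors through $\{0\}$. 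Your ``concrete'' version runs into the same wall: after lifting the generators of the relation subgroup expressing $j^{*}\widehat{\alpha}$ to a class $\rho$ on $X$, one is left with $\widehat{\alpha}-\rho\in\ker j^{*}$, and showing that $\ker j^{*}$ is contained in $i_{*}\mathcal{Z}_{*}(Z)$ plus the relation subgroup on $X$ is precisely the hard content of the theorem, not a free compatibility check. Indeed the theorem forces $[\mathrm{id}]-[\pi]$ into $i_{*}\mathcal{Z}_{*}(Z)$ plus relations, and exhibiting this already requires (Sect) and the formal-group-law structure (blow-up relations). Your instinct that (FGL) is delicate is partly right, but the underlying difficulty is this excess kernel of $j^{*}$ at the cycle level, which your outline leaves unaddressed.
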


This theorem is used to show that both the projective bundle formula and the extended homotopy property hold for $\Omega_*$. 
Let us first recall the notations necessary to express the projective bundle formula. Let $X\in \SCH$ and let $p:E\rightarrow X$ be a vector bundle of rank $n+1$. Denote by $q:\Proj (E)\rightarrow X$ the $\Proj^n$-bundle arising from $E$ and recall that this bundle is equipped with a canonical quotient line bundle $O(1)$: we will write $\xi$ for the group homomorphism
$$\widetilde{c_1}(O(1)):\Omega_*(\Proj(E))\longrightarrow\Omega_{*-1}(\Proj (E))\ . $$
In this setting we define the group homomorphism
$$\sum_{i=0}^n\xi^{(i)}:\bigoplus_{i=0}^n\Omega_{*-n+i}(X)\longrightarrow \Omega_*(\Proj(E))$$
as the sum of the family of group homomorphism $\{\xi^{(i)}\}_{i\in\{0,\tred,n\}}$ given by $\xi^{(i)}:=\widetilde{c_1}(O(1))^i\circ q^*$. 

We are now able to state both the projective bundle formula and the extended homotopy property for $\Omega_*$. For the proofs see (\cite[Theorems 3.5.4 and 3.6.3]{AlgebraicLevine}). 

\begin{theorem}\label{th proj bundle}
Let $X\in \SCH$ and let $E$ be a rank $n+1$ vector bundle on $X$. Then
$$\sum_{i=0}^n\xi^{(i)}:\bigoplus_{j=0}^n\Omega_{*-n+j}(X)\rightarrow \Omega_*(\Proj(E))$$
is an isomorphism.
\end{theorem}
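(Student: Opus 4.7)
The plan is to proceed by induction on the rank $n+1$ of $E$, making crucial use of the right-exact localization sequence of Theorem \ref{th loc seq} together with the extended homotopy property for $\Omega_*$ (Theorem 3.6.3 of Levine-Morel, applied in the oriented Borel-Moore setting). The base case $n=0$ is immediate: $E$ is a line bundle, $\Proj(E) = X$, the map $q$ is the identity, and $\xi^{(0)} = q^*$ is the identity on $\Omega_*(X)$.

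For the inductive step, I would first handle the split case, in which $E$ admits a sub-line bundle $L \hookrightarrow E$ with rank-$n$ quotient $E' := E/L$. This yields a closed immersion $i: \Proj(E') \hookrightarrow \Proj(E)$ whose open complement $j: U \hookrightarrow \Proj(E)$ is an affine bundle over $X$ (a torsor under $\mathrm{Hom}(L, E')$ pulled back from $X$). Theorem \ref{th loc seq} then gives
$$\Omega_*(\Proj(E')) \xrightarrow{i_*} \Omega_*(\Proj(E)) \xrightarrow{j^*} \Omega_*(U) \to 0,$$
the inductive hypothesis identifies the leftmost group with $\bigoplus_{j=0}^{n-1}\Omega_{*-(n-1)+j}(X)$, and the extended homotopy property identifies the rightmost one with $\Omega_{*-n}(X)$. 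Surjectivity of $\sum_{j=0}^n\xi^{(j)}$ is obtained by checking that $i_*$ carries the inductive basis into the span of $\xi^{(0)},\ldots,\xi^{(n-1)}$, modulo corrections involving $\widetilde{c_1}(L)$ absorbed via the axiom (FGL), and that $j^*(\xi^{(n)}(\alpha))$ corresponds to $\alpha \in \Omega_{*-n}(X)$ under the homotopy isomorphism. Injectivity is then established by producing a left inverse: push-forward along $q$ paired against the classes $\xi^{n-j}$ on $\Proj(E)$ gives maps $\Omega_*(\Proj(E)) \to \Omega_{*-n+j}(X)$ which, after computing $q_*(\xi^k)$ via iterated use of (Sect) and the projective bundle formula applied inductively, detect the basis elements.

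To remove the assumption of a globally defined line subbundle, I would pull back to the full flag bundle $\pi: \flag(E) \to X$, along which $\pi^* E$ acquires a complete filtration by line bundles. Iterating the split case at each stage of the tower $\flag(E) \to X$ (each stage being a projective bundle of a quotient bundle of strictly smaller rank) produces the projective bundle formula for $\pi$ itself, and in particular exhibits $\pi^*$ as split injective. Since $\pi^* \xi$ corresponds to $\widetilde{c_1}(O(1))$ on $\pi^*E$ and the operator $\sum \xi^{(j)}$ is natural under smooth base change, one descends the formula from $\pi^* E$ to $E$ using this splitting. The main obstacle will be the bookkeeping in the split inductive step, namely explicitly matching $i_*(\xi'^{(j)}(\alpha))$ with the operators $\xi^{(j)}$ modulo (FGL)-corrections coming from the normal bundle of $i$; controlling these lower-order Chern class terms and verifying the constructed inverse is well defined requires a careful interaction between (Sect), (FGL), and the module structure of $\Omega_*(\Proj(E))$ over $\Omega_*(X)$.
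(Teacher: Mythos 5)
The paper does not supply a proof of this theorem; it defers to Levine--Morel, citing Theorems 3.5.4 (the projective bundle formula) and 3.6.3 (the extended homotopy property) of their book. Measured against that reference, your proposal has the right overall shape — localization plus homotopy plus a rank induction — but as written it is circular. You invoke the extended homotopy property (Theorem \ref{th ext homotopy}, which you correctly identify as Levine--Morel's 3.6.3) to identify $\Omega_*(U)$ with $\Omega_*(X)$; however, in Levine--Morel that result is established \emph{after} and \emph{from} the projective bundle formula, not before it: one realizes an $E$-torsor $V\to X$ as $\Proj(E')\setminus\Proj(E)$ for a suitable extension $E'$ of $\mathcal{O}_X$ by $E$, applies the already-established projective bundle formula to both $\Proj(E)$ and $\Proj(E')$, and then computes $\Omega_*(V)$ from the localization sequence. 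At the stage where the projective bundle formula itself is being proved, the only available homotopy statement is the basic $\mathbb{A}^1$-homotopy invariance for trivial affine bundles $X\times\mathbb{A}^n\to X$, and the open complement $U\subset\Proj(E)$ in your inductive step (a torsor under $\mathrm{Hom}(E',L)$, incidentally, not $\mathrm{Hom}(L,E')$) is generally nontrivial. The missing ingredient is an extra layer of Zariski localization and Noetherian induction on the base $X$ that reduces to the locally trivial case, which is in outline how Levine--Morel actually proceed.

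The proposed descent from the flag bundle founders on the same difficulty one level up. To descend from $\flag(E)$ to $X$ you need $\pi^*$ for $\pi:\flag(E)\to X$ to be (split) injective, but the first stage of that tower is precisely $\Proj(E)\to X$, the bundle under scrutiny, so the rank is not strictly decreasing there; and split injectivity of pull-back along a projective bundle is itself a \emph{consequence} of the projective bundle formula (via the push-forward $q_*$ and the computation of $q_*$ on powers of $\xi$), not something available in advance. A correct strategy discards the flag bundle entirely: first establish the projective bundle formula for trivial bundles $\mathbb{P}^n\times X\to X$, by cutting out a hyperplane $\mathbb{P}^{n-1}\times X$ whose affine complement is handled by basic $\mathbb{A}^1$-homotopy invariance, and then bootstrap to arbitrary $E$ by Noetherian induction on $X$, applying the right-exact localization sequence to a closed subscheme over whose open complement $E$ trivializes.
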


\begin{theorem} \label{th ext homotopy}
 Let $E\rightarrow X$ be a vector bundle over some $X$ in $\SCH$, and let $p: V\rightarrow X $ be an $E$-torsor. Then 
$$p^*:\Omega_*(X)\rightarrow \Omega_*(V)$$ 
is an isomorphism.
\end{theorem}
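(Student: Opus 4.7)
The strategy is to establish the result first for the trivial torsor $V=E$ and then reduce the general torsor case to the same kind of projective-completion argument. Both steps hinge on combining the right-exact localization sequence (Theorem~\ref{th loc seq}) with the projective bundle formula (Theorem~\ref{th proj bundle}).

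\textbf{Step 1: the trivial torsor.} Suppose $V=E$, of rank $r$. I would projectivize to $\bar{p}\colon\bar{E}:=\Proj(E\oplus\mathcal{O}_X)\to X$, a $\Proj^r$-bundle containing the ``hyperplane at infinity'' $i\colon\Proj(E)\hookrightarrow\bar{E}$ whose open complement is exactly $j\colon E\hookrightarrow\bar{E}$. The localization sequence becomes
\[
\Omega_*(\Proj(E))\xrightarrow{\,i_*\,}\Omega_*(\bar{E})\xrightarrow{\,j^*\,}\Omega_*(E)\to 0.
\]
Applying the projective bundle formula to both $\bar{p}$ and $q\colon\Proj(E)\to X$ identifies
\[
\Omega_*(\bar{E})=\bigoplus_{k=0}^{r}\Omega_{*-r+k}(X)\cdot\xi^{k},\qquad \Omega_*(\Proj(E))=\bigoplus_{k=0}^{r-1}\Omega_{*-r+1+k}(X)\cdot\eta^{k},
\]
where $\xi=\widetilde{c}_1(\mathcal{O}_{\bar{E}}(1))$, $\eta=\widetilde{c}_1(\mathcal{O}_{\Proj(E)}(1))$, and $i^*(\xi)=\eta$ (up to the contribution of the formal group law, since $\Proj(E)$ is the zero locus of a section of $\mathcal{O}_{\bar{E}}(1)$). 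Using the compatibility of $i_*$ with the $\Omega_*(X)$-module structure (projection formula) and the self-intersection identity $i^*i_*(1_{\Proj(E)})=\widetilde{c}_1(N_i)(1_{\Proj(E)})$, one checks that the image of $i_*$ is the sub-$\Omega_*(X)$-module generated by $\xi,\xi^{2},\ldots,\xi^{r}$, so that the cokernel is the single free summand $\Omega_{*-r}(X)\cdot 1_{\bar{E}}$. Since $j^{*}\bar{p}^{*}=p^{*}$, this cokernel is reached precisely by $p^{*}\colon\Omega_{*-r}(X)\to\Omega_{*}(E)$, which therefore is an isomorphism.

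\textbf{Step 2: a general torsor.} For an arbitrary $E$-torsor $p\colon V\to X$, I would invoke the classification of vector-group torsors: the isomorphism class of $V$ corresponds to an element of $H^{1}(X,E)$, which in turn yields an extension
\[
0\to E\to E'\to\mathcal{O}_X\to 0
\]
of rank $r+1$. Removing the hyperplane at infinity fibre by fibre produces an $X$-isomorphism $V\cong\Proj(E')\setminus\Proj(E)$, so the recipe of Step 1 applies verbatim with $E\oplus\mathcal{O}_{X}$ replaced by $E'$: the localization sequence
\[
\Omega_*(\Proj(E))\xrightarrow{\,i_*\,}\Omega_*(\Proj(E'))\xrightarrow{\,j^{*}\,}\Omega_{*}(V)\to 0
\]
combined with the projective bundle formula for $E'$ and with the same pushforward analysis identifies $\Omega_{*}(V)$ with $\Omega_{*-r}(X)$ via $p^{*}$.

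\textbf{Main obstacle.} The delicate point is the pushforward computation in Step~1 (which is reused in Step~2). Unlike the Chow-theoretic case, $\widetilde{c}_1$ is not additive on tensor products: in $\Omega_{*}$ the relation between $\xi$ and $\eta$, and between $\xi$ and the class of the normal bundle to the hyperplane at infinity, is governed by the universal formal group law $F$. Consequently, showing that $\mathrm{Im}(i_{*})$ is exactly the $\xi$-part of $\Omega_{*}(\bar{E})$ requires an inductive argument together with (FGL), rather than a one-line Chern-class identity. Once the cokernel of $i_{*}$ is pinned down as the $\Omega_{*-r}(X)\cdot 1$-summand, the identification with $p^{*}$ is immediate from the compatibility $j^{*}\bar{p}^{*}=p^{*}$.
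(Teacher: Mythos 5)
The paper gives no proof of this theorem; it simply cites \cite[Theorem 3.6.3]{AlgebraicLevine}. Your high-level strategy --- compactify the torsor as the open complement of $\Proj(E)$ inside $\Proj(E')$ for a suitable extension $0\to E\to E'\to\mathcal{O}_X\to 0$, and then combine the right-exact localization sequence (theorem~\ref{th loc seq}) with the projective bundle formula (theorem~\ref{th proj bundle}) --- is indeed the standard route in Levine--Morel, and your Step~2 reduction of a general $E$-torsor to this picture is correct.

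The genuine gap lies in the pivotal claim of Step~1, that $\mathrm{Im}(i_*)$ is exactly the $\Omega_*(X)$-submodule spanned by $\xi,\ldots,\xi^{r}$. The easy containment $\supseteq$ is fine: $j^*\mathcal{O}_{\bar E}(1)\cong\mathcal{O}_V$ is trivial, $\widetilde{c}_1(\mathcal{O}_V)=0$, so $j^*\xi^{k}\bar p^*(\alpha)=0$ for $k\geq 1$, and right-exactness of localization then places these classes in $\mathrm{Im}(i_*)$. But the reverse containment $\subseteq$ is, again by right-exactness, \emph{equivalent} to the injectivity of $p^*$; it is not a side computation from which the theorem is deduced, it \emph{is} the theorem. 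Your attempt to verify it via ``the self-intersection identity $i^*i_*(1_{\Proj(E)})=\widetilde c_1(N_i)(1_{\Proj(E)})$'' is not available at this point in the development: the Gysin pullback $i^*$ along a regular embedding is only constructed later (in the subsection on Gysin and l.c.i.\ pullbacks), and its construction --- deformation to the normal cone, intersection with divisors --- itself relies on the extended homotopy property, so invoking it here is circular. Moreover, $1_{\Proj(E)}$ presupposes $\Proj(E)\in\SM$, whereas the theorem is asserted for arbitrary $X\in\SCH$. Thus the injectivity of $p^*$, which is precisely where the real work in Levine--Morel's argument lives, is left unjustified by your proposal; the ``Main obstacle'' paragraph correctly senses that the pushforward analysis is delicate, but does not notice that the proposed tool is unavailable at this stage and would make the argument circular.
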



\subsubsection{Gysin and l.c.i. pull-back morphisms}

At this stage the only structure still missing on $\Omega_*$ is represented by the family of pull-back maps for l.c.i. morphisms: so far these maps have been defined for smooth morphisms only. 
The approach used by Levine and Morel to overcome this difficulty is essentially based on the method introduced by Fulton in \cite{IntersectionFulton}. First one deals with the intersection with Cartier divisors, which is later used, by making use of the deformation to the normal cone, to define pull-back maps for regular embeddings (i.e. the Gysin morphisms).  Finally, the case of l.c.i. morphisms is considered: they are factored into the composition of a regular embedding with a smooth morphism, as for these kinds of morphisms the pull-back map already exists.

Since a more detailed exposition of the construction of the Gysin morphism would force us to a significant detour and given that our use of it will be essentially limited to the formal properties related to functoriality, we will simply assume that Gysin morphisms can be defined and refer the interested reader to sections 6.1-6.5 in \cite{AlgebraicLevine} for a complete treatment of the subject. More specifically, for the next proposition see \cite[Proposition 6.5.4 and theorem 6.5.11]{AlgebraicLevine}.

\begin{proposition}
To every regular embedding $i:Z\rightarrow X$ it is possible to associate a graded group homomorphism $i^*:\Omega_*(X)\rightarrow \Omega_{*-d}(Z)$ called the Gysin morphism. This homomorphism satisfies the following properties.
\begin{enumerate}
\item For every morphism $f:Y\rightarrow X$ {\rm Tor}-independent to $i$ giving rise to the cartesian diagram
$$
\xymatrix{
  Z\times Y \ar[r]^{i'} \ar[d]_{f'}&Y\ar[d]^{f} \\
  Z\ar[r]^{i}& X     }
$$

\quad i) if $f$ is projective, then $i^*f_*=f'\circ i'^*$;

\vspace{0.1 cm}

\quad ii) if $f$ is smooth and quasi-projective, then $f'^*i^*=i'^*f^*$.

\item For every regular embedding $i':Z'\rightarrow Z$ one has $i'^*i^*=(i\circ  i')^*$.
\end{enumerate}

\end{proposition}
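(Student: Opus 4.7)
The plan follows the deformation-to-the-normal-cone strategy from classical intersection theory, adapted to the algebraic cobordism setting by Levine and Morel. The three cornerstones are the right-exact localization sequence (Theorem~\ref{th loc seq}), the projective bundle formula (Theorem~\ref{th proj bundle}), and the extended homotopy property (Theorem~\ref{th ext homotopy}). The construction proceeds in two steps, reducing the general case to the special case where $i$ is the zero section of a vector bundle.

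First I would construct the zero-section Gysin morphism for an arbitrary rank-$d$ vector bundle $\pi:E\to Z$. Letting $s_0:Z\hookrightarrow E$ denote the zero section, I consider the projective completion $\bar E = \Proj(E\oplus\mathcal{O}_Z)$, which contains $E$ as the complement of the hyperplane at infinity $\Proj(E)$. The projective bundle formula expresses both $\Omega_*(\bar E)$ and $\Omega_*(\Proj(E))$ as direct sums of shifted copies of $\Omega_*(Z)$ in powers of the tautological class; inserting this into the localization sequence relating these three groups produces a canonical way to split off an expression defining $s_0^*:\Omega_*(E)\to\Omega_{*-d}(Z)$. Next, for a general regular embedding $i:Z\hookrightarrow X$ of codimension $d$ with normal bundle $N = N_{Z/X}$, I would construct a specialization map $\sigma_i:\Omega_*(X)\to\Omega_*(N)$ via the deformation space $M$ obtained by blowing up $X\times\Proj^1$ along $Z\times\{0\}$ and removing the strict transform of $X\times\{0\}$. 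The resulting morphism $M\to\Proj^1$ has generic fibre $X$ and special fibre over $0$ equal to $N$; applying the localization sequence to the closed immersion $N\hookrightarrow M$ with open complement $X\times\mathbb{A}^1$, then invoking extended homotopy to identify $\Omega_*(X\times\mathbb{A}^1)$ with $\Omega_*(X)$, yields a surjection $\Omega_*(M)\twoheadrightarrow\Omega_*(X)$. The specialization $\sigma_i$ is defined by lifting against this surjection and restricting to the special fibre, and the Gysin map is then $i^* := s_0^*\circ\sigma_i$.

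Property (1) should follow from functoriality of the deformation to the normal cone with respect to Tor-independent base change: both the blow-up and the normal bundle pull back functorially, so $\sigma$ commutes with projective push-forward (giving (1.i)) and with smooth quasi-projective pull-back (giving (1.ii)), combined with the corresponding compatibilities of the zero-section Gysin morphism from the first step. Property (2), functoriality for a composition of two regular embeddings, reduces via the short exact sequence $0\to N_{Z'/Z}\to N_{Z'/X}\to i'^*N_{Z/X}\to 0$ to a comparison performed on a double deformation space that interpolates between the four natural normal cones attached to the pair $(i,i')$. The central obstacle throughout is well-definedness of $\sigma_i$, namely independence of the chosen lift in $\Omega_*(M)$: the naive presentation of $\Omega_*$ by cobordism cycles is not sharp enough to rule out lifts that differ by elements with nonzero restriction to $N$, and one must invoke the refined description of $\Omega_*$ in terms of double-point relations developed in \cite{AlgebraicLevine} to push through uniqueness. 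I expect this well-definedness step to be the deepest part of the argument, with properties (1) and (2) then following by essentially formal manipulations once it is in place.
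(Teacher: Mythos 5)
The paper does not actually prove this proposition; it explicitly defers the construction, stating that it would force ``a significant detour'' and referring the reader to Sections 6.1--6.5 of \cite{AlgebraicLevine} (in particular Proposition~6.5.4 and Theorem~6.5.11) for the complete treatment. Your sketch of the deformation-to-the-normal-cone strategy does capture the broad outline of what Levine and Morel do there, and your reduction to the zero-section case via the projective completion and the localization sequence is on target.

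However, you misidentify the mechanism by which well-definedness of the specialization map $\sigma_i$ is established, and this is not a minor slip. You claim one needs ``the refined description of $\Omega_*$ in terms of double-point relations developed in \cite{AlgebraicLevine}''---but double-point cobordism is not in that book at all; it is a different presentation introduced later by Levine and Pandharipande, and the Gysin construction in \cite{AlgebraicLevine} makes no use of it. The actual mechanism is more elementary than you suggest, and it relies on a step your sketch omits entirely: Levine and Morel first develop the intersection map associated to a Cartier divisor (Sections 6.1--6.2), \emph{before} any deformation argument. Once that is in place, well-definedness of $\sigma_i$ follows from two facts: the right-exact localization sequence (your Theorem~\ref{th loc seq}) shows that any two lifts of a class from $X\times\mathbb{A}^1$ to the deformation space $M^o$ differ by something pushed forward from the special fibre $N$, and the self-intersection formula for divisors kills that difference because the normal bundle of $N$ in $M^o$ is trivial (it is the fibre of $M^o\to\mathbb{A}^1$ over a principal divisor), so its first Chern class operator vanishes---the analogue of your Lemma~\ref{lem trivial}. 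Without the divisor-class theory in hand, your proposed definition of $\sigma_i$ by ``lifting and restricting to the special fibre'' has no candidate map to restrict with, and the appeal to a different, external presentation of $\Omega_*$ to rescue uniqueness points in the wrong direction.
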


In order to define pull-backs for l.c.i. morphism we still need one more lemma (\cite[Lemma 6.5.9]{AlgebraicLevine}) to guarantee that different factorizations of the same morphism give rise to the same map.

\begin{lemma}\label{lem l.c.i. factorization}
Let  $f:X\rightarrow Y$ be an l.c.i. morphism. If we have factorizations $f=q_1\circ i_1=q_2\circ i_2$, with $i_j:X\rightarrow P_j$ regular embeddings and $q_j\rightarrow Y$ smooth and quasi-projective, then 
$$i_1^*\circ q_1^*=i_2^*\circ q_2^*\ .$$ 
\end{lemma}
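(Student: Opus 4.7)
The plan is to compare the two factorizations via a third one built on the fiber product $P := P_1 \times_Y P_2$. Let $\pi_1, \pi_2$ denote the projections of $P$ onto $P_1, P_2$; each $\pi_j$ is smooth and quasi-projective, being the base change of $q_{3-j}$. From $q_1 \circ i_1 = q_2 \circ i_2$ the universal property of the fiber product yields a unique morphism $\delta := (i_1, i_2) : X \to P$ with $\pi_j \circ \delta = i_j$ for $j=1,2$. Setting $r := q_1 \circ \pi_1 = q_2 \circ \pi_2 : P \to Y$, $r$ is smooth and quasi-projective and $f = r \circ \delta$ is a third factorization of $f$. The goal becomes to show that, for each $j$,
$$i_j^* \circ q_j^* = \delta^* \circ r^*$$
as maps $\Omega_*(Y) \to \Omega_*(X)$, from which the lemma follows at once since the right-hand side does not depend on $j$.

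The first substep is to verify that $\delta$ is a regular embedding, so that the Gysin morphism $\delta^*$ is defined. I would factor $\delta$ as $X \to X \times_Y P_2 \to P$, where the second arrow $i_1 \times_Y \mathrm{id}_{P_2}$ is the base change of the regular embedding $i_1$ along the smooth morphism $\pi_2$ and is therefore itself a regular embedding, while the first arrow $(\mathrm{id}_X, i_2)$ is a section of the smooth morphism $X \times_Y P_2 \to X$ (the base change of $q_2$ along $f$), and every section of a smooth morphism is a regular embedding. Thus $\delta$ is a composition of two regular embeddings, hence is itself regular.

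For the main computation, fix $j$ and consider the cartesian square with top edge $\overline{\imath}_j : X \times_{P_j} P \to P$, bottom edge $i_j : X \to P_j$, and vertical arrows $\overline{\pi}_j$ (left) and $\pi_j$ (right). Part~(1)(ii) of the Gysin proposition above yields $\overline{\pi}_j^* \circ i_j^* = \overline{\imath}_j^* \circ \pi_j^*$. By construction $\delta$ factors as $\delta = \overline{\imath}_j \circ \widetilde{\sigma}_j$, where $\widetilde{\sigma}_j$ is a section of $\overline{\pi}_j$ (and is a regular embedding by the argument above). Functoriality of Gysin morphisms (part~(2) of the proposition) together with the identity $\widetilde{\sigma}_j^* \circ \overline{\pi}_j^* = \mathrm{id}$, valid for any section of a smooth morphism, then yields
$$\delta^* \circ \pi_j^* \;=\; \widetilde{\sigma}_j^* \circ \overline{\imath}_j^* \circ \pi_j^* \;=\; \widetilde{\sigma}_j^* \circ \overline{\pi}_j^* \circ i_j^* \;=\; i_j^*.$$
Combining with the functoriality of smooth pullbacks, $r^* = \pi_j^* \circ q_j^*$, gives $\delta^* \circ r^* = i_j^* \circ q_j^*$ for both values of $j$, which completes the argument.

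The main obstacle I anticipate is the appeal to the identity $\widetilde{\sigma}^* \circ \overline{\pi}^* = \mathrm{id}$ for a section of a smooth morphism. While natural in light of functoriality, it is not immediate from the axioms recorded in the excerpt and ultimately relies on the explicit construction of the Gysin morphism via deformation to the normal cone carried out in \cite{AlgebraicLevine}. Once that ingredient is granted, together with the compatibility of Gysin and smooth pullbacks in cartesian squares, the entire proof is a formal manipulation that bypasses any direct comparison between the original two factorizations by routing everything through the universal third factorization over $P$.
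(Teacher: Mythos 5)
The paper does not give its own proof of this lemma; it simply cites \cite[Lemma 6.5.9]{AlgebraicLevine}, so the comparison has to be with Levine--Morel's argument rather than with anything in the text above. Your proof follows that argument faithfully: form $P = P_1 \times_Y P_2$, produce the ``diagonal'' regular embedding $\delta = (i_1,i_2)$, and compare both given factorizations to the common refinement $f = r \circ \delta$.

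Two small points worth confirming in your write-up. First, your factorization of $\delta$ into two regular embeddings is correct: under the identification $X \times_{P_j} P \cong X \times_Y P_{3-j}$, the map $\overline{\imath}_j$ is the base change of $i_j$ along the flat (indeed smooth) projection $\pi_j$, so it is a regular embedding, and the other factor is a section of a smooth morphism, hence also a regular embedding. Second, you have correctly isolated the one non-formal ingredient: the identity $\widetilde{\sigma}^*\circ\overline{\pi}^* = \mathrm{id}$ for a section $\widetilde{\sigma}$ of a smooth morphism $\overline{\pi}$. Note that this cannot be obtained by invoking full l.c.i.\ functoriality, since that theorem is established only after the present lemma and would make the argument circular. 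In Levine--Morel this identity is established beforehand as part of the compatibility package built up from the deformation to the normal cone (it appears, in the form that a smooth refinement of a factorization does not change $i^*\circ q^*$, as a separate lemma preceding 6.5.9, and the section case is the special instance $i = \mathrm{id}$). With that ingredient granted, the rest of your computation
\[
\delta^*\circ\pi_j^* = \widetilde{\sigma}_j^*\circ\overline{\imath}_j^*\circ\pi_j^* = \widetilde{\sigma}_j^*\circ\overline{\pi}_j^*\circ i_j^* = i_j^*
\]
is a correct application of Gysin functoriality and of the base-change property for smooth pullbacks, and the conclusion $i_1^*q_1^* = \delta^*r^* = i_2^*q_2^*$ follows. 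Your proposal is therefore essentially the standard proof, and you have accurately flagged the step that requires machinery beyond the axioms reproduced in this paper.
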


Let us finally provide the definition of pull-back morphism for local complete intersection morphisms together with the results that illustrate its functoriality (\cite[Theorem 6.5.11]{AlgebraicLevine}) and its compatibility with both the external product (\cite[Theorem 6.5.13]{AlgebraicLevine}) and projective push-forwards (\cite[Proposition 6.5.12]{AlgebraicLevine}). Note in particular that these results guarantee that $\Omega_*$ satisfies axioms $(BM1)-(BM3)$.

\begin{definition}
Let $f:X\rightarrow Y$ be an l.c.i. morphism in $\SCH$ of relative dimension $d$. Define $f^*: \Omega_*(Y)\rightarrow \Omega_*(X)$ as $i^*\circ q^*$, where $f=q\circ i$ is a factorization of $f$ with $i$ a regular embedding and $q$ smooth and quasi-projective. 
\end{definition}

\begin{theorem}\label{th l.c.i. functoriality}
Let $f_1:X\rightarrow Y$, $f_2:Y\rightarrow Z$ be l.c.i. morphisms in $\SCH$. Then 
$$(f_2\circ f_1)^*=f_1^* f_2^*\ .$$
\end{theorem}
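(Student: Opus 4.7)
The plan is to reduce this to the already-established functoriality of smooth pullbacks and of Gysin morphisms for regular embeddings, together with their compatibility in cartesian squares. I would first fix factorizations $f_1 = q_1 \circ i_1$ with $i_1: X \hookrightarrow P_1$ a regular embedding and $q_1: P_1 \to Y$ smooth and quasi-projective, and similarly $f_2 = q_2 \circ i_2$ with $i_2: Y \hookrightarrow P_2$ and $q_2: P_2 \to Z$. By definition,
$$f_1^* \circ f_2^* = i_1^* \circ q_1^* \circ i_2^* \circ q_2^*.$$
The goal is then to exhibit a factorization of $f_2 \circ f_1$ as smooth $\circ$ regular embedding whose associated pullback has this same form.

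The central step is to manufacture a cartesian square
$$
\xymatrix{
P_1 \ar@{^{(}->}[r]^{\widetilde{\imath}_2} \ar[d]_{q_1} & \widetilde{P} \ar[d]^{\widetilde{q}_1} \\
Y \ar@{^{(}->}[r]^{i_2} & P_2
}
$$
in which $\widetilde{q}_1$ is smooth and quasi-projective, obtained by extending $q_1$ across the closed immersion $i_2$; then $\widetilde{\imath}_2$ is automatically a regular embedding, being the base change of a regular embedding along a smooth morphism (cf.\ Remark \ref{rem l.c.i. trans}). This yields
$$f_2 \circ f_1 = q_2 \circ i_2 \circ q_1 \circ i_1 = (q_2 \circ \widetilde{q}_1) \circ (\widetilde{\imath}_2 \circ i_1),$$
where $q_2 \circ \widetilde{q}_1$ is smooth and quasi-projective and $\widetilde{\imath}_2 \circ i_1$ is a regular embedding, since both classes are closed under composition. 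Lemma \ref{lem l.c.i. factorization} guarantees that $(f_2 \circ f_1)^*$ is independent of the factorization, so
$$(f_2 \circ f_1)^* = (\widetilde{\imath}_2 \circ i_1)^* \circ (q_2 \circ \widetilde{q}_1)^* = i_1^* \circ \widetilde{\imath}_2^* \circ \widetilde{q}_1^* \circ q_2^*,$$
by functoriality of Gysin morphisms for regular embeddings (part 2 of the Gysin proposition) and of smooth pullbacks (the latter being inherited from the structure of $\mathcal{Z}_*$ and preserved by the quotients).

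To finish, I would invoke the identity $\widetilde{\imath}_2^* \circ \widetilde{q}_1^* = q_1^* \circ i_2^*$, which is precisely the compatibility in part 1(ii) of the Gysin proposition applied to the cartesian square above (Tor-independence is automatic since $\widetilde{q}_1$ is smooth). Combining gives $(f_2 \circ f_1)^* = i_1^* \circ q_1^* \circ i_2^* \circ q_2^* = f_1^* \circ f_2^*$, as desired.

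The hard part will be the construction of the square: producing a smooth quasi-projective $\widetilde{q}_1: \widetilde{P} \to P_2$ whose base change along $i_2$ recovers $q_1: P_1 \to Y$. Since $q_1$ is quasi-projective, $P_1$ embeds as a locally closed subscheme of a projective bundle $\mathbb{P}(\mathcal{E})$ over $Y$ for some vector bundle $\mathcal{E}$; producing $\widetilde{P}$ then amounts to extending $\mathcal{E}$ to a vector bundle on $P_2$ and propagating the defining equations of $P_1 \subset \mathbb{P}(\mathcal{E})$. Making this extension work globally is the main technical content of the argument, and it is where both the quasi-projectivity of $q_1$ and the fact that $i_2$ is a closed immersion play an essential role.
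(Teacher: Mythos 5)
The paper itself does not prove this theorem; it is imported directly from Levine--Morel (\cite[Theorem 6.5.11]{AlgebraicLevine}), so there is no internal proof to compare against. Evaluating your argument on its own: the overall skeleton is correct and is indeed the standard route, namely choose factorizations, build a Cartesian square with one side smooth and the opposite side a regular embedding, and appeal to independence of factorization plus the Gysin compatibilities. But the step you flag as "the main technical content" --- producing $\widetilde{P}$ --- is not just unfinished; the method you sketch would not work. Extending $\mathcal{E}$ to a bundle on $P_2$ and "propagating the defining equations of $P_1 \subset \mathbb{P}(\mathcal{E})$" would produce (after intersecting with the relevant open) a locally closed subscheme of $\mathbb{P}(\widetilde{\mathcal{E}})$ cut out by the extended ideal, and there is no reason for this to be smooth over $P_2$; smoothness is not preserved by naively extending equations off a closed subscheme. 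Moreover, demanding $\widetilde{P} \times_{P_2} Y = P_1$ on the nose overconstrains the problem and is not needed.

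The correct move is to first use Lemma \ref{lem l.c.i. factorization} to \emph{change} the factorization of $f_1$: since $f_1$ is quasi-projective and l.c.i., embed $X$ as a locally closed subscheme of $\mathbb{P}^n_Y$, let $\overline{X}$ be its closure and $C = \overline{X}\setminus X$ (closed in $\mathbb{P}^n_Y$), and set $V := \mathbb{P}^n_Y \setminus C$; then $X \hookrightarrow V$ is a regular closed embedding and $V \to Y$ is smooth quasi-projective. Now extension across $i_2$ is trivial: $\mathbb{P}^n_Y$ is closed in $\mathbb{P}^n_{P_2}$, so $W := \mathbb{P}^n_{P_2} \setminus C$ is open in $\mathbb{P}^n_{P_2}$, smooth quasi-projective over $P_2$, and satisfies $W \times_{P_2} Y = V$. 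With $\widetilde{P}=W$ and the left column of your square replaced by $V\to Y$, the rest of your argument (regular embedding of $V$ into $W$ as base change, functoriality of Gysin morphisms, and part 1(ii) of the Gysin proposition applied to the Tor-independent square) goes through exactly as you wrote it. So the plan is sound, but the specific extension mechanism must be replaced by the open-subscheme-of-projective-space construction; as stated, the proposal has a genuine gap.
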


\begin{proposition}\label{prop l.c.i. ext}
Let $f_i:X_i\rightarrow Y_i$, $i=1,2$ be l.c.i. morphisms in $\SCH$. Then for $\eta_i\in\Omega_*(Y_i)$, $i=1,2$, we have 
$$(f_1\times f_2)^*(\eta_1\times\eta_2)=f_1^*(\eta_1)\times f_2^*(\eta_2)\ .$$
\end{proposition}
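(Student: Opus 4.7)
The natural approach is to factor both $f_i$ and reduce to separate statements about smooth pull-backs and Gysin pull-backs for regular embeddings. Choose factorizations $f_i = q_i \circ i_i$ with $i_i: X_i \to P_i$ a regular embedding and $q_i: P_i \to Y_i$ smooth and quasi-projective. Then
$$f_1 \times f_2 = (q_1 \times q_2) \circ (i_1 \times i_2),$$
where $q_1 \times q_2$ is smooth and quasi-projective, and $i_1 \times i_2$ is a regular embedding (locally, if $I_j \subset \mathcal{O}_{P_j}$ is generated by a regular sequence $\{s_j^{(k)}\}$, the ideal of $X_1 \times X_2$ is generated by the union of the pull-backs of the two sequences, which remains a regular sequence). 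Applying Lemma \ref{lem l.c.i. factorization} and the definition of the l.c.i. pull-back, we obtain $(f_1 \times f_2)^* = (i_1 \times i_2)^* \circ (q_1 \times q_2)^*$, so it suffices to establish the external-product compatibility separately for smooth morphisms and for regular embeddings.

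The smooth case is essentially formal. Writing cobordism cycles $\eta_i = [g_i: V_i \to Y_i, \mathbf{L}_i]$, the explicit formula for pull-back along a smooth morphism together with the definition of external product (Definition \ref{def ext}) show that both $(q_1 \times q_2)^*(\eta_1 \times \eta_2)$ and $q_1^*(\eta_1) \times q_2^*(\eta_2)$ are represented by the cobordism cycle
$$[(V_1 \times_{Y_1} P_1) \times (V_2 \times_{Y_2} P_2) \to P_1 \times P_2,\ \mathbf{L}_1',\ \mathbf{L}_2'],$$
with $\mathbf{L}_i'$ the appropriate pull-back sequence. Since external products, smooth pull-backs, and first Chern class operators all pass to the quotients $\mathcal{Z}_* \to \underline{\mathcal{Z}}_* \to \underline{\Omega}_* \to \Omega_*$, the identity descends to $\Omega_*$.

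For the Gysin case, I would exploit functoriality of regular-embedding pull-backs by writing
$$i_1 \times i_2 = (i_1 \times \mathrm{id}_{X_2}) \circ (\mathrm{id}_{P_1} \times i_2),$$
and applying Theorem \ref{th l.c.i. functoriality} to reduce to the ``mixed'' case where one factor is the identity, namely
$$(i \times \mathrm{id}_Z)^*(\alpha \times \beta) = i^*(\alpha) \times \beta$$
for $i: W \to P$ a regular embedding, $\alpha \in \Omega_*(P)$, $\beta \in \Omega_*(Z)$. Writing $\alpha = [g: V \to P, \mathbf{L}]$ with $g$ projective, the square formed by $g \times \mathrm{id}_Z$ and $i \times \mathrm{id}_Z$ is Cartesian and Tor-independent (base-changing the regular embedding $i$ along the flat factor $Z$ preserves Tor-independence), and property (1)(i) of Gysin morphisms then rewrites $(i \times \mathrm{id}_Z)^*(\alpha \times \beta)$ as a push-forward of a class obtained by pulling back along the regular embedding $i': V \times_P W \to V$, times $\beta$; unraveling this via the explicit form of external products on cycles yields $i^*(\alpha) \times \beta$.

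The main obstacle will be the Gysin case: one must verify Tor-independence in the relevant Cartesian squares and, more substantively, track the behaviour of the deformation-to-the-normal-cone construction underlying $i^*$ under taking products with $Z$. The essential point is that forming the normal cone commutes with flat base change, so the deformation space $M(i \times \mathrm{id}_Z, P \times Z)$ identifies naturally with $M(i, P) \times Z$; granted this, the Gysin operation $(i \times \mathrm{id}_Z)^*$ acts ``in the first factor only'', and the compatibility with external product follows.
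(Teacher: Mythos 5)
The paper does not prove this proposition: it simply cites \cite[Theorem 6.5.13]{AlgebraicLevine}, so there is no internal proof to compare against. Judged on its own terms, your overall strategy --- factor $f_1 \times f_2 = (q_1 \times q_2) \circ (i_1 \times i_2)$ and handle the smooth and regular-embedding pieces separately --- is the natural one, and your treatment of the smooth case and the reduction of the Gysin case to the ``mixed'' statement $(i \times \mathrm{id}_Z)^*(\alpha \times \beta) = i^*(\alpha) \times \beta$ are both fine (aside from a typo in the order of the two factors in the decomposition of $i_1\times i_2$). However, the middle paragraph handling the Gysin case has a genuine gap, in two respects.

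First, the Tor-independence claim is false. You assert that ``base-changing the regular embedding $i$ along the flat factor $Z$ preserves Tor-independence.'' But the square you want to use has $g: V \to P$ (from the cycle representing $\alpha$) on one side and $i: W \to P$ on the other, both crossed with $\mathrm{id}_Z$; a Künneth computation shows that $\mathrm{Tor}^{\mathcal O_{P\times Z}}_q(\mathcal O_{V\times Z}, \mathcal O_{W\times Z}) \cong \mathrm{Tor}^{\mathcal O_P}_q(\mathcal O_V, \mathcal O_W)\otimes_k \mathcal O_Z$, so Tor-independence of the product square is \emph{equivalent} to that of $g$ and $i$ --- which is not given. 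Already for $P=\mathbb{A}^2$, $W$ the origin, and $V$ a coordinate line through the origin, $\mathrm{Tor}_1^{\mathcal O_P}(\mathcal O_V,\mathcal O_W)\neq 0$, so property (1)(i) of the proposition on Gysin morphisms is not applicable.

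Second, even when Tor-independence does hold, the reduction is circular. Applying $(i\times\mathrm{id}_Z)^*(g\times\mathrm{id}_Z)_* = (g'\times\mathrm{id}_Z)_*(i'\times\mathrm{id}_Z)^*$ leaves you with exactly the claim to be proved, now for the pulled-back regular embedding $i'$ instead of $i$ (and for the fundamental-class case $\alpha=1_V$ you reduce to $\pi_2'^*\beta = i'^*(1_V)\times\beta$ for $\pi_2':(V\times_P W)\times Z\to Z$, which is the statement of the proposition for the pair $(\tau_{V\times_P W}, \mathrm{id}_Z)$). So property (1)(i) cannot by itself break the loop. The correct route, which you gesture at in your final paragraph, has to go through the construction of Gysin morphisms itself: one first proves compatibility of intersection with a Cartier divisor (the codimension-one case) with external products, and then uses the fact that the deformation-to-the-normal-cone space for $i\times\mathrm{id}_Z$ is the product of that for $i$ with $Z$ (blow-ups and normal cones commute with the flat base change $(-)\times Z$) to extend this to arbitrary regular embeddings. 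That argument is the substance of \cite[Theorem 6.5.13]{AlgebraicLevine}; the paragraph in your proposal invoking Tor-independence and property (1)(i) does not replace it.
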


\begin{theorem}\label{th l.c.i. push}
Let $f:X\rightarrow Y$, $g:Z\rightarrow Y$ be {\rm Tor}-independent morphisms in $\SCH$, giving the cartesian diagram 
$$
\xymatrix{
  X\times Z \ar[r]^{f'} \ar[d]_{g'}&Z\ar[d]^{g} \\
  X\ar[r]^{f}& Y}
$$
Suppose that $f$ is an l.c.i. morphism and that $g$ is projective. Then
$$f^*g_*=g'_* f'^*\ .$$
\end{theorem}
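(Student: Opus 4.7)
The plan is to reduce the statement to the already-established Gysin base-change identity by factoring $f$ through a regular embedding and a smooth morphism.

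First, I would write $f = q \circ i$ where $i : X \to P$ is a regular embedding and $q : P \to Y$ is smooth and quasi-projective, so that by definition $f^* = i^* \circ q^*$. Pulling back this factorization along $g$ gives a composite cartesian rectangle
$$\xymatrix{
  X \times_Y Z \ar[r]^{i'} \ar[d]_{g'} & P \times_Y Z \ar[r]^{q'} \ar[d]_{g''} & Z \ar[d]^{g} \\
  X \ar[r]^{i} & P \ar[r]^{q} & Y }$$
in which both squares are cartesian. Since $q$ is smooth and hence flat, the right square is automatically Tor-independent. For the left square, one needs that $i$ and $g''$ are Tor-independent; this follows from the Tor-independence of $f$ and $g$ combined with the flatness of $q$, via the change-of-rings spectral sequence. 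Moreover, projectivity is preserved under base change, so both $g'$ and $g''$ are projective; $q'$ is smooth and quasi-projective, and $i'$ is a regular embedding (regular embeddings being stable under Tor-independent base change). In particular $f' = q' \circ i'$ is itself an l.c.i.\ factorization, and by definition $f'^* = i'^* \circ q'^*$.

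Second, I would apply two base-change formulas in succession. For the right square, smoothness of $q$ and projectivity of $g$ give the compatibility $q^* g_* = g''_* q'^*$; this is the smooth-morphism/projective-pushforward base change already available for $\Omega_*$ from its construction (the analogue of axiom $(BM2)$ for the case where the pull-back side is smooth). For the left square, the proposition on Gysin morphisms (part $1.i)$) yields $i^* g''_* = g'_* i'^*$.

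Third, I would simply splice these identities together:
$$f^* g_* \;=\; i^* q^* g_* \;=\; i^* g''_* q'^* \;=\; g'_* i'^* q'^* \;=\; g'_* f'^*.$$

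The main obstacle I anticipate is the verification at the first step that the left square is Tor-independent (equivalently, that $i'$ is a regular embedding); this is a bit of homological bookkeeping combining the Tor-independence of the original square with the flatness of the smooth factor $q$. Once this is in hand, the conclusion is a formal concatenation of the Gysin base-change proposition with the elementary smooth-pullback/projective-pushforward compatibility, and no further input from the construction of $\Omega_*$ is needed.
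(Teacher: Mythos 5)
Your proof is correct and follows the standard route, which is the same as the one in Levine--Morel (the paper itself defers to \cite[Proposition 6.5.12]{AlgebraicLevine} rather than reproducing the argument). The decomposition of $f$ into a regular embedding followed by a smooth quasi-projective morphism, the splitting of the cartesian square into two, and the concatenation of the smooth-pullback/projective-pushforward compatibility on the right with the Gysin base-change on the left is precisely how the result is established; the flat base-change computation verifying Tor-independence of the left square is the one nontrivial check and you identify and handle it correctly.
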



\subsubsection{Universality and fundamental classes}\label{sec universal}

Now that the pull-back morphisms have been extended to l.c.i. morphisms, we are finally able to prove that $\Omega_*$ is an oriented Borel-Moore homology theory on $\SCH$.

\begin{theorem}
Algebraic cobordism $X\rightarrow \Omega_*(X)$ is an oriented Borel-Moore homology theory on $\SCH$ and it is universal among such theories: given an oriented Borel-Moore homology theory $A_*$ on $\SCH$, there exists a unique morphism of oriented Borel-Moore homology theories
$$\vartheta_{A_*}:\Omega_*\rightarrow A_*\ .$$
\end{theorem}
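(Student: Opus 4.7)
The plan is to prove the theorem in two stages: first show that $\Omega_*$ satisfies the axioms (D1)--(D3), (BM1)--(BM3), (PB), (EH), (CD) of an oriented Borel-Moore homology theory, and then establish the universal property.

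For the axioms, most of the work has been carried out in the preceding subsections. Datum (D1) is the additive functor obtained from $\mathcal{Z}_*$ by the three successive quotients in the construction of $\Omega_*$; (D2) is the l.c.i. pull-back just defined by composing smooth pull-backs with Gysin morphisms; (D3) is the external product of Definition \ref{def ext}, which descends through the three quotients thanks to the closure property (\ref{cond product}). Axiom (BM1) is Theorem \ref{th l.c.i. functoriality}, (BM2) is Theorem \ref{th l.c.i. push}, and (BM3) combines the direct compatibility of the external product with projective push-forwards coming from the construction of $\mathcal{Z}_*$ with Proposition \ref{prop l.c.i. ext} on the l.c.i. side. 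The projective bundle formula (PB) is Theorem \ref{th proj bundle}, the extended homotopy property (EH) is Theorem \ref{th ext homotopy}, and (CD) is obtained by feeding the right-exact localization sequence of Theorem \ref{th loc seq} into Lemma \ref{lem CD}.

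For universality, I would define $\vartheta_{A_*}$ on generators by
$$\vartheta_{A_*}([f:Y\rightarrow X,L_1,\ldots,L_r]):=f_*^{A}\bigl(\widetilde{c}_1^{A}(L_1)\circ\cdots\circ\widetilde{c}_1^{A}(L_r)(1_Y^{A})\bigr)\in A_*(X)\ ,$$
extend $\Laz_*$-linearly, and verify that this descends through each of the three families of relations (Dim), (Sect), (FGL) imposed in the construction of $\Omega_*$. The relation (Sect) holds in $A_*$ by extracting a transverse zero-section formula from the l.c.i. pull-back axioms applied to the zero section of a line bundle; (FGL) holds because the analogue of Lemma \ref{lem 1.1.3} for $A_*$ produces a formal group law $F_A$ over $A_*(\spec k)$ together with a classifying ring homomorphism $\varPhi_A:\Laz_*\to A_*(\spec k)$ which supplies the required $\Laz_*$-action; the delicate case is (Dim), which is not directly enforced by any single axiom of an oriented Borel-Moore homology theory and must be deduced from (PB), (EH) and (CD).

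Once well-definedness is settled, compatibility of $\vartheta_{A_*}$ with the structural maps is routine: projective push-forwards and first Chern class operators are built into the formula, smooth pull-backs follow by applying (BM2) for $A_*$ to the cartesian square defining $g^*$ on cobordism cycles, and external products follow from (BM3). Compatibility with general l.c.i. pull-backs reduces, via Lemma \ref{lem l.c.i. factorization}, to compatibility with Gysin morphisms, which can be checked through the deformation-to-the-normal-cone construction together with (PB) inside $A_*$. Uniqueness is forced at the level of generators: any morphism of oriented Borel-Moore homology theories must send $1_Y\in\Omega_*(Y)$ to $1_Y^{A}$ for every $Y\in\SM$ and must commute with projective push-forwards and with first Chern class operators, so its value on $[f:Y\rightarrow X,L_1,\ldots,L_r]$ is uniquely determined. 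The main obstacle I anticipate is exactly the verification of (Dim) inside an arbitrary oriented Borel-Moore homology theory: this is typically established by an induction on dimension using iterated projective bundle arguments, and it is the heart of why the quotient relations built into $\Omega_*$ are in fact satisfied by every such theory.
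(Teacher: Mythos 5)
Your verification that $\Omega_*$ satisfies the axioms of an oriented Borel-Moore homology theory is exactly the paper's argument: (BM1)--(BM3) are Theorems \ref{th l.c.i. functoriality}, \ref{th l.c.i. push} and Proposition \ref{prop l.c.i. ext}; (PB) and (EH) are Theorems \ref{th proj bundle} and \ref{th ext homotopy}; and (CD) follows from the localization sequence of Theorem \ref{th loc seq} together with Lemma \ref{lem CD}. For universality the paper simply cites Levine--Morel's Theorem 7.1.3(1), whereas you sketch the internal structure of that proof; your sketch is consistent in outline with Levine--Morel's argument (define $\vartheta_{A_*}$ on cobordism cycles via push-forward of iterated Chern class operators, extend $\Laz_*$-linearly via the classifying map of the formal group law, and verify that the relations (Dim), (Sect), (FGL) hold in any oriented Borel-Moore homology theory), so this is the same route with the citation unpacked rather than a genuinely different one. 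Be aware, though, that the step you flag as delicate --- showing that (Dim) holds in an arbitrary $A_*$ --- is indeed a substantial chunk of Levine--Morel's Chapter 5, and your one-line gesture toward ``(PB), (EH) and (CD)'' leaves out the real mechanism (reduction to very ample line bundles via (EH), passage to a product of projective spaces, and then the injectivity supplied by (CD)); similarly, the compatibility of $\vartheta_{A_*}$ with l.c.i.\ pull-backs is not merely ``routine'' but requires that the Gysin morphisms of $\Omega_*$ and $A_*$ be compared through the deformation to the normal cone, which is a separate argument.
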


\begin{proof}
One first has to verify that $\Omega_*$ satisfies all the axiom of oriented Borel-Moore homology theory. As we already pointed out, axioms $(BM1)-(BM3)$ corresponds respectively to  theorems \ref{th l.c.i. functoriality}, \ref{th l.c.i. push} and proposition \ref{prop l.c.i. ext}. On the other hand axioms $(PB)$ and $(EH)$ are satisfied due to theorems \ref{th proj bundle} and \ref{th ext homotopy}. One is therefore left to verify axiom $(CD)$ which, in view of the right-exact localization sequence (theorem \ref{th loc seq}), follows from lemma \ref{lem CD}. For the universality see \cite[Theorem 7.1.3 (1)] {AlgebraicLevine}.  
\end{proof}

Now that we have established $\Omega_*$ as an oriented Borel-Moore homology theory, it is possible to construct a functor $\Omega^*:\SM^{op}\rightarrow \mathbf{R}^*$ which, thanks to proposition \ref{prop OCT OBM}, is an oriented cohomology theory . One can actually prove more, that $\Omega^*$ is the universal oriented cohomology theory on $\SM$.

\begin{theorem}\label{th universality}
Algebraic cobordism $X\mapsto\Omega^*(X)$ is an oriented cohomology theory on $\SM$ and it is universal among such theories: given an oriented cohomology theory $A^*$ on $\SM$, there exists a unique morphism of oriented cohomology theories
$$\vartheta_{A^*}:\Omega^*\rightarrow A^*\ .$$ 
Moreover, the classifying map $\varPhi_{\Omega}:\Laz^*\rightarrow \Omega^*(\spec k)$ associated to the formal group law $(\Omega^*(\spec k),F_\Omega)$ is an isomorphism. 
\end{theorem}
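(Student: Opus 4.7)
The plan is to reduce the statement to the previous theorem (which establishes $\Omega_*$ as the universal oriented Borel–Moore homology theory on $\SCH$) together with Proposition \ref{prop OCT OBM}.

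First, restrict $\Omega_*$ to $\SM$: by Proposition \ref{prop OCT OBM} this gives an oriented cohomology theory $\Omega^*$ on $\SM$, defined on a smooth equidimensional $X$ of dimension $d$ by $\Omega^n(X) := \Omega_{d-n}(X)$, with product coming from pullback along the diagonal (which is l.c.i.\ for smooth $X$), unit $\tau_X^*(1)$, pullbacks for arbitrary morphisms in $\SM$ (all such morphisms being l.c.i.), and pushforwards for projective maps. The axioms (A1), (A2), (PB) and (EH) translate directly from (BM1)--(BM3), (PB) and (EH) for $\Omega_*$.

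For universality, given an oriented cohomology theory $A^*$ on $\SM$, the equivalence of Proposition \ref{prop OCT OBM} produces a corresponding oriented Borel–Moore homology theory $A_*$ on $\SM$. I would define $\vartheta_{A^*}$ on cobordism cycles by
\[
\vartheta_{A^*}\bigl([f:Y\to X, L_1,\ldots,L_r]\bigr)
   \;=\; f_*\bigl(c_1(L_1)\cup \cdots \cup c_1(L_r)\bigr)\in A^*(X),
\]
and check that this descends through the three families of defining relations for $\Omega^*$: axiom (Dim) is the vanishing statement in Lemma \ref{lem 1.1.3} (nilpotence of first Chern classes in $A^*$ up to the dimension of $Y$); axiom (Sect) follows from the compatibility of first Chern classes with pushforwards from a transverse zero locus, which is part (1) of Proposition \ref{prop Chern} together with (A2); and axiom (FGL) is exactly the conclusion of Lemma \ref{lem 1.1.3} for $A^*$. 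Uniqueness of $\vartheta_{A^*}$ is automatic because the generators $[f:Y\to X,\mathbf{L}]$ can be rewritten as $f_*\circ \widetilde{c}_1(L_1)\circ\cdots\circ \widetilde{c}_1(L_r)(1_Y)$, and any morphism of oriented cohomology theories must respect units, first Chern classes, and projective pushforwards.

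For the classifying map $\varPhi_\Omega:\Laz^*\to \Omega^*(\spec k)$, surjectivity is essentially built into the construction: the FGL relation forces $\Omega^*(\spec k)$ to carry a formal group law receiving a map from $\Laz^*$, and the generators of $\Omega^*(\spec k)$ are already accounted for by the image of $\Laz^*$ plus cobordism classes of smooth projective varieties, which are identified via the relations (Dim), (Sect), (FGL). The main obstacle is injectivity. The plan here is to invoke the comparison between $\Omega^*$ and complex cobordism $MU^*$ (over a field admitting a complex embedding) combined with Quillen's theorem $MU^*(\mathrm{pt})\cong \Laz^*$; concretely one produces a realization morphism $\Omega^*(\spec k)\to MU^{2*}(\mathrm{pt})$ whose composition with $\varPhi_\Omega$ recovers Quillen's isomorphism, forcing $\varPhi_\Omega$ to be injective. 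This is the deep input — it is the content of the Levine–Morel comparison theorem, and I would cite it from \cite{AlgebraicLevine} rather than reprove it. The extension from fields of characteristic zero admitting a complex embedding to all characteristic-zero fields is then a standard base-change argument using the fact that $\Laz^*$ is torsion-free.
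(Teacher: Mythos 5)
The paper's proof is purely a citation: universality is referred to \cite[Theorem 7.1.3 (2)]{AlgebraicLevine} and the isomorphism $\varPhi_\Omega$ to \cite[Theorem 4.3.7]{AlgebraicLevine}. Your reduction of the first claim to Proposition \ref{prop OCT OBM} together with the formula $\vartheta_{A^*}([f:Y\to X,\mathbf{L}])=f_*(c_1(L_1)\cdots c_1(L_r))$ and the uniqueness argument are faithful in outline to how Levine--Morel prove it, although you dispatch the (Dim) relation a little too quickly: Lemma \ref{lem 1.1.3} only asserts $c_1(L)^n=0$ for $n$ sufficiently large, whereas (Dim) needs $\tilde c_1(L_1)\circ\cdots\circ\tilde c_1(L_n)(1_Y)=0$ precisely once $n>\dim_kY$ for arbitrary, possibly distinct, line bundles, a point that genuinely needs an argument in a general $A^*$. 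Your treatment of the last claim, however, takes a route that does not match the cited source. \cite[Theorem 4.3.7]{AlgebraicLevine} is proved algebraically inside the Levine--Morel framework (surjectivity from the generation of $\Omega_*(k)$ by classes of smooth projective varieties, injectivity via the algebraic machinery of their Chapter 4, all resting on resolution of singularities in characteristic zero but not on topology). The route you sketch --- realize $\Omega^*(\spec k)\to MU^{2*}(\mathrm{pt})$ over $\mathbb{C}$, compose with $\varPhi_\Omega$ to recover Quillen's isomorphism, then base-change to arbitrary characteristic-zero fields --- is a comparison theorem of a different nature; it appears in later, separate work of Levine, and it carries its own nontrivial burden (constructing the realization map and checking it kills the defining relations of $\Omega^*$) that cannot be absorbed by a citation of \cite[Theorem 4.3.7]{AlgebraicLevine}. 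If you wish to match the paper, present the isomorphism as the algebraic black box it is; if you prefer the topological realization, make explicit that this is a logically independent development rather than a paraphrase of the cited theorem.
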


\begin{proof}
For the universality see \cite[Theorem 7.1.3 (2)]{AlgebraicLevine}. Concerning the last statement, the formal group law $(\Omega^*(\spec k),F_\Omega)$ arises from lemma \ref{lem 1.1.3}, while the isomorphism between the Lazard ring and the coefficient ring of algebraic cobordism is proven in \cite[Theorem 4.3.7]{AlgebraicLevine}.
\end{proof}

\begin{remark}
It is worth pointing out that, due to the uniqueness of lemma \ref{lem 1.1.3}, $(\Omega^*(\spec k),F_\Omega)$ has to coincide with $(\Omega^*(\spec k),F)$, the formal group law we obtained by imposing axiom (FGL) in the construction of $\Omega_*$.
\end{remark}


We now want to specialize our general definition of fundamental classes for oriented Borel-Moore homology theories to the specific case of algebraic cobordism. In particular we are interested in illustrating how fundamental classes relates to cobordism cycles.   



\begin{definition}\label{def fund cob }
Let $X\in \LCI$. We define the fundamental class of $X$, denoted $[X]_{\Omega_*}\in \Omega_*(X)$ by setting 
$$[X]_{\Omega_*}:=\tau_X^*(1)\ ,$$
where $1$ represents the identity element in the coefficient ring $\Omega_*(\spec k)$.  

These classes satisfy the following properties:
\begin{enumerate}

\item Let $f:Y\rightarrow X$ be an l.c.i. morphism with $Y,X\in \LCI$. Then  $f^*([X]_{\Omega_*})=[Y]_{\Omega_*}\ .$

\item If $X\in \SM$, then $[X]_{\Omega_*}=1_X=[id_X:X\rightarrow X]\in\Omega^0(X)$ .

\item For every cobordism cycle $[f:Y\rightarrow X]\in\Omega^*(X)$ with $X\in\SCH$ one has $[f:Y\rightarrow X]=f_*(1_Y)$.

\end{enumerate}
\end{definition}
\begin{remark}
In the previous definition property (1) is a direct consequence of the functoriality of l.c.i. pull-back maps. For property (2) one needs only to observe that from the definition of smooth pull-backs one has the equality of cobordism cycles $[id_X:X\rightarrow X]=\tau^*_X([id_{\spec k}:\spec k\rightarrow \spec k])$. (3) follows once the push-forward map $f_*$ is applied to the equality in (2). 
\end{remark}

\begin{remark}
A question that arises quite naturally at this point is whether or not the notion of fundamental class can further be extended so as to enclose a more general family of schemes. In particular one may hope that it is possible to define on all of $\SCH$ fundamental classes which are functorial with respect to l.c.i. morphisms. A partial answer to this question was given by Levine in \cite{FundamentalLevine}. There he exhibits examples of reduced projective Cohen-Macaulay schemes for which it is not possible to define fundamental classes satisfying the required functoriality, hence ruling out the possibility of the existence of a good notion of fundamental class for the whole of $\SCH$. 
\end{remark}

We finish our general discussion on algebraic cobordism with a lemma that will play an important role in our computations: it will allow us to express the top Chern class of a bundle as a cobordism class over the base.

\begin{lemma} \label{lem top Chern}
Let $p:E\rightarrow X$ be a vector bundle of rank $d$ on $X\in\text{\textbf{Sch}}_k$.

1. Suppose that $E$ has a section $s:X\rightarrow E$ such that the zero-subscheme of $s$, $i:Z\rightarrow X$ is a regularly embedded closed subscheme of codimension $d$. Then $\widetilde{c_d}(E)=i_* i^*$.  

2. Suppose furthermore that $X,Z\in\SM$. Then $c_d(E)=[i:Z\rightarrow X]$.
\end{lemma}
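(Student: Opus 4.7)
The plan is to establish part~1 first and then derive part~2 by evaluating at the fundamental class $1_X$. The heart of the argument for part~1 is the identification of $\widetilde{c_d}(E)$ with the composition $s^*\circ s_{0*}$, where $s_0:X\to E$ is the zero section, followed by the identification of this composition with $i_*\circ i^*$ via base change.

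First, by the extended homotopy property (Theorem~\ref{th ext homotopy}), the pullback $p^*:\Omega_*(X)\to \Omega_*(E)$ along the bundle projection $p:E\to X$ is an isomorphism. Since every section $t$ of $p$ satisfies $t^*\circ p^*=\mathrm{id}$, one deduces $t^*=(p^*)^{-1}$ independently of $t$; in particular $s^*=s_0^*$ as maps $\Omega_*(E)\to \Omega_*(X)$. Next I would invoke the self-intersection formula $s_0^*\circ s_{0*}=\widetilde{c_d}(E)$ for the zero section, which for line bundles is built into the definition of $\widetilde{c_1}$ via axiom (PB) and for higher rank follows from the splitting principle and Whitney's formula as carried out in \cite{AlgebraicLevine}. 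Combining these two observations yields $\widetilde{c_d}(E)=s^*\circ s_{0*}$.

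It remains to apply axiom (BM2) to the cartesian square
$$
\xymatrix{
  Z\ar[r]^{i} \ar[d]_{i}&X\ar[d]^{s_0} \\
  X\ar[r]^{s}& E     }
$$
which is indeed cartesian: for a pair $(x_1,x_2)\in X\times X$ with $s(x_1)=s_0(x_2)$, applying $p$ gives $x_1=x_2$, and then $s(x_1)=0$ forces $x_1\in Z$, so $X\times_E X=Z$ with both projections equal to $i$. Both $s$ and $s_0$ are closed regular embeddings of codimension $d$, so the projectivity and l.c.i.\ hypotheses of (BM2) are satisfied. For Tor-independence I would work locally: if $s$ is given by a regular sequence $(f_1,\tred,f_d)$ on $X$, then the Koszul resolution of $\mathcal{O}_{s_0(X)}$ over $\mathcal{O}_E\cong\mathcal{O}_X[y_1,\tred,y_d]$ tensored with $\mathcal{O}_{s(X)}=\mathcal{O}_X[y]/(y_i-f_i)$ becomes the Koszul complex on $(f_1,\tred,f_d)$ over $\mathcal{O}_X$, which is acyclic by the regular-sequence hypothesis. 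Axiom (BM2) then gives $s^*\circ s_{0*}=i_*\circ i^*$, completing part~1.

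For part~2, I would evaluate the identity of part~1 at $1_X\in\Omega^0(X)$. Since $i$ is l.c.i.\ and both $X,Z$ are smooth (hence in $\LCI$), functoriality of fundamental classes gives $i^*(1_X)=1_Z$, whence $i_*i^*(1_X)=i_*(1_Z)=[i:Z\to X]$ by the definition of pushforward on cobordism cycles. Combined with the relation $c_d(E)=\widetilde{c_d}(E)(1_X)$ from (\ref{eq chern op}), this yields $c_d(E)=[i:Z\to X]$. The main obstacle in this plan is the self-intersection formula invoked in the second paragraph: its proof in the Borel--Moore setting requires the splitting-principle machinery for Chern class operators already developed in \cite{AlgebraicLevine}, which I would cite rather than reproduce.
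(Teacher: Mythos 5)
Your argument for part~(2) coincides with the paper's: apply the operator identity of part~(1) to $1_X$, use $c_d(E)=\widetilde{c_d}(E)(1_X)$ and $i^*(1_X)=1_Z$, and unravel the push-forward of a cobordism cycle.

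For part~(1) you take a genuinely more detailed route than the paper, which simply cites \cite[Lemma 6.6.7]{AlgebraicLevine}. Your decomposition
$$\widetilde{c_d}(E)=s_0^*s_{0*}=s^*s_{0*}=i_*i^*$$
(self-intersection for the zero section, then $s^*=s_0^*$ from the extended homotopy property, then $(BM2)$ applied to the defining cartesian square of $Z(s)$) is correct and is in fact the standard reduction underlying the cited lemma, so what you buy is transparency rather than a new idea. The identification $X\times_E X=Z(s)$ is immediate from the paper's own definition of the zero scheme (the fiber product of $s$ and the zero section), so the supporting set-theoretic argument you give is not strictly needed. Two points are well handled: you correctly observe that both $s$ and $s_0$ are closed regular embeddings (so they are projective and l.c.i., as $(BM2)$ requires), and your local Koszul-complex computation of $\mathrm{Tor}^{\mathcal{O}_E}_q(\mathcal{O}_{s(X)},\mathcal{O}_{s_0(X)})$ is exactly the right verification of Tor-independence, with the regular-sequence hypothesis on the ideal of $Z$ entering precisely where it should. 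The one input you do not prove — the self-intersection formula $s_0^*s_{0*}=\widetilde{c_d}(E)$ for a bundle of arbitrary rank — is a real dependency on \cite{AlgebraicLevine}, but since the paper itself outsources all of part~(1) to the same reference, citing it there is entirely appropriate.
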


\begin{proof}
For (1) see \cite[Lemma 6.6.7]{AlgebraicLevine}. For (2) one first recalls the functoriality of fundamental classes with respect to l.c.i. morphisms to obtain $$1_Z=[Z]_{\Omega_*}=i^*([X]_{\Omega_*})=i^*(1_X)\ .$$ Since,  $c_d(E)=\widetilde{c_d}(E)(1_X)$, we can apply part (1) and write
$$c_d(E)=i_* i^*(1_X)=i_*(1_Z)=[i:Z\rightarrow X]\ .\qedhere$$
\end{proof}

\subsection{Relations with other theories: $CH_*$, $G_0[\beta,\beta^{-1}]$ and $CK_*$}

We begin this subsection by illustrating how scalar extension can be used to produce new oriented oriented Borel-Moore homology theories with chosen formal group law. Afterwards we make use of this construction to describe the relations existing between algebraic cobordism and the other theories which we will consider in our study. Throughout this subsection we will again assume the base field $k$ to have characteristic 0.

\begin{definition}
Let $(R,F)$ be a commutative formal group law with $R\in\mathbf{R}^*$. We will denote by $\Omega_*^{(R,F)}$ the functor 
\begin{align*}
\SCH&\longrightarrow \quad \mathbf{Ab}_*\\
X\ \ &\mapsto \Omega_*(X)\otimes_{\Laz _*}R
\end{align*}
where the $\Laz$-module structure is given on $R$ by the ring homomorphism $\varPhi_F:\Laz^*\rightarrow R$ associated to the formal group law $F$ and on $\Omega_*(X)$ by the external product. In case the formal group law $(R,F)$ arises from an oriented Borel-Moore homology theory $A_*$, we will sometimes write $\Omega_*^{A_*}$ instead of $\Omega_*^{(R,F)}$.
\end{definition} 

It is easy to check that the functor $\Omega_*^{(R,F)}$, together with the induced external product and the obvious family of pull-back morphisms, satisfies all the axioms of an oriented Borel-Moore homology theory and that its formal group law is precisely $(R,F)$. Moreover, it follows from the universality of algebraic cobordism that $\Omega_*^{(R,F)}$ is universal among the oriented Borel-Moore homology theories which have $(R,F)$ as associated formal group law. Suppose $A_*$ to be such a theory, then for every $X\in\SCH$ one can define the bilinear morphism
\begin{align*}
 \Omega_*(X)\times R&\longrightarrow \  A_*(X)\\
(\alpha,a)\ &\longmapsto a\times (\vartheta_{A_*}(X)(\alpha))
\end{align*}
where $\times$ stands for the external product in $A_*$ and represents the scalar multiplication in the $R$-module structure on $A_*(X)$. As a consequence for every scheme $X\in\SCH$ one obtains from the universal property of tensor product a unique morphism $\Omega_*^{(R,F)}(X)\rightarrow A_*(X)$ and it is possible to check that as a whole these morphisms form a morphism of oriented Borel-Moore homology theories $\vartheta_{A_*}^{(R,F)}:\Omega_*^{(R,F)}\rightarrow A_*$. The uniqueness of $\vartheta_{A_*}^{(R,F)}$ then follows from the universal properties of tensor product and of $\Omega_*$.

This construction, in view of proposition \ref{prop OCT OBM}, has an analogue in the context of oriented cohomology theories on $\SM$: the functor $\Omega^*_{(R,F)}:=\Omega^*\otimes_{\Laz^*} R$ represents the universal oriented cohomology theory on $\SM$ with $(R,F)$ as associated formal group law. We will denote by $\vartheta_{A^*}^{(R,F)}$ the canonical map $\Omega^*_{(R,F)}\rightarrow A^*$.

\begin{remark}
It is important to point out that fundamental classes of l.c.i. schemes are preserved under morphisms of oriented cohomology theories, as well as under morphisms of oriented Borel-Moore homology theories: this follows from the compatibility of both kinds of morphism with l.c.i. pull-back maps.  
\end{remark}

\begin{remark}\label{rem double ext}
Suppose to be given a morphism of formal group laws $\phi:(R,F)\rightarrow (R',F')$. It follows immediately from the universal property of $(\Laz,F_{\Laz})$ that the unique morphisms $\phi_F$ and $\phi_{F'}$ satisfy the equality $\phi_{F'}=\phi\circ\phi_F$ and hence the two functors   $\Omega_*^{(R',F')}$ and $\Omega_*^{(R,F)}(-)\otimes_R R'$ are isomorphic. 
\end{remark}

We will now present a series of results which identify the universal oriented Borel-Moore homology theories and the universal oriented cohomology theories associated to the additive and periodic multiplicative formal group laws.  
We consider first the case of the additive formal group law $(\ZZ, F_{a})$. 

\begin{theorem} \label{th CH}
The canonical map $$\vartheta^{(\ZZ,F_{a})}_{CH_*}:\Omega^{(\ZZ,F_{a})}_*\rightarrow CH_*$$
 of oriented Borel-Moore homology functors on $\SCH$ is an isomorphism. Moreover, once it is restricted to $\SM$, it induces on the associated oriented cohomology theories the isomorphism
$$\vartheta^{(\ZZ,F_{a})}_{CH^*}:\Omega^*_{(\ZZ,F_{a})}\rightarrow CH^*\ .$$    
\end{theorem}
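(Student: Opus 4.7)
The plan is to verify that $\vartheta^{(\ZZ,F_{a})}_{CH_*}(X)$ is a bijection for every $X \in \SCH$; the cohomology-theory statement then follows immediately by restricting to $\SM$ and invoking Proposition \ref{prop OCT OBM}, under which the equivalence of categories between oriented Borel--Moore homology theories and oriented cohomology theories carries isomorphisms to isomorphisms. Existence and uniqueness of $\vartheta^{(\ZZ,F_{a})}_{CH_*}$ follow from the universal property: since $CH_*$ has the additive formal group law over $CH_*(\spec k) = \ZZ$, the classifying map $\Laz_* \to \ZZ$ coincides with $\varPhi_{F_a}$, and so the universal map $\vartheta_{CH_*}: \Omega_* \to CH_*$ factors uniquely through $\Omega^{(\ZZ,F_{a})}_* = \Omega_* \otimes_{\Laz_*} \ZZ$.

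Surjectivity is handled by resolution of singularities, which is available since $k$ has characteristic zero. Every class in $CH_*(X)$ is a $\ZZ$-linear combination of fundamental classes $[Z]_{CH_*}$ of integral closed subschemes $Z \subset X$. For each such $Z$ with inclusion $i: Z \hookrightarrow X$, choose a projective birational morphism $\pi: \tilde Z \to Z$ with $\tilde Z \in \SM$. Then $(i \circ \pi)_*[\tilde Z]_{CH_*} = [Z]_{CH_*}$ in $CH_*(X)$ because $\pi$ has degree one over its image. On the cobordism side, the class $[\tilde Z \to X] = (i \circ \pi)_*(1_{\tilde Z}) \in \Omega^{(\ZZ,F_{a})}_*(X)$ is then sent by $\vartheta^{(\ZZ,F_{a})}_{CH_*}$ to $[Z]_{CH_*}$, since morphisms of oriented Borel--Moore homology theories commute with push-forward and preserve fundamental classes of smooth schemes.

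For injectivity the plan is to construct an inverse $\psi: CH_*(X) \to \Omega^{(\ZZ,F_{a})}_*(X)$ by setting $\psi([Z]_{CH_*}) = [\tilde Z \to X]$ for a chosen resolution $\tilde Z \to Z$, and then to check two compatibilities. First, the class $[\tilde Z \to X]$ is independent of the resolution: any two smooth projective resolutions of $Z$ are connected by a chain of blow-ups along smooth centers by the weak factorization theorem (again requiring characteristic zero), and the blow-up formula in $\Omega_*$ becomes, after reduction modulo the positive-degree part of $\Laz_*$, the familiar additive blow-up relation that already holds in $CH_*$. Second, $\psi$ vanishes on rational equivalence: for a rational function $\phi$ on an integral subscheme $W \subset X$ with divisor $\mathrm{div}(\phi) = \sum n_i Z_i$, one views $\phi$ as a map $\tilde W \to \Proj^1$ from a resolution and applies axiom (Sect) at $0$ and $\infty$, together with the vanishing of the first Chern class of the trivial bundle $\mathcal{O}(0) \otimes \mathcal{O}(\infty)^{\vee}$, which under the additive formal group law gives the needed linear relation. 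The main obstacle is precisely these two well-definedness verifications, which form the technical core of Levine--Morel's comparison result and are genuinely characteristic-zero statements; once granted, the equalities $\vartheta \circ \psi = \mathrm{id}$ and $\psi \circ \vartheta = \mathrm{id}$ are immediate on generators, completing the proof.
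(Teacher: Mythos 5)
The paper's own ``proof'' is a bare citation to Levine--Morel \cite[Theorems 4.5.1 and 7.1.4\,(2)]{AlgebraicLevine}, so you are reconstructing the underlying comparison argument rather than matching anything in this paper. Your reduction of the cohomology statement to the homology statement via Proposition~\ref{prop OCT OBM} is correct, your surjectivity argument is correct, and you are right to flag that well-definedness of the would-be inverse $\psi$ (independence of resolution via weak factorization, and vanishing on rational equivalence) is the place where characteristic zero enters.

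However, there is a genuine gap in the injectivity step. You claim that, once $\psi$ is well-defined, the identity $\psi\circ\vartheta^{(\ZZ,F_a)}_{CH_*}=\mathrm{id}$ ``is immediate on generators.'' It is not. The group $\Omega_*^{(\ZZ,F_a)}(X)$ is generated by the images of classes $[f\colon Y\to X]$ with $Y$ smooth integral and $f$ projective; for such a class, $\vartheta^{(\ZZ,F_a)}_{CH_*}$ sends it to $f_*[Y]_{CH_*}$, which equals $\deg(f)\cdot[\overline{f(Y)}]_{CH_*}$ when $f$ is generically finite onto its image and $0$ otherwise, and applying $\psi$ returns $\deg(f)\cdot[\widetilde{f(Y)}\to X]$ for a chosen resolution $\widetilde{f(Y)}\to\overline{f(Y)}$. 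To finish you would need
$$[f\colon Y\to X]\ =\ \deg(f)\cdot[\widetilde{f(Y)}\to X]\quad\text{in }\Omega_*^{(\ZZ,F_a)}(X)\ ,$$
and this is precisely the generalized degree formula of Levine and Morel (their Theorem~4.4.7): the difference of the two sides in $\Omega_*(X)$ is a sum $\sum_i a_i[\tilde Z_i\to X]$ with $a_i\in\Laz_*$ of strictly positive degree and $Z_i$ proper closed subschemes, and hence vanishes after $-\otimes_{\Laz_*}\ZZ$. That formula is the structural heart of the theorem, it is logically independent of the two well-definedness checks you do flag, and it requires its own inductive argument with the localization sequence and resolution of singularities. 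Without it, your argument only shows that $\vartheta^{(\ZZ,F_a)}_{CH_*}$ is a split surjection.
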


\begin{proof}
See \cite[Theorems 4.5.1 and 7.1.4 (2)]{AlgebraicLevine}
\end{proof}

For what it concerns the periodic multiplicative formal group law $(\ZZ[\beta,\beta^{-1}],F_m)$, Levine and Morel proved the following result.

\begin{theorem}\label{th K^0}
The canonical map 
$$\vartheta_{(\ZZ[\beta,\beta^{-1}],F_m)}^{K^0[\beta,\beta^{-1}]}:\Omega_{(\ZZ[\beta,\beta^{-1}],F_m)}^*\rightarrow K^0[\beta,\beta^{-1}]$$
is an isomorphism of oriented cohomology theories on $\SM$.
\end{theorem}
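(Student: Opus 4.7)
The plan is to imitate the proof strategy behind Theorem \ref{th CH}. Since the excerpt already computes (in the example following Theorem \ref{th universality}, via Lemma \ref{lem 1.1.3}) that the formal group law associated to $K^0[\beta,\beta^{-1}]$ is exactly the periodic multiplicative law $F_m(u,v) = u+v-\beta uv$ with coefficient ring $\ZZ[\beta,\beta^{-1}]$, the canonical comparison map $\vartheta^{K^0[\beta,\beta^{-1}]}_{(\ZZ[\beta,\beta^{-1}],F_m)}$ exists automatically from the universal property of $\Omega^*_{(\ZZ[\beta,\beta^{-1}],F_m)}$ discussed just above. Thus only bijectivity is at stake.

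For surjectivity I would invoke resolution of singularities, available because the base field has characteristic zero. For $X \in \SM$ one has $K^0(X) = K_0(X) = G_0(X)$; every coherent sheaf on $X$ admits a finite filtration whose successive quotients are pushforwards of invertible sheaves from integral closed subschemes of $X$, and resolving the singularities of those subschemes expresses every class in $K^0[\beta,\beta^{-1}](X)$ as a $\ZZ[\beta,\beta^{-1}]$-linear combination $\sum_j n_j\, f_{j,*}[\mathcal{O}_{Y_j}]\cdot\beta^{-d_j}$ with each $f_j: Y_j \to X$ projective and $Y_j$ smooth of codimension $d_j$. By the definition of $\vartheta$ and of the K-theoretic pushforward recalled in Example \ref{ex OCT}, each summand is the image of the cobordism cycle $[f_j:Y_j\to X] \in \Omega^{d_j}_{(\ZZ[\beta,\beta^{-1}],F_m)}(X)$, so $\vartheta$ surjects.

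Injectivity is the main obstacle, and the strategy is to construct an explicit inverse $\psi$ sending $f_*[\mathcal{O}_Y]\cdot\beta^{-d}$ back to $[f:Y\to X]$ and extending by $\ZZ[\beta,\beta^{-1}]$-linearity, relying on the surjectivity generators above. The heart of the matter is well-definedness: every relation holding in $K^0[\beta,\beta^{-1}](X)$ among the generators $f_*[\mathcal{O}_Y]\cdot\beta^{-d}$ must already hold among the corresponding cobordism cycles in $\Omega^*_{(\ZZ[\beta,\beta^{-1}],F_m)}(X)$. The three axiomatic families (Dim), (Sect), (FGL) imposed in the construction of $\Omega_*$ carry precisely the right K-theoretic content: (FGL), after base change along $\varPhi_{F_m}$, matches the identity $c_1(L\otimes M) = c_1(L)+c_1(M)-\beta c_1(L)c_1(M)$, which K-theoretically is the tautology $[L\otimes M] = [L]\cdot[M]$ under $c_1(L)=\beta^{-1}(1-[L^\vee])$; (Sect) reproduces the Koszul short exact sequence $0 \to L^\vee \to \mathcal{O}_Y \to \mathcal{O}_Z \to 0$ for the zero scheme $Z$ of a transverse section of $L$; and (Dim) handles the vanishing of iterated first Chern class operators beyond the dimension of the base. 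Checking that these three relations, together with those already present in the free group on isomorphism classes of cobordism cycles, suffice to generate all K-theoretic relations is the technical crux; the verification parallels what Levine--Morel do for the additive law in proving Theorem \ref{th CH}, once one systematically replaces each additive computation by its multiplicative analogue and tracks the $\beta$-grading through every pushforward adjustment.
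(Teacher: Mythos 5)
The paper does not give a proof of this theorem at all: it is quoted as an external black box, with the proof consisting of the single line ``See [Levine--Morel, Theorems 4.2.10 and 7.4.1(1)].'' So any self-contained argument you write is, by definition, taking a different route from the paper.

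Your surjectivity step is sound: over a characteristic-zero field one indeed has $K^0(X)=G_0(X)$ for $X\in\SM$, $G_0(X)$ is generated (after filtering coherent sheaves by prime cyclic quotients) by classes $[\mathcal{O}_Z]$ for $Z\subset X$ integral closed, and resolution of singularities together with the localization sequence for $G_0$ lets one replace these by push-forwards from smooth $Y_j$, which are visibly in the image of cobordism cycles. The difficulty is in the injectivity step, and I do not think the argument you sketch can be made to work as stated. You propose to define an inverse on the generators $f_*[\mathcal{O}_Y]\cdot\beta^{-d}$ and to show well-definedness by arguing that every $K$-theoretic relation among those generators lifts to a relation in $\Omega^*_{(\ZZ[\beta,\beta^{-1}],F_m)}$, traced back to the axioms (Dim), (Sect), (FGL). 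But $K^0(X)$ is presented by short exact sequences of arbitrary coherent (or locally free) sheaves, a class of relations with no evident geometric parametrization; it is not at all clear how an arbitrary such exact sequence, once expressed in the generators $f_{j,*}[\mathcal{O}_{Y_j}]$, is a consequence of (Dim), (Sect), (FGL). You flag this as ``the technical crux'' and then assert it ``parallels'' the additive case, but the parallel is misleading: the additive case (Theorem \ref{th CH}) can be attacked this way precisely because $CH^*(X)$ \emph{does} have a small presentation --- integral cycles modulo divisors of rational functions --- so one has an explicit, finitely-describable list of relations to verify. $K^0$ has nothing of the sort, and Levine--Morel's actual argument for the multiplicative/periodic case is structural rather than a relation-matching construction: it leans on the localization long exact sequence for $G_0$ and an induction on dimension, together with the already-established computation of $\Omega^*(\spec k)\cong\Laz^*$, instead of trying to build an inverse by hand from a putative presentation of $K^0$. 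As written, your proposal identifies the right comparison map and the right surjectivity mechanism, but the injectivity half remains a genuine gap.
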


\begin{proof}
See \cite[Theorems 4.2.10 and 7.4.1 (1)]{AlgebraicLevine}.
\end{proof}

This result was later extented to the case of oriented Borel-Moore homology theories by Dai.

\begin{theorem} \label{th G_0}
The canonical map
$$\vartheta_{(\ZZ[\beta,\beta^{-1}],F_m)}^{G_0[\beta,\beta^{-1}]}:\Omega_*^{(\ZZ[\beta,\beta^{-1}],F_m)}\rightarrow G_0[\beta,\beta^{-1}]$$
is an isomorphism of Borel-Moore homology theories on $\SCH$.
\end{theorem}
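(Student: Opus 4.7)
The plan is to reduce to the cohomological version (Theorem \ref{th K^0}), which is already established on $\SM$, and then extend to all of $\SCH$ via resolution of singularities, Noetherian induction on dimension, and the right-exact localization sequence of Theorem \ref{th loc seq}.

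For smooth $X \in \SM$, Proposition \ref{prop OCT OBM} converts the cohomological isomorphism $\Omega^*_{(\ZZ[\beta,\beta^{-1}],F_m)} \cong K^0[\beta,\beta^{-1}]$ into a Borel-Moore one. The identification $K^0(X) \cong G_0(X)$ on smooth $X$ (finite locally free resolutions of coherent sheaves), together with the sign-flip $\deg\beta = -1$ versus $\deg\beta = +1$ (which precisely implements the cohomological-to-homological regrading $A^n = A_{d-n}$ on pure-dimensional smooth schemes), completes the smooth case.

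For general $X \in \SCH$ one argues by Noetherian induction on $\dim X$. Both functors are invariant under nilpotent thickenings (on the $G_0$ side by filtering any coherent sheaf by powers of the nilpotent ideal of $\mathcal{O}_X$; on the $\Omega_*$ side because every cobordism cycle factors through $X_{\mathrm{red}}$), so one may assume $X$ reduced. Generic smoothness in characteristic $0$ then provides a dense open immersion $j: U \hookrightarrow X$ with $U \in \SM$, whose closed complement $i: Z \hookrightarrow X$ has strictly smaller dimension. Tensoring the localization sequence of Theorem \ref{th loc seq} with $\ZZ[\beta,\beta^{-1}]$ over $\Laz^*$ yields a right-exact sequence for $\Omega_*^{(\ZZ[\beta,\beta^{-1}],F_m)}$; the analogous sequence on the $G_0$ side is classical. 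The natural transformation $\vartheta$ fits these into a commutative diagram with exact rows, in which the isomorphism on $U$ comes from the smooth case and the isomorphism on $Z$ comes from the inductive hypothesis.

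The main obstacle is the asymmetry between the right-exactness of the $\Omega_*$-localization sequence and the full exactness enjoyed by $G_0$: a four-lemma argument gives surjectivity of $\vartheta(X)$ immediately, but injectivity cannot be concluded by a naive five-lemma diagram chase, since $i_\ast$ may fail to be injective on the cobordism side. The workaround followed by Dai is to construct an explicit inverse $G_0[\beta,\beta^{-1}] \to \Omega_*^{(\ZZ[\beta,\beta^{-1}],F_m)}$ on generators. Using resolution of singularities combined with d\'evissage, the class of any coherent sheaf $\mathcal{F}$ on $X$ can be rewritten, modulo classes supported on proper closed subschemes handled inductively, as a $\ZZ[\beta,\beta^{-1}]$-linear combination of push-forwards $\pi_\ast [\mathcal{O}_{\widetilde V}] \cdot \beta^{\dim \widetilde V}$ with $\widetilde V$ smooth and $\pi : \widetilde V \to X$ projective, to which one assigns the cobordism cycle $[\widetilde V \to X]$. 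Well-definedness of this assignment amounts to checking that the relations holding in $G_0$ (additivity over short exact sequences, in particular the Grothendieck-group identity $[L\otimes M] = [L] + [M] - \beta[L][M]$ read at the level of first Chern classes of line bundles) match the relations imposed in passing from $\Omega_\ast$ to $\Omega_*^{(\ZZ[\beta,\beta^{-1}],F_m)}$, namely the multiplicative formal group law (FGL); this last verification reduces, via the universality packaged in Theorem \ref{th K^0}, to the smooth case already handled.
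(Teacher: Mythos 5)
The paper does not actually contain a proof of this theorem; it defers entirely to the citation \cite[Theorem 2.2.3]{ThesisDai}. So there is no in-paper argument to compare your reconstruction against, and the best I can do is assess your sketch on its own merits.

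Your overall strategy---establish the smooth case from Theorem \ref{th K^0} via Proposition \ref{prop OCT OBM}, then extend to $\SCH$ by Noetherian induction on dimension using nilpotent invariance, generic smoothness in characteristic $0$, and the right-exact localization sequence of Theorem \ref{th loc seq}---is the standard template for this kind of comparison theorem (it mirrors the proof of Theorem \ref{th CH} in \cite{AlgebraicLevine}), and you correctly identify the central obstacle: because $\Omega_*$-localization is only right exact, a naive diagram chase yields surjectivity but not injectivity, forcing the construction of an explicit inverse. That is the right diagnosis. You might sharpen it slightly: the $G_0$-localization sequence $G_0(Z)\to G_0(X)\to G_0(U)\to 0$ is also only right exact at this level (the kernel of $i_*$ is controlled by $G_1(U)$, which you are not using), so both rows fail full exactness and the four-lemma really is the most one gets for free.

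The genuine gap is in your final paragraph. You assert that well-definedness of the proposed inverse ``reduces, via the universality packaged in Theorem \ref{th K^0}, to the smooth case already handled,'' but this sentence is doing all the hard work and does not explain anything. Universality gives a canonical map $\Omega_*^{(\ZZ[\beta,\beta^{-1}],F_m)}\to G_0[\beta,\beta^{-1}]$; it says nothing directly about whether an attempted inverse, built by choosing resolutions $\widetilde V\to V$ of the irreducible components of supports of coherent sheaves, is independent of those choices and compatible with the dévissage relations in $G_0$. Independence of the resolution is not a formal consequence of universality: it requires an actual birational-invariance statement for the target theory---precisely the content of the birational-invariance subsection later in the paper (Proposition \ref{prop birational} and its surrounding discussion for $CK_*$, which passes to $\Omega_*^{(\ZZ[\beta,\beta^{-1}],F_m)}$ after inverting $\beta$)---together with weak factorization to compare two resolutions through a chain of blow-ups. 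Your sketch never invokes this, and without it the inverse map is not even well-defined, let alone a homomorphism respecting the Grothendieck-group relations. I would also note that dévissage reduces a coherent sheaf to sheaves of the form $\mathcal{O}_V$ for $V$ an integral closed subscheme, and matching the short-exact-sequence relations in $G_0$ against the $(\mathrm{Sect})$ and $(\mathrm{FGL})$ relations in cobordism is a nontrivial computation that you should not wave away as ``the smooth case already handled.''
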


\begin{proof}
See \cite[Theorem 2.2.3]{ThesisDai}
\end{proof}


The next example of formal group law that can be considered is the multiplicative formal group law $(\ZZ[\beta],F_m)$ which gives rise to the so-called connected $K$-theory. We will denote the resulting oriented Borel-Moore homology theory $\Omega_*^{(\ZZ[\beta],F_m)}$ by $CK_*$. Since the multiplicative formal group law can be restricted to both the additive law (by setting $\beta$ equals to 0) and the periodic multiplicative law (by setting $\beta$ equal to an invertible element), in view of remark \ref{rem double ext} one can see  that connected $K$-theory specializes to both $CH_*$ and $G_0[\beta,\beta^{-1}]$. 

\begin{corollary}\label{cor special CH}
The canonical map 
$$\vartheta_{CH_*}^{CK_*}:CK_*\otimes_{\ZZ[\beta]}\ZZ\rightarrow CH_*$$
is an isomorphism of Borel-Moore homology theories on $\SCH$. Moreover, once it is restricted to $\SM$, it induces on the associated oriented cohomology theories the isomorphism
$$\vartheta^{CK^*}_{CH^*}:CK^*\otimes_{\ZZ[\beta]}\ZZ\rightarrow CH^*\ .$$     
\end{corollary}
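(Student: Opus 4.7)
The claim essentially says that connected $K$-theory specialises to the Chow theory by setting $\beta = 0$, and it should be a direct consequence of the change-of-coefficients machinery already assembled, together with the identification $\Omega^{(\ZZ, F_a)}_* \cong CH_*$ of Theorem \ref{th CH}.

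The plan is as follows. First I would observe that the assignment $\beta \mapsto 0$ defines a morphism of formal group laws
$$\phi : (\ZZ[\beta], F_m) \longrightarrow (\ZZ, F_a),$$
since $F_m(u,v) = u + v - \beta u v$ maps to $u+v = F_a(u,v)$ under the ring homomorphism $\ZZ[\beta]\twoheadrightarrow \ZZ$ sending $\beta$ to $0$. This is the point where the multiplicative law degenerates to the additive law, and it is exactly the situation to which Remark \ref{rem double ext} applies.

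Next, I would invoke Remark \ref{rem double ext} for the morphism $\phi$ above. It yields a canonical isomorphism of oriented Borel-Moore homology functors
$$\Omega_*^{(\ZZ, F_a)} \;\cong\; \Omega_*^{(\ZZ[\beta], F_m)} \otimes_{\ZZ[\beta]} \ZZ \;=\; CK_* \otimes_{\ZZ[\beta]} \ZZ.$$
Composing this with the isomorphism $\vartheta^{(\ZZ, F_a)}_{CH_*} : \Omega_*^{(\ZZ, F_a)} \xrightarrow{\sim} CH_*$ provided by Theorem \ref{th CH}, and checking that the composite is precisely the canonical map $\vartheta_{CH_*}^{CK_*}$ (this is just the universal property of tensor product together with naturality of $\vartheta$), gives the first statement.

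For the cohomological statement I would simply restrict to $\SM$ and pass to the associated oriented cohomology theories via Proposition \ref{prop OCT OBM}. Since the functor $A_* \mapsto A^*$ of that proposition is merely a regrading of the underlying groups on smooth schemes, it commutes with the change of coefficients $(-) \otimes_{\ZZ[\beta]} \ZZ$, so the isomorphism of Borel-Moore theories descends to the claimed isomorphism $CK^* \otimes_{\ZZ[\beta]} \ZZ \xrightarrow{\sim} CH^*$. I do not anticipate a serious obstacle here: everything is formal once Remark \ref{rem double ext} and Theorem \ref{th CH} are in hand; the only thing to verify is that the canonical map produced by universality coincides with $\vartheta_{CH_*}^{CK_*}$, which is a diagram chase using the universal property of $\Omega_*$ as the initial oriented Borel-Moore homology theory.
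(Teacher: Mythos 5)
Your proposal is correct and matches the paper's own (one-line) proof exactly: the paper cites precisely Theorem \ref{th CH} and Remark \ref{rem double ext}, and you have simply unpacked how these two ingredients combine via the morphism of formal group laws $\beta\mapsto 0$. Nothing further is needed.
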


\begin{proof}
The statement follows from theorem \ref{th CH} and remark \ref{rem double ext}.
\end{proof}

\begin{corollary}\label{cor special G_0}
The canonical map 
$$\vartheta^{CK_*}_{G_0[\beta,\beta^{-1}]}:CK_*\otimes_{\ZZ[\beta]}\ZZ[\beta,\beta^{-1}]\rightarrow G_0[\beta,\beta^{-1}]$$
is an isomorphism of Borel-Moore homology theories on $\SCH$. Moreover, once it is restricted to $\SM$, it induces on the associated oriented cohomology theories the isomorphism 
$$\vartheta^{CK_*}_{K^0[\beta,\beta^{-1}]}: CK^*\otimes_{\ZZ[\beta]}\ZZ[\beta, \beta^{-1}]\rightarrow K^0[\beta,\beta^{-1}]\ .$$
 
\end{corollary}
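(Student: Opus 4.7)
The plan is to mirror the proof of Corollary \ref{cor special CH}: invoke Theorem \ref{th G_0} together with Remark \ref{rem double ext} applied to the appropriate morphism of formal group laws.

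First, observe that the localization homomorphism $\ZZ[\beta] \hookrightarrow \ZZ[\beta,\beta^{-1}]$ is a morphism of formal group laws $\phi: (\ZZ[\beta],F_m) \to (\ZZ[\beta,\beta^{-1}],F_m)$, since $F_m(u,v)=u+v-\beta uv$ is preserved under $\phi$. By Remark \ref{rem double ext}, applied to this $\phi$, there is a canonical isomorphism of oriented Borel-Moore homology theories
$$\Omega_*^{(\ZZ[\beta],F_m)}(-)\otimes_{\ZZ[\beta]}\ZZ[\beta,\beta^{-1}] \; \xrightarrow{\cong} \; \Omega_*^{(\ZZ[\beta,\beta^{-1}],F_m)}\,.$$
By definition $CK_* = \Omega_*^{(\ZZ[\beta],F_m)}$, so the left-hand side is exactly $CK_*\otimes_{\ZZ[\beta]}\ZZ[\beta,\beta^{-1}]$.

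Next, compose with the isomorphism of Theorem \ref{th G_0}, which identifies $\Omega_*^{(\ZZ[\beta,\beta^{-1}],F_m)}$ with $G_0[\beta,\beta^{-1}]$ as oriented Borel-Moore homology theories on $\SCH$. The composite is the canonical map $\vartheta^{CK_*}_{G_0[\beta,\beta^{-1}]}$, since both the isomorphism of Remark \ref{rem double ext} and the map of Theorem \ref{th G_0} arise from the respective universal properties, and universality pins down the composite uniquely. This establishes the first claim.

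For the cohomological statement on $\SM$, I would restrict the above isomorphism of Borel-Moore homology theories to $\SM$ and apply Proposition \ref{prop OCT OBM}, which is an equivalence between oriented Borel-Moore homology theories on $\SM$ and oriented cohomology theories on $\SM$, and which in particular preserves isomorphisms. Since tensoring with $\ZZ[\beta,\beta^{-1}]$ over $\ZZ[\beta]$ commutes with the equivalence (both operations are compatible with external product and the graded structure on $\spec k$), one obtains the desired isomorphism $CK^*\otimes_{\ZZ[\beta]}\ZZ[\beta,\beta^{-1}] \xrightarrow{\cong} K^0[\beta,\beta^{-1}]$.

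I do not expect any serious obstacle here: the argument is formal, relying entirely on prior results. The only subtle point is verifying that the composite really agrees with the canonical map $\vartheta$ of the corollary; this is a direct consequence of the universal properties defining each of the morphisms involved.
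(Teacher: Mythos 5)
Your proof is correct and essentially matches the paper's own argument: the homological isomorphism follows from Theorem \ref{th G_0} together with Remark \ref{rem double ext} applied to the inclusion $\ZZ[\beta]\hookrightarrow\ZZ[\beta,\beta^{-1}]$. The one small divergence is that for the cohomological statement on $\SM$ the paper simply cites Theorem \ref{th K^0} directly, whereas you re-derive it from the homological part via the equivalence of Proposition \ref{prop OCT OBM}; both routes are valid and equally short.
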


\begin{proof}
The statement follows from theorems \ref{th G_0}, \ref{th K^0} and remark \ref{rem double ext}.
\end{proof}

In view of these results it seems natural to try to investigate whether or not the common properties of $CH_*$ and $G_0[\beta,\beta^{-1}]$ can be extended to $CK_*$. In particular, we have seen in subsection \ref{sect classes} that for both $CH_*$ and $G_0[\beta,\beta^{-1}]$ it is possible to extend the notion of fundamental class to all equi-dimensional schemes in $\SCH$, that this extension is functorial with respect to l.c.i. morphisms (remarks \ref{rem comp CH} and \ref{rem comp G_0}) and that it is compatible with push-forwards (lemmas \ref{lem transverse CH} and \ref{lem transverse G_0}).
One can thererefore ask the following  question.

\begin{question}
Can one extend the definition of fundamental class arising from the structure of oriented Borel-Moore homology theory on $CK_*$ to all equi-dimensional schemes in $\SCH$, so that properties $(1)-(3)$ below are satisfied?
\begin{enumerate}
\item For every l.c.i morphism $f:X\rightarrow Y$ between equi-dimensional schemes $X,Y\in\SCH$ one has
$$[X]_{CK_*}=f^*[Y]_{CK_*}\ .$$ 
\item For every pair of Tor-independent morphisms $f:X\rightarrow Y$ and $g:Z\rightarrow Y$ in $\SCH$, with $f$ projective, $g$ l.c.i. and $X$ equi-dimensional one has 
$$pr_{2*}([W]_{CK_*})=g^*(f_*([X]_{CK_*}))\ ,$$ 
where $W:=Z\times_Y X$.
\item For every equi-dimensional scheme $X\in\SCH$ one has
$$\vartheta^{CK_*}_{CH_*}([X]_{CK_*})=[X]_{CH_*}\quad ,\quad \vartheta^{CK_*}_{G_0[\beta,\beta^{-1}]}([X]_{CK_*})=[X]_{G_0[\beta,\beta^{-1}]}\ .$$
\end{enumerate}

\end{question}

\begin{remark}
While properties (1) and (2) represent the obvious analogues of the compatibilities between the fundamental classes in $CH_*$ and $G_0[\beta,\beta^{-1}]$ and the pull-back and push-forward maps, property (3) requires the extension of the fundamental class to be compatible with the specializations of corollaries \ref{cor special CH} and \ref{cor special G_0}. 
\end{remark}

\subsubsection{Birational invariance for connected $K$-theory}
We end this section by presenting some consequences that can be drawn from a universal property enjoyed by connected $K$-theory. Let us first state the following theorem (\cite[Theorem 4.3.9]{AlgebraicLevine}), which illustrates the nature of the universal property. 

\begin{theorem}
 Let $k$ be a field admitting resolution of singularities and weak factorization. Then $CK_*$ is the universal oriented Borel-Moore homology theory on $\SM$ which has ``birational invariance'' in the following sense: given a birational projective morphism $f:Y\rightarrow X$  between smooth irreducible varieties, then $f_*[Y]_{CK_*}=[X]_{CK_*}$. 
\end{theorem}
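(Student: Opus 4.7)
The statement has two parts, and I would treat them separately. The first is that $CK_*$ itself enjoys birational invariance; the second is its universality among oriented Borel–Moore homology theories with this property.

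For the first part, I would invoke weak factorization to reduce to the case of a blow-up $\pi:\tilde X\to X$ along a smooth irreducible closed subscheme $Z\subset X$ of codimension $d$. In $\Omega_*(X)$ there is a ``blow-up formula'' expressing $\pi_*[\tilde X]_{\Omega_*}-[X]_{\Omega_*}$ as a universal polynomial in Chern classes of $N_{Z/X}$ and in the coefficients $a_{i,j}$ of $F_\Omega$, supported on $Z$; the polynomial vanishes when all coefficients $a_{i,j}$ with $i+j-1\ge 2$ vanish, i.e.\ precisely when one specializes $F_\Omega$ to a multiplicative formal group law. Applying $\vartheta^{CK_*}$ and noting that $(\ZZ[\beta],F_m)$ satisfies this, one obtains $\pi_*[\tilde X]_{CK_*}=[X]_{CK_*}$.

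For the universality part, let $A_*$ be an oriented Borel–Moore homology theory on $\SM$ with birational invariance. By Theorem~\ref{th universality} there is a unique morphism $\vartheta_{A_*}:\Omega_*\to A_*$. Since $CK_*=\Omega_*^{(\ZZ[\beta],F_m)}=\Omega_*\otimes_{\Laz_*}\ZZ[\beta]$ by remark \ref{rem double ext}, factoring $\vartheta_{A_*}$ through $CK_*$ amounts to showing that the classifying homomorphism $\varPhi_{F_A}:\Laz^*\to A^*(\spec k)$ factors through $\varPhi_m:\Laz^*\to\ZZ[\beta]$; equivalently, that every element of the ideal $I:=\ker(\varPhi_m)$ is sent to $0$ by $\varPhi_{F_A}$. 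Using the isomorphism $\Laz^*\cong\Omega^*(\spec k)$ of Theorem~\ref{th universality}, elements of $I$ can be realized as $\ZZ$-linear combinations of classes $[W]_{\Omega_*}$ for $W$ smooth projective. The plan is to exhibit, for each degree, enough birational projective morphisms $g:Y\to W$ between smooth projective varieties so that the classes $g_*[Y]_{\Omega_*}-[W]_{\Omega_*}$, pushed forward to $\spec k$, generate $I$. Natural candidates are iterated blow-ups of products of projective spaces along linear subvarieties: the blow-up formula above shows that these differences lie in $I$, and a dimension-counting/generator argument using the standard description of $\Laz_*$ as a polynomial ring on classes built out of Milnor hypersurfaces and projective bundles shows that they in fact span $I$ in each graded piece. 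Once this is established, birational invariance of $A_*$ yields $\varPhi_{F_A}(I)=0$ directly, and factorization through $\ZZ[\beta]$ is automatic; uniqueness of the resulting morphism $CK_*\to A_*$ follows from uniqueness of $\vartheta_{A_*}$ and the universal property of the tensor product.

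The main obstacle is the combinatorial/geometric input in the second half: producing a supply of birational morphisms whose class-differences are rich enough to generate the ideal $I$ in every degree. This is where resolution of singularities and weak factorization enter in a nontrivial way — resolution is what allows us to assume the source and target of relevant morphisms are smooth projective, and weak factorization reduces the structure of birational morphisms to blow-ups, for which the blow-up formula gives an explicit handle on the resulting class-differences in $\Laz_*$. The first half, by contrast, is a direct computation with the multiplicative FGL once the blow-up formula is in hand.
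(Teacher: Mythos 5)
The paper does not actually prove this theorem: it is cited verbatim from Levine and Morel's book (\cite[Theorem 4.3.9]{AlgebraicLevine}), so there is no ``paper's own proof'' to compare against. That said, your outline correctly mirrors the general architecture of Levine--Morel's argument: reduce birational invariance to blow-ups along smooth centers via weak factorization, and for universality, show that the kernel ideal $I=\ker(\varPhi_m:\Laz\to\ZZ[\beta])$ is generated by differences of pushed-forward fundamental classes along birational projective morphisms between smooth projective varieties.

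However, the two places you flag as ``obstacles'' are not peripheral technical loose ends but precisely the mathematical content of the theorem, and neither is established. First, the ``blow-up formula'' you invoke is stated as a black box; the assertion that the correction term $\pi_*[\tilde X]_{\Omega_*}-[X]_{\Omega_*}$ lies in $I\cdot\Omega_*(X)$ (equivalently, that every monomial of the universal polynomial involves some coefficient $a_{i,j}$ with $i+j\geq 3$) is exactly what one must prove, and you give no argument for it. Note also that it does not follow merely from the classical birational invariance of $G_0$, since $\beta$ is not invertible in $\ZZ[\beta]$: the kernel of $\Omega_*(X)\to G_0[\beta,\beta^{-1}](X)$ may be strictly larger than the kernel of $\Omega_*(X)\to CK_*(X)$, because the latter still sees $\beta$-torsion. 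Second, the assertion that $I$ is spanned, degree by degree, by classes $[W]-g_*[Y]$ coming from birational projective morphisms is the heart of the universality claim; the ``dimension-counting/generator argument'' you gesture at with Milnor hypersurfaces and blow-ups of products of projective spaces is not carried out, and it is not obvious that the proposed candidates span $I$ in each degree. In short, your plan points in the right direction and correctly reduces the theorem to the right algebraic statements, but the proof of those statements is exactly what is missing.
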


In this context for a field $k$ to admit resolution of singularities will mean that the conclusion of the following theorem is valid for varieties over $k$.

\begin{theorem}
Let $k$ be a field of characteristic zero, and let $f:Y\rightarrow X$ be a rational map of reduced $k$-schemes of finite type. Then there is a projective birational morphism $\mu: Y'\rightarrow Y$ such that
\begin{enumerate}
\item $Y'$ is smooth over $k$.
\item The induced birational map $f\circ \mu : Y'\rightarrow X$ is a morphism.
\item The morphism $\mu$ can be factored as a sequence of blow-ups of Y along smooth centers lying over \rm{Sing}$f$.
\end{enumerate}
\end{theorem}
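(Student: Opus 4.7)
The plan is to deduce this statement from Hironaka's classical resolution of singularities in its strong form (blow-ups along smooth centers), combined with the standard graph-closure construction for rational maps; this is essentially the ``resolution of indeterminacies'' theorem and I would not attempt an independent proof of Hironaka, but rather quote it as a black box at the key step.

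First I would reduce to the case where $f$ is already a morphism. Let $U \subseteq Y$ be the (open, dense) domain of definition of the rational map $f$, let $\Gamma \subseteq U \times X$ be the graph of $f|_U$, and let $\overline{\Gamma} \subseteq Y \times X$ be its scheme-theoretic closure. The first projection $\pi_1 \colon \overline{\Gamma} \to Y$ is projective (since $X$ is, or more precisely since the construction is local on $Y$ after compactifying $X$; in the general quasi-projective setting one uses an immersion into a projective $k$-scheme, applies the argument, and then restricts, which is harmless because $\mu$ is birational) and is an isomorphism over $U$, hence projective and birational. Moreover, the second projection $\pi_2 \colon \overline{\Gamma} \to X$ provides a morphism that extends $f$. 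So it suffices to find a projective birational morphism $\nu \colon Y' \to \overline{\Gamma}$ with $Y'$ smooth and $\nu$ a composition of blow-ups along smooth centers sitting over the singular/exceptional locus of $\pi_1$; then $\mu := \pi_1 \circ \nu$ does the job, since $Y' \to X$ through $\pi_2 \circ \nu$ is a morphism, and the centers of $\nu$ project into $Y \setminus U \subseteq \mathrm{Sing}\, f$.

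The second step is pure resolution of singularities. Apply Hironaka's theorem (in the strong form of Bierstone--Milman or Villamayor) to the reduced, possibly singular scheme $\overline{\Gamma}$: in characteristic zero one obtains a projective birational morphism $\nu \colon Y' \to \overline{\Gamma}$ with $Y'$ smooth over $k$, realized as a finite sequence of blow-ups with smooth centers, each center lying entirely inside the (successive proper transforms of the) singular locus. Since $\pi_1$ is an isomorphism over $U$, the singular locus of $\overline{\Gamma}$ maps into $Y \setminus U$, so each blown-up center maps into $Y \setminus U \subseteq \mathrm{Sing}\, f$. Composing the blow-ups on $\overline{\Gamma}$ with $\pi_1$ realizes each of them as a blow-up of (the proper transform in) $Y$ along a smooth center lying over $\mathrm{Sing}\, f$, as required for condition (3).

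The main obstacle is of course entirely contained in that invocation of Hironaka: one needs the strong, functorial version guaranteeing that the resolution is achieved by an explicit sequence of blow-ups along smooth centers supported in the singular locus, and that these centers are themselves smooth subschemes of the ambient scheme (not merely of the strict transform at the step where they appear). Everything else---closing the graph, checking birationality and projectivity of $\pi_1$, verifying that the exceptional centers land over $\mathrm{Sing}\, f$---is formal. Since the statement as written is the standard black-box formulation used throughout the algebraic cobordism literature of Levine--Morel, I would simply cite Hironaka (or Bierstone--Milman) for the resolution step rather than attempt to reprove it.
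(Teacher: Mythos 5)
The paper itself does not prove this statement: it is quoted verbatim as the classical form of Hironaka's resolution-of-indeterminacies theorem (it is essentially Hironaka's Main Theorem II, also stated as such in Levine--Morel), and it serves only to fix the meaning of ``admits resolution of singularities.'' So citing Hironaka as a black box is exactly in the spirit of the paper. Your first step (closing the graph to reduce (2) to (1)) and your appeal to a strong resolution theorem for $\overline{\Gamma}$ are both fine for producing a $\mu$ satisfying conditions (1) and (2).

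However, the passage from your sketch to condition (3) contains a genuine gap. You resolve $\overline{\Gamma}$ by a sequence of blow-ups along smooth centers $Z_i \subset \overline{\Gamma}_i$, and then assert that composing with $\pi_1 \colon \overline{\Gamma} \to Y$ ``realizes each of them as a blow-up of (the proper transform in) $Y$ along a smooth center.'' That is not automatic: $\pi_1$ is itself the blow-up of $Y$ along the ideal sheaf of the indeterminacy locus, and that center is typically neither smooth nor even a subvariety of $Y$ in a useful sense, so the tower $\overline{\Gamma}_n \to \cdots \to \overline{\Gamma}_0 = \overline{\Gamma} \to Y$ is not a tower of blow-ups of $Y$ along smooth centers starting from $Y$ as (3) requires. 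The standard way to obtain (3) is not graph-closure followed by resolution, but the principalization (monomialization) half of Hironaka's package: one takes the ideal sheaf $I$ on $Y$ cutting out the indeterminacy locus, produces a sequence of blow-ups of $Y$ along smooth centers lying in $V(I)$ after which the total transform of $I$ is invertible, interleaved with desingularization of $Y$ itself; the rational map then extends automatically. If you want to keep the graph-closure route, you would need the embedded/functorial version of resolution that simultaneously resolves $\overline{\Gamma}$ and monomializes the exceptional ideal of $\pi_1$, and then argue that the resulting centers descend to smooth centers in the successive transforms of $Y$ --- which is precisely the content of principalization and is not a formal consequence of ``resolve $\overline{\Gamma}$.''
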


An important consequence of birational invariance is that, together with resolution of singularities, it allows to associate to every $X\in\SCH$ a unique class in $CK_*(X)$ which represents the push-forward of the fundamental class of any of the smooth schemes birationally isomorphic to $X$. Given a non-smooth integral scheme $Y\in\SCH$, one can apply resolution of singularities to $id_Y$ to obtain $r:R\rightarrow Y$ birational and projective, with $R\in\SM$. As a consequence one can consider the class $[r:R\rightarrow Y]\in\Omega^*(Y)$. In general this assignment is not well defined as there could be different resolutions of $Y$ giving rise to different cobordism classes but, as it is shown in the next proposition, all these classes have to coincide once they are mapped to connected $K$-theory.

\begin{proposition}\label{prop birational}
Let $r:R\rightarrow X$ and $r ':R'\rightarrow X$ be two projective birational morphisms. Then $\vartheta_{CK_*}([r:R\rightarrow X])=\vartheta_{CK_*}([r ': R'\rightarrow X])\in CK_*(X)$.
\end{proposition}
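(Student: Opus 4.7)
The plan is to construct a common smooth scheme dominating both $R$ and $R'$ birationally, and then reduce the equality to the birational invariance property of $CK_*$. First, since $\vartheta_{CK_*}$ is a morphism of oriented Borel-Moore homology theories, it preserves fundamental classes of smooth schemes and commutes with projective push-forwards, so
$$\vartheta_{CK_*}([r\colon R\to X])=r_*[R]_{CK_*}\quad\text{and}\quad\vartheta_{CK_*}([r'\colon R'\to X])=r'_*[R']_{CK_*}\ .$$
The statement therefore reduces to the identity $r_*[R]_{CK_*}=r'_*[R']_{CK_*}$ in $CK_*(X)$.

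To manufacture the desired smooth roof, I would form the fibre product $R\times_X R'$ and isolate the unique irreducible component $W$ that dominates both projections. Such a component exists because $r$ and $r'$ are birational: the birational identification between $R$ and $R'$ over a common open subscheme of $X$ gives a rational section, and its graph closure inside $R\times_X R'$ yields $W$. Applying resolution of singularities to $W$ produces a smooth irreducible scheme $\tilde R$ equipped with a projective birational morphism $\tilde R\to W$. Composing with the two natural projections gives projective birational morphisms
$$p\colon\tilde R\to R\quad\text{and}\quad p'\colon\tilde R\to R'$$
between smooth irreducible varieties, satisfying $r\circ p=r'\circ p'$; projectivity of $p$ follows from that of $r'$ by base change, and similarly for $p'$.

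Now the birational invariance of $CK_*$ applied to $p$ and $p'$ gives $p_*[\tilde R]_{CK_*}=[R]_{CK_*}$ and $p'_*[\tilde R]_{CK_*}=[R']_{CK_*}$. Functoriality of projective push-forwards (axiom $(BM1)$) then yields
$$r_*[R]_{CK_*}=(r\circ p)_*[\tilde R]_{CK_*}=(r'\circ p')_*[\tilde R]_{CK_*}=r'_*[R']_{CK_*}\ ,$$
which is precisely the required equality.

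The main obstacle I anticipate is the geometric construction of $\tilde R$: one must correctly identify the irreducible component of $R\times_X R'$ that dominates both factors and then invoke resolution of singularities in a coherent way. This step is essentially bookkeeping, but it is the only place where a genuinely geometric ingredient enters the argument; all the other steps are formal manipulations coming from the universal property of $CK_*$ and its characterising property of birational invariance. The hypothesis that $k$ admits resolution of singularities is already in force, since it is built into the very theorem characterising $CK_*$ by birational invariance that we wish to invoke.
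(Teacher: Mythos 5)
Your argument is correct and is essentially the paper's own: both proofs build a smooth common roof dominating $R$ and $R'$ birationally over $X$, then invoke birational invariance of $CK_*$ twice together with functoriality of push-forwards. The paper obtains the roof in one stroke by applying the stated elimination-of-indeterminacy form of resolution of singularities directly to the rational map $\rho = r^{-1}\circ r'\colon R'\dashrightarrow R$, whereas you reconstruct that ingredient by hand via the graph closure in $R\times_X R'$ — the same geometric idea, just unpacked.
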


\begin{proof}
Let us consider the rational map $\rho:=r^{-1}\circ r': R'\rightarrow R$. Thanks to resolution of singularities there exists a projective birational morphism $\mu:R''\rightarrow R'$ with $R''\in\textbf{Sm}_{k}$ such that $\rho\circ \mu$ is a morphism. Let us observe that the birational invariance of $CK_*$ implies that $\mu_*[R'']_{CK_*}=[R']_{CK_*}$ and therefore that 
$$\vartheta_{CK_*}([r'\circ \mu:R''\rightarrow X])=
r'_*\mu_*\vartheta_{CK_*}(1_{R''})=r'_*\mu_*[R'']_{CK_*}=r'_*[R']_{CK_*}
=\vartheta_{CK_*}([r':R'\rightarrow X])\ .$$
Moreover, since the composition $r\circ(r^{-1}\circ r')\circ \mu$ is a morphism and equals $r'\circ \mu$, we also have that $[r\circ \rho\circ \mu:R''\rightarrow X]=[r'\circ \mu:R'' \rightarrow X]$ with $\rho\circ \mu$ birational and projective. It now suffices to invoke again the birational invariance of $CK^*$ to conclude that 
$$\vartheta_{CK_*}([r\circ\rho\circ \mu:R''\rightarrow X])=r_*\rho_*\mu_*[R'']_{CK_*}=r_*[R]_{CK_*}=\vartheta_{CK_*}([r:R\rightarrow X])\ . \qedhere$$
\end{proof}

Thanks to this result we are now able to associate to every integral scheme $X$ a class in $CK_*(X)$. This class will represent the push-forward of the fundamental class of any of the smooth scheme birationally isomorphic to the scheme $X$.

\begin{definition}
Let $Y\in\SCH$ be an integral scheme and let $r:R\rightarrow Y$ be any resolution of singularities of $Y$. We associate to $Y$ the following class in $CK_*(Y)$: $$\eta_Y:=\vartheta_{CK_*}([r:R\rightarrow Y])\ .$$ 
\end{definition}

\begin{remark}
It is worth noticing that if $Y$ is a smooth scheme, then one can take $id_Y$ as a resolution of singularities and therefore the class we just defined coincides with its fundamental class. 
\end{remark}


\section{Degeneracy loci and Schubert varieties}\label{ch 1}
In this section we will present the geometric objects that motivate our study: degeneracy loci, Schubert varieties and Bott-Samelson resolutions. We will also illustrate the method used by Fulton in \cite{FlagsFulton} to express the fundamental classes of both degeneracy loci and Schubert varieties by means 
of certain families of polynomials.
 Throughout this section $k$ will be an arbitrary field.


\subsection{Notations and definitions for the symmetric group}

For $i\in \{1,\tred ,n-1\}$ we will denote by $s_i$ the permutation $(i\  i+1)$ and we will refer to the elements of this family as \textit{fundamental transpositions}. By decomposition of a permutation $\omega\in S_n$ we will mean an $l$-tuple $I=(i_1,\tred,i_l)$ such that $s_I:=s_{i_1}\tred s_{i_l}=\omega$. We will write $\emptyset$ to refer to the empty decomposition of the identity of $S_n$. If $I$ is an $l$-tuple, $(I,i_{l+1})$ will refer to the $(l+1)$-tuple obtained from $I$ by adding $i_{l+1}$ at the end.   

 Since the set of all elementary transpositions generates $S_n$, every $\omega$ admits a decomposition. Among the decompositions of a given element $\omega$, the ones with the fewest elementary transpositions are said to be \textit{minimal}. $l(\omega)$, the length of $\omega$, is then defined as the number of elements appearing in any minimal resolution.  

Among all elements of $S_n$ a special role is played by 
$ 
w_0=
\left( \begin{array}{cccc}
      1 & 2 & \tred & n  \\ n & n-1 &\tred & 1
\end{array} \right)
$, the permutation that achieves the maximum  of the length function $l$: $\frac{n(n-1)}{2}$.  


\subsection{Degeneracy loci associated to morphisms of vector bundles}\label{sec degeneracy}
 
Given a morphism between vector bundles, a degeneracy locus is a closed subscheme of the base obtained by selecting the points over which the map  induced between the fibers satisfies some requirements called $\textit{rank conditions}$. In order to be able to define the degeneracy locus both as a set and as a scheme, it is convenient to recall the notion of zero scheme of a section of a vector bundle.

\begin{definition}
Let $p:E \rightarrow X$ be a vector bundle and $s_E$ its zero section. Given a section $s:X\rightarrow E$ one defines $Z(s)$, the zero scheme of $s$, as the pull-back of $s$ along $s_E$.  Diagrammatically one has
$$
\xymatrix{
  Z(s)\ar[r]^{j=i} \ar[d]_{i}&X\ar[d]^{s} \\
  X\ar[r]^{s_E}& E     }
$$
and the fact that both $s_E$ and $s$ are sections of $E$ forces $i$ and $j$ to coincide.
\end{definition}

\begin{remark}\label{rem zeroscheme}
It is not difficult to prove, by a repeated use of the universal property of fiber products, that the construction of the zero scheme of a section commutes with pull-backs. More precizely, given a vector bundle $p:E\rightarrow X$, a section $s:X\rightarrow E$ and a morphism $\varphi:Y\rightarrow X$ one has $\varphi^{-1}(Z(s))=Z(\varphi^*s)$, where $\varphi^*s:Y\rightarrow \varphi^*E$ is the section naturally induced by $s$. 
\end{remark}

\begin{remark} \label{rem alt}
The zero scheme $Z(s)$ can be also defined in the following equivalent way. Suppose that the affine open sets $\{U_i\}_{i\in I}$ form a trivializing cover of $X$ and denote by $s_i:U_i\rightarrow \mathbb{A}_{U_i}^{{\rm rank}E}$ the restriction of $s$ to $U_i=\text{Spec}\, R_i$. Then $Z(s)\cap U_i$ is defined by the ideal $(s_{i_1},\tred,s_{i_{{\rm rank} E}})$ where the $s_{i_j}\in R_i$ are given by the different components of $s_i$. 
\end{remark}

Before considering the more general case that will be needed for our purposes, let us first define the degeneracy locus associated to a single rank condition.

\begin{definition} \label{def degen locus}
Let $E$ and $F$ be two vector bundles over $X$ of rank $e$ and $f$ respectively. Given $k\in\NN$ with $0\leq k \leq \text{min}(e,f)$ and a morphism of vector bundles $h:E\rightarrow F$, we define the $k$-th degeneracy locus 

$$D_k(h):=Z(\wedge^{k+1}h)=\{x\in X\ |\ {\rm rank}(h(x):E(x)\rightarrow F(x))\leq k\}\ ,$$ 
where $\wedge^{k+1}h$ is the morphism induced by $h$ on the $(k+1)$-th exterior powers (viewed as a section of the bundle $Hom(\wedge^{k+1}E,\wedge^{k+1}F)$) and $h(x)$ is the restriction of $h$ to the fiber over $x$.
\end{definition}  

\begin{remark}\label{rem degen locus}
Since for vector bundles the exterior power functor and the pull-back functor  commute, in view of remark \ref{rem zeroscheme}, one is able to conclude that $k$-th degeneracy loci are preserved under pull-back. In other words, with the notations of the previous definition one has $\varphi^{-1}(D_k(h))=D_k(\varphi^*h)$ for all $\varphi:Y\rightarrow X$.

\end{remark}

\begin{remark} \label{rem minors}
If one considers the alternative definition of zero scheme given in remark \ref{rem alt}, one can actually see what are the local equations defining $D_k(h)$. It is possible to show that the elements $s_{i_j}\in R_i$ are given by the  $(k+1)$-minors of the $e$ by $f$ matrix describing the morphism $h_{|U_i}:\mathbb{A}_{U_i}^e\rightarrow \mathbb{A}_{U_i}^f$.
\end{remark}

We are now in the position to generalize the previous construction to the case of a morphism of vector bundles endowed with full flags. One important feature of these kind of bundles is that they come equipped with a filtration into linear factors.

\begin{definition} \label{def linear factors}
Let $V\rightarrow X$ be vector bundle of rank $n$ and let $W\unddot=(V=W_n\srarrow \trecd \srarrow W_1)$ and $U\unddot=(U_1\subset \trecd \subset U_n=V)$ be full flags of respectively quotient and subbundles of $V$. To these full flags we associate two families of $n$ line bundles $\{L^{W\unddot}_i\}_{i\in\{1,\tred,n\}}$ and $\{L^{U\unddot}_i\}_{i\in\{1,\tred,n\}}$ by setting
$$L^{W\unddot}_i:={\rm Ker\,}(W_i\srarrow W_{i-1})\quad,\quad L^{U\unddot}_i:=U_i/U_{i-1}\ .$$
\end{definition}

 Let us fix some notation. Given $h:E \rightarrow F$  a morphism of vector bundles (respectively of rank $e$ and $f$) over a scheme $X$ it is not restrictive, thanks to the splitting principle, to assume that $E$ and $F$ come equipped with full flags $E_{\textbf{\textbullet}}=(E_1\subset \trecd \subset E_e=E)$ and $F_\textbf{\textbullet}=(F=F_f\srarrow \trecd \srarrow F_1)$. We will denote by $h_{ij}$ the composition of the restriction of $h$ to $E_i$ with the projection onto $F_j$.

 In this setting a set of rank conditions is the assignment of an integer $r_{ij}$ to every map $h_{ij}$. It is therefore possible to interpret it as a function $r:\{1,\tred,e \}\times\{1,\tred, f\}\rightarrow \NN$  such that $r(i,j)=r_{ij}$.

\begin{definition}
  Let $r$ be a set of rank conditions. With the above notations the degeneracy locus of $h$ associated to $r$ is defined as
$$\Omega_r(E_{\textbf{\textbullet}},F_{\textbf{\textbullet}},h):=\bigcap_{(i,j)} D_{r_{ij}}(h_{ij})=\{x\in X \ |\ {\rm rank}(h_{ij}(x):E_i(x)\rightarrow F_j(x))\leq r(i,j)\ \forall i,j \}\ ,$$
where $h_{ij}(x)$ is the restriction of $h_{ij}$ to the fiber over $x$. In case no confusion can arise about which morphism and which flags are considered, we will write $\Omega_r$ instead of the more precise $\Omega_r(E_{\textbf{\textbullet}},F_{\textbf{\textbullet}},h)$.
\end{definition}

\begin{remark}\label{rem pull}
As scheme intersection is defined in terms of fiber products, it follows from remark \ref{rem degen locus} that also $\Omega_r(E_{\textbf{\textbullet}},F_{\textbf{\textbullet}},h)$ is preserved under pull-backs: for $\varphi:Y\rightarrow X$ one has
$\varphi^{-1}(\Omega_r(E_{\textbf{\textbullet}},F_{\textbf{\textbullet}},h))=\Omega_r(\varphi^*E_{\textbf{\textbullet}},\varphi^*F_{\textbf{\textbullet}},\varphi^* h)$.
\end{remark}

In case the two vector bundles have the same rank, it is possible to consider a family of sets of rank conditions associated to permutations.

\begin{definition} 
Suppose $e=f=n$.  Given a  permutation $\omega\in S_n$,  one defines a set of rank conditions $r_\omega$ by setting $$r_\omega (i,j)=|\{k\leq j \  |\ \omega (k)\leq i \}|\ .$$ 

\end{definition}

\begin{definition}
A set of rank conditions $r$ is said permissible if there exists $\omega \in S_n$, with $n\geq\text{max}\{e,f\}$, such that  the restriction of $r_\omega$ to $\{1,\tred ,e\}\times \{1,\tred ,f\}$ coincides with $r$. 
\end{definition}

Permissible rank conditions play an important role since, assuming $h$ generic, they give rise to degeneracy loci  which are locally irreducible. Moreover, as we will see later, if the set of rank conditions arises from a permutation, the degeneracy locus can be defined using a subset of the $n^2$ rank conditions: this leads to the notion of \textit{essential set}. 


\begin{definition}
Given a permutation $\omega\in S_n$ the essential set $Ess(\omega)$ is defined as follows:
$$Ess(\omega)=\{(i,j)\in\{1,\tred,n-1\}^2\ |\ \omega(i)>j,\ 
\omega(i+1)\leq j, \ \omega^{-1}(j)>i,\  \omega^{-1}(j+1)\leq i\}\ .$$ 
\end{definition}

\begin{example} \label{ex omega0}
It is easy to verify that $Ess(\omega_0)=\{(1,n-1),(2,n-2),\tred,(n-1,1)\}$: one only has to recall that $\omega_0(i)=n+1-i$. This turns the four requirements in: 
$$n+1-i>j\quad,\quad  n-i\leq j \quad,\quad n+1-j>i \quad,\quad n-j\leq i\ .$$ Once they are combined the resulting condition is given by $i+j=n$.
\end{example}
An easy consequence of the definition is the following lemma which shows that the essential set is independent of the ambient symmetric group $\omega$ belongs to. 

\begin{lemma} \label{lem Ess emb}
Let $\omega\in S_n$ and, for $m\geq n$, let $i:S_n\rightarrow S_{m}$ be the canonical inclusion. Then $Ess(\omega)=Ess(i(\omega))$.
\end{lemma}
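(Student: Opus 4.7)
The plan is to reduce the statement to two checks: first, that the defining conditions for $Ess(\omega)$ and $Ess(i(\omega))$ agree on $\{1,\ldots,n-1\}^2$, and second, that no element $(i,j)$ of $Ess(i(\omega))$ can have $i\geq n$ or $j\geq n$. Write $\tilde\omega:=i(\omega)$ and recall that by construction $\tilde\omega(k)=\omega(k)$ for $k\leq n$ and $\tilde\omega(k)=k$ for $k>n$; in particular $\tilde\omega^{-1}(l)=\omega^{-1}(l)$ for $l\leq n$ and $\tilde\omega^{-1}(l)=l$ for $l>n$.

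For the first check, if $i,j\in\{1,\ldots,n-1\}$ then the four values $\tilde\omega(i)$, $\tilde\omega(i+1)$, $\tilde\omega^{-1}(j)$, $\tilde\omega^{-1}(j+1)$ coincide with $\omega(i)$, $\omega(i+1)$, $\omega^{-1}(j)$, $\omega^{-1}(j+1)$ respectively (using $i+1\leq n$ and $j+1\leq n$), so membership in the two essential sets is defined by literally the same inequalities.

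For the second check I would run a short case analysis. If both $i,j\geq n$ then $\tilde\omega(i)=i$ and $\tilde\omega(i+1)=i+1$, and the conditions $\tilde\omega(i)>j$ and $\tilde\omega(i+1)\leq j$ become $i>j$ and $i+1\leq j$, which are incompatible. If $i\geq n$ but $j<n$ then $\tilde\omega(i+1)=i+1\geq n+1>j$, so the condition $\tilde\omega(i+1)\leq j$ fails. Symmetrically, if $j\geq n$ but $i<n$ then $\tilde\omega^{-1}(j+1)=j+1\geq n+1>i$, contradicting $\tilde\omega^{-1}(j+1)\leq i$. Hence $Ess(\tilde\omega)\subseteq\{1,\ldots,n-1\}^2$, and combining with the first check yields $Ess(\omega)=Ess(\tilde\omega)$.

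There is no real obstacle here; the argument is purely combinatorial and relies only on the explicit form of the inclusion $S_n\hookrightarrow S_m$. The only point requiring a bit of care is making sure the indices $i+1$ and $j+1$ stay within the range where $\tilde\omega$ and $\tilde\omega^{-1}$ act trivially in the case analysis above, which is precisely why splitting into the three subcases (both large, only $i$ large, only $j$ large) makes the verification immediate.
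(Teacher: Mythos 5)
Your proof follows essentially the paper's approach: check that the defining conditions for $Ess(\omega)$ and $Ess(i(\omega))$ agree on $\{1,\ldots,n-1\}^2$, then rule out any pair in $Ess(i(\omega))$ with a coordinate $\geq n$. The paper reduces to $m=n+1$ by induction and eliminates pairs $(k,n)$ via the fourth condition and $(n,l)$ via the second; your direct case split handles general $m$ at once, which is perfectly fine and avoids the induction.

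There is one small slip in your first subcase. When both $i\geq n$ and $j\geq n$ you write $\tilde{\omega}(i)=i$, but this holds only for $i>n$; for $i=n$ one has $\tilde{\omega}(n)=\omega(n)$, which need not equal $n$. The conclusion is still correct, since for $i=n$ and $j\geq n$ the first defining condition already fails outright ($\tilde{\omega}(n)=\omega(n)\leq n\leq j$), but your derivation of ``$i>j$'' from ``$\tilde{\omega}(i)>j$'' is not justified at the boundary $i=n$. A cleaner route for this subcase is to use conditions two and four instead: since $i+1>n$ and $j+1>n$ one has $\tilde{\omega}(i+1)=i+1$ and $\tilde{\omega}^{-1}(j+1)=j+1$, so the conditions read $i+1\leq j$ and $j+1\leq i$, which are incompatible without any appeal to the value of $\tilde{\omega}(i)$. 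The other two subcases are correct as written.
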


\begin{proof}
First of all, let us observe that it is sufficient to restrict to the case $m=n+1$: the general case immediately follows by induction. As $\omega$ and $i(\omega)$ coincides on $\{1,\tred,n-1\}^2$, the very definition of essential set implies that $Ess(\omega)=Ess(i(\omega))\cap \{1,\tred, n-1\}^2$. One is therefore left to show that in $Ess(i(\omega))$ there are no elements of the form $(k,n)$ and $(n,l)$. In order for $(k,n)$ to belong to $Ess(i(\omega))$ it should satisfy  the forth of the relations defining $Ess(i(\omega))$, which in this case gives $k\geq [i(\omega)^{-1}](n+1)=n+1$: this is impossible since   by definition $Ess(i(\omega))\subseteq \{1,\tred, n\}^2$. Similarly the second requirement forces $l\geq n+1$, thus showing that no element of the form $(n,l)$ can belong to $Ess(i(\omega))$.
\end{proof}


\begin{lemma} \label{lem Ess}
For any $\omega\in S_n$ and any $n$ by $n$ matrix $M$ with entries in a commutative ring $R$, the ideal generated by all minors of size $r_\omega(i,j)+1$ taken from the upper left $i$ by $j$ corner of $M$, for all $1\leq i,j \leq n $, is generated by these same minors using only those $(i,j)$ which are in $Ess(\omega)$.
\end{lemma}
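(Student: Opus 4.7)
The inclusion $\supseteq$ is immediate since $Ess(\omega)$ is contained in the indexing set $\{(i,j) : 1\le i,j\le n\}$. The content of the lemma is the reverse inclusion, and I write $J(i,j)$ for the ideal generated by the $(r_\omega(i,j)+1)$-minors of the upper-left $i\times j$ block $M_{i,j}$. The plan is to show $J(i,j) \subseteq I := \sum_{(p,q)\in Ess(\omega)} J(p,q)$ for every $(i,j)$, so that the two ideals agree.

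The one elementary tool that does all the work is the following monotonicity principle: whenever $i\le i'$, $j\le j'$ and $r_\omega(i,j)=r_\omega(i',j')=:r$, one has $J(i,j) \subseteq J(i',j')$. Indeed $M_{i,j}$ is a submatrix of $M_{i',j'}$, so every $(r+1)$-minor of the former appears tautologically as an $(r+1)$-minor of the latter. Using this, I propagate: starting from any $(i,j)\notin Ess(\omega)$ at which $J(i,j)\neq 0$, I move componentwise via unit steps $(i,j)\mapsto(i+1,j)$ or $(i,j)\mapsto(i,j+1)$ that leave $r_\omega$ unchanged, and iterate. Each step strictly increases $i+j$, so the chain terminates at some $(i_\ast,j_\ast)$ which—by the key combinatorial claim below—either belongs to $Ess(\omega)$ or has vacuous rank condition, in the sense $r_\omega(i_\ast,j_\ast)+1 > \min(i_\ast,j_\ast)$. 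In both cases the monotonicity principle gives $J(i,j) \subseteq J(i_\ast,j_\ast) \subseteq I$, and we are done.

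The main obstacle is the combinatorial claim that such a propagation step is always available at a non-essential point: at any $(i,j)\notin Ess(\omega)$ whose minor condition is nonvacuous, at least one of $r_\omega(i+1,j)=r_\omega(i,j)$ or $r_\omega(i,j+1)=r_\omega(i,j)$ holds (with the corresponding target still inside $\{1,\ldots,n\}^2$). Via the unit-step identities
\[
r_\omega(i+1,j)-r_\omega(i,j) = [\omega^{-1}(i+1)\le j], \qquad r_\omega(i,j+1)-r_\omega(i,j) = [\omega(j+1)\le i],
\]
this claim reduces to a finite case analysis on which of the four conditions (a)--(d) defining $Ess(\omega)$ fails at $(i,j)$, together with an inspection of the boundary regimes $i=n$ and $j=n$ in which the $(r_\omega(i,j)+1)$-minors of $M_{i,j}$ are trivially zero for size reasons. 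Once this matching between the defining inequalities of $Ess(\omega)$ and the plateau structure of $r_\omega$ is carried out, the propagation argument closes the proof.
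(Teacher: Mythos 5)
Your proposed argument has a genuine gap: the ``key combinatorial claim'' at the end is false, and the eastward/southward propagation can get stuck. Take $n=4$ and $\omega=(3,4,1,2)$ in one-line notation, so $\omega=\omega^{-1}$. One computes $Ess(\omega)=\{(2,2)\}$, while at $(i,j)=(3,3)$ one has $r_\omega(3,3)=2<\min(3,3)$, so the minor condition there is nonvacuous, yet $r_\omega(4,3)=r_\omega(3,4)=3\neq 2$. Thus $(3,3)$ is non-essential, nonvacuous, and neither unit step $(3,3)\mapsto(4,3)$ nor $(3,3)\mapsto(3,4)$ preserves $r_\omega$; your chain terminates at a point that is neither in $Ess(\omega)$ nor vacuous, and the unique essential box $(2,2)$ lies to the north-west, where your monotonicity principle cannot reach. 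In terms of the defining inequalities: your two unit-step obstructions correspond only to conditions (b) and (d) in the definition of $Ess(\omega)$, so when both forward steps fail you have (b) and (d), but nothing forces (a) $\omega(i)>j$ or (c) $\omega^{-1}(j)>i$, and at $(3,3)$ both of those fail.

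The missing ingredient is propagation in the opposite direction, which requires a genuinely different (and non-trivial) containment. When, say, $r_\omega(i-1,j)=r_\omega(i,j)-1$, one still has $J(i,j)\subseteq J(i-1,j)$, but this is no longer the tautological ``submatrix $\Rightarrow$ same minors'' observation: instead one must use cofactor (Laplace) expansion to show that each $(r_\omega(i,j)+1)$-minor of the $i\times j$ corner lies in the ideal generated by the $r_\omega(i,j)$-minors of the $(i-1)\times j$ corner (expand along row $i$ if it is used, and along any row otherwise). Only after adding these rank-decreasing north/west steps, each keyed to a failure of (a) or (c), together with a termination argument that works for the enlarged move set, does the propagation always reach an essential or vacuous box. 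Note also that the paper itself does not supply a proof of this lemma; it simply cites Fulton's Lemma~3.10(a) in \cite{FlagsFulton}, so your sketch is an attempt at an independent proof, and that attempt is currently incomplete for the reason above.
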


\begin{proof}
See  \cite[Lemma 3.10.a]{FlagsFulton}.
\end{proof}

\begin{proposition}\label{prop Ess}
Given a permutation $\omega\in S_n$ one has
$$\Omega_{r_\omega}(E_{\textbf{\textbullet}},F_{\textbf{\textbullet}},h)=\bigcap_{(i,j)\in Ess(\omega)} D_{r_\omega(i,j)}(h_{ij})\ .$$
\end{proposition}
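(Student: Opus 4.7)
The plan is to reduce the statement to the ideal-theoretic Lemma on $Ess$ (quoted from Fulton) via a local computation. The claim is an equality of closed subschemes of $X$, so it suffices to verify it on an affine open cover.

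First, I would work on a trivializing affine open $U = \spec R \subseteq X$ over which both $E$ and $F$ are trivial and the two full flags split compatibly. On such a $U$ we may pick bases $e_1, \tred, e_n$ of $E|_U$ and $f_1, \tred, f_n$ of $F|_U$ such that $E_i|_U$ is spanned by $e_1, \tred, e_i$ and the quotient $F|_U \srarrow F_j|_U$ sends $f_a$ to $\bar f_a$ for $a \leq j$ and to $0$ for $a > j$. Then $h|_U$ is represented by an $n \times n$ matrix $M \in M_n(R)$, and under these choices the composite $h_{ij}\colon E_i \to F_j$ is represented locally by the upper-left $j \times i$ submatrix of $M$, which I will denote $M_{ij}$.

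Next, by the local description of zero schemes (Remark on minors), the ideal of $R$ cutting out $D_{r_\omega(i,j)}(h_{ij}) \cap U$ is precisely the ideal $I_{r_\omega(i,j)+1}(M_{ij})$ generated by all $(r_\omega(i,j)+1)$-minors of $M_{ij}$. Scheme-theoretic intersection corresponds to the sum of ideals, so
$$I := \text{ideal of } \Omega_{r_\omega} \cap U = \sum_{(i,j) \in \{1,\tred,n\}^2} I_{r_\omega(i,j)+1}(M_{ij}),$$
while
$$J := \text{ideal of } \bigcap_{(i,j) \in Ess(\omega)} D_{r_\omega(i,j)}(h_{ij}) \cap U = \sum_{(i,j) \in Ess(\omega)} I_{r_\omega(i,j)+1}(M_{ij}).$$
By the cited Lemma on $Ess$ applied to the matrix $M$, these two ideals coincide, so the two schemes agree on $U$. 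Since $X$ is covered by such affines, the equality holds globally.

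The only nontrivial point, once Fulton's lemma is taken as granted, is a bookkeeping verification: one must check that under the convention of the paper (subbundle flag on $E$, quotient flag on $F$) the map $h_{ij}$ is really represented by the upper-left $j \times i$ block of $M$, so that the minors appearing in our local description line up with the ones in the statement of Lemma on $Ess$. Everything else is a direct translation between geometry and commutative algebra.
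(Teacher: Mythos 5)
Your proof is correct and follows essentially the same path as the paper's: trivialize over an affine cover, identify $h$ with a matrix so that $h_{ij}$ becomes an upper-left corner submatrix, translate the scheme intersection into a sum of ideals generated by minors, and invoke Fulton's lemma on the essential set. You are a bit more explicit than the paper on two small points---the bookkeeping that $h_{ij}$ is represented by a $j\times i$ rather than $i\times j$ block, and that your reduction requires $E$ and $F$ to have rank $n$ (a hypothesis the paper leaves implicit when citing Lemma~\ref{lem Ess} for $n\times n$ matrices)---but these are refinements, not a different argument.
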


\begin{proof}
Let $\{U_k\}_{k\in I}$ be an affine open cover of $X$ such that over each $U_k=\text{Spec}\, R_k$ all bundles appearing in the two flags are trivial: we will show that the scheme structures of $\Omega_{r_\omega}(E_{\textbf{\textbullet}},F_{\textbf{\textbullet}},h)$ and $\bigcap_{(i,j)\in Ess(\omega)} D_{r_\omega(i,j)}(h_{ij})$ coincide on these open sets. To do this, let us first consider the restriction of $h$ to one of these open sets: $h_{|U_k}:\mathbb{A}_{U_k}^e\rightarrow \mathbb{A}_{U_k}^f $. This morphism can be interpreted as an $e$ by $f$ matrix with entries in $R_k$, in such a way that the restriction of each morphism $h_{ij}$ is given by the upper left $i$ by $j$ corner. Recall that, as it was pointed out in remark \ref{rem minors}, each $D_{r_\omega(i,j)}(h_{ij})$ is locally defined by the vanishing of the $(r_\omega(i,j)+1)$-minors associated to ${h_{ij}}_{|U_k}$. As a consequence, lemma \ref{lem Ess} guarantees that the defining ideal of $\Omega_{r_\omega}(E_{\textbf{\textbullet}},F_{\textbf{\textbullet}},h)\cap U_k$ can be generated using only the minors coming from the rank conditions $r_\omega(i,j)$ with  $(i,j)\in Ess(w)$, thus proving the equality of the two scheme structures.  
\end{proof}

\begin{remark}
One consequence of proposition \ref{prop Ess} is that it allows to express in the form $\Omega_{r_\omega}(E'\unddot,F'\unddot,h')$ all the degeneracy loci $D_l(h)$ arising from a morphism of vector bundles $h:E\rightarrow F$, provided $E$ and $F$ are already equipped with full flags. One only needs to construct a morphism $h':E'\rightarrow F'$ and to select a permutation $\omega$ such that $Ess(\omega)=\{(i,j)\}$ and $D_{r_\omega(i,j)}(h'_{ij})=D_l(h)$.

 This can be achieved as follows. If $E$ and $F$ have rank $e$ and $f$ respectively, one sets $E':=E\oplus \mathbb{A}_X^{f-l}$, $F':=F\oplus \mathbb{A}_X^{e-l}$ and defines $h':E'\rightarrow F'$ by extending $h$ by 0 on $\mathbb{A}^{f-l}$. The flags on $E'$ and $F'$ are obtained by extending the full flags of $E$ and $F$ with trivial line bundles. For the permutation one sets
$$w=
\left( \begin{array}{ccccccccc}
1 & \tred & l & l+1&  \tred &  f & f+1& \tred & e+f-l \\
1 & \tred & l &  e+1& \tred & e+f-l & l+1 & \tred & e
\end{array} \right)\ .$$
It is easy to verify that $Ess(\omega)=\{(e,f)\}$ and that $r_\omega(e,f)=l$. Since from our construction we have $E'_e=E$, $F'_f=F$ and $h_{(ef)}=h$, we can conclude that 
$$D_l(h)=D_{r_\omega(e,f)}(h'_{ef})=\Omega_{r_\omega}(E'\unddot,F'\unddot,h')\ .$$
\end{remark}

We are now going to see how the set-up can be significantly simplified if one restrict his attention to permissible rank conditions. The first step is to show that it is sufficient to consider degeneracy loci associated to morphisms of vector bundles of the same rank. 

\begin{lemma} \label{lem same rk}
Let $r$ be a permissible set of rank conditions, $\omega\in S_n$ the corresponding permutation and $h:E\rightarrow F$ a morphism of vector bundles over $X$. Let $E\unddot$ and $F\unddot$ be full flags of $E$ and $F$ respectively. Then there exists $h':E'\rightarrow F'$ and full flags $E'_{\textbf{\textbullet}}$ and $F'_{\textbf{\textbullet}}$ such that $\Omega_r (E_{\textbf{\textbullet}},F_{\textbf{\textbullet}},h)=
\Omega_{r_\omega}(E'_{\textbf{\textbullet}},F'_{\textbf{\textbullet}},h')$
\end{lemma}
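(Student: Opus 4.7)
The plan is to set $E':=E\oplus\mathcal{O}_X^{n-e}$ and $F':=F\oplus\mathcal{O}_X^{n-f}$ (both of rank $n$, which is allowed since permissibility of $r$ gives $n\geq\max(e,f)$), equip them with full flags extending $E\unddot$ and $F\unddot$ by trivial line bundles at the top, and take $h':=h\oplus 0:E'\to F'$. Explicitly, $E'_i:=E_i$ for $i\leq e$ and $E'_{e+k}:=E\oplus\mathcal{O}_X^{k}$ for $1\leq k\leq n-e$ (the subbundle of $E'$ using the first $k$ of the new trivial summands), while $F'_j:=F_j$ for $j\leq f$ and $F'_{f+k}:=F\oplus\mathcal{O}_X^{k}$ for $1\leq k\leq n-f$ (the quotient of $F'$ obtained by killing the last $n-f-k$ trivial summands).

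With this setup, for every $(i,j)\in\{1,\ldots,e\}\times\{1,\ldots,f\}$ one checks immediately that $h'_{ij}=h_{ij}$: the domain $E'_i=E_i$ has no trivial summand, and any contribution coming from the trivial summand of $F'$ is killed by the quotient $F'\srarrow F'_j=F_j$. Combined with $r_\omega(i,j)=r(i,j)$ (permissibility), this gives $D_{r_\omega(i,j)}(h'_{ij})=D_{r(i,j)}(h_{ij})$, so that the generators of the defining ideal of $\Omega_r(E\unddot,F\unddot,h)$ already appear among those of $\Omega_{r_\omega}(E'\unddot,F'\unddot,h')$; in particular the inclusion $\Omega_{r_\omega}\subseteq\Omega_r$ holds as schemes. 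The real content of the proof is the reverse inclusion, i.e., that the extra conditions coming from $(i,j)\notin\{1,\ldots,e\}\times\{1,\ldots,f\}$ are redundant.

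For this I would argue locally, using the description of $D_k(h'_{ij})$ from remark \ref{rem minors}. Setting $\bar{i}:=\min(i,e)$ and $\bar{j}:=\min(j,f)$, a direct computation on a common trivialization shows that the matrix of $h'_{ij}$ has the matrix of $h_{\bar{i},\bar{j}}$ in one corner and zeros elsewhere, because $h'=h\oplus 0$ and either the trivial summand of $E'_i$ maps to a summand of $F'$ that is quotiented away in $F'_j$, or the trivial summand of $F'_j$ receives no contribution from $E'_i$. Hence every nonzero $(r_\omega(i,j)+1)$-minor of $h'_{ij}$ is also a $(r_\omega(i,j)+1)$-minor of $h_{\bar{i},\bar{j}}$. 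The elementary containment $I_{k+1}(M)\subseteq I_{k}(M)$ between minor ideals (Laplace expansion along any row), together with the monotonicity $r_\omega(i,j)\geq r_\omega(\bar{i},\bar{j})=r(\bar{i},\bar{j})$, places these minors inside the ideal generated by the $(r(\bar{i},\bar{j})+1)$-minors of $h_{\bar{i},\bar{j}}$, which is already part of the defining ideal of $\Omega_r$. The extra conditions thus contribute no new generators and the two defining ideals coincide. The main obstacle is precisely this passage from a set-theoretic to a scheme-theoretic equality, which is handled by the block-matrix observation combined with the standard minor-ideal inclusion; the rest is routine bookkeeping with the extended flags.
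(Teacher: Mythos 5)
Your proof is correct and follows essentially the same route as the paper: extending $E$, $F$, and their flags by trivial summands, taking $h'=h\oplus 0$, and comparing the defining ideals locally via the block form of the matrix of $h'_{ij}$. You are in fact slightly more careful than the paper's own proof, which asserts that the extra minors are ``already present in the list of generators''; your invocation of the containment $I_{k+1}(M)\subseteq I_k(M)$ together with the monotonicity $r_\omega(i,j)\geq r_\omega(\bar{i},\bar{j})$ supplies the small detail needed in the case where $r_\omega(i,j)$ strictly exceeds $r_\omega(\bar{i},\bar{j})$, so that the extra minors land in the ideal without literally being among the listed generators.
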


\begin{proof}
Set $E'= E\oplus \mathbb{A}_X^{n-e}$, $F'=F\oplus \mathbb{A}_X^{n-f}$ and define $h'$ by extending $h$ to $\mathbb{A}_X^{n-e}$ with the zero map. The full flags $E'$ and $F'$ are then obtained by extending the flags of $E$ and $F$ by setting $E'_{e+i}=E\oplus\mathbb{A}_X^{i}$ and $F'_{e+j}=F\oplus\mathbb{A}_X^{j}$. We now want to show that the two schemes are locally defined by the same equations. For this purpose let us now consider an affine open cover $\{U_k\}_{k\in K}$ such that over each $U_k$ all bundles appearing in $E\unddot$ and $F\unddot$ are trivial; note that this makes trivial also all the bundles in $E'\unddot$ and $F'\unddot$. If we inspect the two maps  $h_{|U_i}:\mathbb{A}_{U_k}^e\rightarrow \mathbb{A}_{U_k}^f $ and  $h'_{|U_k}:\mathbb{A}_{U_k}^n\rightarrow \mathbb{A}_{U_k}^n $ we see that $h'_{|U_k}$ can be described by an $n$ by $n$ matrix whose upper left $e$ by $f$ corner gives $h_{|U_k}$ and such that all entries outside this submatrix are 0.

 Let us now focus on the rank conditions coming from $(i,j)\in\{1,\tred, e\}\times\{1,\tred,f\}$: the equation they impose are obviously the same for both schemes since we are dealing with the exact same minors. On the other hand, the remaining rank conditions for $\Omega_{r_\omega}(E'_{\textbf{\textbullet}},F'_{\textbf{\textbullet}},h')\cap U_k$ do not provide any new equations. In fact these minors are either 0 (if one is taking the determinant of a matrix not contained in the upper corner defining $h_{|U_k}$) or already present in the list of generators of the defining ideal.
\end{proof}


The second step consists in reducing to the case in which the morphism $h$ is $id_V$.

\begin{lemma}
\label{lem id}
Let $h:E\rightarrow F$ be a morphism of vector bundles of rank $n$ over $X$. Let $E_{\textbf{\textbullet}}$ and $F_{\textbf{\textbullet}}$ be full flags of $E$ and $F$ respectively. Then there exists a vector bundle $V$ over $X$ with full flags $E'_{\textbf{\textbullet}}$ and $F'_{\textbf{\textbullet}}$, such that for every $\omega\in S_n$ there exists $\omega'\in S_{\text{rank } V}$ for which $\Omega_{r_\omega}(E\unddot,F\unddot,h)=\Omega_{r_{\omega '}}(E'\unddot,F'\unddot,h'=id_V)$.
\end{lemma}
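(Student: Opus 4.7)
The plan is to take $V := E\oplus F$ of rank $2n$ and to build full flags on $V$ using the graph of $h$. Fix auxiliary subbundle flags $0=K_0\subset K_1\subset\trecd\subset K_n=F$ (one can choose $K_j=\ker(F\srarrow F_{n-j})$) and $0=N_0\subset N_1\subset\trecd\subset N_n=E$. Define the subbundle flag $E'\unddot$ of $V$ by
\[
E'_i \;=\; \Gamma_{h|E_i} \;=\; \{(e,h(e))\ |\ e\in E_i\} \qquad (1\le i\le n)
\]
and $E'_{n+j}=\Gamma_h+(0\oplus K_j)$ for $1\le j\le n$, so that $E'_{2n}=V$. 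Define the quotient flag $F'\unddot$ by $F'_j=F_j$ for $1\le j\le n$ (via $V\srarrow F\srarrow F_j$) and $F'_{n+j}=V/(N_{n-j}\oplus 0)$ for $1\le j\le n$, so that $F'_n=F$ and $F'_{2n}=V$.

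Given $\omega\in S_n$ the plan is to let $\omega'\in S_{2n}$ be the permutation with $\omega'(k)=\omega(k)$ for $k\le n$ and $\omega'(k)=k$ for $k>n$. A direct count, using the elementary facts $r_\omega(i,n)=i$ and $r_\omega(n,j)=j$, gives
\[
r_{\omega'}(i,j) \;=\;
\begin{cases}
r_\omega(i,j) & \text{if }i,j\le n,\\
\min(i,j) & \text{otherwise.}
\end{cases}
\]
This is the first technical step to verify carefully; it will be routine but requires splitting into the four cases according to whether each of $i,j$ is $\le n$ or $>n$.

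Next I would identify the maps $(id_V)_{ij}:E'_i\hookrightarrow V\srarrow F'_j$. In the corner case $i,j\le n$ the first arrow lands in $\Gamma_{h|E_i}\subset E\oplus F$ and the second arrow projects to $F_j$; under the canonical iso $\Gamma_{h|E_i}\cong E_i$ (projection onto the first factor) this is exactly $h_{ij}$. Therefore the rank condition $r_{\omega'}(i,j)=r_\omega(i,j)$ at this $(i,j)$ cuts out $D_{r_\omega(i,j)}(h_{ij})$ scheme-theoretically. For $(i,j)$ outside the $n\times n$ corner the condition $r_{\omega'}(i,j)=\min(i,j)$ is automatic: for any morphism of vector bundles $\varphi:A\to B$ with $\mathrm{rank}\,A=i$, $\mathrm{rank}\,B=j$, the bundle $\wedge^{\min(i,j)+1}A$ or $\wedge^{\min(i,j)+1}B$ is the zero bundle, so $\wedge^{\min(i,j)+1}\varphi=0$ and $D_{\min(i,j)}(\varphi)=X$ scheme-theoretically.

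Combining these two observations, the intersection defining $\Omega_{r_{\omega'}}(E'\unddot,F'\unddot,id_V)$ collapses to $\bigcap_{i,j\le n} D_{r_\omega(i,j)}(h_{ij})=\Omega_{r_\omega}(E\unddot,F\unddot,h)$, which is the desired equality. The only real obstacle is the bookkeeping for the formula for $r_{\omega'}$ and checking that the identification of $(id_V)_{ij}$ with $h_{ij}$ really respects the scheme structure; both are straightforward once one writes everything in a local trivialization compatible with the flags and the chosen splittings $K\unddot, N\unddot$.
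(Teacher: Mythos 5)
Your construction is exactly the one the paper uses: $V=E\oplus F$ of rank $2n$, with the subbundle flag built from the graph of $h$, the quotient flag built from $F$, and $\omega'\in S_{2n}$ the image of $\omega$ under the canonical inclusion. Where the two arguments diverge is in the final verification. The paper invokes Proposition~\ref{prop Ess} together with Lemma~\ref{lem Ess emb} (the essential set is unchanged under $S_n\hookrightarrow S_{2n}$) so that both degeneracy loci are cut out scheme-theoretically by the conditions indexed by the common set $Ess(\omega)\subset\{1,\tred,n-1\}^2$, where the maps $h'_{ij}$ and $h_{ij}$ agree by construction. You instead compute $r_{\omega'}$ over the whole $2n\times 2n$ grid and show that outside the $n\times n$ corner one has $r_{\omega'}(i,j)=\min(i,j)$, so the corresponding exterior power vanishes identically and the condition is vacuous; your case analysis for the formula and the vacuity argument are both correct. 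Both routes are valid: the essential-set reduction is slicker given that the machinery has already been set up, while your direct check is more elementary and does not require Lemma~\ref{lem Ess emb}. One small presentational point: you work with an auxiliary flag $N\unddot$ of $E$ to complete the quotient flag, while the paper reuses the given $E\unddot$; either choice works since those pieces only affect the vacuous conditions, but using $E\unddot$ avoids introducing an extra object.
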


\begin{proof}
One sets $V:=E\oplus F$ and makes the flags of $E$ and $F$ partial flags of $V$ by embedding $E$ into $V$ as the graph of $h$ and by projecting $V$ on $F$ by means of the second projection. One then completes the flags by setting $E'_{n+i}=E\oplus \text{Ker}(F\srarrow F_{n-i})$ and $F'_{n+i}=E/E_{n-i}\oplus F$. Finally, one sets $\omega'$ to be the image of $\omega$ in $S_{2n}$ via the canonical inclusion. In order to show that $\Omega_{r_\omega}(E\unddot,F\unddot,h)=\Omega_{r_{\omega '}}(E'\unddot,F'\unddot,h'=id_V)$, one first makes use of proposition $\ref{prop Ess}$ to write
$$\Omega_{r_\omega}(E\unddot,F\unddot,h)=\bigcap_{(i,j)\in Ess(\omega)} D_{r_\omega(i,j)}(h_{ij})\quad \text{and}\quad 
\Omega_{r_{\omega '}}(E'\unddot,F'\unddot,h')=\bigcap_{(i,j)\in Ess(\omega')} D_{r_{\omega '}(i,j)}(h'_{ij})\ .$$
One then observes that, as a consequence of the set-up, one has $h'_{ij}=h_{ij}$ for $(i,j)\in \{1,\tred,n-1 \}^2$ and therefore to finish the proof it is sufficient to show that $Ess(\omega)=Ess(\omega')$: this is granted by lemma \ref{lem Ess emb}.
\end{proof}


Now that these reductions have been achieved, we will consider the case of degeneracy loci on flag bundles: this will be helpful since the results obtained in this context will later allow us to define a degeneracy class.


\subsection{Schubert varieties and Bott-Samelson resolutions}

 Let $p : V\rightarrow X$ be a vector bundle of rank $n$ over a  smooth scheme $X$ and let $V_{\textbf{\textbullet}}=(V_1\subset V_2\subset ... \subset V_n=V)$ be a full flag of subbundles. We will denote by $\pi:\flag (V)\rightarrow X$ the bundle of full flags of quotient bundles of $V$.

\vspace{0.3 cm}

\textbf{Notation:} By its very defining property $\flag (V)$ has a universal full flag of quotient bundles $Q_{\textbf{\textbullet}}=(\pi^*V=Q_n\srarrow Q_{n-1}\srarrow ... \srarrow Q_1)$ such that for every full flag of quotient bundles $W_{\textbf{\textbullet}}=(V=W_n\srarrow W_{n-1}\srarrow ... \srarrow W_1)$ there exist a unique section $s:X\rightarrow \flag (V)$ for which $s^*(Q\unddot)=W\unddot$. We will denote this section by~$i_{W\unddot}$.

 It is possible as well to associate a section to any full flag of subbundles $U_{\textbf{\textbullet}}=(U_1\subset U_2\subset ... \subset U_n=V)$ in a unique way: it suffices to consider $V/U\unddot=(V\srarrow V/U_1\srarrow \tred \srarrow V/U_{n-1})$.  By $i_{U\unddot}$ we will mean $i_{V/U\unddot}$.

\begin{definition}
Let $\omega\in S_n$ be a permutation. We define $\Omega_\omega$, the \textit{Schubert variety} associated to $\omega$, as the vanishing locus  $\Omega_{r_\omega}(\pi^*V_{\textbf{\textbullet}},Q_{\textbf{\textbullet}},h=id_{\pi^* V})$. 
\end{definition}

\begin{remark}
By their very definition the Schubert varieties depend on the choice of the flag $V_{\textbf{\textbullet}}$. 
\end{remark}

\begin{remark} \label{rem smooth}
In general a Schubert variety $\Omega_\omega$ needs not to be an l.c.i. scheme and, as a consequence (see subsection \ref{sec universal}), the inclusion into $\flag (V)$ will not define a class in algebraic cobordism. However, as we will see, $\Omega_{\omega_0}$ is smooth since it is possible to show that it coincides with $i_{V\unddot}(X)$.
\end{remark}

\begin{lemma} \label{lem Omega0}
The Schubert variety $\Omega_{\omega_0}$ can be described as an intersection in the following way: 
$$\Omega_{\omega_0}=\bigcap_{l=1}^{n-1} Z(h_{l,n-l})\ .$$
\end{lemma}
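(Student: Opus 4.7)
The plan is to apply Proposition \ref{prop Ess} to reduce the intersection over all $(i,j) \in \{1,\dots,n\}^2$ to an intersection over the essential set $Ess(\omega_0)$, and then to compute explicitly the rank conditions $r_{\omega_0}(i,j)$ at the points of this essential set.

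First I would invoke Proposition \ref{prop Ess} to write
$$\Omega_{\omega_0} = \Omega_{r_{\omega_0}}(\pi^* V_\textbf{\textbullet}, Q_\textbf{\textbullet}, id_{\pi^* V}) = \bigcap_{(i,j) \in Ess(\omega_0)} D_{r_{\omega_0}(i,j)}(h_{ij}),$$
where $h = id_{\pi^* V}$. From Example \ref{ex omega0} we already know that $Ess(\omega_0) = \{(l, n-l) \mid 1 \leq l \leq n-1\}$, so this intersection is indexed exactly by $l \in \{1,\dots,n-1\}$.

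Next I would compute $r_{\omega_0}(l, n-l)$ using the definition $r_\omega(i,j) = |\{k \leq j \mid \omega(k) \leq i\}|$. Since $\omega_0(k) = n+1-k$, the condition $\omega_0(k) \leq l$ becomes $k \geq n+1-l$. Combining this with $k \leq n-l$ leaves no admissible $k$, so $r_{\omega_0}(l, n-l) = 0$.

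Finally, recalling from Definition \ref{def degen locus} that $D_k(h) = Z(\wedge^{k+1} h)$, we obtain $D_0(h_{l,n-l}) = Z(h_{l,n-l})$, which yields the desired equality. There is no real obstacle here: everything reduces to combining Proposition \ref{prop Ess}, Example \ref{ex omega0}, and a one-line count of integers satisfying the two inequalities above.
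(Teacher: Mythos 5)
Your proof is correct and follows exactly the same route as the paper's: apply Proposition \ref{prop Ess} to reduce to the essential set, use Example \ref{ex omega0} to identify $Ess(\omega_0)$, verify that $r_{\omega_0}$ vanishes there, and observe $D_0(s)=Z(s)$. The only difference is that you spell out the one-line computation $r_{\omega_0}(l,n-l)=0$ explicitly, which the paper leaves as "an easy computation."
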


\begin{proof}
 In view of the definition of Schubert varieties and of proposition \ref{prop Ess} we have
$$\Omega_{\omega_0}=\Omega_{r_{\omega_0}}(\pi^* V\unddot,Q\unddot,h=id_{\pi^* V})=\bigcap_{(l,k)} D_{r_{\omega_0}(l,k)}(h_{lk})=\bigcap_{(l,k)\in Ess(\omega_0)} D_{r_{\omega_0}(l,k)}(h_{lk})=
\bigcap_{l=1}^{n-1} D_0(h_{l,{n-l}})\ .$$
The last step follows from example \ref{ex omega0}: one has $Ess(\omega_0)=\{(1,n-1),(2,n-2),\tred, (n-1,1)\}$ and it is an easy computation to check that on this set $r_{\omega_0}$ is constantly 0. To finish the proof it is now sufficient to observe that, by definition, for a section $s$ one has
$$D_0(s)=Z(s^{\wedge 1})=Z(s)\ .\qedhere$$
\end{proof}

We now want to establish a connection between Schubert varieties and vanishing loci of morphisms of vector bundles. 

\begin{lemma}\label{lem preimage}
Let $V\rightarrow X$ be a vector bundle of rank $n$, endowed with a full flag of subbundles $V\unddot$ and a full flag of quotient bundles $W\unddot$. Then $i_{W\unddot}^{-1}(\Omega_\omega)=\Omega_{r_\omega}(V\unddot,W\unddot,id_V)$ for every $\omega\in S_n$.
\end{lemma}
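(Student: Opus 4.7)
The plan is to derive the statement as a direct application of the pull-back compatibility for degeneracy loci (Remark \ref{rem pull}), combined with the defining universal property of the section $i_{W\unddot}$. The essential observation is that all three pieces of data defining $\Omega_\omega$ -- the flag $\pi^*V\unddot$, the flag $Q\unddot$, and the morphism $id_{\pi^*V}$ -- pull back along $i_{W\unddot}$ to exactly the data defining $\Omega_{r_\omega}(V\unddot, W\unddot, id_V)$.

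First I would verify the three pull-back identities separately. Since $\pi \circ i_{W\unddot} = id_X$ (this is what it means for $i_{W\unddot}$ to be a section of $\pi$), for any subbundle $V_k \subset V$ we have $i_{W\unddot}^*(\pi^* V_k) = V_k$, and hence $i_{W\unddot}^*(\pi^* V\unddot) = V\unddot$ as full flags of subbundles. By the defining property of the section associated to $W\unddot$, we have $i_{W\unddot}^*(Q\unddot) = W\unddot$. Finally, since $id_{\pi^*V}$ is simply the identity of $\pi^*V$, its pull-back under $i_{W\unddot}$ is the identity of $i_{W\unddot}^*(\pi^*V) = V$, i.e.\ $id_V$.

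Now I would invoke Remark \ref{rem pull}, which asserts that the scheme-theoretic formation of $\Omega_r(E\unddot, F\unddot, h)$ commutes with pull-back along an arbitrary morphism $\varphi: Y \to X$. Applying this to $\varphi = i_{W\unddot}$ gives
\[
i_{W\unddot}^{-1}(\Omega_\omega) = i_{W\unddot}^{-1}\bigl(\Omega_{r_\omega}(\pi^*V\unddot, Q\unddot, id_{\pi^*V})\bigr) = \Omega_{r_\omega}\bigl(i_{W\unddot}^*(\pi^*V\unddot),\, i_{W\unddot}^*(Q\unddot),\, i_{W\unddot}^*(id_{\pi^*V})\bigr),
\]
which by the three identities above equals $\Omega_{r_\omega}(V\unddot, W\unddot, id_V)$, as desired.

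There is no real obstacle here; the only point that deserves care is ensuring that Remark \ref{rem pull} is applied with the scheme-theoretic structure in mind rather than just on underlying sets, but that is exactly what the remark guarantees since it is derived from Remark \ref{rem degen locus} via fiber products. The proof is therefore essentially a bookkeeping exercise, relying entirely on the universal property of $\flag(V)$ and the naturality of the degeneracy locus construction.
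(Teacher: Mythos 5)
Your proof is correct and follows essentially the same route as the paper's: both invoke Remark \ref{rem pull} on the compatibility of $\Omega_{r_\omega}$ with pull-backs and then identify the pulled-back data with $(V\unddot,W\unddot,id_V)$ via $\pi\circ i_{W\unddot}=id_X$ and the universal property of $\flag(V)$. Your write-up is merely a bit more explicit in checking the three pull-back identities, which the paper leaves implicit.
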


\begin{proof}
In this context $\Omega_\omega$ corresponds to $\Omega_{r_\omega}(\pi^*V_{\textbf{\textbullet}},Q\unddot,id_{\pi^* V})$ and therefore the proposition is a consequence of the fact, pointed out in remark \ref{rem pull} that the construction of $\Omega_{r_\omega}$ is preserved under pull-backs:  $i_{W\unddot}^{-1}(\Omega_{r_\omega}(\pi^*V\unddot,Q\unddot,id_{\pi^* V})$ coincides with $\Omega_{r_\omega}(i_{W\unddot}^*(\pi^*V\unddot),i_{W\unddot}^* Q\unddot,id_{i_{W\unddot}^* \pi^* V})=\Omega_{r_\omega}(V\unddot,W\unddot,id_V)$.
\end{proof}

\begin{proposition} 
Let $\Omega_r(E_{\textbf{\textbullet}},F_{\textbf{\textbullet}},h)\subseteq X$ be the vanishing locus associated to a permissible set of rank conditions $r$ and to a morphism of vector bundles $h:E\rightarrow F$. Then there exist a vector bundle $V\rightarrow X$ with a full flag of subbundles $V_{\textbf{\textbullet}}$, together with a section $s:X\rightarrow\flag (V)$ and a permutation $\omega\in S_{\text{rank} \ V} $ such that $s^{-1}(\Omega_\omega)=\Omega_r(E_{\textbf{\textbullet}},F_{\textbf{\textbullet}},h)$.

\end{proposition}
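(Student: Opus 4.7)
The plan is to combine the three reduction lemmas already established in subsection \ref{sec degeneracy} together with Lemma \ref{lem preimage}, applying them in sequence so as to rewrite the given degeneracy locus as the preimage of a Schubert variety.

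First, since $r$ is permissible, by definition there is a permutation $\omega_0 \in S_n$ (with $n \geq \max(\mathrm{rank}\, E, \mathrm{rank}\, F)$) whose associated rank function restricts to $r$. I would invoke Lemma \ref{lem same rk} to produce a morphism $h_1 : E^{(1)} \to F^{(1)}$ between vector bundles of the same rank $n$, both equipped with full flags $E^{(1)}\unddot$ and $F^{(1)}\unddot$, such that
\[
\Omega_r(E\unddot, F\unddot, h) \;=\; \Omega_{r_{\omega_0}}(E^{(1)}\unddot, F^{(1)}\unddot, h_1).
\]

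Second, I would apply Lemma \ref{lem id} to the morphism $h_1$: this produces a vector bundle $V := E^{(1)} \oplus F^{(1)}$ of rank $2n$ together with a full flag of subbundles $V\unddot$ (extending $E^{(1)}\unddot$, with $E^{(1)}$ embedded as the graph of $h_1$) and a full flag of quotient bundles $W\unddot$ (extending $F^{(1)}\unddot$), and a permutation $\omega \in S_{2n}$ (namely the image of $\omega_0$ under the canonical inclusion $S_n \hookrightarrow S_{2n}$) such that
\[
\Omega_{r_{\omega_0}}(E^{(1)}\unddot, F^{(1)}\unddot, h_1) \;=\; \Omega_{r_\omega}(V\unddot, W\unddot, \mathrm{id}_V).
\]

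Third, with $V$ and its two full flags in hand, I would consider the full flag bundle $\pi : \flag(V) \to X$ and the section $s := i_{W\unddot} : X \to \flag(V)$ canonically associated to the flag of quotient bundles $W\unddot$. Lemma \ref{lem preimage} then yields directly
\[
s^{-1}(\Omega_\omega) \;=\; i_{W\unddot}^{-1}(\Omega_\omega) \;=\; \Omega_{r_\omega}(V\unddot, W\unddot, \mathrm{id}_V).
\]
Chaining the three equalities gives $s^{-1}(\Omega_\omega) = \Omega_r(E\unddot, F\unddot, h)$, which is exactly the statement. Since this is a pure assembly of results already proved, there is no real obstacle; the only subtlety is bookkeeping, namely checking that the permutation produced at each step is consistent (in particular that passing from $\omega_0 \in S_n$ to $\omega \in S_{2n}$ via the canonical inclusion preserves the essential set and the rank values on the relevant region, which is precisely Lemma \ref{lem Ess emb} used inside the proof of Lemma \ref{lem id}).
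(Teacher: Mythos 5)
Your proof is correct and follows exactly the same route as the paper: it reduces via Lemma \ref{lem same rk} to equal ranks, then via Lemma \ref{lem id} to $h=\mathrm{id}_V$ with rank conditions $r_\omega$, and then invokes Lemma \ref{lem preimage} to identify the result with the preimage of $\Omega_\omega$ under the section $i_{W\unddot}$. The only cosmetic remark is that your use of $\omega_0$ for the permutation attached to the permissible rank conditions collides with the paper's reserved meaning of $\omega_0$ as the longest element; a different symbol would avoid confusion.
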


\begin{proof}
Thanks to  lemmas \ref{lem same rk} and \ref{lem id}, it is possible to reduce to the case in which $E=F=V$, $h=id_V$ and $r=r_\omega$ for some $\omega\in S_{\text{rank }V}$: this is precisely the content of lemma \ref{lem preimage}.
\end{proof}

The local properties of Schubert varieties can be deduced from the special case in which the base scheme is a point. If one sets $X=\spec k$, $V$ becomes an affine space $\mathbb{A}^n$ while $\flag (V)$ turns into the flag manifold $\flag (n)$, which has dimension $\frac{n(n-1)}{2}$.
Let us recall the following properties of Schubert varieties in a flag manifold. 

\begin{proposition}\label{prop schubert}
Let $X=\spec k$. For any $\omega\in S_n$ the Schubert variety  $\Omega_\omega$ is integral, Cohen-Macaulay and has codimension $l(\omega)$. 
\end{proposition}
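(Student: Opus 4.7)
The plan is to prove the three claims in sequence, relying on the Bruhat decomposition for the first two and on the Bott-Samelson machinery (which is to be introduced shortly afterwards) for the Cohen-Macaulay assertion.

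First, I would set up the Bruhat decomposition $\flag(n) = \coprod_{\tau \in S_n} C_\tau$, where the Schubert cell $C_\tau$ is the orbit of the flag of coordinate subspaces $e_{\tau(1)}, e_{\tau(2)}, \ldots$ under the Borel subgroup $B$ of upper triangular matrices and is isomorphic to affine space of dimension $\binom{n}{2} - l(\tau)$. Using the explicit rank conditions that define $\Omega_\omega$ and the standard description of $C_\tau$ via its rank matrix $r_\tau$, I would check that set-theoretically $\Omega_\omega = \coprod_{\tau \geq \omega} C_\tau$ where $\geq$ is the Bruhat order, and that $\overline{C_\omega}$ is precisely this union. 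This immediately gives that $\Omega_\omega$ is irreducible of codimension $l(\omega)$, since it is the closure of an irreducible locally closed subscheme of that codimension.

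Next, to upgrade ``irreducible'' to ``integral'', I need the scheme structure cut out by the rank conditions to be reduced. The most efficient route is a local computation at a point of the dense cell $C_\omega$: one shows that around such a point the defining equations from Lemma \ref{lem Ess} can be locally completed to a regular system of parameters, so that the local ring of $\Omega_\omega$ is in fact a regular local ring; this gives genericaly reducedness, and combined with irreducibility and the Cohen-Macaulay property (proved next) one concludes reducedness everywhere by the fact that a Cohen-Macaulay scheme which is generically reduced is reduced.

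For the Cohen-Macaulay property, I would use a Bott-Samelson resolution $\phi_I : R_I \to \Omega_\omega$ associated to any reduced decomposition $I = (i_1, \ldots, i_{l(\omega)})$ of $\omega$. The variety $R_I$ is smooth and projective (built as an iterated $\mathbb{P}^1$-bundle), and the map $\phi_I$ is proper and birational onto $\Omega_\omega$. The key technical input is that $\phi_I$ is a \emph{rational} resolution, i.e.\ $(\phi_I)_*\mathcal{O}_{R_I} = \mathcal{O}_{\Omega_\omega}$ and $R^q (\phi_I)_* \mathcal{O}_{R_I} = 0$ for $q > 0$; this is proven by induction on $l(\omega)$, peeling off one $\mathbb{P}^1$-factor at a time and using $H^q(\mathbb{P}^1, \mathcal{O}) = 0$ for $q > 0$ together with a Leray spectral sequence. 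By Kempf's criterion (valid in characteristic $0$), the existence of such a rational resolution implies that $\Omega_\omega$ has rational singularities, and in particular is Cohen-Macaulay.

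The main obstacle is the vanishing $R^q(\phi_I)_* \mathcal{O}_{R_I} = 0$ that underlies the Cohen-Macaulay conclusion: this is a non-trivial fact requiring the inductive analysis of Bott-Samelson towers and is the essential content of the classical theorems of Kempf, Demazure, and Ramanathan. In a treatment like the present one, where the Bott-Samelson resolutions play a central role in the subsequent sections but the Cohen-Macaulay statement is only used as a background hypothesis, the cleanest presentation is to cite these classical results (e.g.\ \cite{FlagsFulton} or Brion-Kumar's book) rather than to reprove them, and to note that the remaining statements (integrality, codimension) are consequences of the Bruhat decomposition.
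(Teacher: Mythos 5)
The paper proves this proposition by a one-line citation to \cite[Lemma 6.1 (a),(c),(d)]{FlagsFulton}, and you yourself conclude that citing the classical results is the cleanest route here. Your sketch of the underlying argument — Bruhat decomposition for irreducibility and codimension, local smoothness along the dense cell plus Cohen-Macaulayness for reducedness, and Bott--Samelson rational resolutions for Cohen-Macaulayness — is the correct standard proof, so on substance you are in agreement with the source the paper cites.

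One caveat worth flagging: the paper states this proposition for an arbitrary base field $k$ (the whole of section \ref{ch 1} is set over arbitrary $k$), but your Cohen-Macaulay argument runs through Kempf's criterion and rational singularities, which you correctly note is a characteristic-$0$ statement. To cover positive characteristic one must instead invoke the Frobenius-splitting arguments of Mehta--Ramanathan/Ramanathan (or, as Fulton does, simply quote them). You do cite Ramanathan, but the parenthetical ``valid in characteristic $0$'' suggests the dependence is on the char-$0$ Kempf route; as written your proof would only establish the proposition in characteristic zero. A second, more stylistic point: in the paper's ordering, Proposition \ref{prop schubert} precedes the introduction of Bott--Samelson resolutions and Proposition \ref{prop resolution} (which records $R^q r_{I*}\mathcal{O}_{R_I}=0$), so basing the proof on Bott--Samelson machinery here would invert the paper's logical order; the citation approach sidesteps this.
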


\begin{proof}
See \cite[Lemma 6.1 (a),(c),(d)]{FlagsFulton}.
\end{proof}

\begin{remark}\label{rem point}
It is not difficult to see that in $\flag (n)$ the Schubert variety $\Omega_{\omega_0}$ is just the $k$-point $i_{V\unddot}:\spec k\rightarrow \flag (n)$, describing the full quotient flag $V/V\unddot$. In view of lemma \ref{lem Omega0} one has
$$i_{V\unddot } ^{-1}(\Omega_{\omega_0})=i_{V\unddot}^{-1}(\bigcap_{l=1}^{n-1} Z(h_{l,n-l}))=\bigcap_{l=1}^{n-1} i^{-1}_{V\unddot} (Z(h_{l,n-l}))=\bigcap_{l=1}^{n-1} Z(s^*(h_{l,n-l}))$$
where the morphisms $s^*(h_{l,n-l})$ are nothing but the zero maps $V_l\rightarrow V\srarrow V/V_l$: it follows that all the zero schemes $Z(s^*(h_{l,n-l}))$ actually coincide with $\spec k$.
$$
\xymatrix{
\spec k  \ar[r]^{\varphi} \ar[d]_{id_{\spec k}}&\Omega_{\omega_0}\ar[d] \\
  \spec k \ar[r]^{i_{V\unddot}}& \flag(n)     }
$$
 Therefore, since $\Omega_{\omega_0}$ is integral and of dimension 0, we have that the closed imbedding $i_{V\unddot } ^{-1}(\Omega_{\omega_0})=\spec k\rightarrow\Omega_{\omega_0}$ is actually an isomorphism.  
\end{remark}

This last observation can be used to obtain a generalization for a general base scheme $X$.
  
\begin{lemma}\label{lem regular Schubert}
The section $i_{V\unddot}: X\rightarrow \flag(V)$ maps $X$ isomorphically onto the Schubert variety $\Omega_{\omega_0}$ and is a regular embedding of codimension $\frac{n(n-1)}{2}$. As $X\in\SM$ this implies $\Omega_{\omega_0}\in\SM$.
\end{lemma}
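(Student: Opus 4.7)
The strategy is to establish three things in sequence: (a) $i_{V\unddot}$ is a regular embedding of codimension $\frac{n(n-1)}{2}$; (b) its image coincides scheme-theoretically with $\Omega_{\omega_0}$; and (c) hence $\Omega_{\omega_0}\in\SM$. Part (c) will be immediate from (a) and (b), since a closed subscheme isomorphic to a smooth scheme is itself smooth.

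For (a), I would use that $\pi:\flag(V)\to X$ is a smooth projective morphism, being an iterated tower of projective bundles, with total relative dimension $\sum_{k=1}^{n-1}k=\frac{n(n-1)}{2}$. Any section of a smooth morphism is a regular embedding whose codimension equals the relative dimension (locally, the conormal sheaf of the section is identified with the pullback of the relative cotangent sheaf, which is locally free of the required rank). Hence $i_{V\unddot}$ is a regular embedding of codimension $\frac{n(n-1)}{2}$.

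For (b), I would first establish the scheme-theoretic inclusion $i_{V\unddot}(X)\subseteq \Omega_{\omega_0}$. By Lemma \ref{lem preimage} applied with $W\unddot=V/V\unddot$, one has
$$i_{V\unddot}^{-1}(\Omega_{\omega_0})\;=\;\Omega_{r_{\omega_0}}(V\unddot,V/V\unddot,\mathrm{id}_V).$$
Combining Proposition \ref{prop Ess} with Example \ref{ex omega0}, this reduces to $\bigcap_{l=1}^{n-1}Z(g_l)$, where $g_l:V_l\irarrow V\srarrow V/V_l$ is identically zero. Each $Z(g_l)$ is therefore all of $X$, so $i_{V\unddot}^{-1}(\Omega_{\omega_0})=X$, giving the factorization $i_{V\unddot}\colon X\irarrow\Omega_{\omega_0}\irarrow\flag(V)$ as closed subschemes.

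For the reverse inclusion I would argue locally on $X$. Choose an affine open cover $\{U_k\}$ of $X$ trivializing $V$ compatibly with the subbundle flag $V\unddot$, so that $V_i|_{U_k}$ corresponds to the standard $\mathcal{O}_{U_k}^i\subseteq\mathcal{O}_{U_k}^n$. Then $\flag(V)|_{U_k}\cong U_k\times_k\flag(n)$, and by Remark \ref{rem pull} applied to the second projection, this identifies $\Omega_{\omega_0}|_{U_k}$ with $U_k\times_k\Omega_{\omega_0}^{\mathrm{pt}}$, where $\Omega_{\omega_0}^{\mathrm{pt}}\subseteq \flag(n)$ is the point-case Schubert variety for the standard flag of $k^n$. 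By Remark \ref{rem point}, $\Omega_{\omega_0}^{\mathrm{pt}}$ is the single reduced $k$-point classifying the quotient flag $V/V\unddot$. Thus $\Omega_{\omega_0}|_{U_k}$ coincides with $i_{V\unddot}(U_k)$, and gluing yields the global equality.

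The main obstacle I anticipate is the local identification step: one must check that the chosen trivialization really induces an isomorphism $\flag(V)|_{U_k}\cong U_k\times_k\flag(n)$ under which the rank conditions defining $\Omega_{\omega_0}$ restrict from those defining $\Omega_{\omega_0}^{\mathrm{pt}}$. This is precisely what Remark \ref{rem pull} provides when applied to the second projection $U_k\times_k\flag(n)\to\flag(n)$, so the obstruction dissolves once the correct functoriality is invoked.
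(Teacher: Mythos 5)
Your proposal is correct and takes essentially the same approach as the paper: both reduce to the point case (Remark \ref{rem point}) via a local trivialization of $V\unddot$, invoke Remark \ref{rem pull} to identify $\Omega_{\omega_0}$ over a trivializing open $U$ with $U\times_k\Omega_{\omega_0}^{\rm pt}$, and conclude the regular embedding statement from the fact that $i_{V\unddot}$ is a section of the smooth morphism $\pi$. The only organizational difference is that you first establish the factorization $X\irarrow\Omega_{\omega_0}\irarrow\flag(V)$ globally (via Lemma \ref{lem preimage}) and then supply the reverse containment locally, whereas the paper obtains the scheme-theoretic equality in a single local computation; this is a harmless reshuffling of the same ingredients.
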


\begin{proof}
In view of the fact that both the flag bundle and the Schubert varieties are preserved under pull-backs, we can check the statement locally. Let us consider an open subset $j:U\rightarrow X$ over which all the bundles in $V\unddot$ are trivial. In other words we have that the full flag bundle $j^*(V\unddot)$ is nothing but the pull-back of a full flag of $\mathbb{A}^n$ via $\tau_U$, the structural morphism of $U$. In particular this implies that $\flag(j^*V)=\flag(n)\times_{\spec k}U$, as one can check that each scheme satisfies the universal properties of the other. A further consequence is that the universal full quotient flag over $\flag(n)$ is pulled-back to the one over $\flag(j^*V)$. 
 Therefore, thanks to remarks \ref{rem pull} and \ref{rem point} we have  
$$\Omega_{\omega_0}=\Omega_{r_{\omega_0}}(j^* (V\unddot),\tau_U^*(Q\unddot),id_{j^* V})=
\tau_U^{-1}(\Omega_{r_{\omega_0}}(\mathbb{A}^n_{\flag(n) \textbf{\textbullet}}, Q\unddot,id_{\mathbb{A}^n_{\flag(n)}}))=\tau_U^{-1}(\Omega_{\omega_0})=\tau_U^{-1}(\spec k)=U\ .$$
Moreover, if one goes through all the equalities, one sees that the isomorphism between $\Omega_{\omega_0}$ and $U$ is given, exactly as it was happening in remark \ref{rem point}, by factoring $i_{j^*(V\unddot)}$ through $\Omega_{\omega_0}$. This happens precisely because the diagram for the case of $\spec k$ pulls-back to 
$$
\xymatrix{
  U \ar[r]^{\varphi} \ar[d]_{id_U}&\Omega_{\omega_0}\ar[d] \\
  U\ar[r]^{i_{j^*(V\unddot)}}& \ \flag(j^* V)     }
$$
and $\varphi$ is the pull-back of the isomorphism between $\spec k$ and the Schubert variety inside of $\flag (n)$. We are only left to show that $i_{V\unddot}$ is regular embedding but this follows from the fact that $i_{V\unddot}$ is a section of the smooth morphism $\pi:\flag(V)\rightarrow X$. 
\end{proof}

The last lemma provides the starting point for the construction of a family of schemes over $\flag (V)$, the so-called Bott-Samelson resolutions, which will allow us to overcome the difficulty outlined in remark \ref{rem smooth}. Each of the members of this family will be smooth over $k$ and will map birationally onto a Schubert variety. Even though this assignment is not unique (the same Schubert variety can be associated to many Bott-Samelson resolutions), it will let us associate algebraic cobordism classes to each Schubert variety.

 To be able to define Bott-Samelson resolutions we first need to introduce a family of flag bundles over $X$. Let $Y_i\rightarrow X$ be the bundle parametrizing the flag bundles one obtains when the rank $i$ bundle is removed from a complete flag. If we denote by $(Q_n\srarrow ...\srarrow Q_{i+1} \srarrow \widehat{Q_{i}} \srarrow Q_{i-1} \srarrow... \srarrow Q_1)$ the universal flag over $Y_i$, then $\flag (V)=\mathbb{P}_{Y_i}(\text{Ker}( Q_{i+1}\srarrow Q_{i-1}))$. 

\begin{remark}\label{rem P^1 bundle}
It is important to stress that this last observation shows that $\varphi_i:\flag (V)\rightarrow Y_{i}$ is a $\mathbb{P}^1$-bundle.
\end{remark}

We are now ready to define Bott-Samelson resolutions. As it has been mentioned, there can be more resolutions associated to the same Schubert variety; this is reflected by the fact that Bott-Samelson resolutions are not indexed by permutations but by decompositions of permutations. In other words we will associate a scheme $r_I:R_I\rightarrow \flag (V)$ to every $l$-tuple $I$. The definition is done recursively on the size of $I$. 

\begin{definition}\label{def Bott-Samelson}
Let $I$ be the $l$-tuple $(i_1,i_2,\tred,i_l)$ with $i_k \in \{1,\tred,n-1 \}$.

 If $l=0$, then  $I=\emptyset$ and one sets $R_\emptyset:=X$, $r_\emptyset=i_{V_{\textbf{\textbullet}}}$. 

If $l>0$, then it is possible to write $I=(I',j)$ and, thanks to  the inductive hypothesis, $r_{I'}:R_{I'}\rightarrow \flag (V)$ has already been defined. 
One then can consider the following fiber diagram

\begin{eqnarray}\label{diagram flag}
\xymatrix{
  R_{I'}\times_{Y_{j}} \flag(V) \ar[rr]^{pr_2} \ar[d]_{pr_1}& &\flag(V)\ar[d]^{\varphi_j} \\
  R_{I'}\ar[r]^{r_{I'}}& \flag(V) \ar[r]^{\varphi_j} &Y_j     }
\end{eqnarray}
and set $R_I:=R_{I'}\times_{Y_j} \flag(V)$ and $r_I:=pr_2$.
\end{definition}

\vspace{0.2 cm}

\begin{remark} \label{rem smooth morph}
Since $\varphi_i$ is a smooth morphism, then the projection on the first factor $R_I\rightarrow R_{I'}$ has to be smooth as well. This fact, together with our assumption of $X$ being a smooth scheme over $k$, proves by induction that $R_I\in\SM$.
\end{remark}

The relationship existing between Bott-Samelson resolutions and Schubert varieties is made explicit by the following results.

\begin{proposition}\label{prop resolution}

Let $I=(i_1,\tred, i_l)$ be a minimal decomposition and set $\omega=\omega_0 s_I$. Then 

1) $r_I(R_I)=\Omega_{\omega}$ and  the resulting map $R_I\rightarrow \Omega_{\omega}$ is a projective birational morphism. $R_I$ is therefore a resolution of singularities of $\Omega_{\omega}$;

2) i) $r_{I*}\mathcal{O}_{R_I}=\mathcal{O}_{\Omega_\omega}$  as coherent sheaves and therefore $\Omega_\omega$ is a normal scheme;

\ \ \ ii) $R^q f_*\mathcal{O}_{R_I}=0$ for q>0, hence $\Omega_\omega$ has at worst rational singularities. 
\end{proposition}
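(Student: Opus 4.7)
The plan is to proceed by induction on $l$, proving (1) and (2) simultaneously. The base case $l=0$ is immediate: here $I=\emptyset$, $\omega=\omega_0$, and $r_\emptyset = i_{V_\unddot}$ is an isomorphism onto $\Omega_{\omega_0}$ by Lemma \ref{lem regular Schubert}, so every claim holds tautologically. For the inductive step, write $I=(I',j)$ with $I'$ minimal of length $l-1$, set $\omega':=\omega_0 s_{I'}$, and note that minimality forces $l(\omega)=l(\omega')-1$, so $\Omega_\omega$ has dimension one more than $\Omega_{\omega'}$. By the inductive hypothesis, $r_{I'}: R_{I'}\to\Omega_{\omega'}$ is projective birational, with $r_{I'*}\mathcal{O}_{R_{I'}}=\mathcal{O}_{\Omega_{\omega'}}$ and $R^q r_{I'*}\mathcal{O}_{R_{I'}}=0$ for $q>0$.

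For part (1), the cartesian square (\ref{diagram flag}) gives $r_I(R_I)=\varphi_j^{-1}(\varphi_j(\Omega_{\omega'}))$. Two standard Schubert-variety facts, valid because $l(\omega' s_j)<l(\omega')$, now apply: the restriction $\varphi_j|_{\Omega_{\omega'}}:\Omega_{\omega'}\to\Omega_{\omega',j}:=\varphi_j(\Omega_{\omega'})$ is birational, and $\varphi_j^{-1}(\Omega_{\omega',j})=\Omega_\omega$, with $\varphi_j|_{\Omega_\omega}:\Omega_\omega\to\Omega_{\omega',j}$ a $\mathbb{P}^1$-subbundle of $\varphi_j$. Combined, these yield $r_I(R_I)=\Omega_\omega$. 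For birationality, at a generic $p\in\Omega_\omega$ the $\mathbb{P}^1$-fiber $\varphi_j^{-1}(\varphi_j(p))$ meets $\Omega_{\omega'}$ in a single reduced point, which by the inductive birationality lifts to a single point of $R_{I'}$; hence the generic fiber of $r_I$ is one reduced point, so $r_I$ is birational. Projectivity is automatic since $R_I\to X$ is projective (iterated $\mathbb{P}^1$-bundle pullbacks) while $\Omega_\omega\hookrightarrow\flag(V)\to X$ is proper.

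For part (2), rewrite $R_I = R_{I'}\times_{Y_j}\flag(V) = R_{I'}\times_{\Omega_{\omega',j}}\Omega_\omega$ (both factor maps to $Y_j$ pass through $\Omega_{\omega',j}$). The resulting cartesian square has both vertical maps---$pr_1:R_I\to R_{I'}$ and $\varphi_j|_{\Omega_\omega}:\Omega_\omega\to\Omega_{\omega',j}$---being $\mathbb{P}^1$-bundles, hence flat. Flat base change yields $R^q r_{I*}\mathcal{O}_{R_I}=(\varphi_j|_{\Omega_\omega})^* R^q(\varphi_j|_{\Omega_{\omega'}}\circ r_{I'})_*\mathcal{O}_{R_{I'}}$, and the Leray spectral sequence together with the inductive vanishing collapses this to $(\varphi_j|_{\Omega_\omega})^* R^q(\varphi_j|_{\Omega_{\omega'}})_*\mathcal{O}_{\Omega_{\omega'}}$. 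The main obstacle lies here: one must show $(\varphi_j|_{\Omega_{\omega'}})_*\mathcal{O}_{\Omega_{\omega'}}=\mathcal{O}_{\Omega_{\omega',j}}$ with vanishing higher direct images. This requires a parallel induction identifying $\Omega_{\omega',j}$ as a Schubert variety in the partial flag bundle $Y_j$ (hence normal with rational singularities); granted that, Zariski's main theorem combined with birationality and at-most-one-dimensional fibers of $\varphi_j|_{\Omega_{\omega'}}$ delivers the claim. Normality of $\Omega_\omega$ then follows from $r_{I*}\mathcal{O}_{R_I}=\mathcal{O}_{\Omega_\omega}$ together with smoothness of $R_I$, and the rational-singularities statement is definitional once the direct-image behavior of the resolution $r_I$ is established.
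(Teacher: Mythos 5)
Your inductive skeleton is sound, and the reduction via the cartesian square to a single question about $\varphi_j|_{\Omega_{\omega'}}$ is well organized; the base case and the final deductions (normality from $r_{I*}\mathcal{O}_{R_I}=\mathcal{O}_{\Omega_\omega}$ together with smoothness of $R_I$, rational singularities being then definitional) are also fine. But at the step you yourself flag as the main obstacle, the argument is not merely deferred --- it is missing, and the tools you invoke there do not supply it. You need
$$(\varphi_j|_{\Omega_{\omega'}})_*\mathcal{O}_{\Omega_{\omega'}}=\mathcal{O}_{\Omega_{\omega',j}}\qquad\text{and}\qquad R^1(\varphi_j|_{\Omega_{\omega'}})_*\mathcal{O}_{\Omega_{\omega'}}=0.$$
Zariski's main theorem plus normality of the target does give the $R^0$ identity for a proper birational morphism, but it says nothing about $R^1$; and fibers of dimension at most one only kill $R^q$ for $q\geq 2$. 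In general a proper birational morphism onto a normal variety with one-dimensional fibers can have $R^1f_*\mathcal{O}\neq 0$ (resolutions of non-rational surface singularities give examples). The $R^1$ vanishing is precisely the content of the theorem of Ramanathan that the paper cites, and it is proved by genuinely cohomological machinery --- Frobenius splitting of Schubert varieties in positive characteristic, Kempf vanishing, and specialization to characteristic zero --- not by birationality-plus-fiber-dimension arguments.

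For the record, the paper does not prove this proposition; it defers to \cite[Appendix C]{SchubertFulton} for part (1) and to \cite[Theorem 4]{SchubertRamanathan} for part (2). You have reconstructed the outer shell of the inductive scheme those sources use, which is a reasonable way to organize the material, and the ``two standard Schubert-variety facts'' you invoke for part (1) are true (though they also require proof in a self-contained treatment). But the ``parallel induction'' you gesture at is not a small appendage: to run it you would have to carry normality and rational singularities simultaneously for Schubert varieties in all the parabolic quotients $Y_j$, each with its own Bott-Samelson tower, and this enlarged bookkeeping is a real part of the published arguments. Until the $R^1$ vanishing is supplied by an actual vanishing theorem, neither normality nor rational singularities of $\Omega_\omega$ follow from what you have written.
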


\begin{proof}
For part (1) see \cite[Appendix C]{SchubertFulton}. For part (2) see \cite[Theorem 4]{SchubertRamanathan}.
\end{proof}

\begin{remark}
The importance of the previous proposition is better understood when one relates it to the push-forward morphisms of $CH_*$ and $G_0$: it guarantees that in both theories the push-forward morphisms maps the fundamental class of $R_I$ to the one of $\Omega_\omega$.  
\end{remark}

\begin{remark}
If $I$ is a minimal decomposition, then its size $l$ describes the relative dimension of the associated Schubert variety $\Omega_{\omega_0 s_I}$ as a scheme over $X$. This can be easily seen for $X=\spec k$, from which the general case is derived. If $X=\spec k$, $l$ actually describes the dimension of $\Omega_{\omega_0 s_I}$: since $I$ is minimal,  one has 
$$l(\omega_0\cdot s_I)=l(\omega_0)-l(s_I)=\frac{n(n-1)}{2}-l$$  and therefore $l=\frac{n(n-1)}{2}-l(\omega_0 \cdot s_I)$. In view of proposition \ref{prop schubert} we know that for any permutation $\omega\in S_n$ the codimension of $\Omega_\omega$ in $\flag (n)$ is given by $l(\omega)$. Since we know that ${\rm dim}_k\flag(n)=\frac{n(n-1)}{2}$, we are able to conclude that ${\rm dim}_k\Omega_{\omega_0 s_I}=l$.
\end{remark}







\subsection{Schubert, Grothendieck and $\beta$-polynomials} \label{sec polynomials}

We begin this subsection by illustrating the definition of double Schubert and Grothendieck polynomials. These two families of polynomials over $\ZZ$ are both indexed by permutations and are defined using essentially the same procedure, based on the ordering of $S_n$ given by the  length function. We will write $R[\bf{x},\bf{y}]$ for $R\variables$.

\begin{definition}
Fix $n\in\NN$.
For each $i\in\{1,\tred,n-1\}$ we define the divided difference operators $\partial_i$ and the isobaric divided difference operators $\pi_i$ on $\ZZ[\bf{x},\bf{y}]$ by setting

\begin{eqnarray}\label{def SG1}
i)\ \partial_i P= \frac{P-\sigma_i (P)}{x_i-x_{i+1}}\quad ;\quad ii)\ \pi_i P=\frac{(1-x_{i+1}) P-(1-x_{i})\sigma_i (P)}{x_i-x_{i+1}}\ ,
\end{eqnarray}
where $\sigma_i$ is the operator exchanging $x_i$ and $x_{i+1}$.

For $\omega\in S_n$ we define the double Schubert polynomial $\mathfrak{S}_\omega$ and the double Grothendieck polynomial $\mathfrak{G}_\omega$ as follows:

if $\omega=\omega_0$ then  
\begin{eqnarray} \label{def SG2}
i)\ \mathfrak{S}_\omega:=\displaystyle{\prod_{i+j\leq k}}(x_i-y_j)\quad ;\quad ii)\  \mathfrak{G}_\omega:=\displaystyle{\prod_{i+j\leq k}}(x_i+y_j-x_i y_j)\ ;
\end{eqnarray}

if $\omega\not= \omega_0$ then there exist an elementary transposition $s_i$ such that $l(\omega)<l(\omega s_i)$: one then sets
\begin{eqnarray}\label{def SG3}
i)\ \mathfrak{S}_\omega:=\partial_i \mathfrak{S}_{\omega s_i}\quad ;\quad ii)\ 
\mathfrak{G}_\omega:=\pi_i \mathfrak{G}_{\omega s_i}\ .
\end{eqnarray}
\end{definition}

\begin{remark} \label{rem definition 1}

A priori the polynomials $\mathfrak{S}_{\omega}$ and $\mathfrak{G}_{\omega}$ are not associated to the permutation $\omega$ but to one of the many minimal decompositions of $\omega_0 \omega$. One therefore has to show that the definition  is independent of the choice of minimal decomposition. The inspection of the relations satisfied by the elementary transposition shows that they are generated by three types of relations: $s_i^2=id_{S_n}$ for every $i\in\{1,\tred,n-1\}$,  
 $s_i s_j=s_j s_i$ if $|i-j|\geq 2$ and $s_i s_j s_i=s_j s_i s_j $ if $|i-j|=1$. 

As we are only interested in minimal decompositions, the relations relevant for us are the ones that do not alter the size of a decomposition: for this reason we can disregard the first set of relations. On the other hand the remaining ones, which are a particular instance of the so-called \textit{braid relations}, turn minimal decompositions into minimal decompositions and could therefore give rise to different polynomials. One way to ensure that this cannot happen is to show that the divided difference operators themselves satisfy the braid relations.

\end{remark}

\begin{remark} \label{rem definition 2}
From the way they have been defined, the polynomials $\mathfrak{S}_\omega$ and $\mathfrak{G}_\omega$ should depend on the choice of $n\in\NN$ and therefore on the ambient symmetric group $\omega$ lives in. This is not actually the case since one can show that $\mathfrak{S}_{\omega_0}$ and $\mathfrak{G}_{\omega_0}$ do not change if one views $\omega_0$ as an element of $S_{n+1}$. Since the definition has $w_0$ as a base case and the recursive steps are not affected by the choice of $n$, the equality for this particular case implies the invariance of the definition for any permutation.
\end{remark}

In \cite{GrothendieckFomin} Fomin and Kirillov unified Schubert and Grothendieck polynomials by defining the double $\beta$-polynomials: this is a family of polynomials over $\ZZ[\beta]$ which specializes to Schubert polynomials when $\beta$ is set to be equal to 0 and to Grothendieck polynomials when $\beta$ equals $-1$. The definition follows the same pattern: one only needs to give an analogue of the divided difference operators and to fix the polynomial associated to the longest permutation $\omega_0$.



\begin{definition}\label{def beta difference}
Fix $n\in \NN$. For each $i\in\{1,\tred,n-1\}$ we define the $\beta$-divided difference operator $\phi_i$ on $\ZZ[\beta][\bf{x},\bf{y}]$ by setting 
\begin{eqnarray}\label{def H1}
\phi_i P=(1+\sigma_i)\frac{(1+\beta x_{i+1})P}{x_i-x_{i+1}}=\frac{(1+\beta x_{i+1}) P-(1+\beta x_{i})\sigma_i (P)}{x_i-x_{i+1}}\ ,
\end{eqnarray}
where $\sigma_i$ is the operator exchanging $x_i$ and $x_{i+1}$ and $1$ represents the identity operator.  

\end{definition}

For these operators to be well-defined, we need the following lemma.

\begin{lemma}
Let $P\in\ZZ[\beta][\bf{x},\bf{y}]$. Then $(x_i-x_{i+1})$ divides $(1+\beta x_{i+1}) P-(1+\beta x_{i})\sigma_i (P)$.
\end{lemma}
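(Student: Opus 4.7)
The plan is to apply the factor theorem in disguise: show that the numerator
\[
Q := (1+\beta x_{i+1}) P - (1+\beta x_i)\sigma_i(P)
\]
vanishes modulo the ideal $(x_i - x_{i+1}) \subset \mathbb{Z}[\beta][\mathbf{x},\mathbf{y}]$, and then promote that to actual divisibility by $x_i - x_{i+1}$ using the fact that the polynomial ring is a domain.

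For the first step, let $\pi : \mathbb{Z}[\beta][\mathbf{x},\mathbf{y}] \twoheadrightarrow A := \mathbb{Z}[\beta][\mathbf{x},\mathbf{y}]/(x_i - x_{i+1})$ be the quotient map. The operator $\sigma_i$ is a ring automorphism of $\mathbb{Z}[\beta][\mathbf{x},\mathbf{y}]$ that swaps $x_i$ with $x_{i+1}$ and fixes $\beta$ together with every other generator; in particular it sends the generator $x_i - x_{i+1}$ to its negative and thus preserves the ideal, descending to an automorphism $\bar{\sigma_i}$ of $A$. Inside $A$ the classes of $x_i$ and $x_{i+1}$ coincide, so $\bar{\sigma_i}$ fixes every generator and is the identity. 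Consequently $\pi(\sigma_i P) = \pi(P)$ and $\pi(1+\beta x_i) = \pi(1+\beta x_{i+1})$, which gives
\[
\pi(Q) \;=\; \pi(1+\beta x_{i+1})\,\pi(P) \;-\; \pi(1+\beta x_{i+1})\,\pi(P) \;=\; 0.
\]

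For the second step, $\mathbb{Z}[\beta][\mathbf{x},\mathbf{y}]$ is an integral domain and $x_i - x_{i+1}$ is a nonzero element, hence a nonzerodivisor. Therefore the principal ideal $(x_i - x_{i+1})$ coincides with the set of polynomial multiples of $x_i - x_{i+1}$, and $\pi(Q) = 0$ yields a uniquely determined $R \in \mathbb{Z}[\beta][\mathbf{x},\mathbf{y}]$ with $Q = (x_i - x_{i+1}) R$, which is precisely $\phi_i P$. There is no real obstacle here; the only point worth emphasizing is that $\sigma_i$ preserves $(x_i - x_{i+1})$, and this is immediate because $\sigma_i$ simply negates its generator.
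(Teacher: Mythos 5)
Your proof is correct, and it follows a genuinely different route from the paper's. The paper argues additively: it reduces to monomials, observes that $\phi_i$ is linear over polynomials symmetric in $x_i, x_{i+1}$, and hence restricts to monomials of the form $x_i^k$ or $x_{i+1}^k$; for those it writes out $(1+\beta x_{i+1})x_i^k - (1+\beta x_i)x_{i+1}^k = (x_i^k - x_{i+1}^k) + \beta x_i x_{i+1}(x_i^{k-1}-x_{i+1}^{k-1})$, each summand of which manifestly factors. Your argument instead works ring-theoretically in the quotient $A = \ZZ[\beta][\mathbf{x},\mathbf{y}]/(x_i - x_{i+1})$: since $\sigma_i$ negates the generator of the ideal, it descends to an automorphism of $A$ which must be the identity (as $\bar{x}_i = \bar{x}_{i+1}$), so the numerator $Q$ collapses to zero modulo $(x_i - x_{i+1})$. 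Your approach is cleaner in that it never needs the reduction to monomials or the observation about linearity over symmetric polynomials, and it exposes the structural reason the divisibility holds — the numerator is built to be anti-invariant under $\sigma_i$ in the relevant sense. The paper's computation, by contrast, has the advantage of producing an explicit expression one can read off for the resulting polynomial $\phi_i P$ in the monomial cases, and it rehearses the symmetric-linearity observation that is reused several times in the surrounding lemmas. One small imprecision in your write-up: the equality of the principal ideal $(x_i - x_{i+1})$ with the set of multiples of $x_i - x_{i+1}$ is the definition, not a consequence of being a domain; the domain hypothesis is needed only to conclude that the cofactor $R$ is \emph{unique}, which is a pleasant extra but not required for divisibility.
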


\begin{proof}
First of all let us observe that, since the operators are additive, it is sufficient to restrict to monomials. A futher reduction can be made by noticing that each operator  $\phi_i$ is linear with respect to polynomials which are symmetric in $x_i$ and $x_{i+1}$. It therefore suffices to consider only monomials of the shape $x_j^k$, with $j\in\{i,i+1\}$ and $k$ strictly positive. Since the two cases are essentially the same, we will only prove the case $j=i$. One then has
$$(1+\beta x_{i+1}) x_i^k-(1+\beta x_{i})\sigma_i(x_i^k)=(1+\beta x_{i+1}) x_i^k-(1+\beta x_{i})x_{i+1}^k=(x_i^k-x_{i+1}^k)+\beta x_i x_{i+1}(x_i^{k-1}-x_{i+1}^{k-1})\ ,$$
which is clearly divisible by $(x_i-x_{i+1})$.   
\end{proof}

We now prove a result concerning the relations existing between products of divided difference operators.

\begin{proposition}\label{prop independence}
The operators $\phi_i$ satisfy the braid realtions. More precisely, the following equations holds:
\begin{align*}
i)\qquad \ \phi_i\circ\phi_j &=\phi_j\circ\phi_i\qquad \ \text{ if }|i-j|\geq 2 \  ;\\
ii)\ \phi_i\circ\phi_j\circ\phi_i &=\phi_j\circ\phi_i\circ\phi_j\  \text{ if }|i-j|=1 \ .
\end{align*}
\end{proposition}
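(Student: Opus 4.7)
For part (i), when $|i-j|\ge 2$, the key observation is that all the pieces making up $\phi_i$ (the multiplications by $(1+\beta x_{i+1})$ and $(1+\beta x_i)$, the transposition $\sigma_i$, and the division by $x_i-x_{i+1}$) involve only $x_i$ and $x_{i+1}$, and similarly for $\phi_j$ in the variables $x_j,x_{j+1}$. Since $\{x_i,x_{i+1}\}\cap\{x_j,x_{j+1}\}=\emptyset$, we have $\sigma_i\sigma_j=\sigma_j\sigma_i$, the transposition $\sigma_i$ commutes with multiplication and division by any polynomial in $x_j,x_{j+1}$, and vice versa. A direct expansion of $\phi_i(\phi_j P)$ and $\phi_j(\phi_i P)$ then produces the same four-term numerator over the same denominator $(x_i-x_{i+1})(x_j-x_{j+1})$, yielding the commutation.

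For part (ii) it suffices to treat $j=i+1$. The main technical device is the factorisation
$$\phi_i \;=\; \partial_i\circ M_{1+\beta x_{i+1}},$$
where $M_g$ denotes multiplication by $g$ and $\partial_i$ is the usual divided difference operator of equation (\ref{def SG1}). Indeed, since $\sigma_i(1+\beta x_{i+1})=1+\beta x_i$, one has $\partial_i\bigl((1+\beta x_{i+1})P\bigr)=\phi_i P$ directly from the definitions. This converts the braid identity $\phi_i\phi_{i+1}\phi_i=\phi_{i+1}\phi_i\phi_{i+1}$ into an identity involving only the classical operators $\partial_i,\partial_{i+1}$ and multiplication operators.

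To finish, I would use the twisted Leibniz rule $\partial_i\circ M_g = M_{\partial_i g}+M_{\sigma_i g}\circ\partial_i$ (and its analogue for $\partial_{i+1}$) to move all multiplication operators to the left in both $\phi_i\phi_{i+1}\phi_i$ and $\phi_{i+1}\phi_i\phi_{i+1}$. Each side then becomes a sum of terms of the form $M_f\circ(\text{word in }\partial_i,\partial_{i+1})$. The words of length three reduce by the classical braid identity $\partial_i\partial_{i+1}\partial_i=\partial_{i+1}\partial_i\partial_{i+1}$, and what remains is a finite list of polynomial identities on the multipliers $f\in\ZZ[\beta][x_i,x_{i+1},x_{i+2}]$ which are routine to verify.

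The main obstacle is bookkeeping: the iterated Leibniz expansion produces many multiplier terms and they must be compared pairwise across the two sides. If this becomes unwieldy, a cleaner fallback is a \emph{reduction to a finite check}: since $\phi_i$ fixes the $y$-variables and the $x_k$ with $k\notin\{i,i+1\}$ and is $\sigma_i$-symmetric-linear, the compositions on both sides are $\ZZ[\beta][\mathbf{y}]$-linear, commute with multiplication by any $x_k$ with $k\notin\{i,i+1,i+2\}$, and are linear over $\ZZ[\beta][x_i,x_{i+1},x_{i+2}]^{S_3}$. Hence the identity need only be verified on a finite basis of the coinvariant algebra of $S_3$ acting on $\ZZ[\beta][x_i,x_{i+1},x_{i+2}]$, reducing the proof to a bounded computation.
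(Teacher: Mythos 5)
Your approach is correct but genuinely different from the paper's. For part (i) both arguments amount to the same observation of disjoint variable sets, so there is nothing to compare there. For part (ii), the paper sets $B_{ij}=\tfrac{1+\beta x_{i+1}}{x_i-x_j}$ so that $\phi_i=(1+\sigma_i)B_{i,i+1}$, then commutes all the coefficients $B_{ij}$ to the left so that both $\phi_1\phi_2\phi_1$ and $\phi_2\phi_1\phi_2$ are expanded in the twisted group algebra, i.e.\ as $\sum_{\omega\in S_3} f_\omega\cdot\sigma_\omega$ with rational-function coefficients $f_\omega$; matching the six coefficients is then an elementary (if tedious) rational-function computation, entirely self-contained. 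You instead factor $\phi_i=\partial_i\circ M_{1+\beta x_{i+1}}$ (which is correct: $\partial_i\bigl((1+\beta x_{i+1})P\bigr)$ is exactly $\phi_i P$), and then push multipliers to the left via the twisted Leibniz rule, so that both sides become $\sum_w M_{f_w}\circ\partial_w$ in the nil-Hecke algebra; you then quote the classical braid relation $\partial_i\partial_{i+1}\partial_i=\partial_{i+1}\partial_i\partial_{i+1}$ to identify the length-three words and are left with a finite coefficient check. What your route buys is polynomial (rather than rational-function) coefficients and the ability to lean on the known nil-Hecke structure and the $\beta=0$ case; what it costs is that you are bootstrapping off the classical braid relation, whereas the paper's proof is self-contained and in fact reproves that relation as the special case $\beta=0$. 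Do note that, like the paper (which works out only the coefficient of the identity explicitly), you have left the final coefficient comparison as a ``routine'' or ``bounded'' check rather than carrying it out; that is acceptable as a plan, but if you intend to write it up you should either display the full list of multiplier identities or invoke your coinvariant-basis fallback concretely.
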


\begin{proof}
In the course of the proof, in order to simplify the notation, we will write $B_{ij}$ for $\frac{1+\beta x_{i+1}}{x_i-x_j}$ and we will therefore have $\phi_i=(1+\sigma_i) B_{i i+1}$. Moreover, since it does not alter the proof, instead of $i$ and $j$ will write 1 and 3 in $(i)$ and 1 and 2 in $(ii)$.

The proof of the two equalities essentially consists of expressing the different operators as linear combinations of products of $\sigma_i$'s. With this goal in mind, it is useful to notice that a product of operators $\sigma_i$ acts on polynomials by exchanging the variables according to some permutation $\omega$ and therefore one can reasonably denote such a product as $\sigma_\omega$. For instance, using this notation, one would write $\sigma_{(12)}$ for $\sigma_1$.

Now, in order to rewrite the given operators in the needed form, one needs to extract all coefficients $B_{ij}$ from the operators $\sigma_i$.
Let us consider for example the case of $\phi_1\circ\phi_3$: one can modify it as follows
$$\phi_1\circ\phi_3=(1+\sigma_1) B_{12} (1+\sigma_3)B_{34}=
(1+\sigma_1)(B_{12}B_{34}\cdot 1+B_{12}B_{43}\cdot \sigma_3)=$$
$$=B_{12}B_{34}\cdot 1 +B_{12}B_{43}\cdot \sigma_3+
B_{21}B_{34}\cdot \sigma_1+B_{21}B_{43}\cdot \sigma_{(12)(34)}\ .$$

If the same procedure is carried out on the other operators one obtains the following expressions

$$\phi_3\circ\phi_1=B_{34}B_{12}\cdot 1 +B_{34}B_{21}\cdot \sigma_1+
B_{43}B_{12}\cdot \sigma_3+B_{43}B_{21}\cdot \sigma_{(12)(34)}\ ,$$
$$\phi_1\circ\phi_2\circ\phi_1=
(B_{12}B_{23}B_{12}+B_{21}B_{13}B_{12})\cdot 1+
(B_{12}B_{23}B_{21}+B_{21}B_{13}B_{21})\cdot\sigma_1
+B_{12}B_{32}B_{13}\cdot\sigma_2+$$
$$+B_{12}B_{32}B_{31}\cdot\sigma_{(132)}+
B_{21}B_{31}B_{23}\cdot\sigma_{(123)}+
B_{21}B_{31}B_{32}\cdot\sigma_{(13)}\ ,$$ 
$$\phi_2\circ\phi_1\circ\phi_2=
(B_{23}B_{12}B_{23}+B_{32}B_{13}B_{23})\cdot 1+
(B_{23}B_{12}B_{32}+B_{32}B_{13}B_{32})\cdot\sigma_2
+B_{23}B_{21}B_{13}\cdot\sigma_1+$$
$$+B_{32}B_{31}B_{12}\cdot\sigma_{(132)}+
B_{23}B_{21}B_{31}\cdot\sigma_{(123)}+
B_{32}B_{31}B_{21}\cdot\sigma_{(13)}\ .$$ 

When one finally compares the results, it becomes evident that (i) holds and that to prove (ii) it remains to show that the coefficients of $1$, $\sigma_1$ and $\sigma_2$  are actually equal. Since this is achieved by explicit computations we will work out, as an example, the one associated to $1$. After the expressions for $B_{ij}$ have been substituted and the two quantities have been factored, one has the following:
\begin{align*}
B_{12}B_{23}B_{12}+B_{21}B_{13}B_{12}&=\frac{(1+\beta x_2)(1+\beta x_3)}{(x_1-x_2)^2}\left[\frac{1+\beta x_2}{x_2-x_3}-\frac{1+\beta x_1}{x_1-x_3}\right]\\
B_{23}B_{12}B_{23}+B_{32}B_{13}B_{23}&=
\frac{(1+\beta x_2)(1+\beta x_3)^2}{(x_2-x_3)^2}\left[\frac{1}{(x_1-x_2)}-\frac{1}{(x_1-x_3)}\right]\\
\end{align*}

To prove the equality it now suffices to compute explicitly the terms inside the square brackets 
\begin{align*}
\frac{1+\beta x_2}{x_2-x_3}-\frac{1+\beta x_1}{x_1-x_3}&=
\frac{(x_1-x_2)(1+\beta x_3)}{(x_2-x_3)(x_1-x_3)}\ , \\\frac{1}{(x_1-x_2)}-\frac{1}{(x_1-x_3)}&=\frac{x_2-x_3}{(x_1-x_2)(x_1-x_3)}\ .\ \qedhere
\end{align*}
\end{proof}

We are now in the position to introduce the $\beta$-polynomials $\mathfrak{H}_{\omega}$.

\begin{definition} 
Fix $n\in \ZZ$ and let $\omega\in S_n$.
If $\omega=\omega_0$ then 
\begin{eqnarray}\label{def H2}
\mathfrak{H}_{\omega_0}:=\prod_{i+j\leq k}(x_i+y_j+\beta x_i y_j).
\end{eqnarray}
If $\omega\not= \omega_0$ then there exists an elementary transposition $s_i$ such that $l(\omega)<l(\omega s_i)$ and one sets 
\begin{eqnarray}\label{def H3}
\mathfrak{H}_\omega:=\phi_i \mathfrak{H}_{\omega s_i}\ .
\end{eqnarray} 

\end{definition}

 Exactly as for $\mathfrak{S}$ and $\mathfrak{G}$ (see remarks \ref{rem definition 1}-\ref{rem definition 2}) one has to show that the definition of $\mathfrak{H}_\omega$ does not depend on the choice of a minimal decomposition of $\omega_0\omega$ and on the choice of the symmetric group $S_n$. Thanks to proposition \ref{prop independence} we already know that $\mathfrak{H}_\omega$ is independent of the choice of minimal decomposition. 

We now prove two lemmas that will be used in the proof of the independence of the polynomials from the choice of $n$.

\begin{lemma}\label{lem Fij}
Let $P=x_i+y_j+\beta x_iy_j$. Then $\phi_i P=1$.
\end{lemma}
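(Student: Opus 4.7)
The plan is to prove this by a direct computation using the defining formula for $\phi_i$ from equation (\ref{def H1}). No conceptual input beyond careful bookkeeping is required: the identity is what one might call a ``characteristic vanishing'' of the factor $(x_i + y_j + \beta x_i y_j)$ under $\phi_i$, and it will fall out of pure algebra.

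First, I would write down explicitly the action of $\sigma_i$ on $P$. Since $\sigma_i$ exchanges $x_i$ and $x_{i+1}$ and fixes every $y_j$, we get $\sigma_i(P) = x_{i+1} + y_j + \beta x_{i+1} y_j$. Substituting into the definition
$$\phi_i P = \frac{(1+\beta x_{i+1})P - (1+\beta x_i)\sigma_i(P)}{x_i - x_{i+1}},$$
the next step is to expand the two products in the numerator.

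Expanding $(1+\beta x_{i+1})(x_i + y_j + \beta x_i y_j)$ yields
$$x_i + y_j + \beta x_i y_j + \beta x_i x_{i+1} + \beta x_{i+1} y_j + \beta^2 x_i x_{i+1} y_j,$$
and expanding $(1+\beta x_i)(x_{i+1} + y_j + \beta x_{i+1} y_j)$ yields
$$x_{i+1} + y_j + \beta x_{i+1} y_j + \beta x_i x_{i+1} + \beta x_i y_j + \beta^2 x_i x_{i+1} y_j.$$
Subtracting, every term cancels in pairs except the opening $x_i - x_{i+1}$: the $y_j$'s cancel, both $\beta x_i y_j$ and $\beta x_{i+1} y_j$ appear once in each expansion and cancel, the $\beta x_i x_{i+1}$ term and the $\beta^2 x_i x_{i+1} y_j$ term are symmetric in $i$ and $i+1$ and hence cancel.

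Therefore the numerator equals $x_i - x_{i+1}$, and dividing by $x_i - x_{i+1}$ gives $\phi_i P = 1$. The only potential ``obstacle'' is losing track of one of the six monomials during the subtraction, so I would present the two expansions in aligned form to make the cancellations transparent.
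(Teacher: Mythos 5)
Your computation is correct: the direct expansion of the numerator does indeed collapse to $x_i - x_{i+1}$, and I verified each of the six monomials and their cancellations.

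The paper takes a different (though of course equally valid) route. Instead of expanding the numerator, it first records two ``atomic'' evaluations, $\phi_i 1 = -\beta$ and $\phi_i x_i = 1$, and then exploits the $\phi_i$-linearity over polynomials symmetric in $x_i,x_{i+1}$ (here $y_j$ and $1+\beta y_j$, which do not involve $x_i$ or $x_{i+1}$ at all). Writing $P = (1+\beta y_j)\,x_i + y_j$ gives $\phi_i P = (1+\beta y_j)\cdot 1 + y_j\cdot(-\beta) = 1$. Your brute-force expansion has the advantage of being entirely self-contained and transparent; one does not need to have already established or believe the linearity properties of $\phi_i$. The paper's approach is slightly more structural: the same two building blocks $\phi_i 1$ and $\phi_i x_i$, together with the linearity trick, are reused elsewhere (notably in the proof of Lemma~\ref{lem polynomial}), so establishing them pays dividends. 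Either presentation is fine; if you keep yours, the aligned-expansion suggestion you make at the end is a good idea, since the cancellations rely on matching six monomials across two lines.
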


\begin{proof}
Through easy computations based on the definition of $\phi_i$, one obtains $\phi_i 1=-\beta$ and $\phi_i x_i=1$. This two expression are sufficient to finish the proof: thanks to the linearity of $\phi_i$ with respect to polynomials symmetric in $x_i$ and $x_{i+1}$ and to its additivity, one has
$$\phi_i P=\phi_i(x_i+y_j+\beta x_i y_j)=(1+\beta y_j)\cdot\phi_i x_i+y_j\cdot \phi_i 1=(1+\beta y_j)\cdot 1-y_j\cdot \beta= 1\ . \qedhere$$
\end{proof}

\begin{lemma} \label{lem polynomial}
Fix $n\in \NN$. For every $m\in \NN$ with $1\leq m\leq n+1$, set
$$H_m:=\prod_{i+j\leq n} (x_i+y_j+\beta x_iy_j)\prod_{k=m}^n (x_k+y_{n+1-k}+\beta x_k y_{n+1-k})\ .$$
Then for $1\leq m'\leq n$ one has
$$\phi_{m'} H_{m'}=H_{m'+1}\ .$$
\end{lemma}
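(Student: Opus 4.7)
The plan is to isolate from $H_{m'}$ the single factor that distinguishes it from $H_{m'+1}$, namely the $k=m'$ term of the second product, and then to exploit the fact that the remaining product $H_{m'+1}$ is symmetric in $x_{m'}$ and $x_{m'+1}$. Once that symmetry is established, the linearity of $\phi_{m'}$ with respect to $\sigma_{m'}$-symmetric polynomials reduces the whole computation to $\phi_{m'}(x_{m'}+y_{n+1-m'}+\beta x_{m'}y_{n+1-m'})$, which equals $1$ by Lemma \ref{lem Fij}.

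First I would write $H_{m'} = (x_{m'}+y_{n+1-m'}+\beta x_{m'}y_{n+1-m'}) \cdot H_{m'+1}$, which is immediate from the definition since the two formulas differ only in whether $k=m'$ is included in the second product. Then I would verify that $H_{m'+1}$ is symmetric in $x_{m'}$ and $x_{m'+1}$ by bookkeeping of factors. In the first product $\prod_{i+j\leq n}(x_i+y_j+\beta x_i y_j)$, the factors involving $x_{m'}$ are $(x_{m'}+y_j+\beta x_{m'}y_j)$ for $j=1,\ldots,n-m'$, and the factors involving $x_{m'+1}$ are $(x_{m'+1}+y_j+\beta x_{m'+1}y_j)$ for $j=1,\ldots,n-m'-1$. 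In the second product $\prod_{k=m'+1}^{n}(x_k+y_{n+1-k}+\beta x_k y_{n+1-k})$, only the term $k=m'+1$ contributes another $x_{m'+1}$-factor, namely $(x_{m'+1}+y_{n-m'}+\beta x_{m'+1}y_{n-m'})$; the remaining terms (for $k\geq m'+2$) involve neither $x_{m'}$ nor $x_{m'+1}$. Collecting everything, the factors of $H_{m'+1}$ that involve either $x_{m'}$ or $x_{m'+1}$ are exactly
$$\prod_{j=1}^{n-m'} (x_{m'}+y_j+\beta x_{m'} y_j)(x_{m'+1}+y_j+\beta x_{m'+1} y_j),$$
which is manifestly invariant under $\sigma_{m'}$.

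With this symmetry in hand, the conclusion follows at once: by the $\sigma_{m'}$-symmetric linearity of $\phi_{m'}$ (evident from its defining formula \eqref{def H1}) and Lemma \ref{lem Fij},
$$\phi_{m'} H_{m'} = H_{m'+1}\cdot \phi_{m'}(x_{m'}+y_{n+1-m'}+\beta x_{m'}y_{n+1-m'}) = H_{m'+1}\cdot 1 = H_{m'+1}.$$
I do not foresee any serious obstacle; the only thing requiring care is the indexing in the symmetry argument, and in particular the boundary case $m'=n$, where the second product of $H_{n+1}$ is empty and $H_{n+1}$ involves neither $x_n$ nor $x_{n+1}$ at all (since $i+j\leq n$ with $j\geq 1$ forces $i\leq n-1$), so the required symmetry is trivially satisfied.
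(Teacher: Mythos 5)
Your proof is correct and follows exactly the same route as the paper's: write $H_{m'}=H_{m'+1}\cdot(x_{m'}+y_{n+1-m'}+\beta x_{m'}y_{n+1-m'})$, check that $H_{m'+1}$ is symmetric in $x_{m'},x_{m'+1}$, and then apply the symmetric linearity of $\phi_{m'}$ together with Lemma \ref{lem Fij}. You merely supply the explicit index bookkeeping (and the boundary case $m'=n$) that the paper compresses into the remark that the relevant factors "appear in pairs."
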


\begin{proof}
First of all one rewrites $H_{m'}$ as $H_{m'+1}\cdot(x_{m'}+y_{n+1-{m'}}+\beta x_{m'} y_{n+1-m'})$ and observes that, since in $H_{m'+1}$ the terms $(x_{m'}+y_j+\beta x_{m'}y_j)$ and $(x_{m'+1}+y_j+\beta x_{m'+1}y_j)$ appear in pairs, $H_{m'+1}$  is symmetric in $x_{m'}$ and $x_{m'+1}$. To finish the proof it is now sufficient to use the linearity of $\phi_{m'}$ with respect to symmetric functions and lemma \ref{lem Fij}.
\end{proof}

\begin{proposition}
The polynomials $\mathfrak{H}_\omega$ are independent of the choice of symmetric group $S_n$ to which $\omega$ belongs.
\end{proposition}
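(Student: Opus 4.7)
The plan is to follow the strategy sketched for Schubert and Grothendieck polynomials in Remark~\ref{rem definition 2}: by induction it suffices to show that if $\omega\in S_n$ is included into $S_{n+1}$ (so that it fixes $n+1$), then $\mathfrak{H}_\omega$ computed via the $S_n$ recipe agrees with $\mathfrak{H}_\omega$ computed via the $S_{n+1}$ recipe. Iterating handles any ambient $S_{n+m}$.

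First I would produce compatible minimal decompositions. A direct computation shows that $\omega_0^{(n+1)}\omega_0^{(n)}=s_1s_2\cdots s_n$ in $S_{n+1}$ (it is the $(n+1)$-cycle sending $i\mapsto i+1$ for $i\le n$ and $n+1\mapsto 1$), and comparing lengths gives $l(\omega_0^{(n+1)})-l(\omega_0^{(n)})=n=l(s_1\cdots s_n)$, so this factorization is length-additive. Since $\omega$ fixes $n+1$, its length is the same in $S_n$ and in $S_{n+1}$, and hence
\[
l(\omega_0^{(n+1)}\omega)=\tfrac{n(n+1)}{2}-l(\omega)=n+l(\omega_0^{(n)}\omega).
\]
Therefore, if $(j_1,\dots,j_m)$ is any minimal decomposition of $\omega_0^{(n)}\omega$, then $(1,2,\dots,n,j_1,\dots,j_m)$ is a minimal decomposition of $\omega_0^{(n+1)}\omega$. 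By Proposition~\ref{prop independence} we may compute $\mathfrak{H}_\omega$ using any minimal decomposition, so the $S_{n+1}$-version reads
\[
\mathfrak{H}_\omega^{(n+1)}=\phi_{j_m}\cdots\phi_{j_1}\bigl(\phi_n\phi_{n-1}\cdots\phi_1\,\mathfrak{H}_{\omega_0^{(n+1)}}\bigr),
\]
while the $S_n$-version is $\mathfrak{H}_\omega^{(n)}=\phi_{j_m}\cdots\phi_{j_1}\mathfrak{H}_{\omega_0^{(n)}}$. Hence everything reduces to the identity
\[
\phi_n\phi_{n-1}\cdots\phi_1\,\mathfrak{H}_{\omega_0^{(n+1)}}=\mathfrak{H}_{\omega_0^{(n)}}.
\]

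This is exactly what Lemma~\ref{lem polynomial} delivers. Using the explicit product formula (\ref{def H2}), I would check that $\mathfrak{H}_{\omega_0^{(n+1)}}=\prod_{i+j\le n+1}(x_i+y_j+\beta x_iy_j)$ factors as $\prod_{i+j\le n}(x_i+y_j+\beta x_iy_j)\cdot\prod_{k=1}^n(x_k+y_{n+1-k}+\beta x_ky_{n+1-k})$, which is precisely the polynomial $H_1$ of Lemma~\ref{lem polynomial}; similarly $\mathfrak{H}_{\omega_0^{(n)}}=H_{n+1}$ (the second product in $H_{n+1}$ is empty). Applying $\phi_{m'}H_{m'}=H_{m'+1}$ successively for $m'=1,2,\dots,n$ then gives the required equality.

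The only mildly delicate point is ensuring that the $S_{n+1}$-recipe may be organized so that the block $\phi_n\phi_{n-1}\cdots\phi_1$ is applied first; this is what the length-additive factorization $\omega_0^{(n+1)}\omega=s_1\cdots s_n\cdot\omega_0^{(n)}\omega$ guarantees, combined with the independence from the choice of minimal decomposition. Once this is set up, the identification with the polynomials $H_m$ of Lemma~\ref{lem polynomial} and the repeated application of that lemma close the argument.
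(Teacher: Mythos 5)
Your argument is correct and follows essentially the same route as the paper's proof: both reduce to showing that $\mathfrak{H}_{\omega_0^{(n)}}$ computed inside $S_{n+1}$ agrees with the explicit product formula, via the length-additive factorization $\omega_0^{(n+1)}=\omega_0^{(n)}s_n\cdots s_1$ (equivalently, your $\omega_0^{(n+1)}\omega_0^{(n)}=s_1\cdots s_n$, since $\omega_0$ is an involution), followed by $n$ applications of Lemma~\ref{lem polynomial}. The only cosmetic difference is that you spell out the reduction to the base case $\omega=\omega_0$ directly in terms of concatenated minimal decompositions, whereas the paper delegates that step to Remark~\ref{rem definition 2}.
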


\begin{proof}
Let us denote by $\omega_{0,n}$ the longest element of $S_n$ viewed as an element of $S_{n+1}$. As it was observed in remark \ref{rem definition 2}, the proof of the proposition can be reduced to showing that

$$\mathfrak{H}_{\omega_{0,n}}=\prod_{i+j\leq n}(x_i+y_j+\beta x_i y_j).$$

To prove this one first needs to factor $\omega_0$ as  a product of elementary transpositions multiplied by $\omega_{0,n}$: $\omega_0=\omega_{0,n}s_n\trecd s_1$. Then, one recalls the recursive definition of $\mathfrak{H}_{\omega_{0,n}}$ and finishes the proof by applying $n$ times lemma \ref{lem polynomial}: 
$$\mathfrak{H}_{\omega_{0,n}}=\phi_n \trecd \phi_1 \mathfrak{H}_{\omega_0}=\phi_n \trecd \phi_1 H_1=\phi_n\trecd \phi_2 H_2=\trecd=H_{n+1}=\prod_{i+j\leq n}(x_i+y_j+\beta x_i y_j) . \qedhere$$ 
\end{proof}

Let us now denote by $\mathfrak{H}^{(b)}_\omega$ and $\phi^{(b)}_i$ the polynomial and the operators one obtains from $\mathfrak{H}_\omega$ and $\phi^{(b)}$ when $\beta$ is set equal to $b$. Using this notation we can make clear what we mean when we say that the $\beta$-polynomials represent a generalization of both Schubert and Grothendieck polynomials.    

\begin{proposition}\label{prop special}
Fix $n\in\NN$. For every $\omega\in S_n$ one has  
$$i)\ \  \mathfrak{H}^{(0)}_\omega(x_1,\tred,x_n,-y_1,\tred,-y_n)=\mathfrak{S}_\omega \quad ;\quad ii)\ \  \mathfrak{H}^{(-1)}_\omega=\mathfrak{G}_\omega\ .$$ 
\end{proposition}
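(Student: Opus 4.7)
The plan is a straightforward descending induction on $l(\omega)$, exploiting the fact that the three families $\mathfrak{S}_\omega$, $\mathfrak{G}_\omega$, $\mathfrak{H}_\omega$ are manufactured by the same recipe: fix the value at $\omega_0$ and then propagate to smaller permutations by repeated application of a divided-difference-type operator, one for each elementary transposition in a minimal decomposition of $\omega_0\omega$. It therefore suffices to check, for each of (i) and (ii), that (a) the base case at $\omega_0$ matches and (b) the operators intertwine correctly.

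For the base cases, setting $\beta=0$ in (\ref{def H2}) and then substituting $y_j\mapsto -y_j$ gives $\prod_{i+j\le n}(x_i-y_j)$, which is exactly $\mathfrak{S}_{\omega_0}$; setting $\beta=-1$ in (\ref{def H2}) produces $\prod_{i+j\le n}(x_i+y_j-x_iy_j)=\mathfrak{G}_{\omega_0}$. For the operators, formula (\ref{def H1}) specializes at $\beta=0$ to
$$\phi_i^{(0)}P=\frac{P-\sigma_i P}{x_i-x_{i+1}}=\partial_i P,$$
and at $\beta=-1$ to
$$\phi_i^{(-1)}P=\frac{(1-x_{i+1})P-(1-x_i)\sigma_i P}{x_i-x_{i+1}}=\pi_i P.$$
Since $\sigma_i$ permutes only $x_i$ and $x_{i+1}$, the substitution $y_j\mapsto -y_j$ commutes with $\phi_i^{(0)}$, so after this substitution $\phi_i^{(0)}$ is identified with $\partial_i$.

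The induction itself is then immediate: given $\omega\ne\omega_0$, choose any elementary transposition $s_i$ with $l(\omega)<l(\omega s_i)$; the inductive hypothesis applied to $\omega s_i$ combined with (b) yields the statement for $\omega$. Well-definedness of all three polynomial families independently of the choice of minimal decomposition has already been secured (see remarks \ref{rem definition 1}, \ref{rem definition 2} and proposition \ref{prop independence}), so no compatibility issue arises. There is really no substantial obstacle: the argument is essentially a side-by-side inspection of the definitions, and the only mild observation required is the commutation of the substitution $y_j\mapsto -y_j$ with $\phi_i^{(0)}$ in case (i).
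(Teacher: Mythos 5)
Your proof is correct and follows exactly the same strategy as the paper: verify the base case at $\omega_0$, verify that $\phi_i$ specializes to $\partial_i$ (resp.\ $\pi_i$) upon setting $\beta=0$ (resp.\ $\beta=-1$), and propagate by the common descending recursion. The one extra observation you make explicit --- that the substitution $y_j\mapsto -y_j$ commutes with $\phi_i^{(0)}$ because $\sigma_i$ acts only on the $x$-variables --- is implicit in the paper's ``compare the definitions,'' and is a sensible detail to spell out.
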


\begin{proof}
In order to verify the two statements one only has to check that they hold for the special case  $\omega=\omega_0$ and that the $\beta$-divided difference operators $\phi_i$ specialize respectively to $\partial_i$ and $\pi_i$. For this it is sufficient to compare the definitions of the polynomials and of the operators. 
\end{proof}

\begin{remark}\label{rem dualdef}
In the last proposition there is an evident asymmetry between the two equalities, given by the fact that, in order to recover the Schubert polynomials, one has to change the sign of the $y_i$'s in the $\beta$-polynomials. As it will become evident when we will deal with the algebraic cobordism analogue of these concepts, in some sense the problem lies in the definition of the double Schubert polynomial and more specifically in the expression for  $\mathfrak{S}_{\omega_0}$. The choice of setting $\mathfrak{S}_{\omega_0}$ equal to $\prod_{i+j\leq n}x_i-y_j$ instead of $\prod_{i+j\leq n}x_i+y_j$  was probably motivated by the observation that in this way one obtains an easier expression for the Chow ring-valued fundamental classes of Schubert varieties, in which one simply substitutes the Chern roots of the  bundles which are involved. In this way the definition of the double Schubert polynomials already takes into account that it is necessary to take the dual of the second  family of line bundles and this is reflected in the (relatively harmless) sign change.

Unfortunately performing the same operations on double Grothendieck polynomials has a far stronger impact on their expression: one would have to replace $y_j$ with $-\frac{y_j}{1-y_j}$. It is most likely for this reason that in this case it has been decided not to encode in the definition the effects of taking the dual on the second family, creating a gap between the two families of polynomials.  
\end{remark}


\subsection{The description of the fundamental classes in the Chow ring }

In this subsection we present the results  which allow to express the Chow ring fundamental classes of both Schubert varieties and degeneracy loci by means of Schubert polynomials. Throughout the subsection $p:V\rightarrow X$ will be a vector bundle of rank $n$, with $\pi:\flag (V)\rightarrow X$ as the associated full flag bundle and  $V_{\textbf{\textbullet}}=(V_1\subset V_2\subset ... \subset V_n=V)$ will be a fixed full flag of subbundles. Let us moreover recall that $\flag (V)$ comes equipped with $Q\unddot$, the universal full flag of quotient bundles of $\pi^* V$.

 We begin our presentation by providing a description of the Chow ring of the flag bundle.

\begin{proposition}\label{prop flag CH}
Let $V$ be a vector bundle over $X\in\SM$ and let $J$ be the ideal of $CH^*(X)[X_1,\tred,X_n]$ generated by the elements $e_i-c_i(V)$ where $e_i$ is the $i$-th elementary symmetric function and $c_i(V)$ is the $i$-th Chern class of $V$. Then the Chow ring of the flag bundle can be described as follows:  
$$CH^*(\flag (V))\simeq CH^*(X)[X_1,\tred,X_n]/J\ .$$ 
\end{proposition}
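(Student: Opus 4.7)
I would induct on $n = \mathrm{rk}(V)$, realizing $\flag(V)\to X$ as an iterated projective bundle and applying the projective bundle formula together with the Whitney formula at each step. Under the claimed isomorphism the variable $X_i$ is to be identified with $c_1(L_i^{Q\unddot})=c_1(\mathrm{Ker}(Q_i\srarrow Q_{i-1}))$, so that by Lemma on quotient-flag Chern roots in the excerpt the relations $e_i(X_1,\ldots,X_n)=c_i(V)$ hold geometrically. The base case $n=1$ is trivial: $\flag(V)=X$, the unique flag line is $V$ itself, and the quotient $CH^*(X)[X_1]/(X_1-c_1(V))$ collapses to $CH^*(X)$ via $X_1\mapsto c_1(V)$.

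\textbf{Inductive step.} Factor $\flag(V)\xrightarrow{p}\Proj(V)\xrightarrow{q}X$, where $\Proj(V)$ parametrizes rank-$1$ quotients of $V$. On $\Proj(V)$ one has the tautological sequence $0\to K\to q^*V\to O(1)\to 0$ with $K$ of rank $n-1$. By the universal property, giving a full quotient flag $Q_n\srarrow\cdots\srarrow Q_2\srarrow Q_1$ of $V$ with prescribed bottom term $Q_1=O(1)$ is equivalent to giving a full quotient flag of $K$; hence $\flag(V)\cong\flag_{\Proj(V)}(K)$. Setting $X_n:=c_1(O(1))$, the projective bundle formula for $CH^*$ gives
$$CH^*(\Proj(V))\;\cong\;CH^*(X)[X_n]\Big/\Big(\sum_{i=0}^{n}(-1)^i c_i(V)\,X_n^{\,n-i}\Big),$$
and by the inductive hypothesis applied to $K\to\Proj(V)$,
$$CH^*(\flag(V))\;\cong\;CH^*(\Proj(V))[X_1,\ldots,X_{n-1}]\big/\bigl(e_i(X_1,\ldots,X_{n-1})-c_i(K)\bigr)_i.$$
Substituting the first into the second produces a presentation of $CH^*(\flag(V))$ over $CH^*(X)$ in the $n$ variables $X_1,\ldots,X_n$.

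\textbf{Matching the relations.} It remains to show the resulting ideal equals $J$. Applying Whitney to $0\to K\to q^*V\to O(1)\to 0$ gives $c_t(V)=(1+X_n t)\,c_t(K)$; combining with $c_t(K)=\prod_{j=1}^{n-1}(1+X_j t)$ from the inductive hypothesis yields $c_t(V)=\prod_{i=1}^n(1+X_i t)$, i.e.\ the relations $e_i(X_1,\ldots,X_n)=c_i(V)$ lie in the ideal. Conversely, from $J$ one recovers the inductive relations $e_i(X_1,\ldots,X_{n-1})=c_i(K)$ by combining the symmetric-function identity $e_i(X_1,\ldots,X_n)=e_i(X_1,\ldots,X_{n-1})+X_n\,e_{i-1}(X_1,\ldots,X_{n-1})$ with the Whitney recursion $c_i(K)=c_i(V)-X_n\,c_{i-1}(K)$; and one recovers the projective bundle relation for $X_n$ from the polynomial identity $\prod_{i=1}^n(X_n-X_i)=\sum_{i=0}^n(-1)^i e_i(X_1,\ldots,X_n)X_n^{n-i}=0$, which holds trivially because of the factor $X_n-X_n$.

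\textbf{Expected obstacle.} The essential geometric input is the identification $\flag(V)\cong\flag_{\Proj(V)}(K)$, which is a straightforward application of the universal property of flag bundles in a relative setting but must be checked carefully to ensure that the line bundles $L_i$ on the two sides correspond as expected. Once this is in place, the proof reduces to the symmetric-function manipulations above, and every remaining step is either the projective bundle formula, the Whitney formula, or formal.
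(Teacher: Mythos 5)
Your proof is correct and self-contained; the paper does not supply its own argument for this proposition but simply cites \cite[Lemma 5.3]{FlagsFulton}, and the iterated-projective-bundle induction you describe is precisely the standard argument behind that citation. One small indexing slip worth flagging: with $X_n=c_1(O(1))=c_1(L_1^{Q\unddot})$, the inductive variables $X_1,\tred,X_{n-1}$ are the Chern roots of $K=\ker(q^*V\srarrow O(1))$ associated to the induced quotient flag $\ker(Q_n\srarrow Q_1)\srarrow\tred\srarrow\ker(Q_2\srarrow Q_1)$, whose linear factors are $\ker(Q_{j+1}\srarrow Q_j)=L_{j+1}^{Q\unddot}$, so $X_i$ corresponds to a cyclic shift of the classes $c_1(L_i^{Q\unddot})$ rather than to $c_1(L_i^{Q\unddot})$ as claimed in your opening paragraph. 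Since the generators $e_i-c_i(V)$ of $J$ are symmetric in the $X_j$ this is immaterial to the resulting presentation, and the rest of your argument --- including the ideal-matching computations --- goes through exactly as written.
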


\begin{proof}
See \cite[Lemma 5.3]{FlagsFulton}.
\end{proof}

\begin{remark}
In the proof of the previous lemma the isomorphism is constructed by mapping the variables $X_i$'s to the \textit{Chern roots} of $V$ associated to the universal full flag of quotient bundles $Q\unddot$. For any full flag of quotient bundles $W\unddot=(\pi^*V=W_n\srarrow W_{n-1}\srarrow ... \srarrow W_1)$ the Chern roots are the first Chern classes  $c_1(L^{W\unddot}_i)\in CH^*(\flag(V))$ with $\{1,\tred,n\}$. This notion can as well be defined for full flags of subbundles and in this case the Chern roots associated to the flag $U\unddot=(U_1\subset U_2\subset ... \subset U_n=\pi^*V)$ are the elements $c_1(L_i^{U\unddot})\in CH^*(\flag(V))$ with $i\in \{1,\tred,n\}$. 
\end{remark}

Since every divided difference operator $\partial_i$ is linear with respect to polynomials symmetric in $X_i$ and $X_{i+1}$, it follows that the ideal $J$ is preserved under their action on $CH^*(X)[X_1,\tred,X_n]$. As a consequence one obtains operators $\overline{\partial_i}$ over $CH^*(\flag (V))$ which, as we will see in the next lemma, can be described in terms of pull-back and push-forward morphisms. With this goal in mind let us apply the functor $CH^*$ to diagram (\ref{diagram flag}) and observe that, since $pr_1$ and $\varphi_i$ are smooth morphisms, we obtain 
$$
\xymatrix{
  CH^*(R_{I}) \ar[rr]^{{r_I}_*}& &CH^*(\flag(V)) \\
  CH^*(R_{I'})\ar[r]^{{r_{I'}}_*} \ar[u]_{pr_1^*}& CH^*(\flag(V)) \ar[r]^{{\varphi_j}_*} &CH^*(Y_j)   \ar[u]^{\varphi_j^*} } 
$$

\begin{lemma}\label{lemma operator}
Following the notation from the preceding diagram one has $$\overline{\partial_j}=\varphi_j^*{\varphi_j}_*\ .$$
\end{lemma}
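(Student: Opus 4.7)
The plan is to exploit the $\mathbb{P}^1$-bundle structure $\varphi_j : \flag(V) \to Y_j$ recalled in remark \ref{rem P^1 bundle} and reduce the identity $\overline{\partial_j} = \varphi_j^*\,{\varphi_j}_*$ to a check on a basis of $CH^*(\flag(V))$ as a module over $\varphi_j^* CH^*(Y_j)$.

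First I would apply the projective bundle formula to $\varphi_j = \mathbb{P}(K) \to Y_j$, where $K := {\rm Ker}(Q_{j+1} \srarrow Q_{j-1})$ is the relevant rank-$2$ bundle over $Y_j$. The key geometric point is that the tautological quotient line bundle $\mathcal{O}_{\mathbb{P}(K)}(1)$ coincides with $L_j = {\rm Ker}(Q_j \srarrow Q_{j-1})$, since a point of the fiber of $\varphi_j$ is precisely the data of the rank-$1$ quotient of $K$ that is inserted as the kernel of the new surjection $Q_{j+1} \srarrow Q_j$. Hence $\xi := c_1(\mathcal{O}_{\mathbb{P}(K)}(1)) = X_j$, and every class $\alpha \in CH^*(\flag(V))$ has a unique expression $\alpha = a + b\, X_j$ with $a, b \in \varphi_j^* CH^*(Y_j)$.

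Next I would verify that both operators are $\varphi_j^* CH^*(Y_j)$-linear. For $\varphi_j^* \circ {\varphi_j}_*$ this is an immediate consequence of the projection formula. For $\overline{\partial_j}$, observe directly from formula (\ref{def SG1}.i) that $\partial_j(a P) = a\, \partial_j(P)$ whenever $a$ is symmetric in $X_j$ and $X_{j+1}$; the analogue of proposition \ref{prop flag CH} for the partial flag bundle $Y_j$ shows that $\varphi_j^* CH^*(Y_j)$ is generated over $CH^*(X)$ by the variables $X_i$ for $i \ne j, j+1$ together with the Chern classes $c_1(K) = X_j + X_{j+1}$ and $c_2(K) = X_j X_{j+1}$ of $K$, all of which are symmetric in $X_j$ and $X_{j+1}$.

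It therefore remains to check the identity on the two module generators $1$ and $X_j$. On the side of divided differences, $\partial_j(1) = 0$ trivially, while $\partial_j(X_j) = (X_j - X_{j+1})/(X_j - X_{j+1}) = 1$. On the geometric side, ${\varphi_j}_*(1) = 0$ by degree reasons (the push-forward lowers degree by $1$ while $CH^{-1}(Y_j) = 0$), and ${\varphi_j}_*(X_j) = {\varphi_j}_*(c_1(\mathcal{O}(1))) = 1$ by the standard computation of the push-forward along a $\mathbb{P}^1$-bundle (the Segre class $s_0(K) = 1$, or equivalently the fact that $c_1(\mathcal{O}(1))$ restricts to a degree-one class on each $\mathbb{P}^1$-fiber). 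Applying $\varphi_j^*$ then gives $\varphi_j^* {\varphi_j}_*(1) = 0$ and $\varphi_j^* {\varphi_j}_*(X_j) = 1$, matching $\overline{\partial_j}$ on both generators; combined with $\varphi_j^* CH^*(Y_j)$-linearity this concludes the proof. The step requiring the most care is the geometric identification $\mathcal{O}_{\mathbb{P}(K)}(1) \cong L_j$, since it fixes the convention under which $X_j = \xi$ and thereby makes the computations ${\varphi_j}_*(1) = 0$, ${\varphi_j}_*(X_j) = 1$ match the values of $\partial_j$.
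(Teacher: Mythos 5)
Your proof is correct. The paper itself offers no argument for this lemma but simply cites \cite[Lemma 7.2]{FlagsFulton}, so you have supplied a self-contained proof where the paper defers to the literature. Your strategy is the natural one (and, as far as I can tell, essentially the same as Fulton's): identify $\flag(V)\to Y_j$ as $\mathbb{P}(K)\to Y_j$ with $\mathcal{O}_{\mathbb{P}(K)}(1)\cong L_j^{Q\unddot}$ so that $\xi=X_j$, observe that both $\overline{\partial_j}$ and $\varphi_j^*{\varphi_j}_*$ are linear over $\varphi_j^*CH^*(Y_j)$ (the first because $\partial_j$ kills nothing symmetric in $X_j,X_{j+1}$, the second by the projection formula), and then compare the two operators on the basis $1,\,\xi$ furnished by the projective bundle formula, where $\partial_j(1)=0={\varphi_j}_*(1)$ and $\partial_j(X_j)=1={\varphi_j}_*(\xi)$. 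You correctly flag the delicate point, namely the identification $\mathcal{O}_{\mathbb{P}(K)}(1)\cong L_j$ under the quotient convention for $\mathbb{P}(-)$ used throughout the paper: the tautological subbundle $S\subset\varphi_j^*K$ is the kernel of the newly inserted surjection $Q_{j+1}\srarrow Q_j$, so that $L_j=\mathrm{Ker}(Q_j\srarrow Q_{j-1})\cong K/S=\mathcal{O}(1)$, which is what makes the two sides of the lemma match on the nose rather than up to a sign or a variable swap.
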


\begin{proof}
See  \cite[Lemma 7.2]{FlagsFulton}.
\end{proof}

This lemma yields the following corollary which, since it relates one with the other the push-forward classes of the Bott-Samelson resolutions, represents the first step towards the description of the fundamental classes of Schubert varieties.    

\begin{corollary} \label{cor BottChow}
Let $I=(i_1,\tred,i_l)$ be an l-tuple with $i_j\in\{1,\tred, n-1\}$ and let $R_{I}$ be the corresponding Bott-Samelson resolution. Then in $CH^*(\flag (V))$ we have the equality
$$\overline{\partial_{i_1}}\trecd\overline{\partial_{i_l}}({r_\emptyset}_*[R_\emptyset]_{CH^*})={r_I}_*[R_I]_{CH^*}\ .$$
\end{corollary}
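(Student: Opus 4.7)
The plan is to prove the identity by induction on $l$, the length of the tuple $I$. The base case $l=0$ is tautological: both sides are just $r_{\emptyset *}[R_\emptyset]_{CH^*}$, since the composition of zero operators is the identity. For the inductive step, write $I=(I',j)$ with $I'=(i_1,\ldots,i_{l-1})$ and $j=i_l$. The inductive hypothesis identifies $\overline{\partial_{i_1}}\cdots\overline{\partial_{i_{l-1}}}(r_{\emptyset *}[R_\emptyset]_{CH^*})$ with $r_{I'*}[R_{I'}]_{CH^*}$, so it remains to show
$$\overline{\partial_j}\bigl(r_{I'*}[R_{I'}]_{CH^*}\bigr)=r_{I*}[R_I]_{CH^*}.$$

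By Lemma \ref{lemma operator} we can rewrite the left-hand side as $\varphi_j^*\,\varphi_{j*}\,r_{I'*}[R_{I'}]_{CH^*}=\varphi_j^*(\varphi_j\circ r_{I'})_*[R_{I'}]_{CH^*}$, so the key step is to recognize this as $(pr_2)_*[R_I]_{CH^*}$ via base change along the outer cartesian square
$$
\xymatrix{
  R_I\ar[r]^{pr_2}\ar[d]_{pr_1}& \flag(V)\ar[d]^{\varphi_j}\\
  R_{I'}\ar[r]^{\varphi_j\circ r_{I'}}& Y_j
}
$$
appearing in Definition \ref{def Bott-Samelson}. I would apply Lemma \ref{lem transverse CH} with $f:=\varphi_j\circ r_{I'}$ (projective, as a composition of the projective morphisms $r_{I'}$ — projective by induction, being a base change of $\varphi_j$ starting from the closed immersion $r_\emptyset=i_{V\unddot}$ — and $\varphi_j$, which is a $\Proj^1$-bundle by Remark \ref{rem P^1 bundle}) and $g:=\varphi_j$ (smooth, hence l.c.i., hence flat, so automatically Tor-independent with $f$). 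The scheme $R_{I'}$ is equi-dimensional since it is smooth by Remark \ref{rem smooth morph}. This yields
$$\varphi_j^*\bigl((\varphi_j\circ r_{I'})_*[R_{I'}]_{CH^*}\bigr)=(pr_2)_*[R_I]_{CH^*}=r_{I*}[R_I]_{CH^*},$$
where in computing $[R_I]_{CH^*}$ one uses that $R_I=R_{I'}\times_{Y_j}\flag(V)$ is smooth (Remark \ref{rem smooth morph}) and that fundamental classes of smooth schemes are compatible with l.c.i.\ pull-back along the smooth morphism $pr_1$.

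There is no real obstacle here: every ingredient — the description of $\overline{\partial_j}$ as $\varphi_j^*\varphi_{j*}$, the base-change compatibility for the Chow group under projective/l.c.i.\ pairs of Tor-independent morphisms, and the smoothness and projectivity of the structural maps of Bott-Samelson schemes — has already been established earlier in the excerpt. The only minor bookkeeping is to check that the hypotheses of Lemma \ref{lem transverse CH} are met, which boils down to verifying projectivity of $\varphi_j\circ r_{I'}$ (immediate from induction and Remark \ref{rem P^1 bundle}) and the flatness of $\varphi_j$ (immediate from smoothness).
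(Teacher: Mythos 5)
Your proof is correct and takes essentially the same route as the paper: induction on $l$, with the inductive step given by Lemma \ref{lemma operator} combined with base change across the cartesian square defining $R_I$. The only difference is cosmetic---you package the base-change step by invoking Lemma \ref{lem transverse CH} by name, whereas the paper appeals directly to proper-push-forward/flat-pull-back compatibility and then separately notes $pr_1^*[R_{I'}]_{CH^*}=[R_I]_{CH^*}$.
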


\begin{proof}
The proof is by induction on the lenght $I$ and the base of the induction is tautologically true as $l=0$ implies $I=\emptyset$. For the inductive step since $l>0$ one can write $I=(I',I_l)$ and, in view of the definition of the Bott-Samelson resolutions, one has $R_I=pr_1^{-1}(R_{I'})$. Therefore, thanks to the functorial compatibilities in the Chow ring between the proper push-forwards and the flat pull-backs, one can write
$$\varphi_{i_l}^*{\varphi_{i_l}}_*{r_{I'}}_*[R_{I'}]_{CH^*}={r_I}_*pr_1^*[R_{I'}]_{CH^*}=
{r_I}_*[R_I]_{CH^*}\ .$$
The statement then follows once both lemma \ref{lemma operator} and the inductive hypothesis are applied to the left hand side.
\end{proof}

Let us recall that by definition the Bott-Samelson resolution $R_\emptyset$ is just the Schubert variety $\Omega_{\omega_0}$. It immediately follows that this last corollary can be used to obtain explicit expressions for the classes $r_{I*}[R_I]_{CH^*}$, provided one has such an expression for $[\Omega_{\omega_0}]_{CH^*}\in CH^*(\flag(V))$.

\begin{lemma}
Let $V\rightarrow X$ be a vector bundle and let $x_i$ and $y_i$ denote respectively the Chern roots associated to the full flags $Q\unddot$ and $\pi^*(V\unddot)$. Then in $CH^*(\flag(V))$ one has
$$[\Omega_{\omega_0}]_{CH^*}=\prod_{i+j\leq n}(x_i-y_j)\ .$$
\end{lemma}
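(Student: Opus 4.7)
The plan is to realize $\Omega_{\omega_0}$ as an iterated regular zero locus and apply Lemma \ref{lem top Chern} step by step. For $k = 0, 1, \ldots, n-1$, set $Z_k := \bigcap_{l=1}^{k} Z(h_{n-l, l}) \subset \flag(V)$, so that $Z_0 = \flag(V)$ and $Z_{n-1} = \Omega_{\omega_0}$ by Lemma \ref{lem Omega0}. I would first check that $Z_k$ is naturally isomorphic to the full flag bundle of $V_{n-k}$ over $X$, embedded inside $\flag(V)$ as the locus where the top $k$ universal quotients agree with the pullbacks of $V \twoheadrightarrow V/V_{n-j}$ for $j \le k$. Consequently each $Z_k$ is smooth over $X$ of relative dimension $\binom{n-k}{2}$, and each inclusion $Z_k \hookrightarrow Z_{k-1}$ is a regular embedding of codimension $n-k$; the extremal case $k = n-1$ recovers Lemma \ref{lem regular Schubert}.

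The key observation is that on $Z_{k-1}$ the morphism $h_{n-k+1, k-1}: \pi^*V_{n-k+1} \to Q_{k-1}$ vanishes by the very definition of $Z_{k-1}$, hence so does its restriction $h_{n-k, k-1}: \pi^*V_{n-k} \to Q_{k-1}$. Since $h_{n-k, k-1}$ is the composition of $h_{n-k, k}: \pi^*V_{n-k} \to Q_k$ with the projection $Q_k \twoheadrightarrow Q_{k-1}$, it follows that the image of $h_{n-k, k}|_{Z_{k-1}}$ lies in $L_k := \ker(Q_k \twoheadrightarrow Q_{k-1})$. This produces a natural section of the rank-$(n-k)$ bundle $E_k|_{Z_{k-1}}$, where $E_k := \pi^*V_{n-k}^\vee \otimes L_k$, whose zero scheme inside $Z_{k-1}$ is precisely $Z_k$.

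Applying Lemma \ref{lem top Chern}(2) stage by stage and pushing forward to $\flag(V)$ via the projection formula gives $[Z_k]_{CH^*} = c_{n-k}(E_k) \cdot [Z_{k-1}]_{CH^*}$ in $CH^*(\flag(V))$, so iterating yields
$$[\Omega_{\omega_0}]_{CH^*} = \prod_{k=1}^{n-1} c_{n-k}(E_k).$$
Since $CH^*$ carries the additive formal group law, Corollary \ref{cor Chern} evaluates $c_{n-k}(E_k) = \prod_{j=1}^{n-k}(x_k - y_j)$, and reindexing $i = k$ produces $\prod_{i+j \le n}(x_i - y_j)$, as desired.

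The main obstacle I anticipate is the construction of the section of $E_k|_{Z_{k-1}}$: the natural morphism $h_{n-k, k}$ does not globally land in the line subbundle $L_k \subset Q_k$, so the factorization is valid only after restriction to $Z_{k-1}$. This coupling between the stages is what forces the stratified iterative approach rather than a single global section of a rank-$\binom{n}{2}$ bundle on all of $\flag(V)$.
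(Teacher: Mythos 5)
Your argument is correct but takes a genuinely different route from the paper. For this lemma the paper simply cites Fulton; the closest in-paper analogue is Proposition \ref{prop initial class}, which computes the cobordism class $\mathcal{R}_\emptyset$ (whose image under $\vartheta_{CH^*}$ is precisely this formula) by realizing $\Omega_{\omega_0}$ as the zero locus of a \emph{single global} section, namely the tuple $(h_{l,n-l})_{l}$, of the rank-$\frac{n(n-1)}{2}$ kernel bundle $K := \ker\bigl(\psi:\bigoplus_{l=1}^{n-1}\mathrm{Hom}(\pi^*V_l,Q_{n-l})\to\bigoplus_{l=1}^{n-2}\mathrm{Hom}(\pi^*V_l,Q_{n-l-1})\bigr)$ on all of $\flag(V)$, and then computes $c_N(K)$ in one pass via the Whitney formula on $0\to K\to M\to M'\to 0$. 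Your stratified descent through $Z_0\supset Z_1\supset\cdots\supset Z_{n-1}=\Omega_{\omega_0}$, with $Z_k\cong\flag(V_{n-k})$ and each $Z_k\hookrightarrow Z_{k-1}$ exhibited as the regular zero locus of a section of the small rank-$(n-k)$ bundle $\pi^*V_{n-k}^\vee\otimes L_k$ over $Z_{k-1}$, is an equally valid and arguably more geometric alternative, and it carries over verbatim to $\Omega^*$ (replacing $x_k-y_j$ by $F(x_k,\chi(y_j))$), so it would reprove Proposition \ref{prop initial class} as well. The trade-off is that you must verify smoothness, regularity and the codimension count at each of the $n-1$ stages and apply the projection formula repeatedly, whereas the paper verifies regularity once (Lemma \ref{lem regular Schubert}) and obtains the top Chern class by a single division. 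One correction to your closing remark: the coupling you identify does not ``force'' the iterative approach --- it only rules out the naive direct-sum candidate $\bigoplus_k \pi^*V_{n-k}^\vee\otimes L_k$ as the ambient bundle. The paper's kernel $K$ is exactly the device that absorbs this coupling globally, since the compatibility $\psi\bigl((h_{l,n-l})_l\bigr)=0$ holds automatically because all the $h_{ij}$ arise from the single map $h=\mathrm{id}_{\pi^*V}$.
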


\begin{proof}
See \cite[Section 2.3, Lemma 1]{SchubertFulton}.
\end{proof}

\begin{corollary}\label{cor BottSchubert}
Let $I=(i_1,\tred,i_l)$ be a minimal decomposition and set $\omega=\omega_0 s_I$. Then in $CH^*(\flag(V))$ one has 
$$r_{I*}[R_I]_{CH^*}=\mathfrak{S}_\omega(x_1,\tred,x_n,y_1,\tred,y_n)\ .$$
\end{corollary}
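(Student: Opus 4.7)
The plan is to combine three ingredients: the identification of $r_{\emptyset *}[R_\emptyset]_{CH^*}$ with the Chow class of $\Omega_{\omega_0}$, the explicit product formula for $[\Omega_{\omega_0}]_{CH^*}$, and the recursive definition of $\mathfrak{S}_\omega$. The key point is that Corollary \ref{cor BottChow} translates the geometric recursion for $r_{I*}[R_I]_{CH^*}$ (pullback-pushforward along the $\Proj^1$-bundles $\varphi_{i_k}$) into the combinatorial recursion defining the double Schubert polynomials.

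First, I would note that $R_\emptyset = X$ and $r_\emptyset = i_{V\unddot}$, so by Lemma \ref{lem regular Schubert} the morphism $r_\emptyset$ realizes $X$ as an isomorphism onto the smooth Schubert variety $\Omega_{\omega_0}$. Pushing forward the fundamental class $1_X = [R_\emptyset]_{CH^*}$ therefore yields $[\Omega_{\omega_0}]_{CH^*}$, which by the preceding lemma equals
$$\prod_{i+j\leq n}(x_i - y_j) = \mathfrak{S}_{\omega_0}(x_1,\ldots,x_n,y_1,\ldots,y_n).$$
Substituting this into Corollary \ref{cor BottChow} gives
$$r_{I*}[R_I]_{CH^*} = \overline{\partial_{i_1}}\cdots\overline{\partial_{i_l}}\,\mathfrak{S}_{\omega_0}.$$

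Second, I would reconcile $\overline{\partial_i}$ with the combinatorial operator $\partial_i$. Via the presentation of Proposition \ref{prop flag CH}, where the Chern roots $x_k$ of the universal quotient flag are identified with the variables $X_k$ and the $y_k$ lie in $CH^*(X)$ (hence are $\overline{\partial_i}$-invariant), the operator $\overline{\partial_i}$ on $CH^*(\flag(V))$ acts precisely as the divided difference $\partial_i$ acting on the $x$-variables while fixing the $y$-variables. Thus the computation reduces to verifying the polynomial identity
$$\partial_{i_1}\cdots\partial_{i_l}\,\mathfrak{S}_{\omega_0} = \mathfrak{S}_{\omega_0 s_I}.$$

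Third, I would prove this identity by induction on $l$, using the minimality of the decomposition. For $l=0$ it is tautological. For the inductive step, set $\omega^{(k)} = \omega_0 s_{i_1}\cdots s_{i_k}$; since $I$ is minimal, each truncation $(i_1,\ldots,i_k)$ is also minimal, so $l(\omega^{(k)}) = l(\omega_0) - k$. In particular $l(\omega^{(k)}) < l(\omega^{(k-1)}) = l(\omega^{(k)} s_{i_k})$, which by the defining recursion (\ref{def SG3})(i) gives $\mathfrak{S}_{\omega^{(k)}} = \partial_{i_k}\mathfrak{S}_{\omega^{(k-1)}}$. Iterating yields the identity, and substituting back completes the proof.

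The only genuinely delicate point I anticipate is checking that the ambiguity inherent in the definition of the divided difference operators $\overline{\partial_i}$ on $CH^*(\flag(V))$ — namely the implicit use of the isomorphism of Proposition \ref{prop flag CH} and the fact that $\partial_i$ preserves the ideal $J$ — matches the formal combinatorial setup cleanly; but this has already been absorbed in the statement of Lemma \ref{lemma operator}, so no new work is required beyond recording the correspondence.
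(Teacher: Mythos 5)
Your proof is correct and follows the same route as the paper: identify $r_{\emptyset *}[R_\emptyset]_{CH^*}$ with $[\Omega_{\omega_0}]_{CH^*}=\mathfrak{S}_{\omega_0}$, feed this into Corollary \ref{cor BottChow}, and unwind the recursive definition of $\mathfrak{S}_\omega$. You are simply more explicit than the paper about two points it leaves implicit — the matching of $\overline{\partial_i}$ with the combinatorial operator $\partial_i$ under the presentation of Proposition \ref{prop flag CH}, and the need for minimality of $I$ so that each truncation satisfies the length inequality required by the recursion (\ref{def SG3}).
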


\begin{proof}
For $I=\emptyset$ the statement is just the preceding lemma. For the general case one only has to apply corollary \ref{cor BottChow} and to recall the recursive definition of Schubert polynomials.
\end{proof}

\begin{remark}
It is worth mentioning that corollary \ref{cor BottSchubert} implies that all tuples which are minimal decompositions of the same permutation $\omega$ give rise to Bott-Samelson resolutions whose push-forward classes all coincide as elements of $CH^*(\flag(V))$. 
\end{remark}

\begin{remark}
Even though, as we will see, our interest in Schubert polynomials is due to their ability of describing the fundamental class of Schubert varieties (and more in general degeneracy loci), their definition is a priori only linked to the push-forward classes of Bott-Samelson resolutions. It is the birational invariance of the Chow ring which enables to bridge the gap between these two notions, allowing to describe Schubert varieties by means of the more easily computable classes associated to Bott-Samelson resolutions.  
\end{remark}

The next step is to relate the push-forward classes of Bott-Samelson resolutions to the fundamental classes of Schubert varieties.  

\begin{theorem}\label{th flagbundle}
Let $V\rightarrow X$ be a vector bundle and let $\omega\in S_n$. Denote by $x_i$ and $y_i$ the Chern roots associated to the full flag bundles $Q\unddot$ and $\pi^*V\unddot$.
 In $CH^*(\flag(V))$ one has
$$[\Omega_\omega]_{CH^*}=\mathfrak{S}_\omega(x_1,\tred,x_n,y_1,\tred,y_n)\ .$$
\end{theorem}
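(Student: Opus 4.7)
The plan is to combine Corollary \ref{cor BottSchubert} with the birational invariance of Chow-group fundamental classes, which does essentially all the heavy lifting once the Bott-Samelson computation is in hand. Given $\omega \in S_n$, I would pick a minimal decomposition $I = (i_1, \ldots, i_l)$ of $\omega_0 \omega$, so that $\omega = \omega_0 s_I$ and $l = l(\omega_0) - l(\omega) = \tfrac{n(n-1)}{2} - l(\omega)$. Corollary \ref{cor BottSchubert} then yields
$$r_{I*}[R_I]_{CH^*} \;=\; \mathfrak{S}_\omega(x_1, \ldots, x_n, y_1, \ldots, y_n) \;\in\; CH^*(\flag(V)),$$
so the theorem reduces to proving the identity $r_{I*}[R_I]_{CH^*} = [\Omega_\omega]_{CH^*}$.

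To establish that identity, I would factor $r_I$ as the composition of the projective birational morphism $\rho_I : R_I \twoheadrightarrow \Omega_\omega$ furnished by Proposition \ref{prop resolution}(1), followed by the closed immersion $\iota : \Omega_\omega \hookrightarrow \flag(V)$. By Remark \ref{rem smooth morph} the scheme $R_I$ is smooth over $k$, hence integral, and by Proposition \ref{prop schubert} the Schubert variety $\Omega_\omega$ is also integral, of the same dimension. The standard formula for proper push-forward in the Chow group, namely $f_*[V] = \deg(V/f(V)) \cdot [f(V)]$ for a proper surjection of integral schemes of the same dimension (see \cite[Section 1.4]{IntersectionFulton}), then gives $\rho_{I*}[R_I]_{CH_*} = [\Omega_\omega]_{CH_*}$ because $\rho_I$ is birational. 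Pushing the resulting equality forward along $\iota$ and re-indexing from homological to cohomological degree returns $r_{I*}[R_I]_{CH^*} = [\Omega_\omega]_{CH^*}$ in $CH^{l(\omega)}(\flag(V))$, which combined with Corollary \ref{cor BottSchubert} finishes the proof.

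There is no substantial obstacle here; the theorem is really a formal consequence of the push-forward computation for Bott-Samelsons together with the birational invariance of $[\,\cdot\,]_{CH_*}$. The only point that deserves explicit bookkeeping is the compatibility of the two gradings: with Chern roots $x_i, y_j$ placed in degree one, $\mathfrak{S}_\omega$ is homogeneous of degree $l(\omega)$, which matches $\operatorname{codim}_{\flag(V)} \Omega_\omega = l(\omega)$ and hence the natural degree of $[\Omega_\omega]_{CH^*}$; this is also consistent with $\dim R_I = \dim \Omega_\omega = l = \tfrac{n(n-1)}{2} - l(\omega)$, ensuring that the birational push-forward lands in the correct graded piece.
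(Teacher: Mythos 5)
Your proof is correct and follows the same route as the paper: both reduce the statement to Corollary \ref{cor BottSchubert} and then use the birational isomorphism $R_I \to \Omega_\omega$ from Proposition \ref{prop resolution}(1) to identify $r_{I*}[R_I]_{CH^*}$ with $[\Omega_\omega]_{CH^*}$. You simply spell out the push-forward step in more detail (the degree formula for proper surjections of integral schemes), where the paper treats it as immediate.
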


\begin{proof}
The statement follows directly from corollary \ref{cor BottSchubert} and proposition \ref{prop resolution}. In fact for every Schubert variety $\Omega_\omega$ one can consider the Bott-Samelson resolution $R_I$, associated to any of the minimal decompositions of $\omega$: in view of part (1) of proposition \ref{prop resolution} $r_I$ is a birational isomorphism and therefore $[\Omega_\omega]_{CH}=r_{I*}[R_I]_{CH}$.  
\end{proof}

We are finally in the position to express the fundamental class of a degeneracy loci, provided this has the expected codimension. This is achieved by pulling back to the base the fundamental class of a suitably constructed Schubert variety. 

\begin{lemma} \label{lem chowultimate}
Given a pure dimensional Cohen-Macaulay scheme $X$, let $V\rightarrow X$ be a vector bundle of rank $n$ with $F\unddot$ and $E\unddot$ full flags respectively of quotient bundles and of subbundles. Let $\omega\in S_n $ and assume that the degeneracy locus $\Omega_{r_\omega}(E\unddot,F\unddot,id_V)$ has codimension $l(\omega)$ in $X$. Then as an element of $CH_*(X)$ the fundamental class of the degeneracy locus is given by the formula  
$$[\Omega_{r_\omega}(E\unddot,F\unddot,id_V)]_{CH_*}=\mathfrak{S}_\omega(x_1,\tred,x_n,y_1,\tred,y_n)\ ,$$
where we denote by $x_i$ the Chern roots associated to $F\unddot$ and by $y_i$ the Chern roots associated to $E\unddot$.
\end{lemma}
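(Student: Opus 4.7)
The plan is to derive the formula by pulling back the universal statement for Schubert varieties via the section $i_{F\unddot}\colon X\to \flag(V)$ classifying the flag of quotient bundles $F\unddot$. Since $i_{F\unddot}$ is a section of the smooth projection $\pi\colon \flag(V)\to X$, it is automatically a regular embedding, and by Lemma \ref{lem preimage} its scheme-theoretic preimage satisfies $i_{F\unddot}^{-1}(\Omega_\omega)=\Omega_{r_\omega}(E\unddot,F\unddot,id_V)$.

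First I would invoke Theorem \ref{th flagbundle} to write, in $CH^*(\flag(V))$,
\[
[\Omega_\omega]_{CH^*} = \mathfrak{S}_\omega(X_1,\ldots,X_n,Y_1,\ldots,Y_n),
\]
where $X_i$ and $Y_i$ are the Chern roots of $Q\unddot$ and $\pi^*V\unddot$ respectively. The universal property of the flag bundle yields $i_{F\unddot}^* Q\unddot = F\unddot$ and $i_{F\unddot}^*\pi^*V\unddot = V\unddot$, so the compatibility of Chern classes with pull-back (Proposition \ref{prop Chern}, part (2)) carries the right-hand side to $\mathfrak{S}_\omega(x_1,\ldots,x_n,y_1,\ldots,y_n)\in CH^*(X)$ once $i_{F\unddot}^*$ is applied.

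The remaining and decisive step is the identity
\[
i_{F\unddot}^*[\Omega_\omega]_{CH^*} = [\Omega_{r_\omega}(E\unddot,F\unddot,id_V)]_{CH_*},
\]
and this is where both hypotheses---pure-dimensional Cohen--Macaulayness of $X$ and the expected codimension $l(\omega)$ of the degeneracy locus---will be essential. By Proposition \ref{prop schubert}, $\Omega_\omega$ is Cohen--Macaulay of codimension $l(\omega)$ in $\flag(V)$; combined with the fact that its preimage in $X$ again has codimension $l(\omega)$ and that $X$ itself is Cohen--Macaulay, the intersection $\Omega_\omega \cap i_{F\unddot}(X)$ is proper, and the refined Gysin formula along the regular embedding $i_{F\unddot}$ reduces to returning the Chow fundamental class of the scheme-theoretic preimage with no excess-intersection correction. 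The main obstacle will be verifying this last equality cleanly from the codimension and Cohen--Macaulay assumptions, invoking the standard proper-intersection statement in Fulton's intersection theory; once this is in hand, everything else is a direct application of Theorem \ref{th flagbundle} together with the functoriality of Chern roots.
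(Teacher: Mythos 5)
Your proposal is correct and follows essentially the same route as the paper: identify the degeneracy locus as the scheme-theoretic preimage $i_{F\unddot}^{-1}(\Omega_\omega)$ via Lemma \ref{lem preimage}, invoke Theorem \ref{th flagbundle} for the universal class in $CH^*(\flag(V))$, track the Chern roots through $i_{F\unddot}^*$, and use Cohen--Macaulayness of $\Omega_\omega$ together with the expected-codimension hypothesis to conclude that the Gysin pull-back returns the fundamental class of the preimage. The paper phrases the middle step by observing that the inclusion $\Omega_{r_\omega}\subset\Omega_\omega$ is a \emph{regular} embedding (which is the clean way to justify multiplicity one), whereas you speak of proper intersection and the absence of excess-intersection correction --- two phrasings of the same Cohen--Macaulay argument from Fulton's intersection theory.
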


\begin{proof}
First of all one should observe that in view of lemma \ref{lem preimage} we have that $i_{F\unddot}^{-1}(\Omega_{\omega})=\Omega_{r_\omega}(E\unddot,F\unddot,id_V)$ where $i_{F\unddot}:X\rightarrow \flag (V)$ is the morphism associated to the flag $F\unddot$.

$$
\xymatrix{
\Omega_{r_\omega}(E\unddot,F\unddot,id_V)  \ar@{^{(}->}[r] \ar@{^{(}->}[d]& \Omega_\omega   \ar@{^{(}->}[d] \\
X   \ar@{^{(}->}[r]^{i_{F\unddot}} & \flag (V)
}
$$
The assumption on the codimension $\Omega_{r_\omega}(E\unddot,F\unddot,id_V)$ in $X$, together with the fact that $\Omega_\omega$ is a Cohen-Macaulay scheme, implies that the embedding $\Omega_{r_\omega}(E\unddot,F\unddot,id_V)\subset \Omega_\omega$ is regular and therefore one has that the Gysin morphism $i_{F\unddot}^*$ maps the fundamental class of $\Omega_\omega$  onto the fundamental class of $i^{-1}_{F\unddot}(\Omega_\omega)$.

 To proceed in the proof one now has to apply theorem \ref{th flagbundle} so to be able to express the fundamental class of the Schubert variety $\Omega_\omega$ as the Schubert polynomial $\mathfrak{S}_\omega$ evaluated at the two families of Chern roots $\{x'_i\}$ and $\{y'_i\}$, which are associated to the full flags $Q\unddot$ and $\pi^*(E\unddot)$.  The final step consists in applying to this polynomial $i_{F\unddot}$: since $\mathfrak{S}_\omega$ has coefficients in $\ZZ=CH^*(\spec k)$, one only has to worry about the effect of the Gysin morphism on the Chern roots. These are mapped onto the Chern roots of the pull-back of the respective flags which are just $F\unddot$ (by the universal property of $\flag (V)$) and $E\unddot$ (since $i_{F\unddot}\pi=id_V$). One therefore has  
$$[\Omega_{r_\omega}(E\unddot,F\unddot,id_V)]_{CH}=i_{F\unddot}^*[\Omega_\omega]_{CH}=
i_{F\unddot}^*(\mathfrak{S}_\omega(x'_1,\tred,x'_n,y'_1,\tred,y'_n))=
\mathfrak{S}_\omega(x_1,\tred,x_n,y_1,\tred,y_n)\ .
\qedhere $$
\end{proof}

\begin{theorem}\label{th chowultimate}
Let $h:E\rightarrow F$ be a morphism of vector bundles of rank $n$ over a pure dimensional Cohen-Macaulay scheme $X$. Let $E_{\textbf{\textbullet}}$ and $F_{\textbf{\textbullet}}$ be full flags of $E$ and $F$ respectively.  Let $\omega\in S_n $ and assume that the degeneracy locus $\Omega_{r_\omega}(E\unddot,F\unddot,h)$ has codimension $l(\omega)$ in $X$. Then as an element of $CH_*(X)$ the fundamental class of the degeneracy locus is given by the formula  
$$[\Omega_{r_\omega}(E\unddot,F\unddot,h)]_{CH_*}=\mathfrak{S}_\omega(x_1,\tred,x_n,y_1,\tred,y_n)\ ,$$
where we denote by $x_i$ the Chern roots associated to $F\unddot$ and by $y_i$ the Chern roots associated to $E\unddot$.
\end{theorem}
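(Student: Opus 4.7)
The plan is to reduce the general statement to the already established lemma \ref{lem chowultimate}, which handles only the identity morphism $id_V$ on a single vector bundle. The reduction from an arbitrary $h\colon E\to F$ is exactly the content of lemma \ref{lem id}. Applying that lemma to our data $(h,E\unddot,F\unddot)$ produces the vector bundle $V:=E\oplus F$ of rank $2n$, a full flag of subbundles $E'\unddot$ and a full flag of quotient bundles $F'\unddot$ of $V$ extending respectively $E\unddot$ and $F\unddot$, together with the permutation $\omega'\in S_{2n}$ obtained as the image of $\omega$ under the canonical inclusion $S_n\hookrightarrow S_{2n}$, such that
$$\Omega_{r_\omega}(E\unddot,F\unddot,h)\;=\;\Omega_{r_{\omega'}}(E'\unddot,F'\unddot,id_V)$$
as closed subschemes of $X$. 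Since the length function is preserved by the canonical inclusion, $l(\omega')=l(\omega)$, so our codimension hypothesis on the left is precisely the codimension hypothesis on the right. The base scheme $X$ is unchanged, so it remains pure dimensional and Cohen-Macaulay.

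Applying lemma \ref{lem chowultimate} to the bundle $V$ equipped with the flags $E'\unddot,F'\unddot$ and the permutation $\omega'$ now yields
$$[\Omega_{r_{\omega'}}(E'\unddot,F'\unddot,id_V)]_{CH_*}\;=\;\mathfrak{S}_{\omega'}(x'_1,\ldots,x'_{2n},y'_1,\ldots,y'_{2n}),$$
where $x'_i=c_1(\mathrm{Ker}(F'_i\srarrow F'_{i-1}))$ and $y'_i=c_1(E'_i/E'_{i-1})$ are the Chern roots of the two extended flags. By the explicit construction in lemma \ref{lem id}, the flag $F'\unddot$ is obtained by prolonging $F\unddot$ at the top of the quotient flag (so the bottom $n$ quotients are those of $F\unddot$), and similarly $E'\unddot$ prolongs $E\unddot$ at the top of the subbundle flag (so the bottom $n$ successive quotients are those of $E\unddot$). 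Consequently the first $n$ Chern roots of each extended flag match those of the original: $x'_i=x_i$ and $y'_i=y_i$ for $i\in\{1,\ldots,n\}$.

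It remains to compare the polynomials. Because $\omega\in S_n$, the definition $\mathfrak{S}_{\omega_0}=\prod_{i+j\leq n}(x_i-y_j)$ and the recursive step involve only the variables $x_1,\ldots,x_{n-1}$ and $y_1,\ldots,y_{n-1}$ (divided differences do not enlarge the variable support), so $\mathfrak{S}_\omega$ is a polynomial in at most the first $n$ of each family. Moreover, by remark \ref{rem definition 2} the polynomial is intrinsic to the permutation: $\mathfrak{S}_{\omega'}=\mathfrak{S}_\omega$ whenever $\omega'$ is the image of $\omega$ under a canonical inclusion of symmetric groups. Combining these two facts, the extra Chern roots $x'_{n+1},\ldots,x'_{2n}$ and $y'_{n+1},\ldots,y'_{2n}$ never appear, and we conclude
$$[\Omega_{r_\omega}(E\unddot,F\unddot,h)]_{CH_*}\;=\;\mathfrak{S}_\omega(x_1,\ldots,x_n,y_1,\ldots,y_n).$$
The only real work, and the place where one could easily slip up, is the bookkeeping check in the second paragraph: verifying that the specific construction of $E'\unddot$ and $F'\unddot$ in lemma \ref{lem id} genuinely reproduces the original Chern roots in the first $n$ positions. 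Once this identification is in place, the stability of Schubert polynomials under the canonical inclusion closes the argument immediately.
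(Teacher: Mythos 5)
Your argument follows the paper's own proof exactly: reduce via Lemma \ref{lem id} to the case $h = id_V$ on $V = E\oplus F$, apply Lemma \ref{lem chowultimate}, and then invoke the stability of Schubert polynomials (Remark \ref{rem definition 2}) together with the fact that the first $n$ Chern roots of the extended flags $E'\unddot, F'\unddot$ agree with those of $E\unddot, F\unddot$. The extra checks you include (preservation of length under the inclusion $S_n\hookrightarrow S_{2n}$, and that $X$ retains the Cohen–Macaulay and pure-dimensionality hypotheses) are implicit in the paper's version and do not change the route.
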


\begin{proof}
One first uses lemma \ref{lem id} and then applies theorem \ref{lem chowultimate} to the locus $\Omega_{r'_{\omega'}}(E'\unddot,F'\unddot,id_V)$. To conclude the proof it suffices to observe that, as $\omega'$ is nothing but $\omega$ viewed as an element of $S_{2n}$, one has $\mathfrak{S}_{\omega'}\in\ZZ[X_1,\tred,X_n,Y_1,\tred,Y_n]$, while, by construction, the first $n$ Chern roots of $E'\unddot$ and $F'\unddot$ (which we denote by $y'_i$ and $x'_i$) coincide with the Chern roots of $E\unddot$ and $F\unddot$. Summing up, one gets the following chain of equalities:
$$[\Omega_{r_\omega}(E\unddot,F\unddot,h)]_{CH_*}=[\Omega_{r_{\omega'}}(E'\unddot,F'\unddot,id_V)]_{CH_*}=\mathfrak{S}_{\omega'}(x'_1,\tred,x'_{2n},y'_1,\tred,y'_{2n})=\mathfrak{S}_\omega(x_1,\tred,x_n,y_1,\tred,y_n)\ .\qedhere$$   
\end{proof}


\subsection{The description of the fundamental classes in the Grothendieck ring}
We will now give an illustration of the results that can been obtained when the Chow ring is replaced with the Grothendieck ring of vector bundles. As we will see both theorem \ref{th flagbundle} and theorem \ref{th chowultimate} have an exact counterpart in this setting. In \cite{PieriFulton} Fulton and Lascoux proved that the fundamental classes of Schubert varieties can be expressed by means of Grothendieck polynomials, exactly as in theorem \ref{th flagbundle}, while in \cite{GrothendieckBuch} Buch proved an analogue of  theorem \ref{th chowultimate} which extends the result to degeneracy loci of the right codimension. In stating the theorems we will follow the notations used by Buch. 

Before we state the theorems, it is worth  recalling that also in the Grothendieck ring of vector bundles one can define first Chern classes for line bundles. For a line bundle $L$ one sets 
\begin{align}
c_1(L):= 1-[L^\vee]\ . \label{eq chern}
\end{align}

\begin{theorem} \label{th flagbundle G_0}
Let $V\rightarrow X$ be a vector bundle and let $\omega\in S_n$. In $K^0(\flag(V))$ one has
\begin{align*}
[\mathcal{O}_{\Omega_\omega}]_{K^0}&=\mathfrak{G}_\omega(1-[M_1^\vee],\tred,1-[M_n^\vee],1-[N_1],\tred,1-[N_n])=\\
&=\mathfrak{G}_\omega(c_1(M_1),\tred,c_1(M_n),c_1(N^\vee_1),\tred,c_1(N_n^\vee))\ , 
\end{align*}

where for $i\in\{1,\tred, n\}$ we set  $M_i:=L_i^{Q\unddot}$ and $N_i:=L_i^{\pi^* V\unddot}$.  
\end{theorem}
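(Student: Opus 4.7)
The plan is to mirror, step by step, the strategy used for the Chow-ring version (theorem \ref{th flagbundle}), replacing the divided difference operators $\partial_i$ by the isobaric operators $\pi_i$ and the Chern polynomial $\mathfrak{S}_{\omega_0}$ by $\mathfrak{G}_{\omega_0}$.

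First, I would set up the $K$-theoretic analogue of proposition \ref{prop flag CH}, giving a presentation
\[
K^0(\flag(V)) \;\cong\; K^0(X)[T_1,\ldots,T_n]\big/\bigl(e_i(T_1,\ldots,T_n)-c_i(V)\bigr),
\]
where the variables $T_i$ are sent to the classes $c_1(M_i)=1-[M_i^\vee]$ of the line bundles attached to the universal full flag $Q\unddot$. Under this presentation each isobaric operator $\pi_i$ from \eqref{def SG1}ii preserves the relevant ideal (being linear over polynomials symmetric in $T_i,T_{i+1}$) and therefore descends to an operator $\overline{\pi_i}$ on $K^0(\flag(V))$.

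Next comes the main geometric step, the analogue of lemma \ref{lemma operator}: on the $\mathbb{P}^1$-bundle $\varphi_i:\flag(V)\to Y_i$ of remark \ref{rem P^1 bundle} one has $\overline{\pi_i}=\varphi_i^*\varphi_{i*}$. The verification reduces, via the projective bundle formula for $K^0$, to computing $\varphi_{i*}$ applied to the generators $1$ and $c_1(M_i)$ of $K^0(\flag(V))$ as a $K^0(Y_i)$-algebra, and then checking that the resulting formula coincides with the effect of $\pi_i$ on the symmetric/antisymmetric decomposition in $T_i,T_{i+1}$. This is essentially the Grothendieck-type Demazure identity and is the place where the sheaf $\mathcal{O}_{\mathbb{P}^1}(-1)$ contributes the term $(1+\beta x_{i+1})$ (with $\beta=-1$) that distinguishes $\pi_i$ from $\partial_i$; I expect this calculation to be the main obstacle.

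Once the operator identity is in place, an induction on the length of a tuple $I=(i_1,\ldots,i_l)$, identical in form to the proof of corollary \ref{cor BottChow} but using functorial compatibility of push-forwards with flat pull-backs in $K^0$, gives
\[
\overline{\pi_{i_1}}\cdots\overline{\pi_{i_l}}\bigl(r_{\emptyset *}[\mathcal{O}_{R_\emptyset}]_{K^0}\bigr) \;=\; r_{I*}[\mathcal{O}_{R_I}]_{K^0}.
\]
I would then treat the base case separately: by lemma \ref{lem regular Schubert}, $\Omega_{\omega_0}=i_{V\unddot}(X)$ is cut out as the zero locus of a regular section of the vector bundle $\bigoplus_{i+j\leq n} M_i\otimes N_j^\vee$ (this is the bundle whose sections produce the maps $h_{l,n-l}$ appearing in lemma \ref{lem Omega0}). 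Applying the Koszul resolution and the definition $c_1(L)=1-[L^\vee]$, one obtains
\[
[\mathcal{O}_{\Omega_{\omega_0}}]_{K^0} \;=\; \prod_{i+j\leq n}\bigl(1-[M_i^\vee\otimes N_j]\bigr) \;=\; \prod_{i+j\leq n}c_1(M_i\otimes N_j^\vee),
\]
which, by the multiplicative formal group law for $K^0$, equals $\mathfrak{G}_{\omega_0}(c_1(M_1),\ldots,c_1(M_n),c_1(N_1^\vee),\ldots,c_1(N_n^\vee))$. Combining this with the recursion step and the definition \eqref{def SG3}ii of $\mathfrak{G}_\omega$ yields the desired formula for $r_{I*}[\mathcal{O}_{R_I}]_{K^0}$ whenever $I$ is a minimal decomposition of $\omega_0\omega$.

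Finally, to bridge from Bott-Samelson classes to the fundamental class of $\Omega_\omega$, I would invoke the birational invariance of $K^0$-classes for resolutions of schemes with rational singularities: part (2) of proposition \ref{prop resolution} gives $r_{I*}\mathcal{O}_{R_I}=\mathcal{O}_{\Omega_\omega}$ and $R^q r_{I*}\mathcal{O}_{R_I}=0$ for $q>0$, so $r_{I*}[\mathcal{O}_{R_I}]_{K^0}=[\mathcal{O}_{\Omega_\omega}]_{K^0}$, and substituting this into the formula above completes the proof.
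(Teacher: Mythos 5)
The paper does not give its own proof of this theorem; it cites \cite[Theorem 3]{PieriFulton} and, in the remark and paragraph following the statement, sketches exactly the strategy you describe: establish the operator identity $\overline{\pi_i}=\varphi_i^*\varphi_{i*}$, reduce to $\omega_0$ by the recursion, and then invoke that $\Omega_\omega$ is normal with rational singularities (both parts of proposition \ref{prop resolution}) to identify $r_{I*}[\mathcal{O}_{R_I}]_{K^0}$ with $[\mathcal{O}_{\Omega_\omega}]_{K^0}$. Your proposal captures all of these points, including the step the paper's remark singles out as essential; so this is the Fulton--Lascoux route, which is what the paper means by ``the main structure of the proof itself is untouched.''

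One small inaccuracy: in the base case you describe $\Omega_{\omega_0}$ as the zero locus of a section of $\bigoplus_{i+j\leq n}M_i\otimes N_j^\vee$, but the bundle that actually carries the defining section (compare the cobordism computation in proposition \ref{prop initial class}) is the kernel $K=\operatorname{Ker}\psi$ of the map between $\bigoplus_l \operatorname{Hom}(\pi^*V_l,Q_{n-l})$ and $\bigoplus_l \operatorname{Hom}(\pi^*V_l,Q_{n-l-1})$, which is not a split direct sum. Your computation of $[\mathcal{O}_{\Omega_{\omega_0}}]_{K^0}$ is nonetheless correct, because $\lambda_{-1}(K^\vee)$ depends only on the class of $K$ in $K^0$, and the Whitney formula applied to $0\to K\to M\to M'\to 0$ gives $\lambda_{-1}(K^\vee)=\prod_{i+j\leq n}\bigl(1-[M_i^\vee\otimes N_j]\bigr)$ exactly as the top Chern class is computed in the cobordism version. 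You should phrase the base case in terms of $K$ to keep the geometric claim accurate, even though the final formula is unchanged.
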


\begin{proof}
See  \cite[Theorem 3]{PieriFulton}.
\end{proof}

\begin{remark}
It can be worth to point out that in the proof of the previous theorem it is necessary to make use of both parts of proposition \ref{prop resolution}. In fact, it not sufficient to know that $r_I:R_I\rightarrow \Omega_\omega$ is a birational isomorphism, one also needs to know that $\Omega_\omega$ is normal and that it has at most rational singularities to be able to conclude that $r_{I*}[\mathcal{O}_{R_I}]_{K^0}=[\mathcal{O}_{\Omega_\omega}]_{K^0}$.
\end{remark}

In view of (\ref{eq chern}) and of remark \ref{rem dualdef} the parallelism with the Chow ring case becomes evident: in both cases the fundamental classes of Schubert varieties are written by means of two families of polynomials in Chern roots which are defined by the same exact inductive procedure.  Of course the similarities are not limited to the statement: the main structure of the proof itself is untouched. Again one first establishes a connection between double Grothendieck polynomials and the push-forward classes of Bott-Samelson resolutions by reducing everything to the special case of the longest permutation $w_0$ and successively one is left to show that each of these classes actually coincides with the fundamental class of the corresponding Schubert variety.

Starting from this result one can proceed further and obtain the following statement which covers the more general case of a degeneracy loci of a morphism between vector bundles. Also in this case the proof is essentially unchanged.

\begin{theorem}
Let $h:E\rightarrow F$ be a morphism of vector bundles of rank $n$ over a smooth scheme $X$. Let $E_{\textbf{\textbullet}}$ and $F_{\textbf{\textbullet}}$ be full flags of $E$ and $F$ respectively.  Let $\omega\in S_n $ and assume that the degeneracy locus $\Omega_{r_\omega}(E\unddot,F\unddot,h)$ has codimension $l(\omega)$ in $X$. Then as an element of $K^0(X)$ the fundamental class of the degeneracy locus is given by 
\begin{align*}
[\mathcal{O}_{\Omega_{r_\omega}(E\unddot,F\unddot,h)}]_{K^0}&=\mathfrak{G}_\omega(1-[M_1^\vee],\tred,1-[M_n^\vee],1-[N_1],\tred,1-[N_n])= \\
&=\mathfrak{G}_\omega(c_1(M_1),\tred,c_1(M_n),c_1(N_1^\vee),\tred,c_1(N_n^\vee))\ ,
\end{align*}

where for $i\in\{1,\tred, n\}$ we set  $M_i:=L_i^{F\unddot}$ and $N_i:=L_i^{E\unddot}$. 
\end{theorem}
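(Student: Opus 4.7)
The plan is to mirror the proof of Theorem \ref{th chowultimate} in the Chow case, replacing that argument's two ingredients (Schubert polynomial formula on the flag bundle, and Gysin pullback for a regular embedding) by their Grothendieck-theoretic analogues which are now available to us.

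First I would perform the usual rank-matching reduction. Using lemma \ref{lem id}, enlarge the data $(E, F, h, \omega)$ to $(E', F', V, \omega')$ with $V = E \oplus F$, $h' = \mathrm{id}_V$, and $\omega'$ the image of $\omega$ in $S_{2n}$, so that $\Omega_{r_\omega}(E\unddot,F\unddot,h) = \Omega_{r_{\omega'}}(E'\unddot,F'\unddot,\mathrm{id}_V)$ as subschemes of $X$. Since $\omega'$ simply has extra fixed points on $\{n+1,\dots,2n\}$, $\mathfrak{G}_{\omega'}$ is the polynomial $\mathfrak{G}_\omega$ in the first $n$ variables of each family (by the analogue of remark \ref{rem definition 2} for $\mathfrak{G}$), and the first $n$ Chern roots of $E'\unddot,F'\unddot$ agree with those of $E\unddot,F\unddot$ by construction. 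Thus it suffices to treat the case $h = \mathrm{id}_V$.

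Next I would move the computation to the universal setting $\flag(V)$. By lemma \ref{lem preimage}, $\Omega_{r_\omega}(E\unddot,F\unddot,\mathrm{id}_V) = i_{F\unddot}^{-1}(\Omega_\omega)$, where $i_{F\unddot}:X\to\flag(V)$ is the section classifying $F\unddot$. The hypothesis that this preimage has codimension $l(\omega)$ in $X$ agrees with the codimension of $\Omega_\omega$ in $\flag(V)$ (proposition \ref{prop schubert}); combined with $X$ smooth and the fact that $\Omega_\omega$ is Cohen-Macaulay, this forces the fiber square
\[
\xymatrix{
\Omega_{r_\omega}(E\unddot,F\unddot,\mathrm{id}_V) \ar@{^{(}->}[r] \ar@{^{(}->}[d] & \Omega_\omega \ar@{^{(}->}[d] \\
X \ar@{^{(}->}[r]^{i_{F\unddot}} & \flag(V)
}
\]
to be Tor-independent, with $i_{F\unddot}$ a regular embedding. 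Consequently the $K^0$-theoretic refined pullback $i_{F\unddot}^*$ sends $[\mathcal{O}_{\Omega_\omega}]$ to $[\mathcal{O}_{\Omega_{r_\omega}(E\unddot,F\unddot,\mathrm{id}_V)}]$, with no correction from higher Tors.

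Finally I would feed this into theorem \ref{th flagbundle G_0}: in $K^0(\flag(V))$,
\[
[\mathcal{O}_{\Omega_\omega}] = \mathfrak{G}_\omega\bigl(c_1(M_1),\dots,c_1(M_n),c_1(N_1^\vee),\dots,c_1(N_n^\vee)\bigr),
\]
where $M_i = L_i^{Q\unddot}$ and $N_i = L_i^{\pi^*V\unddot}$. Applying $i_{F\unddot}^*$ commutes with taking Chern classes and with polynomial expressions over $\mathbb{Z}$, so it only remains to identify the pullbacks of the $M_i$ and $N_i$. By the universal property of $\flag(V)$ one has $i_{F\unddot}^*Q\unddot = F\unddot$, so $i_{F\unddot}^* M_i = L_i^{F\unddot}$; and since $\pi \circ i_{F\unddot} = \mathrm{id}_X$, $i_{F\unddot}^* N_i = L_i^{E\unddot}$. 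Assembling these identifications with the previous step yields the stated formula.

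The step I expect to be the main obstacle is the Tor-independence/regular-embedding verification: in the Chow case it sufficed to invoke standard intersection theory via lemma \ref{lem transverse CH}, but in $K^0$ one needs to rule out higher $\mathrm{Tor}$ contributions when pulling back $[\mathcal{O}_{\Omega_\omega}]$. This is exactly where the smoothness of $X$, the Cohen-Macaulay property of Schubert varieties (proposition \ref{prop schubert}) and the codimension hypothesis combine to force a genuine regular embedding and hence a clean transfer of structure sheaves, as in Buch's treatment in \cite{GrothendieckBuch}.
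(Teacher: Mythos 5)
The paper does not actually supply a proof of this theorem: its proof reads ``See \cite[Theorem 2.1]{GrothendieckBuch}.'' Your proposal fills this gap by adapting, step for step, the Chow-ring argument of lemma \ref{lem chowultimate} and theorem \ref{th chowultimate}: reduce to $h=\mathrm{id}_V$ via lemma \ref{lem id}, pass to the universal $\Omega_\omega\subset\flag(V)$ via lemma \ref{lem preimage}, identify the Chern roots under the section $i_{F\unddot}$, and invoke the universal formula (theorem \ref{th flagbundle G_0}, i.e.\ Fulton--Lascoux) at the top. This is, morally, exactly how Buch argues, so your route is not a genuine departure but a reconstruction of the cited reference. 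Your identification of the sole nontrivial new ingredient---Tor-independence---is correct: the vanishing of higher $\mathrm{Tor}$ is forced by the fact that $i_{F\unddot}$ is a regular embedding (being a section of the smooth projection), that $\Omega_\omega$ is Cohen--Macaulay of codimension $l(\omega)$ (proposition \ref{prop schubert}, transported to $\flag(V)$ by local triviality), and that the preimage has the expected codimension; the Koszul resolution of $\mathcal{O}_X$ then restricts to a resolution on $\Omega_\omega$. One small observation: the theorem as stated here assumes $X$ smooth, which is stronger than what Buch actually needs (he works over an equi-dimensional Cohen--Macaulay base, where one lands in $G_0$ rather than $K^0$); your argument uses nothing of smoothness beyond what makes $i_{F\unddot}$ a regular embedding and $K^0(X)=G_0(X)$, so it would transfer to the more general setting essentially unchanged.
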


\begin{proof}
See \cite[Theorem 2.1]{GrothendieckBuch}.
\end{proof}
 

\section{Cobordism classes of Bott-Samelson resolutions and application to connected $K$-theory} \label{ch 3}

In this section we illustrate how the method used by Fulton for the Chow ring can be applied also to algebraic cobordism. We first present the analogue of the divided difference operators and then we compute the cobordism class of $\Omega_{\omega_0}$ as an element of $\Omega^*(\flag (V))$. In this way we achieve the description of the push-forward classes of the Bott-Samelson resolution in $\Omega^*(\flag (V))$ and afterwards we specialize it to connected $K$-theory, giving a geometric interpretation to the double $\beta$-polynomials of subsection \ref{sec polynomials}.

Throughout this section we will assume the base field $k$ to have characteristic 0.


\subsection{A formula for the push-forward of $\mathbb{P}^1$-bundles}

In \cite{SchubertHornbostel} Hornbostel and Kiritchenko specialize the results of a theorem by Vishik (\cite[Theorem 5.30]{SymmetricVishik}) and give an explicit formula for the push-forward map along a $\Proj ^1$-bundle $\varphi :\mathbb{P}(E)\rightarrow X$. They then use this formula to  build an operator $A_\varphi: \Omega ^*(\mathbb{P}(E))\rightarrow \Omega ^*(\mathbb{P}(E))$ which they prove to coincide with $\varphi ^*\varphi_*$. This is achieved in the following way. First of all they define an operator $A:\Omega ^*(X)[[y_1,y_2]]\rightarrow\Omega ^*(X)[[y_1,y_2]]$ by setting 
$$A(f)= (1+\sigma)\frac{f}{F(y_1,\chi (y_2))}$$
where $[\sigma (f)](y_1,y_2)=f(y_2,y_1)$ and they show that it is well-defined. They then substitute the Chern roots of $E$ (denoted by $\alpha_1$ and $\alpha_2$)  for $y_1$ and $y_2$. If one examines more in detail what it means to substitute the Chern roots, one notices that from $\Omega^* (X)[[y_1,y_2]]$ one actually recovers $\Omega ^*(\mathbb{P}(E))$. More precisely one has  

$$\Omega ^*(\mathbb{P}(E))\simeq \frac{\Omega ^*(X)[[y_1,y_2]]}{(y_1+y_2-c_1(E),y_1y_2-c_2(E))}\ .$$

With this description the embedding of $\Omega ^*(X)$ into $\Omega ^*(\mathbb{P}(E))$ (given by the pull-back along $\varphi$) turns $\Omega ^*(X)$ into the subring of symmetric power series in $\alpha_1$ and $\alpha_2$. In fact, every symmetric power series in Chern roots can be written as a power series in Chern classes and therefore, since the Chern classes are all nilpotents, as an element of $\Omega ^*(X)$. 

 It can be easily checked that the image of $A$ consists of symmetric power series and as a consequence the composition $\Omega ^*(X)[[y_1,y_2]]\rightarrow\Omega ^*(X)[[y_1,y_2]]\rightarrow \Omega ^*(\mathbb{P}(E))$ factors through $\Omega ^*(X)$. Moreover, since $A$ maps the ideal $(y_1+y_2-c_1(E),y_1y_2-c_2(E))$ into itsefl, it is possible to define a new operator $A_\varphi:\Omega ^*(\mathbb{P}(E))\rightarrow \Omega ^*(\mathbb{P}(E))$ and this again factors through $\Omega ^*(X)$.

It actually turns out that the first map of this factorization is $\varphi_*$. To prove this, it is sufficient to show that the two maps coincide on the generators of $\Omega ^*(\mathbb{P}(E))$ as an $\Omega ^*(X)$-module. This is precisely what Hornbostel and Kiritchenko prove: 

$$\varphi _*(1_y)=[A(1)](\alpha_1,\alpha_2)\ ,$$ 
$$\varphi _*(\xi)=[A(y_1)] (\alpha_1,\alpha_2)\ .$$ 
These two equalities imply that the two maps are equal since, by the projective bundle formula, 
$\Omega(\mathbb{P}(E))\simeq 1_{\mathbb{P}(E)}\,\Omega ^*(X)\oplus \xi \,\Omega ^*(X) $ (here $\xi=c_1(\mathcal{O}_E(1))$). Finally, by composing with $\varphi ^*$ one is able to conclude that $A_\varphi=\varphi ^*\varphi_*$. Summarizing we have the following proposition (\cite[Proposition 2.1 and corollary 2.3]{SchubertHornbostel}).

\begin{proposition} \label{prop operator}
Let $\varphi: \mathbb{P}(E)\rightarrow X$ be a $\mathbb{P}^1$-bundle and $A_\varphi:\Omega ^*(\mathbb{P}(E))\rightarrow \Omega ^*(\mathbb{P}(E))$ be  the operator obtained from  
\begin{align*}
\Omega ^*(X)[[y_1,y_2]]&\stackrel{A}{\longrightarrow}\hspace{0.3 cm}\Omega ^*(X)[[y_1,y_2]] \\
f\hspace{0.8 cm}&\longmapsto (1+\sigma)\frac{f}{F(y_1,\chi (y_2))}\ ,
\end{align*}
 by substituting the Chern roots of $E$ for $y_1,y_2$.
Then  $A_\varphi=\varphi ^*\varphi_*$. 

\end{proposition}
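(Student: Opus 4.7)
The plan is to follow the strategy of Hornbostel and Kiritchenko, whose argument builds on Vishik's explicit formula for pushforwards along projective bundles in algebraic cobordism.

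First I would verify that the operator $A$ takes values in $\Omega^*(X)[[y_1,y_2]]$ rather than merely in some ring of meromorphic series. Since $F(u,v)=u+v+\cdots$ and $\chi(v)=-v+\cdots$, one has $F(y,\chi(y))=0$, so $y_1-y_2$ divides $F(y_1,\chi(y_2))$: write $F(y_1,\chi(y_2))=(y_1-y_2)\,G(y_1,y_2)$ with $G(y,y)=1$, which makes $G$ a unit in $\Omega^*(X)[[y_1,y_2]]$. Then applying $(1+\sigma)$ to $f/F(y_1,\chi(y_2))$ yields
\[
\frac{f(y_1,y_2)\,G(y_2,y_1)-f(y_2,y_1)\,G(y_1,y_2)}{(y_1-y_2)\,G(y_1,y_2)\,G(y_2,y_1)},
\]
whose numerator is skew-symmetric in $(y_1,y_2)$ and hence divisible by $y_1-y_2$. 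Thus $A(f)\in\Omega^*(X)[[y_1,y_2]]$, and by construction $A(f)$ is symmetric. Substituting the Chern roots $\alpha_1,\alpha_2$ of $E$ via the projective bundle identification
\[
\Omega^*(\mathbb{P}(E))\cong \Omega^*(X)[[y_1,y_2]]/(y_1+y_2-c_1(E),\,y_1y_2-c_2(E))
\]
places $A(f)(\alpha_1,\alpha_2)$ in the subring $\varphi^*\Omega^*(X)\subset\Omega^*(\mathbb{P}(E))$, so the induced operator $A_\varphi$ is well-defined.

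Next I would use the projective bundle formula to reduce the equality $A_\varphi=\varphi^*\varphi_*$ to a check on module generators. Both operators are $\varphi^*\Omega^*(X)$-linear, for $\varphi^*\varphi_*$ by the projection formula and for $A_\varphi$ because $A$ is itself $\Omega^*(X)$-linear. Combined with the decomposition $\Omega^*(\mathbb{P}(E))=\varphi^*\Omega^*(X)\cdot 1\oplus \varphi^*\Omega^*(X)\cdot \xi$ and the injectivity of $\varphi^*$, this reduces the proposition to the two scalar identities
\begin{align*}
\varphi_*(1_{\mathbb{P}(E)}) &= [A(1)](\alpha_1,\alpha_2),\\
\varphi_*(\xi) &= [A(y_1)](\alpha_1,\alpha_2),
\end{align*}
in $\Omega^*(X)$.

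The technical heart of the argument, which I anticipate as the main obstacle, is the verification of these two identities. Here I would appeal to Vishik's explicit pushforward formula \cite[Theorem 5.30]{SymmetricVishik}, which expresses $\varphi_*$ along a projective bundle in terms of the formal group law $F$ and its inverse $\chi$ applied to Chern roots. Specializing that formula to the rank-two situation and simplifying produces precisely the rational expressions $A(1)$ and $A(y_1)$ after substitution of the Chern roots. The passage from formal manipulations in $\Omega^*(X)[[y_1,y_2]]$ to an honest geometric pushforward is justified by combining the splitting principle with the universality of algebraic cobordism among oriented cohomology theories. A consistency check in the additive case $F(u,v)=u+v$, $\chi(v)=-v$, where $A$ collapses to $(1+\sigma)\tfrac{f}{y_1-y_2}$, recovers the classical Chow-theoretic formulas $\varphi_*(1)=0$ and $\varphi_*(\xi)=1$ for a $\mathbb{P}^1$-bundle, offering a sanity test for the formalism.
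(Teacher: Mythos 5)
Your proposal follows essentially the same route as the paper: establish that $A$ lands in symmetric power series so that $A_\varphi$ factors through $\varphi^*\Omega^*(X)$, then use $\Omega^*(X)$-linearity of both operators together with the projective bundle formula to reduce everything to the two scalar identities $\varphi_*(1)=[A(1)](\alpha_1,\alpha_2)$ and $\varphi_*(\xi)=[A(y_1)](\alpha_1,\alpha_2)$, which are delegated to Hornbostel--Kiritchenko's computation via Vishik's pushforward formula --- exactly as the paper does. One small slip in the well-definedness step: the correct justification for $G$ being a unit is $G(0,0)=1$ (constant term equal to $1$), not $G(y,y)=1$, which fails for a general formal group law since $G(y,y)=F_u(y,\chi(y))=1+O(y)$; but invertibility of $G$ in $\Omega^*(X)[[y_1,y_2]]$ is what the argument actually needs, and that holds.
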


Once this result has been established one can use it, as we will see in the next subsection, to compute recursively the cobordism classes associated to the Bott-Samelson resolutions. 



\subsection{Operators on $\flag (V)$ and the classes $\mathcal{R}_I$}

We now turn our attention to the flag bundle and we provide a description of the algebraic cobordism ring $\Omega^*(\flag(V))$ which mirrors the one we gave in proposition \ref{prop flag CH} for the Chow ring. 

\begin{proposition}
Let $V$ be a vector bundle over $X\in\SM$ and let $J$ be the ideal of $\Omega^*(X)[X_1,\tred,X_n]$ generated by the elements $e_i-c_i(V)$ where $e_i$ is the $i$-th elementary symmetric function and $c_i(V)$ is the $i$-th Chern class of $V$. Then the algebraic cobordism ring of the flag bundle can be described as follows:  
$$\Omega^*(\flag (V))\simeq \Omega^*(X)[X_1,\tred,X_n]/J\ .$$ 
\end{proposition}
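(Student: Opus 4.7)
The plan is to define the natural map from the right-hand side to $\Omega^*(\flag(V))$ using Chern roots, check it is well defined via the Whitney formula, and then prove bijectivity by iterating the projective bundle formula for $\Omega^*$. Concretely, I would define the $\Omega^*(X)$-algebra homomorphism $\bar\phi:\Omega^*(X)[X_1,\ldots,X_n]/J\to\Omega^*(\flag(V))$ by sending each $X_i$ to the Chern root $x_i:=c_1(L_i^{Q\unddot})$ attached to the universal quotient flag $Q\unddot$. That $\bar\phi$ kills $J$ is immediate from lemma \ref{lem quot bundle} applied to $Q\unddot$: since $Q_n=\pi^*V$, the factorisation $c_t(\pi^*V)=\prod_{i=1}^n(1+x_i t)$ says precisely that $e_i(x_1,\ldots,x_n)=\pi^*c_i(V)$ in $\Omega^*(\flag(V))$.

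For the isomorphism I would induct on $n=\mathrm{rank}\,V$. The case $n=1$ is trivial since $\flag(V)=X$ and $J=(X_1-c_1(V))$. For $n\geq 2$, I would factor $\pi$ through the projective bundle of quotient line bundles: let $q_1:\Proj(V)\to X$ with tautological quotient line bundle $O(1)$ and kernel $K_1:=\ker(q_1^*V\srarrow O(1))$ of rank $n-1$. A full flag of quotients of $V$ is the datum of the terminal quotient $Q_1=O(1)$ together with a full flag of quotients of $K_1$, so $\flag(V)\simeq\flag(K_1)$ as $\Proj(V)$-schemes. Theorem \ref{th proj bundle} combined with the defining Chern-class relation (\ref{eq def chern}) gives
\[
\Omega^*(\Proj(V))\simeq \Omega^*(X)[X_1]\Big/\Bigl(\textstyle\sum_{i=0}^n(-1)^i c_i(V)\,X_1^{n-i}\Bigr),
\]
with $X_1$ mapped to $c_1(O(1))=x_1$, while by the inductive hypothesis applied to $K_1\to\Proj(V)$,
\[
\Omega^*(\flag(V))\simeq\Omega^*(\Proj(V))[X_2,\ldots,X_n]\Big/\bigl(e_j(X_2,\ldots,X_n)-c_j(K_1)\bigr)_{j=1}^{n-1}.
\]

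To finish, I would apply the Whitney formula to $0\to K_1\to q_1^*V\to O(1)\to 0$ to rewrite $c_j(K_1)=\sum_{l=0}^{j}(-X_1)^l c_{j-l}(V)$ and then check that the resulting system of relations cuts out the ideal $J$ inside $\Omega^*(X)[X_1,\ldots,X_n]$. Using the elementary identity $e_j(X_1,\ldots,X_n)=e_j(X_2,\ldots,X_n)+X_1 e_{j-1}(X_2,\ldots,X_n)$, the inductive relations collapse to $e_j(X_1,\ldots,X_n)=c_j(V)$ for $1\leq j\leq n-1$, and the single projective-bundle relation rearranges to $e_n(X_1,\ldots,X_n)=c_n(V)$; the converse passage is entirely symmetric. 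The principal obstacle I anticipate is exactly this bookkeeping step matching the ``iterated projective bundle'' presentation with the symmetric-function presentation, but it is purely algebraic and runs in parallel with the Chow-theoretic proof of proposition \ref{prop flag CH}; no input from the formal group law enters, since for a full flag the Whitney formula behaves additively on Chern roots.
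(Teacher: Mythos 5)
The paper does not prove this proposition; it simply cites \cite[Theorem 2.6]{SchubertHornbostel}, so there is no in-text argument to compare against. Your proof is correct and is essentially the standard argument used for flag bundles in any oriented cohomology theory (including the cited reference): the map $X_i\mapsto x_i$ kills $J$ by lemma \ref{lem quot bundle}, and surjectivity plus the ideal identification follow by iterating the projective bundle formula through the tower $\flag(V)\to\Proj(V)\to X$. The only places one should be a bit more explicit than your sketch are (i) upgrading the $\Omega^*(X)$-module isomorphism of theorem \ref{th proj bundle} to the stated \emph{ring} presentation $\Omega^*(\Proj(V))\simeq\Omega^*(X)[X_1]/\bigl(\sum_{i=0}^n(-1)^i c_i(V)X_1^{n-i}\bigr)$ — this is immediate since both sides are free of rank $n$ over $\Omega^*(X)$ with compatible bases, but it deserves a sentence — and (ii) the bookkeeping that the projective-bundle relation together with $e_j=c_j$ for $j<n$ implies $e_n=c_n$: writing the relation as $\sum_{m=0}^n(-X_1)^m c_{n-m}(V)=0$ and using $e_n(X_1,\ldots,X_n)=X_1 e_{n-1}(X_2,\ldots,X_n)$ together with the recursion $c_j(K_1)=c_j(V)-X_1 c_{j-1}(K_1)$ makes this transparent, and the converse inclusion is symmetric as you say. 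You are also right that the formal group law plays no role here, which is exactly what makes this part of the cobordism story run in parallel with proposition \ref{prop flag CH}.
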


\begin{proof}
See \cite[Theorem 2.6]{SchubertHornbostel}.
\end{proof}

 Since by remark \ref{rem smooth morph} Bott-Samelson resolutions are smooth schemes over $k$, it follows that every morphism $r_I$ defines a cobordism class $[r_I:R_I\rightarrow \flag (V)]\in\Omega^*(\flag(V))$. We will denote this class by $\mathcal{R}_I$. The following lemma shows how the recursive definition of the Bott-Samelson resolution reflects on these classes. 

\begin{lemma} \label{lem Bott}
 Let $I$ be an $l$-tuple with $I=(I',i_{l})$. Then $\mathcal{R}_I={\varphi_{i_l}} ^* {\varphi_{i_l}}_*\mathcal{R}_{I'}$.  

\end{lemma}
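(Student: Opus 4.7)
The plan is to unwind the definition of $\mathcal{R}_I$ as a projective pushforward of a fundamental class, and then invoke the base-change axiom (BM2) for the Cartesian square that defines $R_I$ as a fiber product.

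By Definition \ref{def Bott-Samelson}, writing $I=(I',i_l)$, one has $R_I=R_{I'}\times_{Y_{i_l}}\flag(V)$, $r_I=pr_2$, and the outer Cartesian square
$$
\xymatrix{
R_I \ar[r]^{r_I} \ar[d]_{pr_1} & \flag(V) \ar[d]^{\varphi_{i_l}} \\
R_{I'} \ar[r]^{\varphi_{i_l}\circ r_{I'}} & Y_{i_l}.
}
$$
The bottom horizontal map $\varphi_{i_l}\circ r_{I'}$ is projective as a composition of projective morphisms, while $\varphi_{i_l}$ is a $\Proj^1$-bundle by Remark \ref{rem P^1 bundle}, hence smooth and in particular l.c.i.; its flatness also ensures that the two morphisms landing in $Y_{i_l}$ are Tor-independent. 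Therefore the hypotheses of axiom (BM2) are satisfied, and one obtains the identity of operators
$$
\varphi_{i_l}^{*}\,(\varphi_{i_l}\circ r_{I'})_{*} \;=\; r_{I*}\,pr_1^{*}.
$$

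Now I would evaluate both sides on the fundamental class $1_{R_{I'}}=[R_{I'}]_{\Omega^*}$. On the left, functoriality of projective pushforwards (axiom (BM1)) gives $(\varphi_{i_l}\circ r_{I'})_{*}(1_{R_{I'}})=\varphi_{i_l*}\,r_{I'*}(1_{R_{I'}})=\varphi_{i_l*}\mathcal{R}_{I'}$. On the right, since $pr_1$ is the base change of the smooth morphism $\varphi_{i_l}$, it is itself smooth, and hence l.c.i.; property (1) of Definition \ref{def fund cob } then yields $pr_1^{*}(1_{R_{I'}})=1_{R_I}$, and pushing forward along $r_I$ produces $\mathcal{R}_I$. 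Combining,
$$
\varphi_{i_l}^{*}\varphi_{i_l*}\mathcal{R}_{I'} \;=\; r_{I*}\bigl(pr_1^{*}(1_{R_{I'}})\bigr) \;=\; r_{I*}(1_{R_I}) \;=\; \mathcal{R}_I,
$$
as required.

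There is no genuine obstacle here: the entire argument is a direct application of (BM1) and (BM2) to the defining Cartesian square of $R_I$. The only points meriting verification are that $\varphi_{i_l}$ is l.c.i. (immediate from its smoothness) and that the pullback $pr_1^{*}$ preserves the fundamental class (which is the l.c.i. functoriality built into the definition of $[\,\cdot\,]_{\Omega_*}$).
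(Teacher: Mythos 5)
Your proof is correct, but it takes a genuinely different route from the paper's. You identify the outer square of the defining diagram \eqref{diagram flag} as a Cartesian square with one side projective and the other smooth (hence l.c.i.) and Tor-independent, invoke axiom (BM2) to get $\varphi_{i_l}^*(\varphi_{i_l}\circ r_{I'})_* = r_{I*}\,pr_1^*$, and then evaluate on $1_{R_{I'}}$, using (BM1) together with l.c.i.\ functoriality of fundamental classes. The paper instead bypasses the axiomatic base-change entirely: it unwinds the cycle-level definitions in $\Omega_*$, observing that the projective push-forward of $[r_{I'}\colon R_{I'}\to\flag(V)]$ along $\varphi_{i_l}$ is literally $[\varphi_{i_l}\circ r_{I'}\colon R_{I'}\to Y_{i_l}]$ (push-forward is composition), and that the smooth pull-back of this cycle along $\varphi_{i_l}$ is literally $[pr_2\colon R_{I'}\times_{Y_{i_l}}\flag(V)\to\flag(V)]=\mathcal{R}_I$ (smooth pull-back is fiber product), so there is nothing to check beyond Definition \ref{def Bott-Samelson}. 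Your argument is longer but more portable: it shows the identity holds in any oriented Borel-Moore homology theory, not just $\Omega_*$, whereas the paper's one-liner leans on the concrete construction of cobordism cycles. Both are valid; the paper's is the shortest path in this particular theory, while yours exhibits the underlying formal mechanism.
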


\begin{proof}
The stament follows directly from definition \ref{def Bott-Samelson}. More in detail one has 
$${\varphi_{i_{l}}}_* \mathcal{R}_{I'}={\varphi_{i_{l}}}_*[r_{I'}:R_{I'}\rightarrow \flag (V)]=[\varphi_{i_{l}} \circ r_{I'}: R_{I'}\rightarrow Y_{i_l}]\ ,$$
 so taking the pull-back along $\varphi_{i_l}$ gives exactly $$[p_2:R_{I'}\times_{Y_{i_l}}\flag (V)  \rightarrow \flag (V)]=\mathcal{R}_I\ .\qedhere$$
\end{proof}

 Let us now recall remark \ref{rem P^1 bundle} and denote by $A_i: \Omega^*(\flag (V))\rightarrow \Omega^*(\flag (V))$ the operators arising from the $\mathbb{P}^1$-bundles $\varphi_i:\flag (V)\rightarrow Y_i$. Exactly as for the Chow ring, by means of these operators one can relate the classes of any Bott-Samelson resolution to the initial class $\mathcal{R}_\emptyset$.  It is therefore central to have an explicit description of this particular class as this will allow us to compute all the other ones.

Let us recall that a Schubert variety $\Omega_\omega$ was defined as $\Omega_{r_\omega}(\pi^* V\unddot,Q\unddot, h=id_{\pi^* V})$ and that the morphism  $h_{lk}:\pi^*(V_l)\rightarrow Q_k$ is given by the composition of the restriction of $h$ to $V_l$ with $\pi^*(V)\srarrow Q_k$. 

\begin{proposition} \label{prop initial class}
Let $V\unddot=(V_1\subset V_2\subset ... \subset V_n=V)$ be a full flag of subbundles of $V$ and $Q\unddot=(\pi^*V=Q_n\srarrow Q_{n-1}\srarrow ... \srarrow Q_1)$ be the universal full flag of quotient bundles of $\pi^* V$. Denote by $x_i$ and $y_i$ the Chern roots associated to the full flags $Q\unddot$ and $\pi^*(V\unddot)$.
Then  
$$\mathcal{R}_\emptyset=\prod_{k+j\leq n} F(x_k,\chi (y_j))\ .$$
\end{proposition}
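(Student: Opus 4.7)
My strategy is to exhibit $\Omega_{\omega_0}\subset \flag(V)$ as the final stage of a descending chain of regular embeddings, each cut out by a natural section of a vector bundle; the formula then emerges from an iterated application of Lemma \ref{lem top Chern} combined with the projection formula for Chern class operators and Corollary \ref{cor Chern}.

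Starting from $Z_0:=\flag(V)$ I define inductively $Z_j\subset Z_{j-1}$ together with a section $\sigma_j$ of the restriction of the rank-$(n-j)$ bundle $E_j:=(L_j^{\pi^*V\unddot})^\vee\otimes Q_{n-j}$. The key observation is that on $Z_{j-1}$ the morphisms $h_{l,n-l}$ vanish for all $l<j$; composing $\pi^*V_{j-1}\hookrightarrow \pi^*V\srarrow Q_{n-j+1}\srarrow Q_{n-j}$ shows that $\pi^*V_{j-1}$ lies in $\ker(\pi^*V\srarrow Q_{n-j})$ over $Z_{j-1}$, so $h_{j,n-j}|_{Z_{j-1}}$ vanishes on $\pi^*V_{j-1}|_{Z_{j-1}}$ and factors through $L_j^{\pi^*V\unddot}|_{Z_{j-1}}$, giving $\sigma_j$. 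Setting $Z_j:=Z(\sigma_j)$, one verifies scheme-theoretically that $Z_j=Z_{j-1}\cap Z(h_{j,n-j})$, so Lemma \ref{lem Omega0} yields $Z_{n-1}=\Omega_{\omega_0}$.

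The main technical obstacle is verifying that every $\sigma_j$ is a regular section, that is, that $Z_j$ is regularly embedded in $Z_{j-1}$ of codimension exactly $n-j$. From Lemma \ref{lem regular Schubert} the final $\Omega_{\omega_0}$ has codimension $n(n-1)/2=\sum_{j=1}^{n-1}(n-j)$ in $\flag(V)$; since codimension is subadditive along the chain and each step has codimension at most the expected value $n-j$, equality must hold at every step, which together with the Cohen-Macaulayness propagated from the smooth $\flag(V)$ upgrades each $\sigma_j$ to a regular section and makes each $Z_j$ an l.c.i.\ scheme.

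Granted this, Lemma \ref{lem top Chern}(1) gives $\iota_{j*}\iota_j^*=\widetilde{c_{n-j}}(E_j|_{Z_{j-1}})$ as operators on $\Omega_*(Z_{j-1})$, where $\iota_j\colon Z_j\to Z_{j-1}$; applying this to $[Z_{j-1}]_{\Omega_*}$ and invoking the functoriality of fundamental classes along l.c.i.\ pullbacks (Definition \ref{def fund}) yields $\iota_{j*}[Z_j]_{\Omega_*}=\widetilde{c_{n-j}}(E_j|_{Z_{j-1}})[Z_{j-1}]_{\Omega_*}$. Writing $\tilde\iota_j$ for the composite $Z_j\hookrightarrow \flag(V)$ and pushing this identity forward iteratively, the projection formula for Chern class operators (applicable because each $E_j$ is defined globally on $\flag(V)$) gives $\tilde\iota_{n-1,*}[Z_{n-1}]_{\Omega_*}=\prod_{j=1}^{n-1}c_{n-j}(E_j)$ in $\Omega^*(\flag(V))$. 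The left-hand side coincides with $\mathcal{R}_\emptyset$ via the isomorphism $X\cong \Omega_{\omega_0}$ of Lemma \ref{lem regular Schubert}, and Corollary \ref{cor Chern} applied to $L_j^{\pi^*V\unddot}$ (Chern root $y_j$) and $Q_{n-j}$ (Chern roots $x_1,\ldots,x_{n-j}$) evaluates each factor to $c_{n-j}(E_j)=\prod_{k=1}^{n-j}F(x_k,\chi(y_j))$. Multiplying over $j$ therefore recovers $\prod_{k+j\leq n}F(x_k,\chi(y_j))$, as claimed.
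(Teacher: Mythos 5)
Your approach is genuinely different from the paper's. The paper cuts out $\Omega_{\omega_0}$ \emph{in one step} as the zero scheme of a single section $s$ of the rank-$\tfrac{n(n-1)}{2}$ kernel bundle $K=\ker\bigl(\bigoplus_l \mathrm{Hom}(\pi^*V_l,Q_{n-l})\to\bigoplus_l\mathrm{Hom}(\pi^*V_l,Q_{n-l-1})\bigr)$; one application of Lemma \ref{lem top Chern}(2), together with Lemma \ref{lem regular Schubert} for the regularity and codimension of $\Omega_{\omega_0}$, then reduces the problem to computing $c_N(K)$ via the Whitney formula. You instead build a descending chain $Z_0\supset Z_1\supset\cdots\supset Z_{n-1}=\Omega_{\omega_0}$, each step cut by a smaller section $\sigma_j$ of $E_j=(L_j^{\pi^*V\unddot})^\vee\otimes Q_{n-j}$, and assemble the top Chern classes via an iterated projection formula. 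The advantage of the paper's route is that the only regularity it needs to check is for the final $\Omega_{\omega_0}$, which is exactly what Lemma \ref{lem regular Schubert} supplies; your route requires regularity of each intermediate embedding $\iota_j:Z_j\hookrightarrow Z_{j-1}$, and this is where a gap appears.

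Your regularity argument reads: the total codimension of $\Omega_{\omega_0}$ in $\flag(V)$ equals $\sum_j(n-j)$, each step cuts by rank $n-j$ hence contributes codimension at most $n-j$, and since codimension is (sub)additive, equality must hold at each step, forcing each $\sigma_j$ to be a regular section. The problem is that this argument only controls \emph{one} chain of irreducible components $W_{n-1}\subset W_{n-2}\subset\cdots\subset W_0$ passing through $\Omega_{\omega_0}$: it shows $\operatorname{codim}(W_j,W_{j-1})=n-j$. It does \emph{not} exclude the possibility that some $Z_j$ has additional irreducible components (possibly of the wrong codimension, possibly disappearing at later stages), nor that $Z_j$ fails to be pure-dimensional. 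Lemma \ref{lem top Chern}(1) requires the zero scheme $Z_j\subset Z_{j-1}$ to be \emph{regularly embedded of codimension equal to the rank}; a component of wrong codimension would destroy this. To close the gap you would need to identify each intermediate $Z_j$ as a Schubert variety $\Omega_{\omega'}$ (and invoke a suitable Cohen--Macaulayness and pure-codimension statement over a general base, as in the paper's Lemma \ref{lem regular Schubert} for the case $\omega'=\omega_0$), or alternatively trivialize locally and check the fibers over a point. As written, the argument is incomplete; the rest of the proposal (the factorization of $h_{j,n-j}|_{Z_{j-1}}$ through $L_j^{\pi^*V\unddot}$, the application of Lemma \ref{lem Omega0}, the projection-formula bookkeeping, and the evaluation of each $c_{n-j}(E_j)$ via Corollary \ref{cor Chern}) is sound and would go through once the regularity of the chain is established.
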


\begin{proof}
The strategy of the proof is to construct a bundle $K$ together with a section $s$, such that the zero scheme $Z(s)$ will coincide with $\Omega_{\omega_0}$. To do so, first of all let us consider the morphism of vector bundles 
$$\psi:M=\bigoplus_{l=1}^{n-1} \text{Hom}(\pi^*V_l,Q_{n-l})\longrightarrow \bigoplus_{l=1}^{n-2}\text{Hom}(\pi^*V_l,Q_{n-l-1})=M'$$
which assigns to the family 
$\{g_l\}_{l\in\{1,\tred, n-1\}}$ the family $\{g_{l+1}\circ i_l-p_{n-l}\circ g_l\}_{l\in\{1,\tred, n-2\}}$. Here 
$i_l:\pi^*V_l\irarrow \pi^*V_{l+1}$ and $p_l: Q_l\srarrow Q_{l-1}$ are respectevely the injections and the projections within the two flags. As it is easy to check that $\psi$ is surjective, we have the following exact sequence of bundles:
$$0\longrightarrow {\rm Ker\,}\psi\longrightarrow M\stackrel{\psi}\longrightarrow M'\longrightarrow 0\ .$$
Since we know the ranks of $M$ and $M'$, this sequence allows us to compute the rank of $K:=\text{Ker\,}\psi$. We will denote this rank by $N$. 
\begin{align*}
{\rm rank\,} K&={\rm rank\,}M-{\rm rank\,}M'=\sum_{l=1}^{n-1}l(n-l)-\sum_{l=1}^{n-2}l(n-l-1)=(n-1)+\sum_{l=1}^{n-2}[l(n-l)-l(n-l-1)]=\\
&=(n-1)+\sum_{l=1}^{n-2}l=\sum_{l=1}^{n-1}l=\frac{n(n-1)}{2}
\end{align*}
Moreover, thanks to the Whitney formula, we have 
$$c_t(M)=c_t(K)c_t(M')$$
 and, when one looks at the leading coefficients of both sides, this implies that 
$$c_{{\rm rank\,} M}(M)=c_N(K)c_{{\rm rank\,} M'}(M')\ .$$ 
It therefore follows that we can compute the top Chern class of $K$ by taking the ratio of the top Chern classes of $M$ and $M'$. It is worth noticing that the previous equality guarantees that this division is well defined. Now, in order to compute these top Chern classes, we again make use of the Whitney formula: this time we successively remove all direct summands. In this way we obtain the following expressions for the Chern polynomials 
$$c_t(M)=\prod_{l=1}^{n-1}c_t (\text{Hom}(\pi^*V_l,Q_{n-l}))\quad ,\quad c_t(M')=\prod_{l=1}^{n-2}c_t (\text{Hom}(\pi^*V_l,Q_{n-l-1}))\ ,$$
each of which, exactly as before, provides us with an expression for the top Chern class
$$c_{{\rm rank\,} M}(M)=\prod_{l=1}^{n-1}c_{l(n-l)} (\text{Hom}(\pi^*V_l,Q_{n-l}))\quad ,\quad c_{{\rm rank\,} M'}(M')=\prod_{l=1}^{n-2}c_{l(n-l-1)} (\text{Hom}(\pi^*V_l,Q_{n-l-1}))\ .$$
At this point the last missing piece of information is a formula for the top Chern class of a bundle of the form $\text{Hom}(\pi^* V_{m_1},Q_{m_2})$.
We will achieve this by computing the top Chern class of another bundle, isomorphic to the given one:  $(\pi^* V_{m_1})^\vee\otimes Q_{m_2}$.   
Since $\pi^* V_{m_1}$ has a full flag of subbundles and $Q_{m_2}$ has a full flag of quotient bundles, we can apply corollary \ref{cor Chern} which returns us
$$c_{m_1m_2}((\pi^* V_{m_1})^\vee\otimes Q_{m_2})=
\prod_{l=1}^{m_1}\prod_{k=1}^{m2} F(c_1({\rm Ker}(Q_k\srarrow Q_{k-1})),\chi (c_1(\pi^*V_l/\pi^*V_{l-1})))=\prod_{l=1}^{m_1}\prod_{k=1}^{m2} F(x_k,\chi (y_l))\ .$$
 We are finally able to compute $c_N(K)$. 



\begin{align*}
c_N(K)&=\frac{\prod_{l=1}^{n-1}c_{l(n-l)}( \text{Hom}(\pi^*V_l,Q_{n-l}))}{\prod_{l=1}^{n-2}c_{l(n-1-l)}( \text{Hom}(\pi^*V_l,Q_{n-l-1}))}=c_{n-1}( \text{Hom}(\pi^*V_{n-1},Q_1))\cdot\prod_{l=1}^{n-2}\frac{c_{l(n-l)}( \text{Hom}(\pi^*V_l,Q_{n-l}))}{c_{l(n-1-l)}( \text{Hom}(\pi^*V_l,Q_{n-l-1}))}
=\\
&=\prod_{j=1}^{n-1}F(x_1,\chi (y_j))\cdot
\prod_{l=1}^{n-2} \prod_{j=1}^{l}
\frac{\prod_{k=1}^{n-l}F(x_k,\chi (y_j))}{\prod_{k=1}^{n-1-l}F(x_k,\chi (y_j))}=\prod_{j=1}^{n-1}F(x_1,\chi (y_j))\cdot
\prod_{l=1}^{n-2} \prod_{j=1}^{l} F(x_{n-l},\chi(y_j))=\\
&=\prod_{l=1}^{n-1} \prod_{j=1}^{l} F(x_{n-l},\chi(y_j))=
\prod_{k=1}^{n-1} \prod_{j=1}^{n-k} F(x_{k},\chi(y_j))=
\prod_{k+j\leq n}F(x_{k},\chi(y_j))
\end{align*}

Now that we have computed the top Chern class of $K$, we still need to provide a section such that its zero scheme coincide with $\Omega_{\omega_0}$.  For this reason, let us consider the family of morphisms $h_{l,n-l}:\pi^*V_l\irarrow \pi^*V\srarrow Q_{n-l}$. It is clearly sent to $0$ by $\psi$ and, as consequence, it defines a section of $K$, which we will denote~$s$. The isomorphism of $Z(s)$ and $\Omega_{\omega_0}$ then follows from lemma \ref{lem Omega0}
$$Z(s)=\bigcap_{l=1}^{n-1} Z(h_{l,n-l})=\Omega_{\omega_0}\ .$$
In order to conlcude the proof it is now sufficient to observe that, by lemma \ref{lem regular Schubert},  $\Omega_{\omega_0}$ is smooth, is regularly embedded in $\flag (V)$ and has codimension $l(\omega_0)=\frac{n(n-1)}{2}=N $: this allows to apply part (2) of lemma \ref{lem top Chern}. One then has
$$\mathcal{R}_\emptyset=[\Omega_{\omega_0}\irarrow \flag (V)]=[Z(s)\irarrow \flag (V)]=c_N(K)=\prod_{k+j\leq n} F(x_k,\chi (y_j))\ . \qedhere$$
\end{proof}  


It now remains to express the relationship between $\mathcal{R}_\emptyset$ and the other classes.

\begin{theorem}  \label{prop Bott-Samelson}
 For $I=(i_1,...,i_l)$, $\mathcal{R}_I=A_{i_l}\trecd A_{i_1} \mathcal{R}_\emptyset$. 
\end{theorem}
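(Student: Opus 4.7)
The plan is to prove the formula by induction on the length $l$ of the decomposition $I$, relying on the two main ingredients already in place: the recursive description of the classes $\mathcal{R}_I$ provided by Lemma \ref{lem Bott}, and the identification of the operators $A_i$ with $\varphi_i^* \varphi_{i*}$ given by Proposition \ref{prop operator}.

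The base case $l=0$ is tautological: if $I = \emptyset$, then the right-hand side is the empty composition applied to $\mathcal{R}_\emptyset$, which is just $\mathcal{R}_\emptyset$ itself. For the inductive step, I would write $I = (I', i_l)$ where $I' = (i_1, \ldots, i_{l-1})$, and assume by induction that $\mathcal{R}_{I'} = A_{i_{l-1}} \cdots A_{i_1} \mathcal{R}_\emptyset$. Then applying Lemma \ref{lem Bott}, which says that $\mathcal{R}_I = \varphi_{i_l}^* \varphi_{i_l *} \mathcal{R}_{I'}$, and recalling that by Proposition \ref{prop operator} the operator $A_{i_l}$ arising from the $\Proj^1$-bundle $\varphi_{i_l} : \flag(V) \rightarrow Y_{i_l}$ (see Remark \ref{rem P^1 bundle}) is precisely $\varphi_{i_l}^* \varphi_{i_l *}$, we obtain
\[
\mathcal{R}_I = \varphi_{i_l}^* \varphi_{i_l *} \mathcal{R}_{I'} = A_{i_l} \mathcal{R}_{I'} = A_{i_l} A_{i_{l-1}} \cdots A_{i_1} \mathcal{R}_\emptyset,
\]
which closes the induction.

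In this proof there is essentially no obstacle: both the geometric step (passing from $R_{I'}$ to $R_I$ via the $\Proj^1$-bundle $\varphi_{i_l}$) and the operator-theoretic step (identifying $\varphi_{i_l}^* \varphi_{i_l *}$ with the Vishik-type operator $A_{i_l}$) have been established separately. The statement is simply the formal combination of these two facts, and the only subtlety worth double-checking is that the application of Proposition \ref{prop operator} in the relative setting $\varphi_i : \flag(V) \rightarrow Y_i$ (which is a $\Proj^1$-bundle over $Y_i$, not over a point) is legitimate — but this is exactly the generality in which that proposition is stated.
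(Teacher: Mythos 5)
Your proof is correct and coincides with the paper's own argument: both proceed by induction on the length of $I$, with a trivial base case, and the inductive step combines Lemma \ref{lem Bott} with the identification $A_{i_l}=\varphi_{i_l}^*\varphi_{i_l*}$ from Proposition \ref{prop operator}. Your closing remark about the relative setting is well taken but, as you note, already covered by the generality of Proposition \ref{prop operator}.
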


\begin{proof}
The proof is by induction on the number of elements in the $l$-tuple $I$. While for $l=0$  the statement is trivial, the inductive step can be proved by combining lemma \ref{lem Bott}
 and proposition \ref{prop operator}:
$$\mathcal{R}_I=\mathcal{R}_{(I',i_l)}={\varphi _{i_l}}^* {\varphi _{i_l}}_* \mathcal{R}_{I'}=A_{i_l} \mathcal{R}_{I'}=
A_{i_l}A_{i_{l-1}}\trecd A_{i_1} \mathcal{R}_\emptyset\ . \qedhere$$   
\end{proof}

\begin{remark}
The previous result represents the extension of theorem 3.2 in \cite{SchubertHornbostel} from the case of the flag manifold (in which the base scheme is $\spec k$) to a general flag bundle with smooth base $X$. 
\end{remark}

We end this subsection by pulling back the classes $\mathcal{R_I}$ to the base. 

\begin{definition}
Let $V\rightarrow X$ be a vector bundle with $V\unddot$ and $W\unddot$ full flags  of respectively subbundles and quotient bundles. Let $i_{W\unddot}:X\rightarrow\flag(V) $ be the section associated to $W\unddot$ by the universal property of $\flag (V)$.
To every degeneracy locus $\Omega_{r_{\omega}}(V\unddot,W\unddot,id_V)$  we can associate a class 
$$\mathbf{\Omega}_I:=i_W^*(\mathcal{R}_I)\in\Omega^*(X)$$ 
which depends on the choice of $R_I$, one of the Bott-Samelson resolutions birationally isomorphic to the Schubert variety $\Omega_\omega$. Here $I$ represents any of the minimal decompositions of $\omega_0\omega$.  
\end{definition}


\subsection{Specialization to connected $K$-theory}

In this subsection we are going to state the conclusions that can be drawn for $CK^*(\flag (V))$ from  the results we have obtained in $\Omega^*(\flag (V))$. As before $V$ will be a vector bundle of rank $n$ over $X\in\SM$, equipped with a full flag $V\unddot$, by means of which all Schubert varieties are meant to be defined. The universal full flag of quotient bundles over $\flag (V)$ will be denoted $Q\unddot$.

\begin{proposition}
Let $\Omega_\omega$ be the Schubert variety associated to $\omega\in S_n$. Then $\eta_{\Omega_\omega}=\vartheta_{CK_*}([R_I\rightarrow \Omega_\omega])\in CK_*(\Omega_\omega)$ for every Bott-Samelson resolution $r_I:R_I\rightarrow \flag (V)$ associated to $I$, a minimal decomposition of $\omega_0\omega$ .  
\end{proposition}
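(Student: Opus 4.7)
The plan is to observe that $r_I \colon R_I \to \Omega_\omega$ is itself a resolution of singularities of $\Omega_\omega$ and then invoke the definition of $\eta_{\Omega_\omega}$ together with the birational invariance of $CK_*$.

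First I would unpack how $I$ determines a morphism $R_I \to \Omega_\omega$. By assumption $I = (i_1,\ldots,i_l)$ is a minimal decomposition of $\omega_0\omega$, so $s_I = \omega_0\omega$ and therefore $\omega_0 s_I = \omega$. Proposition \ref{prop resolution}(1) then applies (with this $I$ and this $\omega$), giving the set-theoretic equality $r_I(R_I) = \Omega_\omega$ and showing that the induced morphism $r_I \colon R_I \to \Omega_\omega$ is projective and birational. Combined with Remark \ref{rem smooth morph}, which tells us that $R_I \in \SM$, this says precisely that $r_I \colon R_I \to \Omega_\omega$ is a resolution of singularities.

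Next I would compare with $\eta_{\Omega_\omega}$. By definition, $\eta_{\Omega_\omega} = \vartheta_{CK_*}([r \colon R \to \Omega_\omega])$ for any projective birational morphism $r$ from a smooth scheme $R$, and Proposition \ref{prop birational} guarantees that this element of $CK_*(\Omega_\omega)$ is independent of the chosen resolution. Applying this to the resolution $r_I \colon R_I \to \Omega_\omega$ established in the previous step gives
\[
\eta_{\Omega_\omega} = \vartheta_{CK_*}([r_I \colon R_I \to \Omega_\omega]),
\]
which is exactly what we want.

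There is essentially no obstacle here: all the real work has already been done in Proposition \ref{prop resolution} (to know that each Bott--Samelson $R_I$ attached to a minimal decomposition resolves the corresponding Schubert variety) and in Proposition \ref{prop birational} (to know that $\vartheta_{CK_*}$ kills the dependence on the choice of resolution). The only point that deserves a brief mention is that $[r_I \colon R_I \to \Omega_\omega]$ is a legitimate cobordism cycle in $\Omega_*(\Omega_\omega)$ even though $\Omega_\omega$ need not be smooth: this is simply because $R_I \in \SM$ and $r_I$ is projective, which is all that the definition of a cobordism cycle requires of the target.
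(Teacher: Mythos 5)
Your proof is correct and follows essentially the same route as the paper: both appeal to Proposition \ref{prop resolution}(1) to see that $r_I\colon R_I\to\Omega_\omega$ is a resolution of singularities, then conclude from the definition of $\eta_{\Omega_\omega}$, whose well-definedness rests on Proposition \ref{prop birational}. Your version merely spells out the steps (why $\omega_0 s_I=\omega$, why $R_I\in\SM$, why the cobordism cycle makes sense) that the paper leaves implicit.
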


\begin{proof}
We know from part (1) of proposition \ref{prop resolution} that Bott-Samelson resolutions arising from minimal decompositions actually map onto the corresponding Schubert variety and therefore it makes sense to talk about the cobordism classes $[R_I\rightarrow \Omega_\omega]$. Moreover, again by proposition \ref{prop resolution}, each $R_I$ of the given kind is a resolution of singularities of $\Omega_\omega$, so we can finish the proof by applying $\vartheta_{CK}$ and recalling the definition of $\eta_{\Omega_\omega}$. 
\end{proof}

An immediate corollary of this result is that all the classes $\vartheta_{CK^*}([r_I:R_I\rightarrow \flag (V)])$ related to the same Schubert variety coincide in $CK^*(\flag (V))$.


\begin{corollary} \label{cor class}
With the same notations as in the previous proposition, let $j$ be the inclusion of $\Omega_\omega$ into $\flag (V)$. Then 
$j_*\eta_{\Omega_\omega}=\vartheta_{CK}(\mathcal{R}_I)$.

\end{corollary}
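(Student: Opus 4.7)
The plan is to reduce the statement to the previous proposition by factoring $r_I$ through $\Omega_\omega$ and invoking the functoriality of push-forwards together with the fact that morphisms of oriented Borel-Moore homology theories commute with projective push-forwards.

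First I would invoke part (1) of proposition \ref{prop resolution}: since $I$ is a minimal decomposition of $\omega_0\omega$, the Bott-Samelson resolution $r_I: R_I \to \flag(V)$ has image exactly $\Omega_\omega$, so it factors as $r_I = j \circ \tilde{r}_I$, where $\tilde{r}_I: R_I \to \Omega_\omega$ is the associated projective birational morphism. Applying the functoriality of push-forwards (axiom $(BM1)$) in $\Omega_*$ to the cobordism cycle $\mathcal{R}_I = [r_I: R_I \to \flag(V)] = r_{I*}(1_{R_I})$ gives
$$\mathcal{R}_I = r_{I*}(1_{R_I}) = j_* \tilde{r}_{I*}(1_{R_I}) = j_*\bigl([\tilde{r}_I: R_I \to \Omega_\omega]\bigr) \in \Omega_*(\flag(V)).$$

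Next I would apply the morphism $\vartheta_{CK}$ to both sides. Since $\vartheta_{CK}$ is a morphism of oriented Borel-Moore homology theories, it commutes with push-forwards along the projective morphism $j$, so
$$\vartheta_{CK}(\mathcal{R}_I) = \vartheta_{CK}\bigl(j_*[\tilde{r}_I: R_I \to \Omega_\omega]\bigr) = j_* \vartheta_{CK}\bigl([\tilde{r}_I: R_I \to \Omega_\omega]\bigr).$$
By the previous proposition, the inner expression $\vartheta_{CK}([\tilde{r}_I: R_I \to \Omega_\omega])$ equals $\eta_{\Omega_\omega}$, which yields the required equality $\vartheta_{CK}(\mathcal{R}_I) = j_*\eta_{\Omega_\omega}$.

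There is essentially no obstacle here: the whole argument is a one-line consequence of proposition \ref{prop resolution} (to get the factorization), the functoriality of $r_*$, and the compatibility of $\vartheta_{CK}$ with projective push-forwards. The only minor care needed is to ensure the cobordism cycle notation $[r_I: R_I \to \flag(V)]$ is identified with $r_{I*}(1_{R_I})$, which is precisely property (3) of definition \ref{def fund cob }.
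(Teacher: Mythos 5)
Your proof is correct and is essentially the argument the paper leaves implicit (the paper presents this as an "immediate corollary" without writing out a proof). The factorization $r_I = j\circ\tilde r_I$ from proposition \ref{prop resolution}, the identification $\mathcal{R}_I = r_{I*}(1_{R_I})$ via property (3) of definition \ref{def fund cob }, functoriality of push-forward, and the fact that $\vartheta_{CK}$ commutes with projective push-forwards are exactly the ingredients needed, and you apply them in the natural order.
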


\begin{remark}
Another relevant difference when one considers connected $K$-theory as opposed to algebraic cobordism, is a considerable simplification in the expressions describing the different operations. For instance, as  
\begin{equation} \label{CK}
\vartheta_{CK}(F(u,\chi(v)))=u+\vartheta_{CK}(\chi(v))-\beta u\cdot\vartheta_{CK} (\chi(v)) =u-\frac{v}{1-\beta v}+\frac{\beta u v}{1-\beta v}=\frac{u-v}{1-\beta v}\ ,
\end{equation}
we will be able to write out explicit formulas for the operators linked to $\Proj ^1$-bundles and the class $\vartheta_{CK}(\mathcal{R}_\emptyset)$. 
\end{remark}

Let us recall that for a $\Proj ^1$-bundle $\varphi:\Proj(E)\rightarrow X$ the operator $A_\varphi:\Omega^*(\Proj (E))\rightarrow \Omega^*(\Proj (E))$ had been defined from 
$$A:\Omega^*(X)[[y_1,y_2]]\rightarrow \Omega^*(X)[[y_1,y_2]]\ ,\  f\mapsto (1+\sigma)\frac{f}{F(y_1,\chi(y_2))}$$
 by substituting the Chern roots of $E$ for $y_1$ and $y_2$. Using (\ref{CK}), we can now rewrite $A^{CK}=A\otimes_{\mathbb{L}^*}\ZZ [\beta]:CK^*(X)[[y_1,y_2]]\rightarrow CK^*(X)[[y_1,y_2]]$ as follows:
 $$A^{CK}(f)= (1+\sigma)\left[\frac{(1-\beta y_2)f}{y_1-y_2}\right]= \frac{(1-\beta y_2)f}{y_1-y_2}+\frac{\sigma((1-\beta y_2)f )}{y_2-y_1}= \frac{(1-\beta y_2)f-(1-\beta y_1)\sigma(f)}{y_1-y_2}\ .$$   

\begin{remark} \label{rem operators}
It is important to point out that the previous equality shows that the operator $A^{CK}_\varphi$ can be expressed in terms of the $\beta$-divided difference operators of definition \ref{def beta difference}: one only needs to change the sign of $\beta$ and consider $\phi^{(-\beta)}$.  
\end{remark}

It is now worth restating the content of proposition \ref{prop operator} after one has applied the functor $- \otimes_{\Laz^*}\ZZ[\beta]$. $A_\varphi\otimes_{\Laz^*}\ZZ[\beta]$ will be denoted as $A^{CK}_\varphi$. 

\begin{proposition} \label{prop operator CK}
Let $\varphi: \mathbb{P}(E)\rightarrow X$ be a $\mathbb{P}^1$-bundle and $A^{CK}_\varphi:CK ^*(\mathbb{P}(E))\rightarrow CK ^*(\mathbb{P}(E))$ be  the operator obtained from  
\begin{eqnarray*}
CK ^*(X)[[y_1,y_2]]&\stackrel{A^{CK}}{\longrightarrow}&\hspace{0.6 cm}CK ^*(X)[[y_1,y_2]] \\
f\hspace{0.8 cm}&\longmapsto &\frac{(1-\beta y_2)f-(1-\beta y_1)\sigma(f)}{y_1-y_2}\ ,
\end{eqnarray*}
 by substituting the Chern roots of $E$ for $y_1,y_2$.
Then  $A^{CK}_\varphi=\varphi ^*\varphi_*$. 
\end{proposition}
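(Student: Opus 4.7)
The plan is to deduce this proposition from Proposition \ref{prop operator} by a base-change argument, together with the explicit computation of the formal group law operation $F(y_1,\chi(y_2))$ that has already been sketched in the remark preceding the statement. Since $CK^*$ is by definition $\Omega^*\otimes_{\Laz^*}\ZZ[\beta]$ with $\Laz^*\to\ZZ[\beta]$ being the classifying map of the multiplicative formal group law $F_m(u,v)=u+v-\beta uv$, the canonical morphism $\vartheta:\Omega^*\to CK^*$ is a morphism of oriented cohomology theories. In particular, $\vartheta$ commutes with pull-backs $\varphi^*$ and push-forwards $\varphi_*$, sends Chern classes to Chern classes, and carries the universal formal group law $F$ on $\Omega^*(\spec k)$ to $F_m$ on $CK^*(\spec k)=\ZZ[\beta]$.

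First I would show that the operator $A^{CK}_\varphi$ defined in the statement is precisely the one obtained from $A_\varphi$ by this base change. The operator $A:\Omega^*(X)[[y_1,y_2]]\to\Omega^*(X)[[y_1,y_2]]$ is defined via $f\mapsto(1+\sigma)(f/F(y_1,\chi(y_2)))$, interpreted (as in Hornbostel--Kiritchenko) through the observation that $F(y_1,\chi(y_2))$ divides the symmetrizable combination $(1+\sigma)f$ in a suitable sense. Applying $-\otimes_{\Laz^*}\ZZ[\beta]$ to this data, the specialization of $F(y_1,\chi(y_2))$ is exactly the quantity computed in the remark: using $\chi_{F_m}(y_2)=-y_2/(1-\beta y_2)$ one finds
\[
F_m(y_1,\chi_{F_m}(y_2))=\frac{y_1-y_2}{1-\beta y_2}.
\]
Substituting and symmetrizing gives $A^{CK}(f)=\bigl((1-\beta y_2)f-(1-\beta y_1)\sigma(f)\bigr)/(y_1-y_2)$, which matches the definition in the statement. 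Hence $A^{CK}_\varphi$ is literally $A_\varphi\otimes_{\Laz^*}\ZZ[\beta]$.

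Next I would transfer the equality $A_\varphi=\varphi^*\varphi_*$ from Proposition \ref{prop operator} to $CK^*$. Because $\vartheta$ commutes with $\varphi^*$ and $\varphi_*$, and with Chern roots (which appear in the substitution producing $A_\varphi$ from $A$), for every $\alpha\in\Omega^*(\Proj(E))$ we obtain
\[
\vartheta(\varphi^*\varphi_*\alpha)=\varphi^*\varphi_*\vartheta(\alpha),\qquad
\vartheta(A_\varphi\alpha)=A^{CK}_\varphi\vartheta(\alpha),
\]
so Proposition \ref{prop operator} implies $A^{CK}_\varphi\circ\vartheta=\varphi^*\varphi_*\circ\vartheta$. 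Since $\vartheta$ is surjective (being induced from a tensor product with $\ZZ[\beta]$), this forces the equality of operators $A^{CK}_\varphi=\varphi^*\varphi_*$ on all of $CK^*(\Proj(E))$.

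The only point requiring a small sanity check is that the various fractions are well defined: one needs to know that $(1-\beta y_2)f-(1-\beta y_1)\sigma(f)$ is always divisible by $y_1-y_2$ in $CK^*(X)[[y_1,y_2]]$, so that $A^{CK}$ actually lands in the power series ring. This is essentially the content of the lemma used to justify Definition \ref{def beta difference}, proved by reducing to monomials $y_1^k$, $y_2^k$. I do not expect any real obstacle; the argument is a clean specialization from algebraic cobordism, and the main substantive work was already done in establishing Proposition \ref{prop operator}.
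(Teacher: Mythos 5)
Your proof is correct and follows the same route the paper has in mind: the paper offers no separate argument for this proposition but simply says the result is Proposition \ref{prop operator} "after one has applied the functor $-\otimes_{\Laz^*}\ZZ[\beta]$," and your write-up supplies exactly that base-change argument — the explicit computation $F_m(y_1,\chi_{F_m}(y_2))=(y_1-y_2)/(1-\beta y_2)$, the compatibility of $\vartheta_{CK}$ with $\varphi^*$, $\varphi_*$, and Chern roots, and the surjectivity of $\vartheta_{CK}$ (which holds because the classifying map $\Laz^*\to\ZZ[\beta]$, sending $a_{1,1}\mapsto-\beta$ and the remaining $a_{i,j}\mapsto 0$, is surjective).
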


As it has been mentioned earlier, by means of (\ref{CK}) it is possible to write an explicit expression for the fundamental class of  $\Omega_{\omega_0}$ in $CK^*(\flag (V))$.

\begin{proposition} \label{prop initial class CK}
Denote by $x_i$ and $y_i$ the Chern roots associated to the full flags $Q\unddot$ and $\pi^*(V\unddot)$. Then  
$$\vartheta_{CK^*}(\mathcal{R}_\emptyset)=\prod_{k+l\leq n}\frac{x_k-y_l}{1-\beta y_l}=\mathfrak{H}_{\omega_0}^{(-\beta)}(x_1,\tred,x_n,\chi_{F_m}(y_1),\tred,\chi_{F_m}(y_n)) \ .$$
\end{proposition}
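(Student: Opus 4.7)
The plan is to deduce the statement from Proposition \ref{prop initial class} by applying the morphism $\vartheta_{CK^*}$ and then performing an elementary computation with the multiplicative formal group law.

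First I would invoke Proposition \ref{prop initial class}, which states that in $\Omega^*(\flag(V))$ one has
$$\mathcal{R}_\emptyset = \prod_{k+l\leq n} F(x_k, \chi(y_l)).$$
Applying the morphism of oriented cohomology theories $\vartheta_{CK^*}$, the formal group law $F$ is sent to the multiplicative formal group law $F_m(u,v) = u+v-\beta u v$ over $\ZZ[\beta]$, and its inverse becomes $\chi_{F_m}(u) = \frac{-u}{1-\beta u}$. The Chern roots $x_k, y_l$ are preserved since $\vartheta_{CK^*}$ commutes with first Chern classes. Therefore it suffices to compute the image of each factor $F(x_k, \chi(y_l))$. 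A direct calculation gives
$$F_m(x_k, \chi_{F_m}(y_l)) = x_k + \frac{-y_l}{1-\beta y_l} - \beta x_k \cdot \frac{-y_l}{1-\beta y_l} = \frac{x_k(1-\beta y_l) - y_l + \beta x_k y_l}{1-\beta y_l} = \frac{x_k - y_l}{1-\beta y_l},$$
which establishes the first claimed equality.

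For the second equality, I would unwind the definition of $\mathfrak{H}_{\omega_0}$ given by (\ref{def H2}): replacing $\beta$ by $-\beta$ yields
$$\mathfrak{H}_{\omega_0}^{(-\beta)}(x_1,\tred,x_n,y_1,\tred,y_n) = \prod_{i+j\leq n}(x_i + y_j - \beta x_i y_j).$$
Substituting $\chi_{F_m}(y_j) = \frac{-y_j}{1-\beta y_j}$ for $y_j$ in each factor reproduces exactly the computation above (with the roles of $y_l$ and $y_j$ identified), so each factor becomes $\frac{x_i - y_j}{1-\beta y_j}$. Taking the product over all pairs $i+j \leq n$ gives the desired identity.

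No substantial obstacle is expected: the proof is essentially a bookkeeping exercise combining Proposition \ref{prop initial class}, the explicit form of $F_m$ and $\chi_{F_m}$, and the definition of $\mathfrak{H}_{\omega_0}^{(-\beta)}$. The only point requiring some care is the sign of $\beta$—ensuring consistency between the convention that $CK_*$ uses $F_m(u,v) = u+v-\beta uv$ (so that $\vartheta_{CK^*}$ sends the inverse $\chi$ to $\chi_{F_m}(u) = -u/(1-\beta u)$) and the convention adopted for $\mathfrak{H}^{(-\beta)}$ built from $\phi_i^{(-\beta)}$ with the appropriate sign reversal. This is precisely the phenomenon flagged in Remark \ref{rem operators} regarding operators, and it accounts for the presence of $-\beta$ rather than $\beta$ in the second expression.
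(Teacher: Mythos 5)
Your proposal is correct and follows the same route as the paper: apply $\vartheta_{CK^*}$ to Proposition~\ref{prop initial class}, compute $F_m(u,\chi_{F_m}(v)) = \frac{u-v}{1-\beta v}$ factor by factor, and match against the definition of $\mathfrak{H}_{\omega_0}^{(-\beta)}$. The only difference is presentational: the paper cites the displayed identity~(\ref{CK}) rather than re-deriving it inline, and your careful discussion of the $\beta$ versus $-\beta$ sign convention makes explicit exactly what Remark~\ref{rem operators} flags.
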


\begin{proof}
The first equality follows immediately once one applies $\vartheta_{CK}$ to proposition \ref{prop initial class} and uses (\ref{CK}). For the second equality one only needs to recall the definition of $\beta$-polynomials and again use (\ref{CK}).
\end{proof}

We are now ready to express the fundamental class of any Schubert variety $\Omega_\omega$ as a rational function in the Chern roots arising from the flags $Q\unddot$ and $\pi^*V$.

\begin{theorem} \label{prop Bott-Samelson CK}
Let $\omega\in S_n$ and $I=(i_1,\tred,i_l)$ be any minimal decomposition of $\omega_0\omega$. Let $X\in \SM$. Denote by $j$ the inclusion of the Schubert variety $\Omega_\omega$ into $\flag (V)$ and by $x_i$ and $y_j$ the Chern roots associated to the full flags $Q\unddot$ and $\pi^*(V\unddot)$. Then the class $j_*\eta_{\Omega_\omega}\in CK^*(\flag (V))$ is given by
$$j_* \eta_{\Omega_\omega}=\mathfrak{H}^{(-\beta)}_\omega(x_1,\tred,x_n,\chi_{F_m}(y_1),\tred,\chi_{F_m}(y_n)) \ .$$
\end{theorem}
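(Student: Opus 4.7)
The plan is to combine three ingredients already available in the excerpt: (i) the description of $j_*\eta_{\Omega_\omega}$ as the image of a Bott--Samelson class under $\vartheta_{CK^*}$, (ii) the recursive formula $\mathcal{R}_I = A_{i_l}\circ\cdots\circ A_{i_1}\mathcal{R}_\emptyset$ for the Bott--Samelson classes in $\Omega^*(\flag(V))$, and (iii) the identification of the $CK^*$-specialization of $A_i$ with the $\beta$-divided difference operator $\phi_i^{(-\beta)}$ acting on the $x$-variables. The recursive definition of $\mathfrak{H}_\omega^{(-\beta)}$ will then produce the desired polynomial as a direct induction.

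First, by corollary \ref{cor class} we have $j_*\eta_{\Omega_\omega}=\vartheta_{CK^*}(\mathcal{R}_I)$, where $I$ is any minimal decomposition of $\omega_0\omega$; note that this class is independent of the chosen decomposition. Applying $\vartheta_{CK^*}$ to the identity of theorem \ref{prop Bott-Samelson} and using that $\vartheta_{CK^*}$ is a morphism of oriented cohomology theories (and therefore commutes with the smooth pull-backs and projective push-forwards that constitute $A_i=\varphi_i^*\varphi_{i*}$), we obtain
\[
\vartheta_{CK^*}(\mathcal{R}_I)=A^{CK}_{i_l}\circ\cdots\circ A^{CK}_{i_1}\bigl(\vartheta_{CK^*}(\mathcal{R}_\emptyset)\bigr).
\]
Proposition \ref{prop initial class CK} then gives the base case $\vartheta_{CK^*}(\mathcal{R}_\emptyset)=\mathfrak{H}_{\omega_0}^{(-\beta)}(x_1,\ldots,x_n,\chi_{F_m}(y_1),\ldots,\chi_{F_m}(y_n))$, so everything reduces to tracking how the operators $A^{CK}_{i_k}$ act on this polynomial.

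Now comes the key identification. Remark \ref{rem operators} shows that, written as an operator in the Chern roots $x_i,x_{i+1}$ of the relevant rank-two subbundle on $\flag(V)$, $A^{CK}_{i_k}$ acts precisely by the $\beta$-divided difference operator $\phi_{i_k}^{(-\beta)}$ in the variables $x_{i_k},x_{i_k+1}$. Moreover, the variables $y_j$ (hence also $\chi_{F_m}(y_j)$) are pull-backs from $X$, so they lie in the subring $\pi^*\Omega^*(X)\subset\Omega^*(\flag(V))$ and are fixed by each operator $A^{CK}_{i_k}$. Consequently, applying $A^{CK}_{i_l}\circ\cdots\circ A^{CK}_{i_1}$ to $\mathfrak{H}^{(-\beta)}_{\omega_0}(x_\bullet,\chi_{F_m}(y_\bullet))$ is the same as applying $\phi^{(-\beta)}_{i_l}\circ\cdots\circ\phi^{(-\beta)}_{i_1}$ to the polynomial $\mathfrak{H}^{(-\beta)}_{\omega_0}(X_1,\ldots,X_n,Y_1,\ldots,Y_n)$ in the formal variables $X_i$ and then substituting $X_i\mapsto x_i$, $Y_j\mapsto \chi_{F_m}(y_j)$.

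Finally, an easy length-induction using the recursive definition of the $\beta$-polynomials shows that if $I=(i_1,\ldots,i_l)$ is a minimal decomposition of $\omega_0\omega$, then
\[
\phi^{(-\beta)}_{i_l}\circ\cdots\circ\phi^{(-\beta)}_{i_1}\mathfrak{H}^{(-\beta)}_{\omega_0}=\mathfrak{H}^{(-\beta)}_\omega,
\]
since at each step the permutation obtained by right-multiplication with the corresponding $s_{i_k}$ has length one less than the previous one. Substituting $X_i\mapsto x_i$ and $Y_j\mapsto \chi_{F_m}(y_j)$ yields the claimed formula. The main point where care is required is the second paragraph: one must verify that $\vartheta_{CK^*}$ genuinely intertwines $A_i$ with $A^{CK}_i$ (which amounts to the compatibility of the specialization with proper push-forward and smooth pull-back) and that the $y_j$ are indeed invariant under the operators $A^{CK}_{i_k}$; the rest is formal manipulation.
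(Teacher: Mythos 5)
Your proof follows the paper's argument essentially step for step: corollary \ref{cor class} to identify $j_*\eta_{\Omega_\omega}$ with $\vartheta_{CK^*}(\mathcal{R}_I)$, theorem \ref{prop Bott-Samelson} plus functoriality of $\vartheta_{CK^*}$ to push the computation to $\vartheta_{CK^*}(\mathcal{R}_\emptyset)$, proposition \ref{prop initial class CK} for the base case, and remark \ref{rem operators} to match $A^{CK}_{i}$ with $\phi^{(-\beta)}_{i}$. One small imprecision: the $y_j$ are not literally fixed by $A^{CK}_{i_k}$ (indeed $A^{CK}_{i_k}(y_j)=y_j\cdot\phi^{(-\beta)}_{i_k}(1)=\beta y_j$), but since $A^{CK}_{i_k}$ is $CK^*(X)$-linear via the projection formula, the $y_j$ behave as scalars inside a product, which is the correct substance of your claim and all that the argument needs.
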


\begin{proof}
From corollary \ref{cor class} we know that the class $j_*\eta_{\Omega_\omega}$ coincides with $\vartheta_{CK}(\mathcal{R}_I)$ provived that $I$ is a minimal decomposition of $\omega_0\omega$. Moreover, thanks to theorem \ref{prop Bott-Samelson} we can express $\mathcal{R}_I$ by means of $\mathcal{R_\emptyset}$ and the operators $A_{i_j}$. Therefore, by functoriality, $\vartheta_{CK}(\mathcal{R}_I)$ can be expressed in terms of the operators $A^{CK}_{i_j}$ and of $\vartheta_{CK}(\mathcal{R}_\emptyset)$. More precisely we have
$$j_* \Omega_\omega=\vartheta_{CK^*}(\mathcal{R}_I)=
\vartheta_{CK}( A_{i_l}\trecd A_{i_1} (\mathcal{R}_\emptyset))=
A^{CK}_{i_l}\vartheta_{CK}( A_{i_{l-1}}\trecd A_{i_1} (\mathcal{R}_\emptyset))=\trecd=
 A^{CK}_{i_l}\trecd A^{CK}_{i_1} \vartheta_{CK} (\mathcal{R}_\emptyset)\ .$$
To finish the proof it is  now sufficient to invoke proposition \ref{prop initial class CK} and to observe that, as it was pointed out in remark \ref{rem operators}, the operators $A_{i_j}^{CK}$ coincide with the $\beta$-divided difference operators $\phi^{(-\beta)}$.
\end{proof}

\begin{remark}
It directly follows from proposition \ref{prop special} that the previous theorem specializes to theorems \ref{th flagbundle} and \ref{th flagbundle G_0}. One only has to apply the canonical natural transformations $CK^*\rightarrow CH^* $ and $CK^*\rightarrow K_0[\beta,\beta^{-1}]$ to the equality. This recovers immediately the result for the Chow ring, while for the Grothendieck ring it is still necessary to set $\beta$ equal to 1.   
\end{remark}









\bibliographystyle{siam}
\bibliography{biblio}

\end{document}